\def\i1n{i=1,\cdots,n}
\def\j1n{j=1,\cdots,n}
\def\ij1n{i,j=1,\cdots,n}
\def\R{\mathbb R}
\def\N{\mathbb N}
\def \i{\mathrm i}
\numberwithin{equation}{section}
\theoremstyle{definition}
\newtheorem{thm}{\indent Theorem}[section]
\newtheorem{lem}{\indent Lemma}[section]
\newtheorem{prop}{\indent Proposition}[section]
\newtheorem{defn}{\indent Definition}[section]
\newtheorem{rem}{\indent Remark}[section]
\newtheorem{theorem}{Theorem}[section]
\newtheorem{lemma}[theorem]{Lemma}
\newtheorem{corollary}[theorem]{Corollary}
\newtheorem{proposition}[theorem]{Proposition}
\newtheorem{remark}[theorem]{Remark}
\newtheorem{assumption}[theorem]{Assumption}
\newtheorem{claim}[theorem]{Claim}
\theoremstyle{definition}
\newcommand{\dd}{{\mathrm d}}
\newcommand{\be}{\begin{equation}}
	\newcommand{\ee}{\end{equation}}
\newcommand{\beq}{\begin{equation*}}
	\newcommand{\eeq}{\end{equation*}}
\begin{document}
	
	\title{Null controllability of damped nonlinear wave equation}
	

	\author{Yan Cui\thanks{Department of Mathematics, Jinan University, Guangzhou, P. R. China (cuiy32@jnu.edu.cn).}
		\and Peng Lu\thanks{Department of Mathematics, Zhejiang Sci-Tech University, Hangzhou, P. R. China (plu25@zstu.edu.cn).}
		\and Yi Zhou\thanks{School of Mathematics Science, Fudan University, Shanghai, P. R. China (yizhou@fudan.edu.cn).}}
	
	\date{}
	
	\pagestyle{myheadings} \markboth{Null controllability of damped nonlinear wave equation}{}
	\maketitle
	
	\begin{abstract}
		In this paper, we investigate the null controllability of nonlinear wave systems. Initially, we employ a combination of the Galerkin method and a fixed point theorem to establish the null controllability for semi-linear wave equations with nonlinear functions that are dependent on velocities, under the geometric control condition. Subsequently, utilizing a novel iterative method, we demonstrate the null controllability for a class of quasi-linear wave systems in a constructive manner. Lastly, we present a control result for a class of fully nonlinear wave systems, serving as an  application.
		
	\end{abstract}
	
	\textbf{ 2010 Mathematics Subject  Classification.} 35L70, 93B05, 35L05
	
	\textbf{Keywords}: quasi-linear wave equation, exact controllability, semi-linear wave equation, observability inequality

	\section{Introduction and main results}\label{intro}

	Assuming $T > 0$, we consider $\Omega\subset\mathbb{R}^n$, an open and bounded domain with a smooth boundary $\partial\Omega$. Here, $\omega$ is an open non-empty subset of $\Omega$. The characteristic function of $\omega$ is denoted by $\chi_\omega$.
	
	In this paper, our focus lies on the internal controllability issue pertaining to the subsequent nonlinear wave system:
	\begin{equation}\label{Wave Eqn with internal control}
		\left\{
		\begin{aligned}
			& y_{tt}-\Delta y+f(t,x,y,y_t,\nabla y,\nabla^2 y)=\chi_\omega(x)u(t,x), & (t,x)\in & ~ (0,T)\times\Omega, \\
			& y(t,x)=0, & (t,x)\in & ~ (0,T)\times\partial\Omega, \\
			& y(0,x)=y^0, ~ y_t(0,x)=y^1, & x\in & ~\Omega.
		\end{aligned}
		\right.
	\end{equation}
	
	{Here, $u$ represents the control (or input), $(y^0,y^1)$ is the initial data, and the nonlinear function  $f$ will be considered in several cases later.
		
		Our goal in this paper is to investigate the internal controllability problem when the nonlinear term $f$ meets specific criteria: for a given $T>0$, and given $(y^0,y^1), (y^0,y^1)$ within certain functional spaces, we aim to determine whether there exists a control such that the solution $y$ of \eqref{Wave Eqn with internal control} with initial data $(y^0,y^1)$ fulfills the condition $(y(T),y_t(T)) = (y^0,y^1)$?
		
		The issue of controllability for wave equations is steeped in a rich history. D. Russell \cite{Russell78} and J. L. Lions \cite{Lions88} laid the groundwork by establishing the duality principle, which reveals that the exact controllability of the control system is intrinsically linked to the observability inequality of the adjoint system.  C. Bardos, G. Lebeau, and J. Rauch \cite{BardosLebeauRauch92} highlighted that the geometric control condition (GCC) is crucial for the controllability of scalar wave equations. We roughly state that a subdomain $ \omega\subset \Omega$ and a time $T>0$ satisfy  GCC if and only if every general bicharacteristic intersects the set $(0,T)\times \omega $. For further details, we refer the reader to \cite{BurqGerardCNS,RauchTaylordecay,LaurentLeautaud16}.
	}
	\subsection{Semi-linear case }
	{Numerous studies have investigated scenarios where nonlinearity is expressed as $ f = f(u) $. E. Zuazua \cite{zuazua} has demonstrated the exact controllability of semi-linear wave equations by employing a blend of the Hilbert Uniqueness Method (HUM) and Schauder's fixed point theorem, provided that the nonlinearity $f$ exhibits Lipschitz continuity. I. Lasiecka and R. Triggiani \cite{LasieckaTriggiani91} expanded upon this foundational work by applying a global inversion theorem, which allowed for the inclusion of nonlinearities $f$ that are absolutely continuous with a first derivative $f'$ that is almost everywhere uniformly bounded. E. Zuazua \cite{Zuazua1} delved into additional cases where the nonlinearity exhibits logarithmic growth, characterized by $ f(u) \sim u \ln^p(u) $, specifically within the context of one spatial dimension. X. Fu, J. Yong, and X. Zhang \cite{Fu3} subsequently overcame the dimensional constraints of these findings by extending the results to higher-dimensional spaces. Their approach is grounded in the application of fixed point theorems, which enables the reduction of exact controllability to obtain global Carleman estimates for the linearized wave equation with a potential, as detailed in \cite{DZZ08}.
		
		In a more recent contribution, A. Munch and E. Trelat \cite{Trlat22} have provided constructive proof for the results initially presented in \cite{Zuazua1}. Their approach involves the design of a least-squares algorithm, which is adept at yielding both the control inputs and the corresponding solutions for one-dimensional semi-linear wave equations. When the nonlinearity is of power-type, $ f(u) = |u|^{p-1}u $ with $ 1 \leq p < 5 $, B. Dehman, G. Lebeau, and E. Zuazua \cite{DLZstabNLW} have demonstrated the exact controllability, assuming that the control is exerted on a subdomain situated exterior to a spherical boundary, thereby truncating the nonlinear effects. This framework has been generalized in \cite{DL09} to encompass the Geometric Control Condition (GCC) and to accommodate nonlinearities without the need for truncation, albeit with the stipulation that the lower frequency components of the initial data must be sufficiently diminutive. The critical case, where $ p = 5 $, has been addressed by C. Laurent \cite{Laurent} through the application of profile decomposition techniques on compact Riemannian manifolds. For cases with more general structures of nonlinear terms, the reader is referred to \cite{ZL10}; further details are provided in \cite{Hongheng} and \cite{Zhang}.
		
		When a system includes a term of the form $y_t$, it is typically understood to exhibit damping or anti-damping characteristics. For example, boundary feedback damping of this nature can be utilized to show, through Huygens's principle, that linear wave equations in odd dimensions achieve null controllability under damping within a finite time (refer to \cite{cirina,ZL}). This property is also known as rapid stabilization (see \cite{Coronlivre}).
		
		Moreover, closely linked to the controllability issue, there is a wealth of research on the stabilizability of such systems. For more information, we suggest consulting \cite{BardosLebeauRauch92, DLZstabNLW}, and for advancements on more general systems, we refer to the works of M. M. Cavalcanti, V. N. D. Cavalcanti, R. Fukuoka, and J. A. Soriano, as detailed in \cite{Cavalcanti} and \cite{Cavalcanti2}.
		
		In the context where the nonlinearity is defined as $ f = f(y_t) $, X. Zhang \cite{Zhang} has proposed an open problem: whether the following type of semilinear system
		\begin{equation}\label{semilinear-intro}
			y_{tt}+\mathcal{A}y+f(y_t)=\chi_\omega(x)u(t,x)
		\end{equation}
		is exactly controllable in the energy space, even though the nonlinearity $ f $ is globally Lipschitz continuous.
		
		To our knowledge, there are fewer results regarding this problem. In the first part of this paper, we address this problem, attempting to solve it with some additional assumptions.
	}
	
	{\color{black}For simplicity of notation, we denote $H^s=H^s(\Omega), H^0=L^2(\Omega)$ and we define (see \cite{DL09})
		\begin{align}
			\mathcal{H}^s=\Big\{v\in H^s\Big| \Delta^{i} v|_{\partial\Omega}=0,  i=0,1,..., \Big\lfloor\frac{s}{2}-\frac{1}{4}\Big\rfloor\Big\},
		\end{align}
		where $ \lfloor\cdot\rfloor$ stands for floor function: For any $x\in\mathbb{R}$,
		\begin{align}
			\lfloor x\rfloor:=\max\{y\in\mathbb{Z}: ~ y\leq x\}.
	\end{align}}
	
	Our first goal is to study the null controllability  of the following system:
	\begin{equation}\label{semilinear damped wave control}
		\left\{
		\begin{aligned}
			& y_{tt}-\Delta y+f(y_t)=\chi_\omega(x)u, & (t,x)\in & ~ (0,T)\times\Omega, \\
			& y(t,x)=0, & (t,x)\in & ~ (0,T)\times\partial\Omega, \\
			& y(0,x)=y^0, ~ y_t(0,x)=y^1, & x\in & ~ \Omega,
		\end{aligned}
		\right.
	\end{equation}
	where $\omega\subset\Omega$, $\chi_\omega\in C^2(\overline{\Omega})$ satisfies $0\leq\chi_\omega(x)\leq 1$, $\chi_\omega|_{\omega}\equiv 1$, and $\chi_\omega$ supports in a neighbourhood of $\omega$. Let $ f : \mathbb{R} \to \mathbb{R} $ be a nonlinearity satisfying $ f(0) = 0 $ and assume that $ f $ is Lipschitz continuous. That is, there exist constants $ L > \tilde{L} > 0 $ such that the following conditions hold:
	\begin{enumerate}
		\item Lipschitz Continuity: For all $ a, b \in \mathbb{R} $, the function $ f $ satisfies the inequality
		\begin{equation}\label{Lipschitz}
			\big| f(a) - f(b) \big| \leq L |a - b|.
		\end{equation}
		
		\item Monotonicity Condition: Additionally, for all $ a, b \in \mathbb{R} $ with $ a \neq b $, it is required that
		\begin{equation}\label{tilde L condition}
			(a - b) \big( f(a) - f(b) \big) \geq \tilde{L} (a - b)^2.
		\end{equation}
		
	\end{enumerate}

	Our primary result is as follows.
	
	\begin{thm}\label{thm:semilinear}
		Suppose that $(T,\omega)$ fulfills the Geometric Control Condition (GCC). Then there exists a constant $D>0$ such that if $f$ satisfies \eqref{Lipschitz}--\eqref{tilde L condition} and
		\begin{equation}\label{condition of D L and tilde L}
			\left(\frac{L}{\tilde{L}}-1\right)^2<\frac{L}{2D},
		\end{equation}
		then for any $(y^0,y^1)\in \mathcal{H}^2\times \mathcal{H}^1$, there exists a control function $u\in L^2(0,T; H^1(\omega))$ that ensures
		\begin{equation}\label{estimate of similinear control function}
			\begin{aligned}
				& \int_0^T\int_\omega|\nabla u|^2\dd x\dd t+\int_0^T\int_\omega|u|^2\dd x\dd t \\
				\leq & ~ D^*\left(\int_\Omega(|y^1|^2+|\nabla y^0|^2)\dd x+\int_\Omega(|\nabla y^1|^2+|\Delta y^0|^2)\dd x\right)
			\end{aligned}
		\end{equation}
		for some $D^*>0$. Additionally, the corresponding solution $(y,y_t)$ to \eqref{semilinear damped wave control} with initial data $(y^0,y^1)$ satisfies
		\begin{equation}
			y(T)=0, \quad y_t(T)=0.
		\end{equation}
	\end{thm}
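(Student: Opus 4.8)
The plan is to recast the nonlinear problem as a fixed point problem built on the null controllability of a reference \emph{linear} damped wave equation. First I would split the nonlinearity: write $f(a)=\tilde{L}a+g(a)$ with $g(a):=f(a)-\tilde{L}a$. By \eqref{Lipschitz}--\eqref{tilde L condition} the map $g$ is Lipschitz with constant $L-\tilde{L}$, satisfies $g(0)=0$, and is monotone ($ag(a)\geq 0$). Given $z$ in the ball $B_R:=\{z\in L^2(0,T;\mathcal{H}^1):\|z\|_{L^2(0,T;\mathcal{H}^1)}\leq R\}$, I would treat $g(z)$ as a known source and consider the linear controlled equation
\begin{equation*}
y_{tt}-\Delta y+\tilde{L}y_t=\chi_\omega u-g(z),\qquad (y,y_t)(0)=(y_0,y_1),\qquad (y,y_t)(T)=(0,0),
\end{equation*}
choosing $u$ as the minimal $L^2(0,T;H^1(\omega))$-norm (HUM) control. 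This defines a map $\Lambda(z):=y_t$; a fixed point $z^\ast=y^\ast_t$ of $\Lambda$ yields a solution of \eqref{semilinear damped wave control} steered to $(0,0)$ at time $T$, since then $\tilde{L}y^\ast_t+g(y^\ast_t)=f(y^\ast_t)$, and the estimate \eqref{estimate of similinear control function} is read off from the linear control cost.

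\textbf{The linear step.} The core ingredient is an observability inequality for the adjoint equation $\varphi_{tt}-\Delta\varphi-\tilde{L}\varphi_t=0$ (with Dirichlet data) under GCC. Since $\tilde{L}\varphi_t$ is lower order and has constant coefficient, unique continuation holds, so this follows from the Bardos--Lebeau--Rauch observability of the free wave equation by a compactness--uniqueness argument, first at the energy level and then, after commuting the equation with $-\Delta$ (the domain being smooth, $-\Delta$ self-adjoint), at the level corresponding to data in $\mathcal{H}^2\times\mathcal{H}^1$; here the smooth cut-off $\chi$ supported near $\omega$ is used to pass from an $\mathcal{H}^2$-type observation on $\omega$ to an $H^1$-observation on a slightly larger set. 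By the Hilbert Uniqueness Method this produces, for $(y_0,y_1)\in\mathcal{H}^2\times\mathcal{H}^1$ and $g(z)\in L^2(0,T;\mathcal{H}^1)$, a control $u\in L^2(0,T;H^1(\omega))$ with
\begin{equation*}
\int_0^T\!\!\int_\omega\big(|\nabla u|^2+|u|^2\big)\,dx\,dt\leq D\Big[\int_\Omega\big(|y_1|^2+|\nabla y_0|^2+|\nabla y_1|^2+|\Delta y_0|^2\big)dx+\|g(z)\|_{L^2(0,T;\mathcal{H}^1)}^2\Big],
\end{equation*}
and I would construct the corresponding state $y$, together with $(y,y_t)\in C([0,T];\mathcal{H}^2\times\mathcal{H}^1)$ and $y_{tt}\in L^2(0,T;L^2)$, by the Galerkin method on the eigenbasis of $-\Delta$ and standard energy estimates, passing to the limit in the truncation using the uniform bounds.

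\textbf{Closing the fixed point.} Since $\|g(z)\|_{L^2(0,T;\mathcal{H}^1)}\leq(L-\tilde{L})\|z\|_{L^2(0,T;\mathcal{H}^1)}$, the cost estimate gives $\|\Lambda(z)\|_{L^2(0,T;\mathcal{H}^1)}^2\leq C\|(y_0,y_1)\|_{\mathcal{H}^2\times\mathcal{H}^1}^2+CD(L-\tilde{L})^2R^2$; tracking the constants $L,\tilde{L},D$, this is $\leq R^2$ for a suitable $R$ precisely when the smallness condition \eqref{condition of D L and tilde L}, i.e. $(L/\tilde{L}-1)^2<L/(2D)$, holds, so $\Lambda$ maps $B_R$ into itself. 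Moreover $\Lambda(B_R)$ is bounded in $L^2(0,T;\mathcal{H}^1)\cap H^1(0,T;L^2)$ (use the equation to bound $y_{tt}$), hence relatively compact in $L^2(0,T;L^2)$ by the Aubin--Lions lemma, and $\Lambda$ is continuous for the $L^2(0,T;L^2)$ topology because $z\mapsto g(z)$ is Lipschitz and the map source $\mapsto$ HUM control $\mapsto$ state is bounded linear. Schauder's fixed point theorem (compact-map form) then yields $z^\ast\in B_R$ with $\Lambda(z^\ast)=z^\ast$, hence the controlled solution and the bound \eqref{estimate of similinear control function}.

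\textbf{Main obstacle.} The delicate point is the observability inequality at the $\mathcal{H}^2\times\mathcal{H}^1$ regularity level under the mere GCC assumption --- requiring either the microlocal theory together with the regularity-lifting argument, or a Galerkin approximation with bounds uniform in the truncation parameter --- and it is there that the constant $D$ entering \eqref{condition of D L and tilde L} is pinned down; a secondary bookkeeping point is to check that the self-mapping inequality closes under exactly the stated form of the smallness condition.
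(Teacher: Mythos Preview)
Your approach is genuinely different from the paper's, and while it is plausible as a route to \emph{a} null controllability result under \emph{some} smallness assumption, it does not establish the theorem as stated, and the point you flag as ``secondary bookkeeping'' is in fact the main gap.

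The paper does \emph{not} linearize around the $\tilde L$--damped equation and run Schauder. Instead it works at the Galerkin level with the full nonlinear forward system \eqref{finite dimision yN} coupled to a \emph{linear backward} system \eqref{finite dimision vN} with anti-damping $-L\partial_t v_N$ (note: coefficient $L$, not $\tilde L$), takes the control to be $\chi\,\partial_t v_N$, and defines $\mathcal F_N:(v_N(T),\partial_t v_N(T))\mapsto (y_N(T),\partial_t y_N(T))$. The key step is an inner-product estimate in a weighted $\tilde\ell^2$ norm (Remark~\ref{rem:innerproduct}) showing $(\mathcal F_N(x),x)\ge 0$ on a large sphere, after which Evans' topological lemma produces a zero of $\mathcal F_N$; one then passes to the limit using the compact embedding $H_0^1\hookrightarrow\hookrightarrow L^2$ to handle $f(\partial_t y_N)$. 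The condition \eqref{condition of D L and tilde L} arises from a very specific balance: the $J_1,J_2$ terms measuring the discrepancy $|f'-L|\le L-\tilde L$ are controlled by choosing $\delta_1=\tilde L\sqrt{D/L}$, $\delta_2=\tilde L\sqrt{D/(2L)}$, and the observability inequality \eqref{observability inequality semilinear1}--\eqref{observability inequality semilinear2} is invoked for the $L$--damped system of Lemma~\ref{the example prop}. That is where the factor $L/(2D)$ with $D=D(L)$ comes from.

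In your scheme, by contrast, the reference linear operator carries damping $\tilde L$ and the observability constant you would invoke is $D'=D'(\tilde L)$, not the paper's $D(L)$; the self-mapping inequality you sketch gives a contraction factor of order $(L-\tilde L)^2(D'+1)/\tilde L^2$ rather than $2D(L-\tilde L)^2/(L\tilde L^2)$, so the resulting smallness condition neither has the stated form nor involves the same constant. Since the theorem asserts the existence of a constant $D$ making \emph{precisely} \eqref{condition of D L and tilde L} sufficient (and the remark pins $D$ to Lemma~\ref{the example prop}), this mismatch is a genuine gap, not mere bookkeeping. A second, smaller point: your compactness argument is fine, but for the $H^1(\omega)$--level HUM you need observability at the $\mathcal H^2\times\mathcal H^1$ level for a system whose source $g(z)$ is only Lipschitz in $z$; you should check that $g(z)\in\mathcal H^1$ (it does, since $g(0)=0$ and $z|_{\partial\Omega}=0$) and that the chain rule $\nabla g(z)=g'(z)\nabla z$ is legitimate here.
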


		
		\begin{remark}
			
			\begin{itemize}
				\item $D$ comes from observability inequality in Lemma \ref{lem:the example prop}.
				\item {\color{black}When $(\omega,T)$ satisfies GCC, for any fixed $L>0$,  \eqref{condition of D L and tilde L} can be rewritten as:
					\begin{align}\label{restrict:L}
						\frac{L}{1+\sqrt{\frac{L}{2D}}}<\tilde{L}<L.
					\end{align}
					
					Since  $D$ would be of form $e^{CL}$ for some constant $C$ (combing a time transformation and \cite[Theorem 1.5]{LaurentLeautaud16}),  \eqref{restrict:L} is an explicit lower bound for $\tilde{L}$.  However, when $L$ is large enough, $\tilde{L}$ is a small perturbation of $L$. So we expect that \eqref{restrict:L} can be improved by other types of geometric conditions.}
				\item $D^*$ in \eqref{estimate of similinear control function} can be given explicitly in terms of $D,L,\tilde{L}$ and $\chi$. Actually, $D^*=\frac{C^*}{\delta}, $ $C^*$ is given by \eqref{C*} and $\delta$ is given by \eqref{delta-similinear}.
			\end{itemize}
		\end{remark}
		
		\begin{remark}
			The proof relies heavily on the specific damping structure, allowing us to employ the Galerkin method and a fixed point argument as discussed in L. C. Evans \cite{Evans}. It might be expected that this approach could also be applicable to other types of damping within the wave system, even with varying boundary conditions.
		\end{remark}
		
		\begin{remark}
			Note that in Theorem \ref{thm:semilinear}, the time and domain of control are assumed to satisfy the GCC, which is necessary  when $f$ is linear (\cite{Bardos,BurqGerardCNS,RauchTaylordecay}).
		\end{remark}
		
		\begin{remark}
			This result partially solves the problem posed by Xu Zhang in \cite[Remark 7.2]{Zhang}.  The initial data here are assumed in  $\mathcal{H}^2\times \mathcal{H}^1$. It is still not known whether Theorem \ref{thm:semilinear} holds in general for any initial data in energy space  $H_0^1\times L^2$.
		\end{remark}
		We outline the proof as follows.
		We expand any given initial value $(y^0 ,y^1)\in \mathcal{H}^2\times \mathcal{H}^1 $
		of system \eqref{semilinear damped wave control} as follows:
		\begin{equation}\label{intro:yinitial}
			y^0=\sum\limits_{j=1}^\infty(y^0,\varphi_j)_{L^2}\varphi_j, \quad y^1=\sum\limits_{j=1}^\infty(y^1,\varphi_j)_{L^2}\varphi_j.
		\end{equation}
		where $\{\varphi_j\}_{j=1}^\infty$  is a sequence of orthonormal bases in
	$L^2$
		space, satisfying the elliptic eigenvalue problem.
		Next, we define the finite energy elements $(y_N^0,y_N^1)$ as follows:
		\begin{equation}\label{intro:yninitial}
			y_{N}^{0}=\sum\limits_{j=1}^N(y^0,\varphi_j)_{L^2}\varphi_j, \quad y_{N}^{1}=\sum\limits_{j=1}^N(y^1,\varphi_j)_{L^2}\varphi_j.
		\end{equation}
	Then, let
			\begin{equation}\label{yvN-}
			y_N=\sum\limits_{j=1}^Ng_{jN}(t)\varphi_j, \quad v_N=\sum\limits_{j=1}^Nh_{jN}(t)\varphi_j,
		\end{equation}
		we consider the following coupled finite-dimensional system of ordinary differential equations:
		which solves the finite-dimensional system
		\begin{equation}\label{intro:finite dimision yN-example}
			\left\{
			\begin{aligned}
				& \Big(\partial_t^2y_N-\Delta y_N+2\partial_ty_N-\chi_\omega\partial_tv_N,\varphi_i\Big)_{L^2}=0, \quad i=1,2,\cdots,N \\
				& t=0: g_{jN}=(y^0,\varphi_j)_{L^2}, ~ g'_{jN}=(y^1,\varphi_j)_{L^2} \\
			\end{aligned}
			\right.
		\end{equation}
		and the backward system
		\begin{equation}\label{intro:finite dimision vN-example}
			\left\{
			\begin{aligned}
				& \Big(\partial_t^2v_N-\Delta v_N-2\partial_tv_N,\varphi_i\Big)_{L^2}=0, \quad i=1,2,\cdots,N \\
				& t=T: h_{jN}=a_j, ~ h'_{jN}=b_j, \\
			\end{aligned}
			\right.
		\end{equation}
			where $(\vec{a}_N,\vec{b}_N)=(a_1,\cdots,a_N,b_1,\cdots,b_N)\in \mathbb{R}^{2N}$ with
		\begin{equation}\label{abb-}
			\sum_{i=1}^N \left(|\lambda_i|^4|a_i|^2+|\lambda_i|^2|b_i|^2\right)=\|(v_N(T),\partial_tv_{N}(T))\|^2_{\mathcal{H}^2\times \mathcal{H}^1}<\infty.
		\end{equation}
	    We then prove the conclusions of our theorem in two steps:
	    \begin{enumerate}
	    	\item [(1)] 	
	    There exists a time $T>0$, for any $N$, we prove that there exist
	   $(\vec{a}_N,\vec{b}_N)$ satisfying \eqref{abb} such that the system \eqref{intro:finite dimision yN-example}--\eqref{intro:finite dimision vN-example} has a unique solution $y_N, v_N \in C^0(0,T;\mathcal{H}^2)\cap C^1(0,T;\mathcal{H}^1)\cap C^2(0,T;L^2)$ satisfying $$\|y_N\|_{C^i(0,T;\mathcal{H}^{2-i})}\leq C\|(y^0,y^1)\|_{\mathcal{H}^2\times \mathcal{H}^1}, \quad i=0,1,2,$$ and
		$$\|v_N\|_{C^i(0,T;\mathcal{H}^{2-i})}\leq C\|(y^0,y^1)\|_{\mathcal{H}^2\times \mathcal{H}^1}, \quad i=0,1,2.$$
		Here $C$ is a positive constant independent of $N$.
		Furthermore, $y_N$ satisfies
		$$(y_N(T),\partial_ty_{N}(T))=(0,0).$$
		
	\item 	[(2)]Based on the above norm control, we can employ a compactness argument to obtain the following convergence results: There exist functions $y,v \in C^0(0,T;\mathcal{H}^2)\cap C^1(0,T;\mathcal{H}^1)\cap C^2(0,T;L^2)$ such that the sequences  $(y_N,\partial_ty_{N})$ (resp. $(v_N,\partial_tv_{N})$ )  in $C(0,T;\mathcal{H}^2)\times C(0,T;\mathcal{H}^1)$ converge weakly to $(y,y_t)$ (resp. $(v,v_{t})$ ) in $C(0,T;\mathcal{H}^2)\times C(0,T;\mathcal{H}^1)$, and strongly in $C(0,T;\mathcal{H}^1)\times C(0,T;L^2)$. Since  $(y_N,v_N)$ solves system \eqref{intro:finite dimision yN-example}--\eqref{intro:finite dimision vN-example}, the limit $(y,v)$  satisfies the equation in the sense of  $L^2$ and due to the convergence of the initial and terminal values of
		 $(y_N,y_{Nt})$:
		$$(y_N(0),\partial_ty_{N}(0))\rightarrow (y^0,y^1), \text{~as~} N\rightarrow \infty,$$
		and
		$$(y_N(T),\partial_ty_{N}(T))= (0,0),$$
	we have $$(y(0),y_t(0))=(y^0,y^1), ~ (y(T),y_{t}(T))=(0,0).$$
	 \end{enumerate}
	
		
		The first step relies on a novel application of a zero-point lemma, which is essentially a variant of Brouwer's fixed-point theorem. We  construct a sequence of vector maps
		 $\mathcal{F}_N: \mathbb{R}^{2N}\to \mathbb{R}^{2N}$
		 defined by
		\begin{equation}
			\mathcal{F}_N:\big(a_1,\cdots,a_N,b_1,\cdots,b_N\big)^\top\mapsto \Lambda_N\big(g_{1N}(T),\cdots,g_{NN}(T),g'_{1N}(T),\cdots,g'_{NN}(T)\big)^\top,
		\end{equation}
		which maps the initial values of the finite-dimensional dual system to the terminal values of the target system.
		Here $\Lambda_N=\text{diag}(\lambda_1^2,\cdots,\lambda_N^2,\lambda_1,\cdots,\lambda_N)  \in \R^{2N\times 2N}$. We have an equivalent $\ell_2$ norm given by:
		\begin{equation*}\begin{split}
			(\mathcal{F}_N(x_N), x_N)_{\tilde{\ell_2}_\delta}=~&\frac{1}{\delta}\int_\Omega\Big(\partial_ty_N(T)\partial_tv_N(T)+\nabla y_N(T)\cdot\nabla v_N(T)\Big)\dd x		 \\& +\int_\Omega\Big(\nabla\partial_ty_N(T)\cdot\nabla\partial_tv_N(T)+\Delta y_N(T)\Delta v_N(T)\Big)\dd x.
			\end{split}		
		\end{equation*}
		  This transformation reduces the problem to determining the existence of zeros for the functions $\mathcal{F}_N$.  We demonstrate that the function $\mathcal{F}_N$ satisfies
		  \begin{align}
		  	& (\mathcal{F}_N(x_N), x_N)_{\tilde{\ell_2}_\delta} \nonumber \\
		  	\geq & ~ \left(\frac{1}{2D}-\frac{L-\tilde{L}}{\tilde{L}\sqrt{2DL}}\right)E_2(v_N(T))+\frac{1}{\delta}\left(\frac{1}{2D}-\frac{L-\tilde{L}}{2\tilde{L}\sqrt{DL}}\right)E_1(v_N (T)) \nonumber \\
		  	& -\Big(\frac{\|\Delta\chi\|_{L^\infty}}{4L}+\frac{(L-\tilde{L})}{2L\tilde{L}}\sqrt{\frac{D}{2L}}\|\nabla\chi\|_{L^\infty}^2\Big)E_1(v_N(T)) \nonumber \\
		  	& -\frac{\tilde{L}}{\delta}\sqrt{\frac{D}{L}}\Big(\frac{L-\tilde{L}}{2\tilde{L}}+\frac{L}{L-\tilde{L}}\Big)E_1(y_N(0))-\tilde{L}\sqrt{\frac{D}{2L}}\Big(\frac{L-\tilde{L}}{2 \tilde{L}}+\frac{L}{L-\tilde{L}}\Big)E_2(y_N(0)). \nonumber
		  \end{align}	
		  where
		  	\begin{equation*}			
		  	E_1(u(t)):=\int_\Omega\big(|u_t(t)|^2+|\nabla u(t)|^2\big)\dd x,\qquad E_2(u(t)):=\int_\Omega\big(|\nabla u_t(t)|^2+|\Delta u(t)|^2\big)\dd x.
		  \end{equation*}
		 Then, applying the observation inequality of the linear system, we demonstrate that $$(\mathcal{F}_N(x_N), x_N)_{\tilde{\ell_2}_\delta}\geq 0,$$
		for $|x_N|=r$ on a specific sphere, where $r$ is a sufficiently large radius. Using Lemma \ref{lemma from Evans-example}, we establish the existence of zeros of $\mathcal{F}_N$. Furthermore, by utilizing energy estimates of the equations, we obtain  the uniform bound of the solutions for the finite-dimensional system. Consequently, the proof is completed. For detailed proof, one may refer to Section 4; alternatively, the methodological exposition provided in Section 2 for the linear system serves as an illustrative example.

		\subsection{Quasi-linear case}
		Many studies have also been done on the subject related to the exact controllability. In \cite{Li06}, by using a constructive method, T. Li and L. Yu obtained the exact boundary controllability for 1D quasi-linear wave system. We refer the reader to \cite{LiRao03,Li08} for a system theory of controllability for 1D quasi-linear hyperbolic system. It was generalized by the third author and Z. Lei to the two or three space dimensional case \cite{ZL}. Their proofs strongly relied on boundary damping and Huygens's principle. By using a different method based on Riemannian geometry, P. Yao \cite{Yao1} also obtained the exact boundary controllability for a class of quasi-linear wave in high space dimensional case. Let us mention that the above results concern boundary control problem. As far as we know, there are much fewer known results about internal controllability for quasi-linear case. K. Zhuang \cite{zhuang} studied the exact internal controllability for a class of 1D quasi-linear wave equation.

		When considering the internal energy controllability of nonlinear wave equations in higher dimensions, the boundary conditions are typically prescribed, which precludes the direct application of Huygens's principle to the linearized system. Our second contribution extends the work in \cite{ZL} by examining the internal null controllability of damped quasilinear wave equations. Let us consider a nonlinear term $ f $ that is defined as follows:
		\begin{equation}\label{cond:f1} f(t,x,y,y_t,\nabla y,\nabla^2 y) = -y_t+g_1(t,x,y,y_t,\nabla y) + \sum_{i,j=1}^{n} \frac{\partial}{\partial x_j} \left( g_2^{ij}(t,x,y,y_t,\nabla y) \frac{\partial y}{\partial x_i} \right), \end{equation}
		where $ g_1 $ and $ g_2^{ij} $, for $ i, j = 1, \ldots, n $, are smooth functions satisfying the following conditions:
		 \begin{equation}\label{g1} \begin{cases}  g_1(t,x,0,0,0) = 0, \\ g_1(t,x,y,y_t,\nabla y) = O(|y|^2 + |y_t|^2 + |\nabla y|^2) \quad \text{as} \quad (|y| + |y_t| + |\nabla y|) \to 0, \end{cases} \end{equation} and \begin{equation}\label{g2} \begin{cases}  g_2^{ij} = g_2^{ji},  g_2^{ij}(t,x,0,0,0) = 0, \\  g_2^{ij}(t,x,y,y_t,\nabla y) = O(|y| + |y_t| + |\nabla y|) \quad \text{as} \quad (|y| + |y_t| + |\nabla y|) \to 0. \end{cases} \end{equation}
		
		The notation $ \nabla $ denotes the gradient operator, $ \nabla^2 $ represents the Hessian matrix, and $ O $ denotes the Landau symbol, indicating the asymptotic behavior of the functions $ g_1 $ and $ g_2^{ij} $ as their arguments approach zero.
		
		{\color{black}In order to study the controllability of the system \eqref{Wave Eqn with internal control}, it is imperative to specify the suitable functional space in which the solution exists for some time interval $(0,T)$. As the well-posedness of the wave equation necessitates that the principal coefficient term adhere to certain regularity criteria, the functions $ g_1 $ and $ g_2^{ij} $, among others, must satisfy these conditions. Consequently, the initial conditions for the system must belong to the Sobolev space $ \mathcal{H}^s \times \mathcal{H}^{s-1} $, with $ s $ being sufficiently large. This, in turn, necessitates that the boundary conditions of the equation satisfy certain compatibility constraints.
			To facilitate the description of conditions, we rewrite the quasi-linear system \eqref{Wave Eqn with internal control} with the nonlinear term f satisfying \eqref{cond:f1} as follows:
			\begin{equation}\label{system:quasilinear0}
				\left\{
				\begin{aligned}
					& y_{tt}+b_0y_t-{\color{black}\sum\limits_{i,j=1}^n\big(a_{ij}y_{x_i}\big)_{x_j}}+\sum\limits_{k=1}^nb_ky_{x_k}+\tilde{b}y=\chi_\omega u, & (t,x)\in & ~ (0,T)\times\Omega, \\
					& y(t,x)=0, & (t,x)\in & ~ (0,T)\times\partial\Omega, \\
					& y(0,x)=y^0, ~ y_t(0,x)=y^1, & x\in & ~ \Omega,
				\end{aligned}
				\right.
			\end{equation}
			with
			\begin{equation} \label{coe:1}
				\begin{aligned}
					& {\color{black}a_{ij}=a_{ji}=\delta_{ij}-g^{ij}_2, \quad b_0=1+\int_0^1\frac{\partial g_1}{\partial y_t}(t,x, y,\tau y_t,\nabla y)\dd\tau,} \\
					& {\color{black}b_k=\int_0^1\frac{\partial g_1}{\partial y_{x_k}}(t,x, y, y_t, y_1,\cdots,\tau y_{x_k},\cdots,y_{x_n})\dd\tau, \quad \tilde{b}=\int_0^1\frac{\partial g_1}{\partial y}(t,x,\tau y, y_t,\nabla y)\dd\tau,}
				\end{aligned}
			\end{equation}
		where $\delta_{ij}$ is  Kronecker delta function.
		
			We impose the following boundary compatibility conditions:
			\begin{assumption}[$\mathcal{H}^s $-Boundary compatibility conditions]\label{assumption:boundary} Let $s\geq 2$. The smooth coefficients $a_{ij}, b_k $ for $ i,j=1,\cdots,n$ and $ k=0,1,\cdots,n$, as well as $\tilde{b}$ in System \eqref{Wave Eqn with internal control}, satisfy the following conditions for any $t\in [0,T]$ and
			 $ u,v\in \cap_{i=0}^2C^i(0,T;\mathcal{H}^{s-i}),  $
				\begin{equation}\label{cond:f2}
					\begin{cases}
					 \sum\limits_{i,j=1}^{n} \frac{\partial a_{ij}}{\partial x_j} (t,x,v,0,\nabla v) \frac{\partial u}{\partial x_i} \in C(0,T;\mathcal{H}^{s-2}), \\
					  \sum\limits_{i,j=1}^{n} a_{ij}(t,x,v,0,\nabla v) \frac{\partial^2 u}{\partial x_i \partial x_j } \in C(0,T;\mathcal{H}^{s-2}), \\
					   \sum\limits_{i,j=1}^{n}\partial_t a_{ij}(t,x,v,0,\nabla v) \frac{\partial^2 u}{\partial x_i \partial x_j }\in C(0,T;\mathcal{H}^{s-2}), \\
						 \sum\limits_{k=1}^{n}b_k(t,x, v,0,\nabla v)\frac{\partial u}{\partial x_k}
					  \in C(0,T;\mathcal{H}^{s-2}),  \\
					 b_0(t,x, v,0,\nabla v) \partial_tu, \quad \tilde{b}(t,x, v,0,\nabla v) u \in C(0,T;\mathcal{H}^{s-2}).
					\end{cases}				
				\end{equation}
	
			\end{assumption}
			
		}
		
		Before stating our main result, we introduce a  geometric condition on the pair $(\omega,T)$.
		{\color{black}\begin{assumption}\label{defi:weak-Gamma}

				Assume that there exists a point $ x_0 \notin \overline{\Omega} $ such that the following inequality is satisfied:
				\begin{equation}
					T > 1 + 100 (n + 2) \sqrt{n} \max_{x \in \overline{\Omega}} |x - x_0|,
				\end{equation}		
				and  \begin{equation}			
					\omega:=\Omega\cap O_{\varepsilon_0}(\Gamma_{\varepsilon_0})
				\end{equation}   for some $ \varepsilon_0 > 0 $,
				where
				$ \Gamma_{\varepsilon_0} $ is a subset of the boundary $ \partial \Omega $ defined by
				\begin{equation}
					\Gamma_{\varepsilon_0}:= \left\{ x \in \partial \Omega \middle| (x - x_0) \cdot \bm{\nu} > -\varepsilon_0 \right\},
				\end{equation} and
                \begin{equation}
                O_{\varepsilon_0}(\Gamma_{\varepsilon_0}):=\left\{ x \in \R^n: ~ d(x, \Gamma_{\varepsilon_0})<\varepsilon_0\right\},
                \end{equation}
                denotes neighborhood of $\Gamma_{\varepsilon_0}$ with a width of $\varepsilon_0$. Here, $ \bm{\nu}=(\nu^1,\cdots,\nu^n) $ denotes the unit outward normal vector to the boundary $ \partial \Omega $ of the domain $ \Omega $ and 		$$d(x, \Gamma_{\varepsilon_0}):=\inf\{|x-y|\big |y\in \Gamma_{\varepsilon_0}\}$$ means the distance between $x\in \R^n$ and  $\Gamma_{\varepsilon_0}$.
				
		\end{assumption}}
		
		\begin{thm}\label{thm:main1} Let $s\geq  \max\{n+2,4\}$ be an integer. Assume that Assumption \ref{assumption:boundary} on coefficients holds. Additionally,
			 assume that $(\omega,T)$ satisfy Assumption \ref{defi:weak-Gamma}.
			Then
			there exists a small positive constant $\varepsilon_{mthm}>0$, such that for any given initial data $(y^0,y^1)\in \mathcal{H}^s\times \mathcal{H}^{s-1}$, if the following norm condition is satisfied:
			\begin{equation}
				\big\|(y^0,y^1)\big\|_{\mathcal{H}^s\times \mathcal{H}^{s-1}}\leq \varepsilon_{mthm},
			\end{equation}
			then there exists a control $u\in L^\infty(0,T;\mathcal{H}^{s-1})$ and a constant $C_{uni}$ such that there exists a unique solution \begin{equation}y\in C(0,T;\mathcal{H}^{s})\cap C^1(0,T;\mathcal{H}^{s-1})\end{equation} of \eqref{system:quasilinear0} with internal control $u$, corresponding to the initial data $(y^0,y^1)$ and satisfying:
\begin{equation}\label{cond:y}
\big\|(y,y_t)\big\|_{\mathcal{H}^s\times \mathcal{H}^{s-1}}\leq C_{uni}\varepsilon_{mthm},
\end{equation}
and
\begin{equation}
				y(T,x)=0, ~ y_t(T,x)=0.
			\end{equation}
		\end{thm}
		
		Several remarks are given in order.

		\begin{remark}
			{\color{black}
			We first note that a smooth function  $g(t,x,y)$  satisfies
            $$ g= O(|y|) \quad \text{as} \quad |y|  \to 0,$$
            implies that there exist constants $C>0$, $\nu>0$, and a smooth function $\tilde{g}(t,x,y)$ such that  for $|y|\leq \nu$, we have $g=\tilde{g}y$ with the property that  $|\partial_t^i\partial_x^j\partial_{y}^k\tilde{g}|\leq C$, for all $ i,j,k\in \N$.
				
				Therefore, the functions with integral forms in compatibility condition \eqref{cond:f2} can be expressed in an alternative form, as detailed in Remark \ref{rem:alphabound}.
			
				Here we provide a few examples of nonlinearities $ f $ that adhere to the boundary compatibility condition \eqref{cond:f2}:
				\begin{itemize}
					\item 	Let $g_1(t,x,y,y_t,\nabla y)=a\chi_O y^2, g_2^{ij}(t,x,y,y_t,\nabla y)=\delta_{ij}-\chi_O by $ where $\chi_O$ is a cut-off function with compact support $O\subset \Omega\setminus \partial\Omega$ and $a,b\in \R$. Then
					\begin{equation}
						a_{ij}=\delta_{ij}-\chi_O by, ~ b_0=1, ~ b_k=0, ~ \tilde{b}=ay\chi_O .
					\end{equation}
					
					\item   Let $g_1(t,x,y,y_t,\nabla y)=c\chi_O y_t^2, g_2^{ij}(t,x,y,y_t,\nabla y)=\delta_{ij}-d\chi_O y_t $ for any $c,d\in \R$;
Then
					\begin{equation}
						a_{ij}=\delta_{ij}-\chi_O dy_t, ~ b_0=1, ~ b_k=0, ~ \tilde{b}=ay_t\chi_O.
					\end{equation}
					
				\end{itemize}
			}
		\end{remark}

		\begin{remark}Note that our argument is based on a  transformation that transmutes the original system into an analogous system incorporating damping terms. Consequently, this enables the construction of an algorithmic procedure that engenders sequences for both control inputs and solutions. By substantiating an observability inequality for a linearized system with coefficients that are time-space dependent, particularly within the system's principal component (as elaborated in Theorem \ref{fully nonlinear observability}), and subsequently applying the contraction mapping theorem, we deduce the convergence of the aforementioned sequences for control inputs and solutions.
		\end{remark}

		\begin{remark} Since the condition \eqref{cond:f1} is satisfied by the nonlinearity $ f(T - t, \cdot, \cdot, \cdot, \cdot) $ with the same validity as it is for $ f(t, \cdot, \cdot, \cdot, \cdot) $, the combination of Theorem \ref{thm:main1} and the well-posedness of the system governed by equation \eqref{Wave Eqn with internal control} enables us to demonstrate the exact controllability of the system delineated by equation \eqref{Wave Eqn with internal control}.
		\end{remark}

		\begin{remark}
			To establish the convergence of the solutions to the constructed linear system with respect to initial values, we assume that
			$ s \geq \max\{n+2,4\} $, as detailed in Proposition \ref{prop:vz} within Section \ref{sec:4}. Additionally, since our proof relies on higher-order space-time norm estimates, we also need to assume that
		$s$ is an integer, as specified in Lemma \ref{lem:VZ} within Section \ref{sec:4}. Consequently, the analogous result cannot be deduced under the condition $ s > \frac{n}{2} + 2 $, which is corroborated by the findings in \cite[Theorem 5.1]{Zhang} and \cite[Theorem 4.3]{Hongheng}. Nonetheless, our methodology of proof is, to a certain extent, constructive in nature. The control inputs and solutions are amenable to numerical computation via an iterative algorithm, as articulated by equations \eqref{eqnofvalpha} and \eqref{eqnofzalpha} presented in Section \ref{sec:4}. We expect that the regularity condition imposed on $ s $ may be relaxed.
		\end{remark}
		
		\subsection{Fully nonlinear case}
		Finally, we are going to consider the full nonlinear system. Assume that nonlinearity  $f(t,x,y,y',\nabla^2 y)$ is a smooth function and satisfies the following condition:
		\begin{equation}\label{assume:f3}
			f=O(|y|^2+|y'|^2+|\nabla^2y|^2), \quad {\color{black}\text{as}~(|y|+|y'|+|\nabla^2y|\to 0),}
		\end{equation}
		where $y'=(y_t,\nabla y)$. We then present our result concerning another type of null controllability:
		{\color{black}	\begin{thm}\label{fully nonlinear controllability}
				Let nonlinearity $f$ satisfies conditions  \eqref{cond:f2} and \eqref{assume:f3}, and let 	$(T,\omega)$ satisfy Assumption \ref{defi:weak-Gamma}.
				Assume further that there exists a positive constant $\varepsilon_{fnon}>0$, such that for any initial data $(y^0,y^1) $, the following norm condition
				\begin{equation}
					\|y^0\|_{\mathcal{H}^s}+\|y^1\|_{\mathcal{H}^{s-1}}\leq \varepsilon_{fnon},
				\end{equation}			
				holds
				for some integer $s\geq {\color{black}\max\{n+3,5\}}$. Then there exists a control  $u(t,x)\in L^2(0, T; H^{s-1})$ and a unique solution $y\in C(0,T;\mathcal{H}^{s})\cap C^1(0,T;\mathcal{H}^{s-1})\cap C^2(0,T;\mathcal{H}^{s-2})$ to \eqref{Wave Eqn with internal control} with internal control that  satisfies
				\begin{equation}
					y_t(T)=0, ~ y_{tt}(T)=0.
				\end{equation}
		\end{thm}}

		\subsection{Organization of this paper}
	
		The rest of this paper is organized as follows. In Section \ref{sec:2}, we introduce three distinct methods and establish the exact controllability of the damped Klein-Gordon equation, thereby laying the groundwork for our subsequent analysis. Section \ref{sec:3} is dedicated to demonstrating the null controllability of the damped semilinear wave equation, with the proof of Theorem \ref{thm:semilinear} as its culmination. In Section \ref{sec:4}, we present the proof of Theorem \ref{thm:main1}, which addresses the controllability of the quasilinear damping wave system with small initial data. Section \ref{sec:5} focuses on proving Theorem \ref{fully nonlinear controllability}, which concerns the local null controllability of the damped fully nonlinear wave equation. Finally, the Appendix \ref{appdix} contains the proof of an observability inequality for the linear time-dependent wave system, a result that is crucial for establishing Theorem \ref{thm:main1}.

		\section{Controllability for linear damped hyperbolic system}\label{sec:2}

		In this section, we will consider the null controllability problem for the
		linear system
		\begin{equation}\label{Damped Wave Eqn with internal control}
			\left\{
			\begin{aligned}
				&	y_{tt}+b_0y_t-\sum\limits_{i,j=1}^n\big(a^{ij}y_{x_i}\big)_{x_j}+\sum\limits_{k=1}^nb_ky_{x_k}+\tilde{b}y=\chi_\omega(x)u(t,x), & (t,x)\in & ~ (0,T)\times\Omega, \\
				& y(t,x)=0, & (t,x)\in & ~ (0,T)\times\partial\Omega, \\
				& y(0,x)=y^0, ~ y_t(0,x)=y^1, & x\in & ~ \Omega. \\
			\end{aligned}
			\right.
		\end{equation}
		Here we assume that coefficients $a^{ij}\in C^1( [0,T]\times \overline{\Omega})$ satisfy
		\begin{equation}\label{AA1}
			a^{ij}(t,x)=a^{ji}(t,x), ~ \text{for} ~ (t,x)\in [0,T]\times \overline{\Omega}, ~ i,j=1,\cdots,n,
		\end{equation}
		and for some $\beta>0$,
		\begin{equation}\label{AA2}
			\sum_{i,j=1}^n a^{ij}(t,x) \xi^i\xi^j\geq \beta |\xi|^2, ~ \text{for} ~ (t,x,\xi) \in [0,T]\times \overline{\Omega}\times \R^n,
		\end{equation}
		where $\xi=(\xi^1,\cdots,\xi^n)\in \R^n$.
		Assume that
		\begin{equation}\label{BB1}
			~ b_0\in C^1([0,T]\times \overline{\Omega}), ~ b_k, ~ \tilde{b}\in L^\infty([0,T]\times \overline{\Omega}), ~ k=1,\cdots,n.
		\end{equation}
	
		Thanks to classical semi-group theory (see \cite{pazy83}), we can obtain that for any $(y^0,y^1)\in \mathcal{H}^1\times L^2$ and $u\in L^2((0,T)\times\Omega)$, System \eqref{Damped Wave Eqn with internal control} admits a global solution $$y\in C(0,T;\mathcal{H}^1)\cap C^1(0,T;L^2).$$
		
		We say that system \eqref{Damped Wave Eqn with internal control} is exactly null controllable in $\mathcal{H}^1\times L^2$, if
		for any given $(y^0,y^1)\in \mathcal{H}^1\times L^2$, there exists a control function $u\in L^2((0,T)\times\Omega)$, such that  $(y(T),y_t(T))=(0,0)$. 
		
		In order to study the null controllability of system \eqref{Damped Wave Eqn with internal control}, we need to consider the following dual system:
		\begin{equation}\label{system:z-HUM}
			\left\{
			\begin{aligned}
				& z_{tt}+b_0z_t-\sum\limits_{i,j=1}^n\big(a^{ij}z_{x_i}\big)_{x_j}+\sum\limits_{k=1}^nb_kz_{x_k}+\tilde{b}z=0, & (t,x)\in & ~ (0,T)\times\Omega, \\
				& z(t,x)=0, & (t,x)\in & ~ (0,T)\times\partial\Omega, \\
				& z(0,x)=z_0, ~ z_t(0,x)=z_1 & x\in & ~ \Omega.
			\end{aligned}
			\right.
		\end{equation}
		
		We say  system \eqref{system:z-HUM} is exactly observable in $\mathcal{H}^1\times L^2$, if for any initial data $(z_0,z_1)\in \mathcal{H}^1\times L^2$, the corresponding solution $z\in C(0,T;  \mathcal{H}^1)\cap C^1(0,T;  L^2)$ of system \eqref{system:z-HUM} holds an observability inequality
		\begin{equation}\label{obs}
			\|z_0\|_{\mathcal{H}^1}^2+\|z_1\|_{L^2}^2\leq C\int_0^T \|z_t\|^2_{L^2(\omega)}\dd t,
		\end{equation}
		where $C$ is a positive constant independent of $(z_0,z_1)$.

		\subsection{Constant case}
		In this subsection, we assume the coefficients are specified as:
		
		\begin{equation}\label{sect2:a}
			a^{ij}=\delta_{ij}, ~ i,j=1,\cdots,n
		\end{equation}
		where $\delta_{ij}$ is  Kronecker delta function and
			\begin{equation}\label{sect2:b}
				b_0=1, b_k=0,\tilde{b}=0.
			\end{equation}

		We introduce an alternative method to prove the following theorem. This method will  be instrumental in the subsequent proofs of our main results.
		\begin{theorem}\label{thm: the example thm2}
			Assume that $a^{ij}$ satisfies \eqref{sect2:a}. Assume that  \eqref{sect2:b} is valid. If System \eqref{system:z-HUM} is exactly observable in $\mathcal{H}^2\times \mathcal{H}^1$, then system \eqref{Damped Wave Eqn with internal control} is exactly null controllable.
		\end{theorem}
		
		\begin{remark}
			Indeed, by using HUM method, it is not difficult to show that system \eqref{Damped Wave Eqn with internal control} is exactly null controllable in the space $ L^2(\Omega) \times H^{-1}(\Omega) $, provided that system \eqref{system:z-HUM} exhibits exact observability in $ \mathcal{H}^1\times L^2$.
		\end{remark}

		Before our proof,  we introduce an intermediate value lemma as follow.
		\begin{lemma}\label{lemma from Evans-example}
			Let $ F: \mathbb{R}^N \rightarrow \mathbb{R}^N $ be a continuous function. Let  $ A,B \in \mathbb{R}^{N \times N} $ be any given symmetric positive definite matrix. Suppose that the inequality
			\begin{equation}\label{inner-product}
				(Bx, AF(x))_{\ell_2}:=Bx\cdot AF(x) \geq 0,
			\end{equation}
			holds for all $ x $ with $ |Bx|_{\ell_2} = r $, for some $ r > 0 $. Then, there exists a point $ x_0 \in \R^N, $ such that $ Bx_0\in B_r $ and $ F(x_0) = 0 $, where $ B_r $ denotes the closed ball in $ \mathbb{R}^N $ with radius $ r $ centered at the origin.
		\end{lemma}
		{\color{black}	\begin{proof}
				The case where $A=B=Id_{N\times N}$ can be found in L. C. Evans \cite{Evans}.  We argue by contradiction and assume the assertion to be false, it would imply that  $F(x)\not=0$ for all $Bx\in B_r$. We define the continuous mapping $w: \overline{B}_r\rightarrow \partial B_r$ as follows:
				\begin{equation}\label{ww}
					w(y):=-\frac{rF(B^{-1}y)}{|F(B^{-1}y)|_{\ell_2}}, ~ \forall ~ y\in \bar{B}_r.
				\end{equation}
				According to Brouwer's Fixed Point theorem, there exists a point $z\in B_r\setminus \{0\}$ with $w(z)=z.$
				
				Now, taking $Bx_1=z, x_1\in \R^N\setminus\{ 0\}$, then by definition \eqref{ww} of $w$, we have
				\begin{equation}
					Bx_1=	w(Bx_1)=-\frac{rF(x_1)}{|F(x_1)|_{\ell_2}}.\end{equation}
				
				Hence, we claim that equation \eqref{inner-product} will lead to a contradiction. We now proceed to analyze the inner product bound of $(Bx_1, ABx_1)_{\ell_{2}}$ as follows:
				\begin{equation}
					0<	(Bx_1,ABx_1)_{\ell_{2}}=(Bx_1,Aw(Bx_1))_{\ell_{2}}=- \frac{r}{|F(x_1)|_{\ell_2}} Bx_1\cdot AF(x_1)\leq 0.
				\end{equation}
				
				This contradiction indicates that our initial assumption is not true, thereby establishing the existence of a point $ x_0 \in B_r $ for which $ F(x_0) = 0 $, and thus concluding the proof.
		\end{proof} }

		We now proceed to establish the null controllability of the system \eqref{Damped Wave Eqn with internal control}. The idea of this method will be used in the proof of Theorem \ref{thm:main1}.
		\begin{proof}
			Let  $\{\varphi_j\}_{j=1}^\infty$ be the eigenfunction of $-\Delta$ with Dirichlet boundary condition coresponding to eigenvalue $\lambda_j^2$. Thanks to elliptic equation theory,  $\{\varphi_j\}_{j=1}^\infty$ actually is the standard orthogonal basis of $L^2(\Omega)$ such that for each $j$,
			\begin{equation}\label{lambdai}
				\left\{
				\begin{aligned}
					&( -\Delta)\varphi_j=\lambda_j^2\varphi_j, & x\in & ~ \Omega, \\
					& \varphi_j=0, & x\in & ~ \partial\Omega,
				\end{aligned}
				\right.
			\end{equation}
			and define the finite energy elements $(y_N^0,y_N^1)$ as follow:
			\begin{equation}\label{yninitial}
				y_N^0=\sum\limits_{j=1}^N(y^0,\varphi_j)_{L^2}\varphi_j, \quad y_N^1=\sum\limits_{j=1}^N(y^1,\varphi_j)_{L^2}\varphi_j.
			\end{equation}

			Let  $(y_N,v_N)$ be given by
			\begin{equation}\label{yvN}
				y_N=\sum\limits_{j=1}^Ng_{jN}(t)\varphi_j, \quad v_N=\sum\limits_{j=1}^Nh_{jN}(t)\varphi_j,
			\end{equation}
			which solves the finite-dimensional system
			\begin{equation}\label{finite dimision yN-example}
				\left\{
				\begin{aligned}
					& \Big(\partial_t^2y_N-\Delta y_N+2\partial_ty_N-\chi_\omega\partial_tv_N,\varphi_i\Big)_{L^2}=0, \quad i=1,2,\cdots,N \\
					& t=0: g_{jN}=(y^0,\varphi_j)_{L^2}, ~ g'_{jN}=(y^1,\varphi_j)_{L^2} \\
				\end{aligned}
				\right.
			\end{equation}
			and the backward system
			\begin{equation}\label{finite dimision vN-example}
				\left\{
				\begin{aligned}
					& \Big(\partial_t^2v_N-\Delta v_N-2\partial_tv_N,\varphi_i\Big)_{L^2}=0, \quad i=1,2,\cdots,N \\
					& t=T: h_{jN}=a_j, ~ h'_{jN}=b_j, \\
				\end{aligned}
				\right.
			\end{equation}
			where $(\vec{a}_N,\vec{b}_N)=(a_1,\cdots,a_N,b_1,\cdots,b_N)\in \mathbb{R}^{2N}$ with
			\begin{equation}\label{abb}
				\sum_{i=1}^N \left(|\lambda_ia_i|^2+|\lambda_i||b_i|^2\right)<\infty.
			\end{equation}
			
			Now we define $\mathcal{F}_G^N:\mathbb{R}^{2N}\rightarrow \mathbb{R}^{2N} $ as follows
			\begin{equation}\label{a map in Galerkin-example2}
				\mathcal{F}_G^N:\big(a_1,\cdots,a_N,b_1,\cdots,b_N\big)\mapsto \big(\lambda_1 g_{1N}(T),\cdots,\lambda_N g_{NN}(T),g'_{1N}(T),\cdots,g'_{NN}(T)\big),
			\end{equation}
			which transforms the final state of $v_N$ to that of $y_N$ at time $T$.  Then we have
			\begin{equation}\label{217}
				B_N\vec{l}\cdot A_N\mathcal{F}^N_G(\vec{l})=\Big(\big(v_N(T),\partial_tv_N(T)\big) ~,~ \mathcal{F}_g\big(v_N(T),\partial_tv_N(T)\big)\Big)_{H^1\times L^2},
			\end{equation}
			for any $\vec{l}=\big(a_1,\cdots,a_N,b_1,\cdots,b_N\big)^\top$,
                where
                $$A_N=Id_{N\times N}, \quad B_N=\text{diag}(\lambda_1, \lambda_2, \cdots \lambda_N, 1, \cdots, 1).$$

			Now our goal is to prove that
			there exists $R>0$, such that
			\begin{equation}\label{218}
				B_N\vec{l}\cdot A_N\mathcal{F}^N_G(\vec{l})\geq 0,
			\end{equation}
			provided $|\vec{l}|_{\ell_2}\geq R$.
			
			In order to obtain \eqref{218}, by recalling the definition of inner product $(\cdot,\cdot)_{\mathcal{H}^1\times L^2}$ and \eqref{217}, we need to prove
			\begin{equation}
				\begin{split}
					&\int_\Omega\Big(\partial_ty_N(T)\partial_tv_N(T)+\nabla y_N(T)\cdot\nabla v_N(T)\Big)\dd x \\ =~&\Big(\big(v_N(T),\partial_tv_N(T)\big) ~,~ \mathcal{F}_g\big(v_N(T),\partial_tv_N(T)\big)\Big)_{\mathcal{H}^1\times L^2}\geqslant0.
				\end{split}				
			\end{equation}
			
			By multiplying the equation in \eqref{finite dimision yN-example} by $h'_{iN}(t)$ and the equation in  \eqref{finite dimision vN-example} by $g'_{iN}(t)$, and summing over $i$,  we derive an energy identity
			\begin{equation}
				\frac{\dd}{\dd t}\int_\Omega\big(\partial_ty_N\partial_tv_N+\nabla y_N\cdot\nabla v_N\big)\dd x=\int_\omega|\partial_tv_N|^2\dd x.
			\end{equation}

			Integrating this over $(0,T)$ with respect to $t$ yields the inequality
			\begin{equation}\label{3.17 example}
				\begin{aligned}
					& \int_\Omega\Big(\partial_ty_N(T)\partial_tv_N(T)+y_N(T)v_N(T)+\nabla y_N(T)\cdot\nabla v_N(T)\Big)\dd x \\
					= & ~ \int_\Omega\Big(\partial_ty_N(0)\partial_tv_N(0)+\nabla y_N(0)\cdot\nabla v_N(0)\Big)\dd x+\int_0^T\int_\omega|\partial_tv_N|^2\dd x\dd t \\
					\geq & ~ \int_0^T\int_\omega|\partial_tv_N|^2\dd x\dd t-\delta_0E(y_N(0))-\frac{1}{4\delta_0}E(v_N(0)),
				\end{aligned}
			\end{equation}
			where $E(v(t))=\int_\Omega(|v_t(t)|^2+|v(t)|^2+|\nabla v(t)|^2)\dd x$.
			
			Now we take $\delta_0>\frac{C}{2}$, $C$ is given by \eqref{obs},  then by the observability inequality,
			\begin{equation}\label{obs1}
				\frac{1}{2\delta_0}E(v_N(0))\leq\int_0^T\int_\omega|\partial_tv_N|^2\dd x\dd t.
			\end{equation}

			Plugging \eqref{obs1} into \eqref{3.17 example}, we get
			\begin{equation}\label{2.14}
				\int_\Omega\Big(\partial_ty_N(T)\partial_tv_N(T)+\nabla y_N(T)\cdot\nabla v_N(T)\Big)\dd x\geq\frac{1}{4\delta_0}E(v_N(0))-\delta_0E(y_N(0)).
			\end{equation}

			Next, multiplying \eqref{finite dimision vN-example} by $2h'_{iN}(t)$ and summing over $i$ yields
			\begin{equation}
				\frac{\dd}{\dd t}E(v_N(t))=2\int_\Omega\big|\partial_tv_N(t)\big|^2\dd x\leq 2E(v_N(t)),
			\end{equation}
			which implies that			
			\begin{equation}
				\frac{\dd}{\dd t}\big(e^{-2t}E(v_N(t))\big)\leq 0.			
			\end{equation}	
				Therefore,
			\begin{equation}
				E(v_N(T))\leq e^{2T}E(v_N(0)).
			\end{equation}

			Combining with \eqref{2.14}, we obtain
			\begin{equation}
				\begin{aligned}
					& \int_\Omega\Big(\partial_ty_N(T)\partial_tv_N(T)+\nabla y_N(T)\cdot\nabla v_N(T)\Big)\dd x \\
					\geq & ~ \frac{1}{4\delta_0}E(v_N(0))-\delta_0E(y_N(0)) \\
					\geq & ~ \frac{1}{4\delta_0e^{2T}}E(v_N(T))-\delta_0E(y_N(0)) .
				\end{aligned}
			\end{equation}
			
			Hence taking $R^2=4\delta_0^2e^{2T}E(y_N(0))$ and	if  $E(u_N(T))\geq R$, then 			
			\begin{equation}\label{Fgge0}	\Big(\big(v_N(T),\partial_tv_N(T)\big) ~,~ \mathcal{F}_g\big(v_N(T),\partial_tv_N(T)\big)\Big)_{\mathcal{H}^1\times L^2}\geq 0.
			\end{equation}

			Now
			we can apply Lemma \ref{lemma from Evans-example}, there exist $\{a_j\}_{j=1}^N$ and $\{b_j\}_{j=1}^N$, such that
			\begin{equation}				\mathcal{F}_G^N\big(\big(a_1,\cdots,a_N,b_1,\cdots,b_N\big)\big)=B_N\big(g_{1N}(T),\cdots,g_{NN}(T),g'_{1N}(T),\cdots,g'_{NN}(T)\big)^\top=0.
			\end{equation}			
			By \eqref{yvN}, this indeed is equivalent to :
			\begin{equation}\label{ytarget}
				y_N(T)=0, ~ \partial_ty_N(T)=0,
			\end{equation}			
			with
			\begin{equation}\label{231}
				E(v_N(T))=\sum_{i=1}^N (\lambda^2_i a_i^2+b_i^2)\leq R^2\leq  CE(y_N(0)).
			\end{equation}
			Let us go back to $v_N$-equation. Multiplying \eqref{finite dimision vN-example}
			by $h_{jNt}$ and summing over $j=1,\cdots, N$,  by integration by parts, we obtain
			\begin{equation}
				E(v_N(0))= E(v_N(T))-2(\chi_\omega\partial_tv_N,\partial_t v_N)_{L^2}\leq E(v_N(T)).
			\end{equation}
			Together with \eqref{231}, this gives
			\begin{equation}
				E(v_N(0))\leq CE(y_N(0)).
			\end{equation}
			
			By using \eqref{3.17 example}, and we can find that
			\begin{equation}
				\int_0^T\int_\omega|\partial_tv_N|^2\dd x\dd t\leq CE(y_N(0))\leq CE(y(0)).
			\end{equation}			
			Consequently, we obtain a bound for $\{\partial_tv_N\}_{N=1}^\infty$  in $L^2(0,T;L^2(\Omega))$.
			
			Moreover, by well-posedness theory of ode systems, we obtain
			\begin{equation}
				\{v_N\}_{N=1}^\infty  \subset L^\infty(0,T;\mathcal{H}^2(\Omega))\cap W^{1,\infty}(0,T;\mathcal{H}^1(\Omega)),
			\end{equation}
			and
			\begin{equation}
				\{y_N\}_{N=1}^\infty  \subset L^\infty(0,T;\mathcal{H}^2(\Omega)), \quad
				\big\{\partial_ty_N\big\}_{N=1}^\infty  \subset L^\infty(0,T;\mathcal{H}^1(\Omega)).
			\end{equation}
			Since $\vec{l}$ is assumed to satisfy \eqref{abb} which is equivalent to that $\|v_N(T)\|_{\mathcal{H}^2}<+\infty$.
			So by using equation \eqref{yvN}, this implies
			\begin{equation}
				\big\{\partial_t^2y_N\big\}_{N=1}^\infty  \subset L^\infty(0,T;L^2(\Omega)).
			\end{equation}
			With the help of classical compactness results (see \cite{S.Zheng}), we can extract a subsequence $\{y_N\}_{N=1}^\infty$ (still denoted by $\{y_N\}_{N=1}^\infty$) such that
			\begin{equation}\begin{cases}
					y_N\stackrel{\ast}{\longrightarrow} y  \text{~in~} L^\infty(0,T;\mathcal{H}^2(\Omega)),\\
					\partial_ty_N\stackrel{\ast}{\longrightarrow} y_t  \text{~in~} L^\infty(0,T;\mathcal{H}^1(\Omega)), \\
					\partial_t^2y_N\stackrel{\ast}{\longrightarrow} y_{tt}  \text{~in~} L^\infty(0,T;L^2(\Omega)),
				\end{cases}			
			\end{equation}
			and
			\begin{equation}
				\partial_tv_N \stackrel{\ast}{\longrightarrow} u  \text{~in~} L^\infty(0,T;L^2(\Omega)),
			\end{equation}
                where $\stackrel{\ast}{\longrightarrow}$ means weakly-$\ast$ convergence.
			
			Combining with initial condition \eqref{yninitial} and  \eqref{ytarget} these convergences  are sufficient to establish that $ y $ is a weak solution to the damped wave equation with
			\begin{equation}
				y(0)=y^0, y_t(0)=y^1, y(T)=0,y_t(T)=0
			\end{equation}
			and
			internal control $ u $.	Thus, we have obtained the null controllability of the system \eqref{Damped Wave Eqn with internal control}.			
		\end{proof}
		
		We finish this part by giving a observation result for linear system, which will be used in the proof of Theorem \ref{thm:semilinear}.
		Consider the following system: \begin{equation}\label{system:z-L}
			\left\{
			\begin{aligned}
				& {\color{black}z_{tt}-Lz_t-\Delta z=0}, & (t,x)\in & ~ (0,T)\times\Omega, \\
				& z(t,x)=0, & (t,x)\in & ~ (0,T)\times\partial\Omega, \\
				&  z(0,x)=z_0, ~ z_t(0,x)=z_1, & x\in & ~ \Omega \\
			\end{aligned}
			\right.
		\end{equation}
		where $L$ is a constant.
		\begin{lem}\label{lem:the example prop}
			Assume that $(\omega,T)$ satisfies GCC. Then there exists a constant $D>L$, such that
			for any initial data $(z_0,z_1)\in \mathcal{H}^1\times L^2(\Omega)$, the corresponding solution $z\in C(0,T;  \mathcal{H}^1)\cap C^1(0,T;  L^{2})$ of system \eqref{system:z-L} holds
			\begin{equation}
				\label{observability inequality semilinear1}
				\|z_0\|_{H_0^1}^2+\|z_1\|_{L^2}^2\leq D \int_0^T \|\nabla z\|^2_{(L^2(\omega))^n}\dd t.
			\end{equation}
            Here $D$ is a constant independent of $z$. Moreover, for any initial data $(z_0,z_1)\in \mathcal{H}^2\times \mathcal{H}^1$, the corresponding solution $z\in C(0,T;  \mathcal{H}^2)\cap C^1(0,T;  \mathcal{H}^1)$ of system \eqref{system:z-L} satisfies
			\begin{equation}\label{observability inequality semilinear2}
				\frac{1}{2}\Big(\big\|\nabla z_t(T)\big\|_{L^2}^2+\|\Delta z(T)\|_{L^2}^2\Big)\leq \tilde{D}\int_0^T\|\nabla z_t\|_{(L^2(\omega))^n}^2\dd t.
			\end{equation}
Here $\tilde{D}$ is a constant independent of $z$.
		\end{lem}
		\begin{proof}
			We first note that equation \eqref{observability inequality semilinear1} is a classical result (see \cite{Bardos}). To prove \eqref{observability inequality semilinear2}, let us  take $v=z_t$, and observe that
		$v$ is  a solution of the system
		$$
            \left\{
            \begin{aligned}
            &v_{tt}-Lv_t-\Delta v=0, & (t,x)\in & ~ (0,T)\times\Omega, \\
            &v=0, & (t,x)\in & ~ (0,T)\times\partial\Omega,
            \end{aligned}
            \right.
            $$
		with initial data
			\begin{align*}
				v(0)=z_1 \in H_0^1, \quad v_t(0)=\Delta z_0+L z_1\in L^2.
			\end{align*}
			From these conditions, we can derive \eqref{observability inequality semilinear2} from \eqref{observability inequality semilinear1}.
		\end{proof}
		
		\subsection{Various case: controllability in $\mathcal{H}^1\times L^2$}
		In this section, we consider the exact null controllability of the system \eqref{Damped Wave Eqn with internal control} in  $\mathcal{H}^1\times L^2$ when the coefficients depend on both space and time. Denote $Q^T:=(0,T)\times\Omega$, we assume that the coefficients $a^{ij}, i,j=1,\cdots,n$  fulfill the conditions  \eqref{AA1}--\eqref{AA2} and additionally satisfy the following bound:
			\begin{equation}\label{coe:aaaaa}
				\|a^{ij}-\delta_{ij}\|_{C^1(\overline{Q^T})}\leq \varepsilon,
			\end{equation}
			for some $\varepsilon$.
			 For simplicity of exposition, we further assume that the coefficients are specified as:
			\begin{equation}\label{sect2:bb}
				b_0=1, b_k=0,\tilde{b}=0.
			\end{equation}
Then we have
\begin{theorem}\label{thm:the example thm}		Assume that \eqref{coe:aaaaa} and \eqref{sect2:bb} are valid.
		Assume that System \eqref{system:z-HUM} is exact observable in $\mathcal{H}^1\times L^2$, then there exists a sufficiently small  $\varepsilon_1$ such that if $\varepsilon\leq \varepsilon_1$, then \eqref{Damped Wave Eqn with internal control} is exactly null controllable in $\mathcal{H}^1\times L^2$.
		\end{theorem}
		\begin{proof}
			{\color{black}Consider the following system:
				\begin{equation}\label{system:z-Picard}
					\left\{
					\begin{aligned}
						& z_{tt}+z_t-\sum\limits_{i,j=1}^n\big(a^{ij}z_{x_i}\big)_{x_j}=0, & (t,x)\in & ~ (0,T)\times\Omega, \\
						& z(t,x)=0, & (t,x)\in & ~ (0,T)\times\partial\Omega, \\
						&  z(0,x)=z_0, ~ z_t(0,x)=z_1, & x\in & ~ \Omega.
					\end{aligned}
					\right.
				\end{equation}
				
				Our strategy involves defining
				\begin{equation}\label{y=w-z linear}
					y(t)=w(t)-z(T-t).
				\end{equation}
				Here  $w$ satisfies
				\begin{equation}\label{the eqn of w linear}
					\left\{
					\begin{aligned}
						& w_{tt}+w_t-\sum\limits_{i,j=1}^n\big(a^{ij}w_{x_i}\big)_{x_j}=-2\chi_{\Omega\setminus\omega}z_t(T-t), & (t,x)\in & ~ (0,T)\times\Omega, \\
						& w(t,x)=0, & (t,x)\in & ~ (0,T)\times\partial\Omega, \\
						& w(0,x)=z(T)+y^0, ~ w_t(0,x)=-z_t(T)+y^1, & x\in & ~ \Omega.
					\end{aligned}
					\right.
				\end{equation}
				It is straightforward to verify that $y$ satisfies
				\begin{equation}\label{the eqn of y linear}
					\left\{
					\begin{aligned}
						& y_{tt}+y_t-\sum\limits_{i,j=1}^n\big(a^{ij}y_{x_i}\big)_{x_j}=2\chi_\omega z_t(T-t), & (t,x)\in & ~ (0,T)\times\Omega, \\
						& y(t,x)=0, & (t,x)\in & ~ (0,T)\times\Omega, \\
						& y(0,x)=y^0, ~ y_t(0,x)=y^1, & x\in & ~ \Omega.
					\end{aligned}
					\right.
				\end{equation}
				Note that $y(T)=w(T)-z_0$ and $~ y_t(T)=w_t(T)+z_1$.
				
				If we can find $(z_0,z_1)$ such that $w(T)=z_0, ~ w_t(T)=-z_1$, then we may take
				\begin{equation}\label{the control u}
					u=2z_t(T-t)
				\end{equation}
				and by the well-posedness of system \eqref{Damped Wave Eqn with internal control}, $u$ will be the control we seek.}
			
			For every $(z_0,z_1)\in \mathcal{H}^1\times L^2$, we define the map
			$$\mathcal{F}:(z_0,z_1)\mapsto\big(w(T),-w_t(T)\big).$$
			
			We aim to show that this map has a fixed point, which would yield the desired conclusion. In the remainder of the proof, we demonstrate that $\mathcal{F}$ is a contraction mapping, and then by contraction mapping theorem, $\mathcal{F}$ has a fixed point.
			
			{\color{black}We  claim that \eqref{obs} implies that
				there exists a constant $\kappa<1$, depending only on $T,\Omega,\omega, a^{ij}$,  such that
				\begin{equation}\label{estimates by observable inequality}
					\begin{aligned}
						& \frac{1}{2}\Big(\int_\Omega a^{ij}(T)z_{x_i}(T)z_{x_j}(T)\dd x +\|z_t(T)\|_{L^2(\Omega)}^2\Big)+e^{\beta T\varepsilon}\int_0^T\|z_t\|^2_{L^2(\Omega\setminus\omega)}\dd t \\
						\leq ~ & \frac{\kappa}{2}\Big(\int_\Omega a^{ij}(0)z_{x_i}(0)z_{x_j}(0)\dd x+\|z_1\|_{L^2(\Omega)}^2\Big).
					\end{aligned}
				\end{equation}
				
				Indeed, by energy equality,
				\begin{equation}
					\begin{aligned}
						& \frac{1}{2}\Big(\int_\Omega a^{ij}(t)z_{x_i}(t)z_{x_j}(t)\dd x+\|z_t(t)\|_{L^2(\Omega)}^2\Big)+\int_0^t\|z_t\|^2_{L^2(\Omega)}\dd \tau \\
						= ~ & \frac{1}{2}\Big(\int_\Omega a^{ij}(0)z_{x_i}(0)z_{x_j}(0)\dd x+\|z_1\|_{L^2(\Omega)}^2\Big)+\frac{1}{2}\int_0^t \int_\Omega a_t^{ij}(\tau)z_{x_i}(\tau)z_{x_j}(\tau)\dd x\dd \tau,
					\end{aligned}
				\end{equation}
				By Gronwall's inequality and the smallness assumption \eqref{coe:aaaaa} on $a^{ij}_t$,
				we have
				\begin{equation}
					\begin{aligned}
						& \frac{1}{2}\Big(\int_\Omega a^{ij}(T)z_{x_i}(T)z_{x_j}(T)\dd x+\|z_t(T)\|_{L^2(\Omega)}^2\Big)+e^{\beta\varepsilon T}\int_0^T\|z_t\|^2_{L^2(\Omega)}\dd t \\
						\leq ~ & \frac{e^{\beta\varepsilon T}}{2}\Big(\int_\Omega a^{ij}(0)z_{x_i}(0)z_{x_j}(0)\dd x+\|z_1\|_{L^2(\Omega)}^2\Big).
					\end{aligned}
				\end{equation}
				 Given that System \eqref{system:z-HUM} is exactly observable, which yields the observability inequality:
				 \begin{equation}
				 	C\int_0^T\|z_t\|^2_{L^2(\omega)}\dd t \geq \Big(\int_\Omega a^{ij}(0)z_{x_i}(0)z_{x_j}(0)\dd x+\|z_1\|_{L^2(\Omega)}^2\Big).
				 \end{equation}
				 Here we utilize the fact that $a^{ij}(0)$ is  positive and bounded.
				 Thus, we have
				 \begin{equation}
				 	\begin{aligned}
				 		& \frac{1}{2}\Big(\int_\Omega a^{ij}(T)z_{x_i}(T)z_{x_j}(T)\dd x+\|z_t(T)\|_{L^2(\Omega)}^2\Big)+e^{\beta\varepsilon T}\int_0^T\|z_t\|^2_{L^2(\Omega\setminus\omega)}\dd t \\
				 		\leq ~ & \left(\frac{e^{\beta\varepsilon T}}{2}-\frac{e^{\beta\varepsilon T}}{C} \right)\Big(\int_\Omega a^{ij}(0)z_{x_i}(0)z_{x_j}(0)\dd x+\|z_1\|_{L^2(\Omega)}^2\Big).
				 	\end{aligned}
				 \end{equation}
			   By  choosing $\varepsilon_1$ such that
				\begin{equation}
					\frac{e^{\beta\varepsilon T}}{2}-\frac{e^{\beta\varepsilon T}}{C}<\frac{1}{2},
				\end{equation}
				 and setting $\kappa=e^{\beta\varepsilon T}(1-\frac{2}{C})$, we obtain  \eqref{estimates by observable inequality}.
				
				Now, multiplying \eqref{the eqn of w linear} by $w_t$ and  integrating by parts, we derive
				\begin{equation}\label{w:esti}
					\begin{aligned}
						& \frac{1}{2}\frac{\dd}{\dd t}\Big(\|w_t\|_{L^2(\Omega)}^2+\int_\Omega a^{ij}(t)w_{x_i}(t)w_{x_j}(t)\dd x\Big)+\|w_t\|_{L^2(\Omega)}^2 \\
						= ~ & -2\int_{\Omega\setminus\omega}w_t(t)z_t(T-t)\dd x + \frac{1}{2}\int_\Omega a_t^{ij}(t)w_{x_i}(t)w_{x_j}(t)\dd x\\
						\leq ~ & \|w_t\|_{L^2(\Omega)}^2+\|z_t(T-t)\|_{L^2(\Omega\setminus\omega)}^2+ \frac{1}{2}\int_\Omega a_t^{ij}(t)w_{x_i}(t)w_{x_j}(t)\dd x.
					\end{aligned}
			\end{equation}}
			
			Integrating \eqref{w:esti} in $t$ from 0 to $T$ and applying Gronwall's inequality, we obtain:
                \begin{align}\label{FF}
                        & \frac{1-C_n\varepsilon}{2}\left\|\mathcal{F}(z_0,z_1)\right\|_{H_0^1\times L^2}^2 \nonumber\\
					\leq ~ & \frac{1}{2}\Big(\|w_t(T)\|_{L^2(\Omega)}^2+\int_\Omega a^{ij}(T)w_{x_i}(T)w_{x_j}(T)\dd x\Big) \nonumber\\
					\leq ~ & e^{CT \varepsilon}\int_0^T\|z_t\|_{L^2(\Omega\setminus\omega)}^2\dd t+\frac{e^{CT \varepsilon}}{2}\left\|-z_t(T)+y^1\right\|_{L^2(\Omega)}^2\nonumber\\
                        &+\frac{e^{CT \varepsilon}}{2}\int_\Omega a^{ij}(0)(z(T)+y^0)_{x_i}(z(T)+y^0)_{x_j}\dd x \nonumber\\
					\leq ~ & e^{CT \varepsilon}\int_0^T\|z_t\|_{L^2(\Omega\setminus\omega)}^2\dd t+\frac{(1+\delta)e^{CT\varepsilon}}{2}\|z_t(T)\|_{L^2(\Omega)}^2+\frac{(1+\delta^{-1})e^{CT\varepsilon}}{2}\|y^1\|_{L^2(\Omega)}^2\nonumber\\
                        &+\frac{e^{CT\varepsilon}}{2}\int_\Omega a^{ij}(0)z_{x_i}(T)z_{x_j}(T)\dd x+\frac{(1+C_n\varepsilon)e^{CT\varepsilon}}{2}\|y^0\|_{H_0^1(\Omega)}^2 \nonumber\\
					&+e^{CT\varepsilon}\int_\Omega a^{ij}(0)z_{x_i}(T) \partial_{x_j}y^0 \, \dd x \nonumber\\
					\leq ~ & e^{CT \varepsilon}\int_0^T\|z_t\|_{L^2(\Omega\setminus\omega)}^2\dd t+\frac{(1+\delta)e^{CT \varepsilon}}{2}\Big(\|z_t(T)\|_{L^2(\Omega)}^2+\int_\Omega a^{ij}(0)z_{x_i}(T)z_{x_j}(T)\dd x\Big) \nonumber\\
					&+\frac{(1+\delta^{-1})(1+C_n\varepsilon)e^{CT \varepsilon}}{2}\Big(\|y^1\|_{L^2(\Omega)}^2+\|y^0\|_{H_0^1(\Omega)}^2\Big)\nonumber \\
					\leq ~ & (1+\delta)e^{CT \varepsilon}\left(\frac{1}{2}\Big(\|z_t(T)\|_{L^2(\Omega)}^2+\int_\Omega a^{ij}(0)z_{x_i}(T)z_{x_j}(T)\dd x\Big)+e^{\beta T\varepsilon}\int_0^T\|z_t\|_{L^2(\Omega\setminus\omega)}^2\dd t\right) \nonumber\\
					&+\frac{(1+\delta^{-1})(1+C_n\varepsilon)e^{CT \varepsilon}}{2}\Big(\|y^1\|_{L^2(\Omega)}^2+\|y^0\|_{H_0^1(\Omega)}^2\Big)\nonumber \\
					\leq ~ & \frac{\kappa(1+\delta)(1+C_n\varepsilon)e^{CT \varepsilon}}{2} \big\|(z_0,z_1)\big\|_{H_0^1\times L^2}^2\nonumber\\
                        &+\frac{(1+\delta^{-1})(1+C_n\varepsilon)e^{CT \varepsilon}}{2}\Big(\|y^1\|_{L^2(\Omega)}^2+\|y^0\|_{H_0^1(\Omega)}^2\Big),
				\end{align}		
			where $C_n$  depends only on $n$. Since $\kappa<1$, we can choose $\varepsilon_1$ such that
                $$
                \kappa \,\frac{1+C_n\varepsilon_1}{1-C_n\varepsilon_1}e^{CT \varepsilon_1}<1,
                $$
                and take $\delta$ sufficiently small  such that
                \begin{equation}\label{cond:111}\kappa(1+\delta)\frac{1+C_n\varepsilon_1}{1-C_n\varepsilon_1}e^{CT \varepsilon_1}<1.\end{equation}
                Then $\mathcal{F}$ is a mapping from the set
			$$\Bigg\{(z_0,z_1)\bigg|\big\|(z_0,z_1)\big\|_{\mathcal{H}^1\times L^2}^2\leq\frac{(1+\delta^{-1})\frac{1+C_n\varepsilon}{1-C_n\varepsilon}e^{CT \varepsilon}}{1-(1+\delta)\kappa\frac{1+C_n\varepsilon}{1-C_n\varepsilon}e^{CT \varepsilon}}\Big(\|y^1\|_{L^2}^2+\|y^0\|_{\mathcal{H}^1}^2 \Big)\Bigg\}$$
			to itself.
			By the definition of $\mathcal{F}$, we know that $\mathcal{F}$ holds
			\begin{equation}
				\mathcal{F}\big(z_0^{(1)},z_1^{(1)}\big)-\mathcal{F}\big(z_0^{(2)},z_1^{(2)}\big)= \mathcal{F}\big(z_0^{(1)}-z_0^{(2)},z_1^{(1)}-z_1^{(2)}\big)
			\end{equation}		
			with the special case that $(y^0,y^1)=(0,0)$.	
			Then due to the above \eqref{FF}, we obtain
			$$\Big\|\mathcal{F}\big(z_0^{(1)},z_1^{(1)}\big)-\mathcal{F}\big(z_0^{(2)},z_1^{(2)}\big)\Big\|_{\mathcal{H}^1\times L^2}^2\leq\kappa(1+\delta)\frac{1+C_n\varepsilon}{1-C_n\varepsilon}e^{CT \varepsilon}\Big\|\big(z_0^{(1)} -z_0^{(2)},z_1^{(1)}-z_1^{(2)}\big)\Big\|_{\mathcal{H}^1\times L^2}^2.$$
		Thus, by \eqref{cond:111}, $\mathcal{F}$ is a contraction map. Hence, by applying contraction mapping theorem, $\mathcal{F}$ has a fixed point, this conclude the proof of our main theorem.
		\end{proof}

		\begin{remark} In contrast to the Hilbert Uniqueness Method (HUM), the presence of damping in the system allows for the identification of the control function in a markedly more straightforward manner. By applying a damping effect, we are able to construct both the control function and the corresponding solutions directly, leveraging the Contraction Mapping Theorem. Nonetheless, the HUM not only ensures the existence of a control function but also yields a wealth of information regarding its properties, such as the $L^2$
			-optimality of the control, the algorithm presented herein does not furnish any guarantees concerning the optimality of the constructed control.
		\end{remark}

\subsection{Various case: controllability in $\mathcal{H}^l\times \mathcal{H}^{l-1}$}

		At the end of this section, let us consider the following linear hyperbolic system.
		\begin{equation}\label{linear hyperbolic observability system}
			\left\{
			\begin{aligned}
				& z_{tt}+b_0z_t-\sum\limits_{i,j=1}^n\big(a^{ij}z_{x_i}\big)_{x_j}+\sum\limits_{k=1}^nb_kz_{x_k}+\tilde{b}z=f, & (t,x)\in & ~ (0,T)\times\Omega, \\
				& z(t,x)=0, & (t,x)\in & ~ (0,T)\times\partial\Omega, \\
				& z(0,x)=z_0, ~ z_t(0,x)=z_1, & x\in & ~ \Omega.
			\end{aligned}
			\right.
		\end{equation}

		{\color{black}
		We begin by stating Theorem \ref{thm:well-reg}, which establishes the well-posedness of the aforementioned linear system and the regularity of its solutions.
		\begin{theorem}
			\label{thm:well-reg}
			Let $T$ be given and $f(t,x)\in C(0,T; \mathcal{H}^1)\cap C^1(0,T;L^2)$.	Assume that $a^{ij}(t,x)=a^{ji}(t,x)\in C^1(\overline{(0,T)\times \Omega})$, and there exists a small constant $\varepsilon_{\mathcal{H}^1}\ll 1$, such that
			\begin{equation}\label{asump:coe2}
				\begin{cases}
					\|a^{ij}-\delta_{ij}\|_{  C^{1}(\overline{Q^T})}<\varepsilon_{\mathcal{H}^1}, ~ i,j=1,\cdots, n, \\
					\|b_0-1\|_{  C^{1}(\overline{Q^T})}<\varepsilon_{\mathcal{H}^1}, ~ \|\tilde{b}\|_{  C^{0}(\overline{Q^T})}<\varepsilon_{\mathcal{H}^1}, \\	  \|b_k\|_{ C^{0}(\overline{Q^T})}<\varepsilon_{\mathcal{H}^1}, ~ k=1,\cdots, n, 
				\end{cases}
			\end{equation}
			where $\delta_{ij}$	 is  Kronecker delta function and $\overline{Q^T}=[0,T]\times \overline{\Omega}$. Then for any initial data $(z_0,z_1)\in \mathcal{H}^2\times \mathcal{H}^1$, system \eqref{linear hyperbolic observability system} admits a unique solution $z\in \cap_{i=0}^2 C^i(0,T; \mathcal{H}^{2-i})$. What is more, the solution $z$ satisfies
			\begin{equation}
				\|z\|_{\cap_{i=0}^2 C^i(0,T; \mathcal{H}^{2-i})} \leq C \left(\|(z_0,z_1)\|_{\mathcal{H}^2\times \mathcal{H}^1}+\|f\|_{C(0,T; \mathcal{H}^1)\cap C^1(0,T;L^2)}\right)
			\end{equation}
			where $C=C(\varepsilon_{\mathcal{H}^1},n,T,\Omega)$ depends on $\varepsilon_{\mathcal{H}^1},n,T,\Omega$.
		\end{theorem}
			

		\begin{proof}
			Let $X:=\mathcal{H}^1\times L^2, Y=\mathcal{H}^2\times \mathcal{H}^1$. Then $Y$ is dense in $X$.
			
			Denote the linear operators as follows:
			\begin{equation*}
					A(t)=\left(
				\begin{array}{cc}
					0& 1 \\
					 \sum\limits_{i,j=1}^n a^{ij}(t,\cdot)\partial^2_{x_ix_j}& 0
				\end{array}\right), ~ B(t)=\left(
				\begin{array}{cc}
					0 & 1 \\
					\tilde{b}(t)+b_{k}(t)\partial_{x_k}+ \sum\limits_{i,j=1}^n (\partial_{x_i}a^{ij}) \partial_{x_j} & b_0
				\end{array}\right),
			\end{equation*}	
		 then 		
		 for any $t\in [0,T]$, $A(t): \mathcal{D}(A(t))\subset X\rightarrow X$ with
		 \begin{equation}\label{222}
		 	\mathcal{D}(A(t))=Y,
		 \end{equation}
		 and $B(t): \mathcal{D}(B(t))\subset X\rightarrow X$ with $\mathcal{D}(B(t))=X$. Moreover, we have $B(t):Y\rightarrow Y$, and thus $\{B(t)\}_{t\in [0,T]}$ is a strongly continuous family of bounded operators on $X$. Therefore, by perturbation theory, it suffices to prove the theorem in the case that $B\equiv0$.
			
			We plan to use \cite[Theorem 5.3]{pazy83} to complete the proof.  In view of the assumptions of \cite[Theorem 5.3]{pazy83},  			
			we only need to verify that
		$\{A(t)\}_{t\in [0, T]}$ satisfies the following conditions:
		\begin{description}
			\item[(1)]  $\{A(t)\}_{t\in [0, T]}$  is a stable family of infinitesimal generators of
			$C_0$ semigroups on $X$;
			\item[(2)] $\mathcal{D}(A(t))=Y$ is independent of $t$;
			\item[(3)]  for any $v\in Y$, $A(t)v$ is continuously differentiable in $X$.
		\end{description}

			Proof of Condition (2) comes from \eqref{222}. Since $a^{ij}\in C^1(\overline{[0,T]\times \Omega})$, condition (3) is also immediately satisfied.
			
			We are left with condition (1). We choose
			$\varepsilon_{\mathcal{H}^1}\ll 1$
			to be sufficiently small such that, by recalling assumption \eqref{coecond:norm1}, we know that
		$(a^{ij})$
			is positive definite. This ensures that the operator
			$a^{ij}(t,\cdot)\partial^2_{x_ix_j}$
			is a second-order elliptic operator that behaves like the Laplacian  $\Delta$.
			
			We observe that for any
			 $t\in [0,T]$,
			$A(t)$ is closed and $(\lambda-A(t))^{-1}$ exists. And  there exist $M>0$ and $w\in \R$, such that for any $\lambda>w$,
			\begin{equation}\label{Att}
				\|(\lambda-A(t))^{-k}\|\leq M(\lambda-w)^{-k}, \quad k\in\mathbb{N}_+.
			\end{equation}
			Then, according to Hille-Yoshida Theorem, we have proved that  $\{A(t)\}_{t\in [0, T]}$ is a family of infinitesimal
			generators of $C_0$ semi-groups on $X$.
			
			Furthermore, since \eqref{Att} is valid for any $t\in [0,T]$, we know that $\{A(t)\}_{t\in [0, T]}$ is stable (see \cite[Definition 2.1]{pazy83} for definition). Hence we have obtained the proof of (1). Now we can apply \cite[Theorem 5.3]{pazy83} to complete the proof.
		\end{proof}

		\begin{corollary}
			\label{cor:well-reg}
			Let $T>0$ be given and $l\geq \frac{n}{2}+1$ be a positive integer.   Assume that $a^{ij}(t,x)=a^{ji}(t,x)\in \cap_{i=0}^l C^i(0,T;\mathcal{H}^{l-i})$, $f(t,x)\in  \cap_{i=0}^{l-1} C^{i}(0,T; \mathcal{H}^{l-i-1})$, and there exists a small constant $\varepsilon_{\mathcal{H}^l}\ll 1$, such that
				\begin{equation}\label{coecond:norm1}
				\begin{cases}
					\|a^{ij}-\delta_{ij}\|_{ \cap_{i=0}^l C^i(0,T;\mathcal{H}^{l-i})}<\varepsilon_{\mathcal{H}^l}, ~ i,j=1,\cdots, n, \\
					\|b_0-1\|_{\cap_{i=0}^l C^i(0,T;\mathcal{H}^{l-i})}<\varepsilon_{\mathcal{H}^l}, ~ \|\tilde{b}\|_{\cap_{i=0}^l C^i(0,T;\mathcal{H}^{l-i})}<\varepsilon_{\mathcal{H}^l}, \\	  \|b_k\|_{\cap_{i=0}^l C^i(0,T;\mathcal{H}^{l-i})}<\varepsilon_{\mathcal{H}^l}, ~ k=1,\cdots, n.
				\end{cases}
			\end{equation}
			Moreover, assume that $a^{ij}, b_k, \tilde{b}$ satisfy that boundary compatibility condition: for any $u\in C^0(0,T;\mathcal{H}^l)\cap C^1(0,T;\mathcal{H}^{l-1})$ and $t\in [0, T)$,
			\begin{equation}\label{assumppp}\begin{cases}
				\sum\limits_{i,j=1}^n\big(a^{ij}\big)_{x_j}u_{x_i}, ~ \sum\limits_{i,j=1}^na^{ij}u_{x_ix_j} \in \mathcal{H}^{l-2},\\ b_0u_t, ~   \sum\limits_{k=1}^nb_ku_{x_k}, ~ \tilde{b}u\in \mathcal{H}^{l-2}.
			\end{cases}
			\end{equation}
			Then for any initial data $(z_0,z_1)\in \mathcal{H}^l\times \mathcal{H}^{l-1}$, \eqref{linear hyperbolic observability system} admits a unique solution $z\in  C(0,T;\mathcal{H}^{l})\cap C^1(0,T;\mathcal{H}^{l-1}) \cap C^2(0,T;\mathcal{H}^{l-2})$ satisfying
			\begin{equation}\label{268}
				\|z\|_{\cap_{k=0}^2 C^k(0,T;\mathcal{H}^{l-k})} \leq C \left(\|(z_0,z_1)\|_{\mathcal{H}^l\times \mathcal{H}^{l-1}}+\|f\|_{\cap_{i=0}^l C^{2}(0,T; \mathcal{H}^{l-i})}\right),
			\end{equation}
			where $C=C(l,n,\varepsilon_{\mathcal{H}^l},T,\Omega)$ depends on $l,n,\varepsilon_{\mathcal{H}^l},T$ and $\Omega$.
		\end{corollary}
		\begin{proof}
			We only sketch the proof here. Let $\tilde{X}:=\mathcal{H}^{l-1}\times \mathcal{H}^{l-2}, \tilde{Y}=\mathcal{H}^l\times \mathcal{H}^{l-1}$.
			
			Noting that when $l>\frac{n}{2}+1$, according to the Sobolev embedding theorem, $a^{ij}(t,x)\in \cap_{i=0}^2 C^i(0,T;\mathcal{H}^{l-i})$ implies $a^{ij}(t,x)\in C^1(\overline{[0,T]\times \Omega})$. Additionally, if taking $\varepsilon_{\mathcal{H}^l}\leq \varepsilon_{\mathcal{H}^1}$, then assumption \eqref{asump:coe2} is clearly satisfied by \eqref{coecond:norm1}.
		    Thanks to assumption \eqref{assumppp}, we still have that for any $t\in [0,T]$, $A(t): \mathcal{D}(A(t))\subset \tilde{X}\rightarrow \tilde{X}$ with
		 \begin{equation}
		 	\mathcal{D}(A(t))=\tilde{Y}.
		 \end{equation}
		 and $B(t): \mathcal{D}(B(t))\subset \tilde{X}\rightarrow \tilde{X}$ with
			$\mathcal{D}(B(t))=\tilde{X}$.
           Thus, similar to Theorem \ref{thm:well-reg}, we can use \cite[Theorem 5.3]{pazy83} to obtain that
		there exists a unique solution $z\in  \cap_{k=0}^2C^k(0,T;\mathcal{H}^{l-k})$ for system \eqref{linear hyperbolic observability system}.
		\end{proof}
		
	}
      At the end, we provide a higher-order energy version of the observability inequality
		\begin{theorem}\label{fully nonlinear observability}
			Assume that $(T,\omega)$ satisfy Assumption \ref{defi:weak-Gamma} for some constant $\varepsilon_0$. Let $l>\frac{n}{2}+2$ be an integer. 	
			Assume \eqref{AA1}, \eqref{AA2} and \eqref{BB1} are valid.
			Then there exists  a small constant $\varepsilon_{obs}=\varepsilon_{obs}(\varepsilon_0)>0$ such that \begin{equation}\label{coecond:norm2}
				\begin{cases}
				\|a^{ij}-\delta_{ij}\|_{ \cap_{i=0}^l C^i(0,T;\mathcal{H}^{l-i})}<\varepsilon_{obs}, ~ i,j=1,\cdots, n \\
				\|b_0-1\|_{\cap_{i=0}^l C^i(0,T;\mathcal{H}^{l-i})}<\varepsilon_{obs}, \quad \|\tilde{b}\|_{\cap_{i=0}^l C^i(0,T;\mathcal{H}^{l-i})}<\varepsilon_{obs}, \\	  \|b_k\|_{\cap_{i=0}^l C^i(0,T;\mathcal{H}^{l-i})}<\varepsilon_{obs}, ~ k=1,\cdots, n,
				\end{cases}
			\end{equation}
			then for any initial data $(z_0,z_1)\in \mathcal{H}^1\times L^2$ and $f\in L^2((0,T)\times \Omega)$, the corresponding solution $z$ of system \eqref{linear hyperbolic observability system}
			holds
			\begin{equation}\label{linear hyperbolic observability inequality}
				\|z_1\|_{L^2(\Omega)}^2+\|z_0\|_{\mathcal{H}^1}^2\leq D_1\left(\int_0^T\int_\omega|z_t|^2\dd x\dd t+\int_0^T \big\| f \big\|_{L^2}^2 \dd t\right),
			\end{equation}
			where $D_1=D_1(T,\omega, \Omega, n,\varepsilon_{obs})>0$ depends on $T,\omega, \Omega, n$ and $\varepsilon_{obs}$.
\end{theorem}
		Utilizing the Duhamel principle, equation \eqref{linear hyperbolic observability inequality} is directly derived from the homogeneous observability inequality of type $(i.e., f \equiv 0)$. Theorem \ref{fully nonlinear observability} possesses its own integrity. We elect to postpone the proof to the appendix.
		
		\section{Proof of Theorem \ref{thm:semilinear}}\label{sec:3}

		This section is devoted to proving Theorem \ref{thm:semilinear}.
		The proof relies on the Galerkin method and a fixed-point Lemma \ref{lemma from Evans-example}, which are introduced in the proof of Theorem \ref{thm: the example thm2}.
		
		Let $\{\varphi_j\}_{j=1}^\infty$ be the eigenfunctions of the Laplacian $-\Delta$ on $\Omega$ corresponding to the eigenvalues $\{\lambda_j^2\}_{j=1}^\infty$ such that
		\begin{equation}\label{elliptic1}
			\left\{
			\begin{aligned}
				&-\Delta\varphi_j=\lambda_j^2\varphi_j, & x\in & ~ \Omega, \\
				& \varphi_j=0, & x\in & ~ \partial\Omega,
			\end{aligned}
			\right.
		\end{equation}
		Due to the classical elliptic operator theory, $\lambda_j$ satisfies
		\begin{equation}\label{lambdajj}
		0 < \lambda_1^2 \leq \lambda_2^2 \leq \cdots < +\infty,
		\end{equation}
		and $\lambda_j^2 \rightarrow \infty$ as $j$ tends to infinity.
		Furthermore, $\{\varphi_j\}_{j=1}^\infty$ is the standard orthogonal basis of $L^2(\Omega)$.
		
		Let $(y_N^0, y_N^1)$ be the asymptotic initial conditions defined by
		\begin{equation}
			y_N^0 = \sum\limits_{j=1}^N (y^0, \varphi_j)_{L^2} \varphi_j, \quad y_N^1 = \sum\limits_{j=1}^N (y^1, \varphi_j)_{L^2} \varphi_j,
		\end{equation}
		then, recalling the initial conditions for the data $(y^0, y^1) \in \mathcal{H}^2 \times \mathcal{H}^1$, we get		
		\begin{equation}\label{1111}
			\begin{split}
				y_N^0 \rightarrow y^0, \quad \text{in} \quad \mathcal{H}^2, \qquad
				y_N^1 \rightarrow y^1, \quad \text{in} \quad \mathcal{H}^1.
			\end{split}
		\end{equation}

		Let $(y_N,v_N)$ be the finite approximation solution defined by
		\begin{equation}\label{yvN-1}
			y_N=\sum\limits_{j=1}^Ng_{jN}(t)\varphi_j, \quad v_N=\sum\limits_{j=1}^Nh_{jN}(t)\varphi_j,
		\end{equation}
		where  the coefficients $(g_{jN},h_{jN})$ solve the finite-dimensional system
		\begin{equation}\label{finite dimision yN}
			\left\{
			\begin{aligned}
				& \Big(\partial_t^2y_N-\Delta y_N+f\big(\partial_ty_N\big)-\chi\cdot\partial_tv_N,\varphi_i\Big)_{L^2}=0, \quad i=1,2,\cdots,N, \\
				& t=0: g_{jN}=(y^0,\varphi_j)_{L^2}, ~ g'_{jN}=(y^1,\varphi_j)_{L^2}, ~ j=1,2,\cdots,N,\\
			\end{aligned}
			\right.
		\end{equation}
		and backward system
		\begin{equation}\label{finite dimision vN}
			\left\{
			\begin{aligned}
				& \Big(\partial_t^2v_N-\Delta v_N-L\partial_tv_N,\varphi_i\Big)_{L^2}=0, \quad i=1,2,\cdots,N, \\
				& t=T: h_{jN}=a_j, ~ h'_{jN}=b_j, ~ j=1,2,\cdots,N. \\
			\end{aligned}
			\right.
		\end{equation}
		Contrasting with the linear damped wave equation case, the term $f(u_t)$ requires a higher-order energy estimate, rather than the one-order energy estimate.
		We need to define  two energy functionals as follows: for any $u(t)\in C^0(0,T; \mathcal{H}^2)\cap C^1(0,T; \mathcal{H}^1) $,
		\begin{equation}			
			E_1(u(t)):=\int_\Omega\big(|u_t(t)|^2+|\nabla u(t)|^2\big)\dd x,\qquad E_2(u(t)):=\int_\Omega\big(|\nabla u_t(t)|^2+|\Delta u(t)|^2\big)\dd x.
		\end{equation}
		
		By \eqref{yvN-1} and the fact that the $\varphi_j$ in \eqref{elliptic1} are orthogonal, we know the norm equivalence relations are given by
	
		\begin{equation}\label{energydecay}
			\begin{split}
				E_1(y_N(0))	=\sum_{j=1}^N \left(|\lambda_j g_{jN}(0)|^2 +| g'_{jN}(0)|^2\right)\sim \|(y_N^0,y_N^1)\|^2_{\mathcal{H}^1\times L^2}\leq 	\|(y^0,y^1)\|^2_{\mathcal{H}^2\times 		 \mathcal{H}^1}, \\
				E_2(y_N(0))	=\sum_{j=1}^N \left(|\lambda_j^2 g_{jN}(0)|^2 +|\lambda_j g'_{jN}(0)|^2\right)\sim \|(y_N^0,y_N^1)\|^2_{\mathcal{H}^2\times \mathcal{H}^1}\leq 	\|(y^0,y^1)\|^2_{\mathcal{H}^2\times \mathcal{H}^1}.
			\end{split}		
		\end{equation}
		We can then define a continuous map $\mathcal{F}_N: \mathbb{R}^{2N}\to \mathbb{R}^{2N}$ by
		\begin{equation}\label{a map in Galerkin2}
			\mathcal{F}_N:\big(a_1,\cdots,a_N,b_1,\cdots,b_N\big)^\top\mapsto \Lambda_N\big(g_{1N}(T),\cdots,g_{NN}(T),g'_{1N}(T),\cdots,g'_{NN}(T)\big)^\top,
		\end{equation}
		where $\Lambda_N=\text{diag}(\lambda_1^2,\cdots,\lambda_N^2,\lambda_1,\cdots,\lambda_N)  \in \R^{2N\times 2N}$  and $|\Lambda_N \big(a_1,\cdots,a_N,b_1,\cdots,b_N\big)^\top|_{\ell_2}<\infty$. Then we state the following lemma, which plays a key role in our proof of Theorem \ref{thm:semilinear}.
		\begin{lemma}\label{lem:Galerkin}
			Under the condition of Theorem \ref{thm:semilinear}, let $\mathcal{F}_N$ be defined by \eqref{a map in Galerkin2}. Then there exists a constant $R$ independent of $N$ and  $x_{0}^N=\big(a_1,\cdots,a_N,b_1,\cdots,b_N\big)\in \mathbb{R}^{2N}$  such that $|\Lambda_Nx_0^N|_{\ell_2}\leq R$ and
			\begin{equation}
				\mathcal{F}_N(x_0^N)=0.
			\end{equation}
		\end{lemma}

		\begin{proof}
			Multiplying equation \eqref{finite dimision yN} by $(\lambda_i^2+\delta^{-1})h'_{iN}(t)$, equation \eqref{finite dimision vN} by $(\lambda_i^2+\delta^{-1})g'_{iN}(t)$, adding them together, and sum this over $i=1,\cdots, N$, we get
			\begin{equation}
				\begin{aligned}
					& \frac{\dd}{\dd t}\int_\Omega\Big(\frac{1}{\delta}\partial_ty_N\partial_tv_N+\nabla\partial_ty_N\cdot\nabla\partial_tv_N\Big)\dd x+\frac{\dd}{\dd t}\int_\Omega\Big(\frac{1}{\delta}\nabla y_N\cdot\nabla v_N+\Delta y_N\Delta v_N\Big)\dd x \\
					&+\int_\Omega\Big(f'\big(\partial_ty_N\big)\nabla\partial_ty_N\cdot \nabla\partial_tv_N-L\nabla\partial_ty_N\cdot\nabla\partial_tv_N\Big)\dd x \\
					&+\frac{1}{\delta}\int_\Omega\big(f\big(\partial_ty_N\big)\partial_tv_N-L\partial_ty_N\partial_tv_N\big)\dd x \\
					= & ~ \frac{1}{\delta}\int_\Omega\chi|\partial_tv_N|^2\dd x-\int_\Omega\chi\partial_tv_N\Delta\partial_tv_N\dd x \\
					= & ~ \int_\Omega\chi\big|\nabla\partial_tv_N\big|^2\dd x+\int_\Omega\Big( \frac{\chi}{\delta}-\frac{\Delta\chi}{2}\Big)|\partial_tv_N|^2\dd x,
				\end{aligned}
			\end{equation}
			where the constant $\delta>0$ will be determined later.
			
			Setting  $\vec{l}=\big(a_1,\cdots,a_N,b_1,\cdots,b_N\big)^\top$, $B_N=\Lambda_N$ and  \begin{equation}\label{ABNN}
				A_N=\text{diag}\left(1+\frac{1}{\delta\lambda_1^2},\cdots,1+\frac{1}{\delta\lambda_N^2}, 1+\frac{1}{\delta},\cdots,1+\frac{1}{\delta}\right)\in \R^{2N\times 2N}.\end{equation}
			Then, integrating the above equation with respect to  $t\in[0,T]$, we get
			\begin{align}\label{3111}
					& (B_N\vec{l},A_NF_N(\vec{l}))_{\ell_2} \nonumber \\
					=~& \frac{1}{\delta}\int_\Omega\Big(\partial_ty_N(T)\partial_tv_N(T)+\nabla y_N(T)\cdot\nabla v_N(T)\Big)\dd x \nonumber \\
					&+ \int_\Omega\Big(\nabla\partial_ty_N(T)\cdot\nabla\partial_tv_N(T)+\Delta y_N(T)\Delta v_N(T)\Big)\dd x \nonumber \\
					=~ & \frac{1}{\delta}\int_\Omega\Big(\partial_ty_N(0)\partial_tv_N(0)+\nabla y_N(0)\cdot\nabla v_N(0)\Big)\dd x \nonumber \\
					&+ \int_\Omega\Big(\nabla\partial_ty_N(0)\cdot\nabla\partial_tv_N(0)+\Delta y_N(0)\Delta v_N(0)\Big)\dd x \nonumber \\
					&+ \frac{1}{\delta}\int_0^T\int_\Omega\big(L\partial_ty_N-f(\partial_ty_N)\big)\partial_tv_N\dd x\dd t+\int_0^T\int_\Omega\big(L-f'(\partial_ty_N)\big)\nabla\partial_ty_N\cdot\nabla \partial_tv_N\dd x\dd t \nonumber \\
					&+ \int_0^T\int_\Omega\chi\big|\nabla\partial_tv_N\big|^2\dd x\dd t+\int_0^T\int_\Omega\Big(\frac{\chi}{\delta}-\frac{\Delta\chi}{2}\Big)|\partial_tv_N|^2\dd x\dd t.
				\end{align}

			Now our goal is to prove that there exists a $R>0$ independent of $N$, such that if for any $\vec{l}\in \R^{2N}$ holds $|B_N\vec{l}|_{\ell_2} \geq R$, then
			\begin{equation}\label{FNl}
				(B_N\vec{l},A_NF_N(\vec{l}))_{\ell_2} \geq 0.
			\end{equation}
			
			Therefore, if equation \eqref{FNl} is valid, combining with \eqref{ABNN} where $A_N,B_N$ are positive define, then we can apply Lemma \ref{lemma from Evans-example} to establish the existence of  $x_0^N$, such that $F_N(x_0^N)=0$, thereby completing the proof.
			
			To get the lower bound of the right hand side of \eqref{3111}, we denote 	\begin{equation}
				\begin{split}							
					J_1= & ~ \frac{1}{\delta}\int_\Omega\Big(\partial_ty_N(0)\partial_tv_N(0)+\nabla y_N(0)\cdot\nabla v_N(0)\Big)\dd x \\
					&+ \int_\Omega\Big(\nabla\partial_ty_N(0)\cdot\nabla\partial_tv_N(0)+\Delta y_N(0)\Delta v_N(0)\Big)\dd x,\\
					J_2=&~\frac{1}{\delta}\int_0^T\int_\Omega\big(L\partial_ty_N-f(\partial_ty_N)\big)\partial_tv_N\dd x\dd t\\ &+\int_0^T\int_\Omega\big(L-f'(\partial_ty_N)\big)\nabla\partial_ty_N\cdot\nabla \partial_tv_N\dd x\dd t.				
				\end{split}				
			\end{equation}
			Since $f$ is Lipschitz continuous,  its derivative $f'$ exists almost everywhere. From conditions \eqref{Lipschitz} and \eqref{tilde L condition}, we have $\tilde{L}\leq f'\leq L$, hence,  Young's inequality yields
			
			\begin{equation}\label{integration terms}
				\begin{aligned}
					|J_1| \leq & ~ \frac{\delta_1}{2\delta(L-\tilde{L})}\int_0^T\int_\Omega\big|L\partial_ty_N-f(\partial_ty_N)\big|^2\dd x\dd t+\frac{L-\tilde{L}}{2\delta\delta_1}\int_0^T\int_\Omega |\partial_tv_N|^2\dd x\dd t \\
					&+\frac{\delta_2(L-\tilde{L})}{2}\int_0^T\int_\Omega\big|\nabla\partial_ty_N\big|^2\dd x\dd t+\frac{L-\tilde{L}}{2\delta_2}\int_0^T\int_\Omega\big|\nabla\partial_tv_N\big|^2\dd x\dd t \\
					\leq & ~ \frac{\delta_1(L-\tilde{L})}{2\delta}\int_0^T\int_\Omega|\partial_ty_N|^2\dd x\dd t+\frac{L-\tilde{L}}{2\delta\delta_1}\int_0^T\int_\Omega|\partial_tv_N|^2\dd x\dd t \\
					&+\frac{\delta_2(L-\tilde{L})}{2}\int_0^T\int_\Omega\big|\nabla\partial_ty_N\big|^2\dd x\dd t+\frac{L-\tilde{L}}{2\delta_2}\int_0^T\int_\Omega\big|\nabla\partial_tv_N\big|^2\dd x\dd t,
				\end{aligned}
			\end{equation}
			and
			\begin{equation}
				|J_2|\leq\frac{\delta_1L}{\delta(L-\tilde{L})}E_0(y_N(0))+\frac{L-\tilde{L}}{4\delta\delta_1L}E_0(v_N(0))+\frac{\delta_2L}{L-\tilde{L}}E_1(y_N(0))+ \frac{L-\tilde{L}}{4\delta_2L}E_1(v_N(0)),
			\end{equation}
			where $\delta_1>0$ and $\delta_2>0$ are  constants to be determined later.
			
			Next, to control the right-hand side of \eqref{integration terms}, we make the standard energy estimate of  $y_N$ and $v_N$.  Multiplying equation \eqref{finite dimision yN} by $g_{iNt}(t)$, adding them together, and summing this over $i=1,\cdots, N$, we obtain the energy estimate of $y_N$
			\begin{equation}\label{energy estimate 0}
				\frac{1}{2}\frac{\dd}{\dd t}E_1(y_N(t))+\int_\Omega f(\partial_ty_N)\partial_ty_N\dd x=\int_\Omega\chi\partial_ty_N\partial_tv_N\dd x.
			\end{equation}
			
			Integrating \eqref{energy estimate 0} from $0$ to $T$ with respect to $t$, we get
			\begin{equation}
				\begin{aligned}
					& \tilde{L}\int_0^T\int_\Omega|\partial_ty_N|^2\dd x\dd t \\
					\leq & ~ \int_0^T\int_\Omega f(\partial_ty_N)\partial_ty_N\dd x\dd t \\
					= & ~ \frac{1}{2}E_1(y_N(0))-\frac{1}{2}E_1(y_N(T))+\int_0^T\int_\Omega\chi\partial_ty_N\partial_tv_N\dd x\dd t \\
					\leq & ~ \frac{1}{2}E_1(y_N(0))+\frac{\tilde{L}}{2}\int_0^T\int_\Omega|\partial_ty_N|^2\dd x\dd t+\frac{1}{2\tilde{L}}\int_0^T\int_\Omega\chi^2|\partial_tv_N|^2\dd x\dd t.
				\end{aligned}
			\end{equation}
			We then obtain
			\begin{equation}\label{estimate for yN1}
				\int_0^T\int_\Omega|\partial_ty_N|^2\dd x\dd t\leq\frac{1}{\tilde{L}}E_1(y_N(0))+\frac{1}{\tilde{L}^2}\int_0^T\int_\Omega\chi^2|\partial_tv_N|^2\dd x\dd t.
			\end{equation}
			
			Multiplying equation  \eqref{finite dimision yN} by $\lambda_i^2g'_{iN}(t)$, adding them together, and summing this over $i=1,\cdots, N$, we get
			\begin{equation}\label{energy estimate 1}
				\frac{1}{2}\frac{\dd}{\dd t}E_2(y_N(t))+\int_\Omega f'(\partial_ty_N)|\nabla\partial_t y_N|^2\dd x=\int_\Omega\nabla \partial_ty_N\cdot\big(\chi\nabla \partial_tv_N+\partial_tv_N\nabla\chi\big)\dd x.
			\end{equation}
			
			Integrating \eqref{energy estimate 1} from $0$ to $T$ with respect to $t$, we   obtain
			\begin{equation}\label{estimate for yN2}
				\begin{aligned}
					& \int_0^T\int_\Omega\big|\nabla\partial_ty_N\big|^2\dd x\dd t \\
					\leq & ~ \frac{1}{\tilde{L}}E_2(y_N(0))+\frac{1}{\tilde{L}^2}\int_0^T\int_\Omega\Big|\chi\nabla\partial_tv_N+\partial_tv_N\nabla\chi\Big|^2\dd x\dd t \\
					\leq & ~ \frac{1}{\tilde{L}}E_2(y_N(0))+\frac{2}{\tilde{L}^2}\int_0^T\int_\Omega\Big(\chi^2\big|\nabla\partial_tv_N\big|^2+|\nabla\chi|^2|\partial_tv_N|^2\Big)\dd x\dd t.
				\end{aligned}
			\end{equation}
			
			Similarly, multiplying the equation \eqref{finite dimision vN} by $h'_{iN}(t)$ and $\lambda_i h'_{iN}(t)$ respectively, and following a similar process to the estimates above for $y_N$, we can obtain:
			\begin{equation}\label{estimate for vN1}
				\int_0^T\int_\Omega|\partial_tv_N|^2\dd x\dd t=\frac{1}{2L}\left(E_1(v_N(T))-E_1(v_N(0))\right)
			\end{equation}			
			and
			\begin{equation}\label{estimate for vN2}
				\int_0^T\int_\Omega\big|\nabla\partial_tv_N\big|^2\dd x\dd t=\frac{1}{2L}\left(E_2(v_N(T))-E_2(v_N(0))\right).
			\end{equation}
			These two equations show that $E_i(v_N)$ for $i =1, 2$ is non-increasing.
			
			Combining  \eqref{integration terms} with \eqref{estimate for yN1}--\eqref{estimate for vN2}, we obtain
			\begin{equation}			
				\begin{aligned}
					& ~ |J_1|+|J_2| \\
					\leq & ~ \frac{L-\tilde{L}}{4\delta\delta_1L}E_0(v_N(T))+\frac{\delta_1(L-\tilde{L})}{2\delta\tilde{L}^2}\int_0^T\int_\Omega\chi^2|\partial_tv_N|^2\dd x\dd t \\
					&+\frac{L-\tilde{L}}{4\delta_2L}E_2(v_N(T))+\frac{\delta_2(L-\tilde{L})}{\tilde{L}^2}\int_0^T\int_\Omega\Big(\chi^2\big|\nabla\partial_tv_N\big|^2+|\nabla\chi|^2|\partial_t v_N|^2\Big)\dd x\dd t \\
					&+\left(\frac{\delta_1(L-\tilde{L})}{2\delta\tilde{L}}+\frac{\delta_1L}{\delta(L-\tilde{L})}\right)E_1(y_N(0))+\left(\frac{\delta_2(L-\tilde{L})}{2\tilde{L}}+\frac{\delta_2L} {L-\tilde{L}}\right)E_2(y_N(0)).
				\end{aligned}
			\end{equation}		
			Hence  by \eqref{3111}, this implies that
			\begin{align}\label{327}
					& \frac{1}{\delta}\int_\Omega\Big(\partial_ty_N(T)\partial_tv_N(T)+\nabla y_N(T)\cdot\nabla v_N(T)\Big)\dd x \nonumber\\
					&+ \int_\Omega\Big(\nabla\partial_ty_N(T)\cdot\nabla\partial_tv_N(T)+\Delta y_N(T)\Delta v_N(T)\Big)\dd x \nonumber\\
					\geq & ~ \int_0^T\int_\Omega\Big(\chi-\frac{\delta_2(L-\tilde{L})}{\tilde{L}^2}\chi^2\Big)\big|\nabla\partial_tv_N\big|^2\dd x\dd t+\int_0^T\int_\Omega\frac{1}{\delta}\Big(\chi- \frac{\delta_1(L-\tilde{L})}{2\tilde{L}^2}\chi^2\Big)|\partial_tv_N|^2\dd x\dd t \nonumber\\
					&-\int_0^T\int_\Omega\Big(\frac{\Delta\chi}{2}+\frac{\delta_2(L-\tilde{L})}{\tilde{L}^2}|\nabla\chi|^2\Big)|\partial_tv_N|^2\dd x\dd t-\frac{L-\tilde{L}}{4\delta\delta_1L}E_1(v_N(T)) -\frac{L-\tilde{L}}{4\delta_2L}E_2(v_N(T)) \nonumber\\
					&-\left(\frac{\delta_1(L-\tilde{L})}{2\delta\tilde{L}}+\frac{\delta_1L}{\delta(L-\tilde{L})}\right)E_1(y_N(0))-\left(\frac{\delta_2(L-\tilde{L})}{2\tilde{L}}+\frac{\delta_2L} {L-\tilde{L}}\right)E_2(y_N(0)).
				\end{align}
			
			Thus, it is now time to estimate each term on the right-hand side of \eqref{327}. We aim to obtain the  lower bounds for the first two positive terms and the upper bounds for the last five negative terms.
			
			First, let us take			
			\begin{equation}
				\delta_1=\tilde{L}\sqrt{\frac{D}{L}},\quad \delta_2=\tilde{L}\sqrt{\frac{D}{2L}}.
                \end{equation}
			Recalling the assumption \eqref{condition of D L and tilde L} on $L,D$ and $\tilde{L}$,	we find that
			\begin{equation}
				0<\frac{\delta_2(L-\tilde{L})}{\tilde{L}^2}\leq \frac{(L-\tilde{L})}{\tilde{L}}<1, \; 0<\frac{\delta_1 (L-\tilde{L})}{2\tilde{L}^2}\leq\frac{(L-\tilde{L})}{2\tilde{L}}<\frac{1}{2}.
			\end{equation}
			Together with the definition of $\chi$, this implies that
			\begin{equation}
            \chi-\frac{\delta_2(L-\tilde{L})}{\tilde{L}^2}\chi^2\geq \Big(1-\frac{(L-\tilde{L})}{\tilde{L}}\Big)\chi, \quad \chi-\frac{\delta_1 (L-\tilde{L})}{2\tilde{L}^2}\chi^2\geq \Big(1-\frac{(L-\tilde{L})}{2\tilde{L}} \Big)\chi.
			\end{equation}
			Thus, we obtain the lower bounds for the first two terms.
			
			Next, using the estimate from \eqref{estimate for vN1}, we find
			\begin{equation}
				\begin{aligned}
					& \int_0^T\int_\Omega\Big(\frac{\Delta\chi}{2}+\frac{\delta_2(L-\tilde{L})}{\tilde{L}^2}|\nabla\chi|^2\Big)|\partial_tv_N|^2\dd x\dd t \\
					\leq & ~ \frac{1}{2L}\Big(\frac{\|\Delta\chi\|_{L^\infty}}{2}+\frac{\delta_2(L-\tilde{L})}{\tilde{L}^2}\|\nabla\chi\|_{L^\infty}^2\Big)E_1(v_N(T)).
				\end{aligned}
			\end{equation}
			
			Furthermore, under the same assumptions as in Theorem \ref{thm:semilinear}, it appears that the assumptions of Lemma \ref{lem:the example prop} are also satisfied. Therefore, we have the following two observability inequalities, \eqref{observability inequality semilinear1} and \eqref{observability inequality semilinear2}, which lead to
			\begin{align}\label{ready to use the lemma}
				& ~ \frac{1}{\delta}\int_\Omega\Big(\partial_ty_N(T)\partial_tv_N(T)+\nabla y_N(T)\cdot\nabla v_N(T)\Big)\dd x \nonumber \\
				& +\int_\Omega\Big(\nabla\partial_ty_N(T)\cdot\nabla\partial_tv_N(T)+\Delta y_N(T)\Delta v_N(T)\Big)\dd x \nonumber \\
				\geq & ~ \Big(1-\frac{(L-\tilde{L})}{\tilde{L}}\sqrt{\frac{D}{2L}}\Big)\int_0^T\int_\omega\big|\nabla\partial_tv_N\big|^2\dd x\dd t-\frac{L-\tilde{L}}{2\tilde{L}\sqrt{2DL}}E_1(v_N (T)) \nonumber \\
				& +\frac{1}{\delta}\Big(1-\frac{(L-\tilde{L})}{2\tilde{L}}\sqrt{\frac{D}{L}}\Big)\int_0^T\int_\omega|\partial_tv_N|^2\dd x\dd t-\frac{L-\tilde{L}}{4\delta\tilde{L}\sqrt{DL}}E_1(v_N(T)) \nonumber \\
				& -\Big(\frac{\|\Delta\chi\|_{L^\infty}}{4L}+\frac{(L-\tilde{L})}{2L\tilde{L}}\sqrt{\frac{D}{2L}}\|\nabla\chi\|_{L^\infty}^2\Big)E_1(v_N(T)) \\
				& -\frac{\tilde{L}}{\delta}\sqrt{\frac{D}{L}}\Big(\frac{L-\tilde{L}}{2\tilde{L}}+\frac{L}{L-\tilde{L}}\Big)E_1(y_N(0))-\tilde{L}\sqrt{\frac{D}{2L}}\Big(\frac{L-\tilde{L}}{2 \tilde{L}}+\frac{L}{L-\tilde{L}}\Big)E_2(y_N(0)) \nonumber \\
				\geq & ~ \left(\frac{1}{2D}-\frac{L-\tilde{L}}{\tilde{L}\sqrt{2DL}}\right)E_2(v_N(T))+\frac{1}{\delta}\left(\frac{1}{2D}-\frac{L-\tilde{L}}{2\tilde{L}\sqrt{DL}}\right)E_1(v_N (T)) \nonumber \\
				& -\Big(\frac{\|\Delta\chi\|_{L^\infty}}{4L}+\frac{(L-\tilde{L})}{2L\tilde{L}}\sqrt{\frac{D}{2L}}\|\nabla\chi\|_{L^\infty}^2\Big)E_1(v_N(T)) \nonumber \\
				& -\frac{\tilde{L}}{\delta}\sqrt{\frac{D}{L}}\Big(\frac{L-\tilde{L}}{2\tilde{L}}+\frac{L}{L-\tilde{L}}\Big)E_1(y_N(0))-\tilde{L}\sqrt{\frac{D}{2L}}\Big(\frac{L-\tilde{L}}{2 \tilde{L}}+\frac{L}{L-\tilde{L}}\Big)E_2(y_N(0)). \nonumber
			\end{align}
			
			{\color{black} Let
				\begin{equation}
					c_1:=\left(\frac{1}{2D}-\frac{L-\tilde{L}}{\tilde{L}\sqrt{2DL}}\right), \; c_2:=\frac{1}{2\delta}\left(\frac{1}{2D}-\frac{L-\tilde{L}}{2\tilde{L}\sqrt{DL}}\right).
				\end{equation}
				Recalling assumption \eqref{condition of D L and tilde L}, we can verify that $c_1 > 0$, $c_2 > 0$. It is possible to choose $\delta$ small enough such that
				\begin{equation}\label{delta-similinear}		\delta\Big(\frac{\|\Delta\chi\|_{L^\infty}}{4L}+\frac{(L-\tilde{L})}{2L\tilde{L}}\sqrt{\frac{D}{2L}}\|\nabla\chi\|_{L^\infty}^2\Big)\leq\frac{c_1}{2}.
				\end{equation}

				Subsequently, if
				\begin{align}\label{112}
						&c_1 E_2(v_N(T))+c_2 E_1(v_N (T)) \nonumber\\
						\geq ~& \frac{\tilde{L}}{\delta}\sqrt{\frac{D}{L}}\Big(\frac{L-\tilde{L}}{2\tilde{L}}+\frac{L}{L-\tilde{L}}\Big)E_1(y_N(0))+\tilde{L}\sqrt{\frac{D}{2L}}\Big(\frac{L-\tilde{L}}{2 \tilde{L}}+\frac{L}{L-\tilde{L}}\Big)E_2(y_N(0))   \nonumber\\
						=:~&d_1E_1(y_N(0))+d_2E_2(y_N(0))
				\end{align}
				is valid, we can derive
				\begin{equation}\label{innerproduct}
					\begin{aligned}
						(B_N\vec{l},A_NF_N(x_0))_{\tilde{l}_2}=~	& \frac{1}{\delta}\int_\Omega\Big(\partial_ty_N(T)\partial_tv_N(T)+\nabla y_N(T)\cdot\nabla v_N(T)\Big)\dd x \\
						 & + \int_\Omega\Big(\nabla\partial_ty_N(T)\cdot\nabla\partial_tv_N(T)+\Delta y_N(T)\Delta v_N(T)\Big)\dd x\geq 0.
					\end{aligned}
				\end{equation}

				Since $\{\varphi_j\}_{j=1}^\infty$ is the standard orthogonal basis of $L^2(\Omega)$ satisfying \eqref{elliptic1}, we have
				\begin{align}
			&c_1 E_2(v_N(T))+c_2 E_1(v_N (T))\nonumber\\
                =~&\sum_{i=1}^N \left((c_1 \lambda_i^4+c_2\lambda_i^2)|a_i|^2+(c_1\lambda_i^2+c_2)|b_i|^2\right)\nonumber \\						
			\geq~& \min\{c_1,c_2\}\sum_{i=1}^N \left( \lambda_i^4|a_i|^2+\lambda_i^2|b_i|^2\right)=\min\{c_1,c_2\} |\Lambda_N \vec{l}|_{\ell_2}^2.	
				\end{align}
				Recalling the initial energy upper bound condition \eqref{energydecay}, we then have
				\begin{equation}\label{111}
					d_1E_1(y_N(0))+d_2E_2(y_N(0))\leq \max\{d_1,d_2\} (E_1(y(0))+E_2(y(0))).
				\end{equation}
				Hence, we define
				\begin{equation}
					R:=\sqrt{\frac{\max\{d_1,d_2\} (E_1(y(0))+E_2(y(0)))}{\min\{c_1,c_2\}}},
				\end{equation}
				and therefore if
				$
				|\Lambda_N \vec{l}|_{\ell_2}\geq R,
				$
				then \eqref{FNl} holds. Moreover, $R$ is independent of $N$.		 }
		\end{proof}
		
		Now we are in a position to prove Theorem \ref{thm:semilinear}.
		
		\begin{proof} For any $N > 0$, by Lemma \ref{lem:Galerkin}, there exists a $\vec{l}_N=(a_1,\cdots,a_N,b_1,\cdots,b_N)$ satisfying
			$\mathcal{F}_N(\vec{l}_N)=0$. Thanks to the  definition of $\mathcal{F}_N$, this indeed implies that $(y_N(T),y_{Nt}(T))=(0,0)$. Then we get
			\begin{equation}
				\begin{aligned}
					&\frac{1}{\delta}\int_\Omega\Big(\partial_ty_N(T)\partial_tv_N(T)+\nabla y_N(T)\cdot\nabla v_N(T)\Big)\dd x \\
					+ & ~ \int_\Omega\Big(\nabla\partial_ty_N(T)\cdot\nabla\partial_tv_N(T)+\Delta y_N(T)\Delta v_N(T)\Big)\dd x=0.
				\end{aligned}
			\end{equation}

			Thus, referring to \eqref{ready to use the lemma}, we find that
			\begin{align}\label{estimate of similinear approximate control}
				& \delta\int_0^T\int_\omega\big|\nabla\partial_tv_N\big|^2\dd x\dd t+\frac{1}{2}\int_0^T\int_\omega|\partial_tv_N|^2\dd x\dd t \nonumber \\
				\leq ~ & C^*\left(E_0(y_N(0))+\delta E_1(y_N(0))\right) \\
				\leq ~ & C^*\left(E_0(y(0))+\delta E_1(y(0))\right), \nonumber
			\end{align}
			where the constant is given by
			\begin{equation}\label{C*}
				\begin{aligned}
					C^*= & ~ \sqrt{\frac{D}{L}}\Big(\frac{L-\tilde{L}}{2}+\frac{L\tilde{L}}{L-\tilde{L}}\Big)\Big(1-\frac{L-\tilde{L}}{\tilde{L}}\sqrt{\frac{2D}{L}}\Big)^{-1} \\
					= & ~ \frac{L^2+\tilde{L}^2}{2(L-\tilde{L})}\frac{\tilde{L}\sqrt{D}}{\tilde{L}\sqrt{L}-(L-\tilde{L})\sqrt{2D}}.
				\end{aligned}
			\end{equation}
			
			It follows from \eqref{estimate of similinear approximate control} that $\{\partial_t v_N\}_{N=1}^\infty$ is bounded in $L^2(0,T;\mathcal{H}^1)$, and hence there exists a subsequence that converges weakly. Furthermore, by the energy estimates \eqref{energy estimate 0} and \eqref{energy estimate 1},
			\begin{equation}\label{y_nbounded}
				\begin{aligned}
					\{y_N\}_{N=1}^\infty & \subset L^\infty(0,T;H^2(\Omega)\cap H_0^1(\Omega)), \\
					\big\{\partial_ty_N\big\}_{N=1}^\infty & \subset L^\infty(0,T;H_0^1(\Omega)), \\
					\big\{f\big(\partial_ty_N\big)\big\}_{N=1}^\infty & \subset L^\infty(0,T;H_0^1(\Omega)),
				\end{aligned}
			\end{equation}
			are bounded sequences. From the system of $y_N$, \eqref{y_nbounded} infers $\{\partial_t^2 y_N\}_{N=1}^\infty \subset L^\infty(0,T;L^2(\Omega))$. Therefore, we can extract a subsequence of $\{y_N\}$ (still denoted as $\{y_N\}$), such that there exist $y \in L^\infty(0,T;H^2 \cap H_0^1)$, $z \in L^\infty(0,T;H_0^1)$, and
			\begin{equation}
				\begin{cases}
					y_N\stackrel{\ast}{\longrightarrow}y, & \quad \text{in} ~ L^\infty(0,T;H^2(\Omega)\cap H_0^1(\Omega)), \\
					\partial_ty_N\stackrel{\ast}{\longrightarrow}y_t, & \quad \text{in} ~ L^\infty(0,T;H_0^1(\Omega)), \\
					\partial_t^2y_N\stackrel{\ast}{\longrightarrow}y_{tt}, & \quad \text{in} ~ L^\infty(0,T;L^2(\Omega)), \\
					f\big(\partial_ty_N\big)\stackrel{\ast}{\longrightarrow}z, & \quad \text{in} ~ L^\infty(0,T;H_0^1(\Omega)).
				\end{cases}
			\end{equation}
			On the other hand, by a compactness argument (refer to \cite{S.Zheng}), we have
			\begin{align}
				\partial_ty_N\longrightarrow y_t, \quad \text{in} ~ L^2(0,T;L^2(\Omega)),
			\end{align}
			and
			\begin{equation}
				\begin{cases}
					y_N(T)\longrightarrow y(T), \quad \text{in} ~ \mathcal{H}^1, \\	\partial_ty_N(T)\longrightarrow y_t(T), \quad \text{in} ~ L^2(\Omega).
				\end{cases}
			\end{equation}
			
			Thus, given that $f$ is a Lipschitz function,
			\begin{align*}
				f\big(\partial_ty_N\big)\longrightarrow f(y_t), \quad \text{in} ~ L^2(0,T;L^2(\Omega)).
			\end{align*}
			
			By the uniqueness of the limit, we conclude that $z = f(y_t)$, implying
			\begin{align*}
				f\big(\partial_ty_N\big)\stackrel{\ast}{\longrightarrow}f(y_t), & \quad \text{in} ~ L^\infty(0,T;H_0^1(\Omega)).
			\end{align*}

			Consequently, the approximation solutions $\{y_N\}_{N=1}^\infty$ converge to a weak solution $y\in C^0((0,T];\mathcal{H}^1)\cap C^1((0,T];L^2)$ of \eqref{semilinear damped wave control} in the sense of $L^2([0,T]; L^2)$. Moreover, the weak limit $u$ of $\{\partial_t v_N\}_{N=1}^\infty$ is the desired control function. Letting $N \to \infty$ in \eqref{estimate of similinear approximate control}, we obtain \eqref{estimate of similinear control function} with $D^*=\frac{C^*}{\delta}$, where $\delta$ and $C^*$ are defined in \eqref{delta-similinear} and \eqref{C*}.
		\end{proof}
		
		\section{Proof of Theorem \ref{thm:main1}}\label{sec:4}
		
		This section is devoted to proving Theorem \ref{thm:main1}. As mentioned in the introduction part, it is sufficient to consider the null controllability problem for quasi-linear damped wave equation
		\begin{equation}\label{system:quasilinear}
			\left\{
			\begin{aligned}
				& y_{tt}+b_0y_t-{\color{black}\sum\limits_{i,j=1}^n\big(a_{ij}y_{x_i}\big)_{x_j}}+\sum\limits_{k=1}^nb_ky_{x_k}+\tilde{b}y=\chi_\omega u, & (t,x)\in & ~ (0,T)\times\Omega, \\
				& y(t,x)=0, & (t,x)\in & ~ (0,T)\times\partial\Omega, \\
				& y(0,x)=y^0, ~ y_t(0,x)=y^1, & x\in & ~ \Omega,
			\end{aligned}
			\right.
		\end{equation}
		with
		\begin{equation} \label{coe:2}
			\begin{cases}
				 a_{ij}=a_{ji}=\delta_{ij}-g^{ij}_2(t,x, y, y_t,\nabla y), \quad i,j=1,\cdots,n,\\
                 b_0=b_0(t,x, y, y_t,\nabla y)=1+\int_0^1\frac{\partial g_1}{\partial y_t}(t,x,\tau y,\tau y_t,\tau\nabla y)\dd\tau, \\
				 b_k=b_k(t,x, y, y_t,\nabla y)=\int_0^1\frac{\partial g_1}{\partial y_{x_k}}(t,x,\tau y,\tau y_t,\tau\nabla y)\dd\tau, \quad k=1,\cdots,n \\
                \tilde{b}= \tilde{b}(t,x, y, y_t,\nabla y)=\int_0^1\frac{\partial g_1}{\partial y}(t,x,\tau y,\tau y_t,\tau\nabla y)\dd\tau.
			\end{cases}
		\end{equation}
				
		\subsection{Existence of solutions of system \eqref{system:quasilinear}}

		Considering the damping term $y_t$, we aim to develop an algorithmic framework that not only establishes the existence of the solution to \eqref{system:quasilinear} but also achieves null controllability for the system described by \eqref{system:quasilinear}. We start by focusing on the linearized version of the system \eqref{system:quasilinear}. To this end, we introduce the following iterative procedure: We initialize with $(z^{(0)},v^{(0)})\equiv (0,0)$. For each $\alpha\geq 1$, given the previous iteration $(z^{(\alpha-1)},v^{(\alpha-1)})$, we define the next iteration $(z^{(\alpha)},v^{(\alpha)})$ as detailed below.
		\begin{equation}\label{eqnofzalpha}
			\left\{
			\begin{aligned}
				& z^{(\alpha)}_{tt}-b^{(\alpha)}_0z^{(\alpha)}_t-{\color{black}\sum\limits_{i,j=1}^n\big(a^{(\alpha)}_{ij}z^{(\alpha)}_{x_i}\big)_{x_j}}+\tilde{b}^{(\alpha)}z^{(\alpha)} & & \\
				& ~ +\sum\limits_{i=1}^n b_i^{(\alpha)}z^{(\alpha)}_{x_i}=0, & (t,x)\in & ~ (0,T)\times\Omega, \\
				& z^{(\alpha)}(t,x)=0, & (t,x)\in & ~ (0,T)\times\partial\Omega, \\
				& z^{(\alpha)}(T,x)=v^{(\alpha-1)}(T,x)+z^{(\alpha-1)}(T,x), & x\in & ~ \Omega, \\
				& z^{(\alpha)}_t(T,x)=v_t^{(\alpha-1)}(T,x)+z_t^{(\alpha-1)}(T,x), & x\in & ~ \Omega,
			\end{aligned}
			\right.
		\end{equation}
		and
\begin{equation}\label{eqnofvalpha}
			\left\{
			\begin{aligned}
				& v^{(\alpha)}_{tt}+b^{(\alpha)}_0v^{(\alpha)}_t-{\color{black}\sum\limits_{i,j=1}^n\big(a^{(\alpha)}_{ij}v^{(\alpha)}_{x_i}\big)_{x_j}}+\tilde{b}^{(\alpha)}v^{(\alpha)} & & \\
				& ~ +\sum\limits_{i=1}^n b^{(\alpha)}_iv^{(\alpha)}_{x_i}=-2\chi\cdot z^{(\alpha)}_t, & (t,x)\in & ~ (0,T)\times\Omega, \\
				& v^{(\alpha)}(t,x)=0, & (t,x)\in & ~ (0,T)\times\partial\Omega, \\
				& v^{(\alpha)}(0,x)=y^0, ~ v^{(\alpha)}_t(0,x)=y^1, & x\in & ~ \Omega
			\end{aligned}
			\right.
		\end{equation}
		where
		\begin{equation}\label{coeabb}
			\begin{cases}
				& a_{ij}^{(\alpha)}=a_{ij}(t,x,v^{(\alpha-1)},v_t^{(\alpha-1)},\nabla v^{(\alpha-1)}), \quad i,j=1,\cdots,n,\\
				& b^{(\alpha)}_i=b_i(t,x,v^{(\alpha-1)},v_t^{(\alpha-1)},\nabla v^{(\alpha-1)}), \quad i=0,\cdots,n, \\
				& \tilde{b}^{(\alpha)}=\tilde{b}(t,x,v^{(\alpha-1)},v_t^{(\alpha-1)},\nabla v^{(\alpha-1)}).
			\end{cases}
		\end{equation}
			We provide some remarks on the assumptions made on the coefficients.
\begin{rem}
			Thanks to the assumptions on $g_2^{ij}$ and $g_1$, we have the following  relations: as $|y|+|\nabla y|+|y_t|\rightarrow 0$, for any $(t,x)\in \overline{(0,T)\times \Omega}$,
			\begin{align} \label{OOO}
				a_{ij}=~&\delta_{ij}+O(|y|+|\nabla y|+|y_t|), \quad b_0=1+ O(|y|+|\nabla y|+|y_t|), \nonumber \\
				b_k=~& O(|y|+|\nabla y|+|y_t|), \quad \tilde{b}= O(|y|+|\nabla y|+|y_t|).
				\end{align}		
		\end{rem}


	\begin{rem}\label{rem:alphabound}
		With the help of \eqref{OOO}, the recurrence relation \eqref{coeabb} can be equivalently written as
		\begin{equation}\begin{split}
			a_{ij}^{(\alpha)}&=\delta_{ij}+a_{ij,0}v^{(\alpha-1)}_t+a_{ij,k}v^{(\alpha-1)}_{x_k}+a_{ij,n+1}v^{(\alpha-1)}, i,j,k=1,\cdots,n\\
				b_0^{(\alpha)}&=1+b_{0,0}v^{(\alpha-1)}_t+b_{0,i}v^{(\alpha-1)}_{x_i}+b_{0,n+1}v^{(\alpha-1)},   i=1,\cdots,n\\
				b_k^{(\alpha)}&=b_{k,0}v^{(\alpha-1)}_t+a_{k,i}v^{(\alpha-1)}_{x_i}+b_{k,n+1}v^{(\alpha-1)}, k,i=1,\cdots,n;\\
				 \tilde{b}^{(\alpha)}&= \tilde{b}_{0}v^{(\alpha-1)}_t+\tilde{b}_{i}v_{x_i}^{(\alpha-1)}+\tilde{b}_{n+1}v^{(\alpha-1)}, k,i=1,\cdots,n\\
			\end{split}
		\end{equation} 	
		where $a_{ij,k}, i,j=1,\cdots,n,  k=0,1,\cdots,n+1$, $b_{i,k}, i,k=0, 1,\cdots,n+1$ and $\tilde{b}_{i}, i=0,1,\cdots, n+1$ are smooth bounded functions.
		
	\end{rem}
		
    Now we state the following proposition:

   \begin{proposition}
			\label{prop:vz}
			Let the sequences $v^{(\alpha)}$ and $z^{(\alpha)}$ be the solutions of \eqref{eqnofvalpha} and \eqref{eqnofzalpha}, respectively. Under the same assumptions as in Theorem \ref{thm:main1},			
			there exists a constant $\varepsilon_{prop}>0$, such that the norm condition for initial data $(y^0,y^1)$ is satisfied:
			\begin{equation*}
				\|y^0\|_{H^s}+\|y^1\|_{H^{s-1}}\leq\varepsilon_{prop},
			\end{equation*}
			where $s\geq \max\{n+2,4\}$.
			Then for any $t\in [0,T]$, we have that as $\alpha \rightarrow \infty$,
			\begin{align*}
				(v^{(\alpha)}(t),v^{(\alpha)}_t(t),v^{(\alpha)}_{tt}(t))&\rightarrow \big(y(t),y_t(t),y_{tt}(t)\big), \quad \text{in} ~ \mathcal{H}^{s-1} \times \mathcal{H}^{s-2}\times \mathcal{H}^{s-3},\\
				(z^{(\alpha)}_t(t),z^{(\alpha)}_{tt}(t))&\rightarrow (u(t),u_t(t)), \quad \text{in} ~ \mathcal{H}^{s-2}\times \mathcal{H}^{s-3},
			\end{align*}
			where limit functions $y\in \cap_{p=0}^2C^p(0,T;\mathcal{H}^{s-p})$ and $u\in \cap_{p=0}^1C^p(0,T;\mathcal{H}^{s-1-p})$ are solutions to the quasilinear system \eqref{system:quasilinear}, subject to the terminal conditions $$(y(T),y_t(T))=(0,0).$$
		\end{proposition}
		
		Notice that the existence part in Theorem \ref{thm:main1} follows from Proposition \ref{prop:vz} directly.

	    In order to obtain the convergence properties of sequences $v^{(\alpha)}$ and $z^{(\alpha)}$, we need to estimate the error of iteration $(v^{(\alpha)},z^{(\alpha)})$, thus,
		we define
		\begin{equation}\label{VZ}
			V^{(\alpha)}=v^{(\alpha)}-v^{(\alpha-1)}, ~ Z^{(\alpha)}=z^{(\alpha)}-z^{(\alpha-1)},\end{equation}
 Then we get $V^{(1)}=v^{(1)}, Z^{(1)}=z^{(1)}$ and for each $\alpha\geq 2$, sequence $V^{(\alpha)}$ and $Z^{(\alpha)}$ solve the equations
		\begin{equation}\label{eqnofValpha}
			\left\{
			\begin{aligned}
				& V^{(\alpha)}_{tt}+b_0^{(\alpha)}V^{(\alpha)}_t-{\color{black}\sum\limits_{i,j=1}^n\big(a_{ij}^{(\alpha)}V^{(\alpha)}_{x_i}\big)_{x_j}}+\tilde{b}^{(\alpha)}V^{(\alpha)} & & \\
				& ~~~~~~~~~~~~~~~~~~~~ +\sum\limits_{i=1}^n b_i^{(\alpha)}V^{(\alpha)}_{x_i}=F^{(\alpha)}-2\chi\cdot Z_t^{(\alpha)}, & (t,x)\in & ~ (0,T)\times\Omega, \\
				& V^{(\alpha)}(t,x)=0, & (t,x)\in & ~ (0,T)\times\partial\Omega, \\
				& V^{(\alpha)}(0,x)=0, ~ V^{(\alpha)}_t(0,x)=0, & x\in & ~ \Omega
			\end{aligned}
			\right.
		\end{equation}
		and
		\begin{equation}\label{eqnofZalpha}
			\left\{
			\begin{aligned}
				& Z^{(\alpha)}_{tt}-b_0^{(\alpha)}Z^{(\alpha)}_t-{\color{black}\sum\limits_{i,j=1}^n\big(a_{ij}^{(\alpha)}Z^{(\alpha)}_{x_i}\big)_{x_j}}+\tilde{b}^{(\alpha)}Z^{(\alpha)} & & \\
				& ~~~~~~~~~~~~~~~~~~~~ +\sum\limits_{i=1}^n b_i^{(\alpha)}Z^{(\alpha)}_{x_i}=H^{(\alpha)}, & (t,x)\in & ~ (0,T)\times\Omega, \\
				& Z^{(\alpha)}(t,x)=0, & (t,x)\in & ~ (0,T)\times\partial\Omega, \\
				& Z^{(\alpha)}(T,x)=v^{(\alpha-1)}(T,x), ~ Z^{(\alpha)}_t(T,x)=v^{(\alpha-1)}_t(T,x) & x\in & ~ \Omega,
			\end{aligned}
			\right.
		\end{equation}
		where
		\begin{equation}\label{Gdefn}
			\begin{aligned}
				F^{(\alpha)}= & \big(b_0^{(\alpha)}-b_0^{(\alpha-1)}\big)v^{(\alpha-1)}_t-\sum\limits_{i,j=1}^n\big[\big(a_{ij}^{(\alpha)}-a_{ij}^{(\alpha-1)}\big)v^{(\alpha-1)}_{x_i}\big]_{x_j}  \\
				& +\big(\tilde{b}^{(\alpha)}-\tilde{b}^{(\alpha-1)}\big)v^{(\alpha-1)}+\sum\limits_{i=1}^n\big(b_i^{(\alpha)}-b_i^{(\alpha-1)}\big)v^{(\alpha-1)}_{x_i},
			\end{aligned}
		\end{equation}

		and
		\begin{equation}\label{Hdefn}
			\begin{aligned}
				H^{(\alpha)}= & -\big(b_0^{(\alpha)}-b_0^{(\alpha-1)}\big)z^{(\alpha-1)}_t-\sum\limits_{i,j=1}^n\big[\big(a_{ij}^{(\alpha)}-a_{ij}^{(\alpha-1)}\big)z^{(\alpha-1)}_{x_i}\big]_{x_j}  \\
				& +\big(\tilde{b}^{(\alpha)}-\tilde{b}^{(\alpha-1)}\big)z^{(\alpha-1)}+\sum\limits_{i=1}^n\big(b_i^{(\alpha)}-b_i^{(\alpha-1)}\big)z^{(\alpha-1)}_i.
			\end{aligned}
		\end{equation}

    The key of the proof of Proposition \ref{prop:vz} is the following estimates: 	
\begin{lemma}\label{lem:VZ}
Let $s\geq \max\{n+2,4\}$ be an integer. There exists a small $\varepsilon_{lem}>0$ and $0<\delta<1$, such that for each $\varepsilon \leq \varepsilon_{lem}$ and for all $\alpha\geq 1$, System \eqref{eqnofzalpha}--\eqref{eqnofvalpha} admits a unique solution $(v^{(\alpha)},z^{(\alpha)})$ with initial data holding
\begin{equation}
				\|y^0\|_{H^s}+\|y^1\|_{H^{s-1}}\leq\varepsilon.
			\end{equation}
 Moreover, for any $t\in [0,T]$ and $\alpha\geq 1$. The sequences $(v^{(\alpha)},z^{(\alpha)})$ and $(V^{(\alpha)},Z^{(\alpha)})$ satisfy
			\begin{equation}\label{induction}
				\begin{aligned}
					& \|V^{(\alpha)}\|_{H^{s-1}}^2+\|V^{(\alpha)}_t(t)\|_{H^{s-2}}^2\leq(1-\delta)^{2\alpha}{C_{V,s}}\varepsilon^2, \\
					& \|Z^{(\alpha)}(t)\|_{H^{s-1}}^2+\|Z^{(\alpha)}_t(t)\|_{H^{s-2}}^2\leq(1-\delta)^{2\alpha}C_{Z,s}\varepsilon^2,
				\end{aligned}
			\end{equation}
			and
			\begin{equation}\label{induction3}
				\begin{aligned}
					& \big\|v^{(\alpha)}(t)\big\|_{H^{s}}^2+\big\|v^{(\alpha)}_t(t)\big\|_{H^{s-1}}^2+\big\|v^{(\alpha)}_{tt}(t)\big\|_{H^{s-2}}^2\leq C_{v,s}\varepsilon^2, \\
					& \big\|z^{(\alpha)}(t)\big\|_{H^{s}}^2+\big\|z^{(\alpha)}_t(t)\big\|_{H^{s-1}}^2+\big\|v^{(\alpha)}_{tt}(t)\big\|_{H^{s-2}}^2 \leq C_{z,s}\varepsilon^2,
				\end{aligned}
			\end{equation}
			 where  $C_{V,s}, C_{Z,s}, C_{v,s}, C_{z,s} $ are positive constants independent of $\alpha,\varepsilon$.
		\end{lemma}

The demonstration of Lemma \ref{lem:VZ} is long, hence we postpone the proof in the next subsection. We give the proof of Proposition \ref{prop:vz} when assuming Lemma \ref{lem:VZ} holds.
		
		\begin{proof}[Proof of Proposition \ref{prop:vz} assuming Lemma \ref{lem:VZ} holds.]		
			
				By the definition of $V^{(\alpha)}$ given in \eqref{VZ}, equation \eqref{induction} entails that
				\begin{equation}\label{412}
					\left\|v^{(\alpha)} - v^{(\beta)}\right\|_{ C^{0}(0,T; \mathcal{H}^{s-1})}^2 \leq \sum_{i=\beta}^\alpha \left\|V^{(i)}\right\|_{C^{0}(0,T; \mathcal{H}^{s-1})}^2 \leq \frac{(1-\delta)^{2\beta}}{1 - (1-\delta)^2} (C_{V,s-1})^2 \varepsilon^2.
				\end{equation}
				Since $0 < \delta < 1$, inequality \eqref{412} and \eqref{induction3} with $k=s-1$ indicate that for each $t\in [0,T]$, the sequence $\{v^{(\alpha)}(t)\}_{\alpha=1}^\infty$ constitutes a Cauchy sequence in $ \mathcal{H}^{s-1}$. Thus, this together with $\{v_t^{(\alpha)}\}_{\alpha=1}^\infty\subset L^\infty(0,T; \mathcal{H}^{s-1})$ implies that there exists $y\in C^0(0,T; \mathcal{H}^{s-1})$ such that $\{v^{(\alpha)}\}_{\alpha=1}^\infty$ converges strongly to $y$ in $C^0(0,T; \mathcal{H}^{s-1})$.
				
				  By utilizing \eqref{induction} and \eqref{induction3}, we can also deduce that $\{v_t^{(\alpha)}\}_{\alpha=1}^\infty\subset L^\infty(0,T; \mathcal{H}^{s-1})$ and $\{v_{tt}^{(\alpha)}\}_{\alpha=1}^\infty\subset L^\infty(0,T; \mathcal{H}^{s-2})$ converge strongly to $\tilde{v}_1\in C(0,T; \mathcal{H}^{s-2})$ and $\tilde{v}_2\in C(0,T; \mathcal{H}^{s-3})$, respectively. Moreover, by \eqref{induction3}, we know $\{v_{tt}^{(\alpha)}\}_{\alpha=1}^\infty\subset L^\infty(0,T; \mathcal{H}^{s-2})$ and $\{v_{ttt}^{(\alpha)}\}_{\alpha=1}^\infty\subset L^\infty(0,T; \mathcal{H}^{s-3})$, so according to compactness argument, we have $\tilde{v}_1=y_t\in C^0(0,T; \mathcal{H}^{s-2})$ and $\tilde{v}_2=y_{tt}\in C^0(0,T; \mathcal{H}^{s-3})$.

				 Similarly, we can establish the existence of  $z\in C^{0}(0,T; \mathcal{H}^{s-1}), z_t\in  C^{0}(0,T; \mathcal{H}^{s-2})$.
				
                Noting that $s\geq \max\{n+2, 4\} \geq \frac{n}{2}+3$, hence by Morrey's embedding inequality, $\mathcal{H}^{s-2}\subset C^1(\Omega)$. This implies that for any $t\in [0,T]$,
                \begin{equation}
			\begin{cases}
				& a_{ij}^{(\alpha)}(t,x,v^{(\alpha)},v^{(\alpha)}_t,\nabla v^{(\alpha)})\rightarrow a_{ij}(t,x,y,y_t,\nabla y), \quad i,j=1,\cdots,n,\\
				& b^{(\alpha)}_i(t,x,v^{(\alpha)},v^{(\alpha)}_t,\nabla v^{(\alpha)})\rightarrow b_i(t,x,y,y_t,\nabla y), \quad i=0,\cdots,n, \\
				& \tilde{b}^{(\alpha)}(t,x,v^{(\alpha)},v^{(\alpha)}_t,\nabla v^{(\alpha)})\rightarrow \tilde{b}(t,x,y,y_ty,\nabla y).
			\end{cases}
		    \end{equation}
                in $C^1(\Omega)$, as $\alpha$ goes to $\infty$.

                By the way, we note that for both initial and terminal values satisfy
				 \begin{equation}
				 (v^{(\alpha)}(0),v_t^{(\alpha)}(0))\rightarrow (y(0),y_t(0)), ~ \text{in}\quad \mathcal{H}^{s-1}\times \mathcal{H}^{s-2},
				 \end{equation}
				 and
				 \begin{equation}
				 	(z^{(\alpha)}(T),z_t^{(\alpha)}(T))\rightarrow (z(T),z_t(T)), ~ \text{in} \quad \mathcal{H}^{s-1}\times \mathcal{H}^{s-2}.
				 \end{equation}

			     Given the initial condition of system \eqref{eqnofvalpha} and the terminal condition of system \eqref{eqnofzalpha}, letting $\alpha $ goes to $\infty$, we obtain
			     \begin{equation}
			     	\begin{split}
			     		(y(0),y_t(0))=&(\lim\limits_{\alpha\rightarrow \infty}v^{(\alpha)}(0), \lim\limits_{\alpha\rightarrow \infty}v^{(\alpha)}(0))=(y^0,y^1), \\
			     		 	(y(T),y_t(T))=&(\lim\limits_{\alpha\rightarrow \infty}v^{(\alpha)}(T), \lim\limits_{\alpha\rightarrow \infty}v^{(\alpha)}(T)), \\
			     		(\lim\limits_{\alpha\rightarrow \infty} z^{(\alpha)}(T),\lim\limits_{\alpha\rightarrow \infty} z_t^{(\alpha)}(T))=&	(\lim\limits_{\alpha\rightarrow \infty} z^{(\alpha-1)}(T),\lim\limits_{\alpha\rightarrow \infty} z_t^{(\alpha-1)}(T)) \\&+(\lim\limits_{\alpha\rightarrow \infty}v^{(\alpha-1)}(T), \lim\limits_{\alpha\rightarrow \infty}v^{(\alpha-1)}(T)).
			     	\end{split}
			     \end{equation}
				This immediately implies that in $\mathcal{H}^{s-1}\times \mathcal{H}^{s-2}$
				\begin{equation}
					(y(0),y_t(0))=(y^0,y^1),  (y(T),y_t(T))=(0,0).
				\end{equation}

				Next, from \eqref{induction3} and the compactness argument, we can deduce that there exists a subsequence of $\{v^{(\alpha)}\}_{\alpha=1}^\infty$ (denoted as $\{v^{(\alpha_1)}\}_{\alpha_1=1}^\infty$) and $\tilde{y}\in \cap_{p=0}^{2} W^{p,\infty}(0,T; \mathcal{H}^{s-p})$, such that
				\begin{equation}\label{411}
					(v^{(\alpha_1)}, v_t^{(\alpha_1)}) \stackrel{\ast}{\longrightarrow} (\tilde{y}, \tilde{y}_t), ~ \text{in } ~ L^\infty(0,T;\mathcal{H}^s) \times L^\infty(0,T;\mathcal{H}^{s-1}),
				\end{equation}
				as $\alpha_1$ goes to $\infty$.
				
				Since the limit is unique, we conclude that  $y=\tilde{y}\in \cap_{p=0}^{2} C^{p}(0,T; \mathcal{H}^{s-p})$.
							By analogous reasoning, we can establish that $\{z^{(\alpha)}\}$ converges strongly to $z\in\cap_{p=0}^{2} C^{p}(0,T; \mathcal{H}^{s-p})$.

				Finally, letting $u=-2z_t$,  these convergence results imply that $(y,u)$ is a solution of System \eqref{system:quasilinear} with initial data $(y^0, y^1)$
				and satisfying the terminal conditions
				 $(y(T), y_t(T)) = (0, 0)$. This completes the proof.
		\end{proof}

\subsection{Proof of Lemma \ref{lem:VZ}}

{\color{black}
        The proof of Lemma \ref{lem:VZ} consists of two points. The first one is to prove the well-posedness of the system \eqref{eqnofzalpha}, \eqref{eqnofvalpha}, \eqref{eqnofValpha}, \eqref{eqnofZalpha} for each $\alpha$. The second one is to show that the corresponding solutions satisfy the estimates \eqref{induction} and \eqref{induction3}.

		Before we state the well-posedness results for System \eqref{eqnofzalpha}--\eqref{eqnofvalpha}, it is imperative to establish a norm bound for composite functions. This estimation is essential for the subsequent analysis of the coefficients within our iterative scheme.

 We can have the following conclusion from the preceding discussion.
\begin{lem}\label{lem:FH} Let $s\geq n+2$ be an integer.
Assume that there exists a constant $\nu_1$ such that 	\begin{equation}
	\sum_{p=0}^{s}\|\partial_t^p z^{(\alpha-1)}\|_{  \mathcal{H}^{s-p}}+\sum_{p=0}^{s-1}\|\partial_t^p(z^{(\alpha-1)}-z^{(\alpha-2)})\|_{\mathcal{H}^{s-1-p}}\leq \nu_1,
\end{equation}
then we have for any $t\in [0,T]$,
\begin{equation}\label{coe:FH}
	\begin{split}
		\sum_{p=0}^{s-2}\|\partial_t^pF^{(\alpha)}\|_{\mathcal{H}^{s-2-p}}\leq C_F\big(\sum_{p=0}^{s-2}\|\partial_t^p(v^{(\alpha-1)}-v^{(\alpha-2)})\|_{\mathcal{H}^{s-1-p}}\big)\big(\sum_{p=0}^{s} \|\partial_t^pv^{(\alpha-1)}\|_{ \mathcal{H}^{s-p}}\big), \\
		\sum_{p=0}^{s-2}\|\partial_t^pH^{(\alpha)}\|_{\mathcal{H}^{s-2-p}}\leq C_H\big(\sum_{p=0}^{s-2}\|\partial_t^p(v^{(\alpha-1)}-v^{(\alpha-2)})\|_{ \mathcal{H}^{s-1-p}}\big)\big(\sum_{p=0}^{s} \|\partial_t^pz^{(\alpha-1)}\|_{  \mathcal{H}^{s-p}}\big),
	\end{split}				
\end{equation}
for some constants $C_F,C_H$ depending on $\nu_1, s, T, n$ and the bounds of $a_{ij,k}, i,j=1,\cdots,n,  k=0,1,\cdots,n+1$, $b_{i,k}, i,k=0, 1,\cdots,n+1$ and $\tilde{b}_{i}, i=0,1,\cdots, n+1$ in Remark \ref{rem:alphabound} independent of $\alpha$.
\end{lem}}

Here, we present a lemma on the norm estimate of a composite function in a bounded domain, which can be referred to as \cite[Lemma 2.1]{LiT2}. This lemma plays a crucial role in estimating the coefficients in the subsequent iterative estimates.

			\begin{lemma}\label{lem:GG}
			Let $l>n$ be an integer and $T>0$.
			Let $G(t,\gamma)=G(t,\gamma_1,\cdots,\gamma_{M})\in C^\infty([0,T]\times\R^M)$  be a bounded smooth function and satisfies
			\begin{equation}\label{GG}
				\|G\|_{C^l([0,T]\times \Omega)}\leq C_\Omega,
			\end{equation}
			for some constant $C_\Omega$ depends on $T$ and $\Omega$.
			If there exists a small positive constant 		     	
			$\nu_1$, such that
			\begin{equation}\label{gammal}
			\sum_{p=0}^l\big\|\partial_t^p\gamma_i\|_{  \mathcal{H}^{l-k}} \leq \nu_1, i=1,\cdots,M,
			\end{equation}	
			then  for any $t\in [0,T]$, for any $u,v\in \cap_{p=0}^lC([0,T]; \mathcal{H}^{l-p})$, we have
			\begin{equation}\label{GUV123}
				\begin{split}
					\sum_{p=0}^l\big\|\partial_t^pG(\gamma)\big\|_{ \mathcal{H}^{l-p}}&\leq C_1, \\
					\sum_{p=0}^l\big\|\partial_t^p(G(\gamma)u)\big\|_{  \mathcal{H}^{l-k}}&\leq C_2(\sum_{p=0}^l\big\|\partial_t^pu\|_{ \mathcal{H}^{l-p}}),\\
					\sum_{p=0}^l\big\|\partial_t^p(G(\gamma)uv)\big\|_{  \mathcal{H}^{l-k}}&\leq C_3(\sum_{p=0}^l\big\|\partial_t^pu\|_{  \mathcal{H}^{l-k}})(\sum_{p=0}^l\big\|\partial_t^pv\|_{ \mathcal{H}^{l-k}}),
				\end{split}
			\end{equation}
			where $C_i=C_i(n, C_\Omega, M, T, \nu_1, s)>0, i=1,2,3$ depend on $n, C_\Omega, M, T, \nu_1$ and $s$.
		\end{lemma}
		
	{\color{black}\begin{proof}[Proof of Lemma \ref{lem:GG}]
	Thanks to \eqref{GG} and \eqref{gammal}, the first inequality is straightforward. Moreover, since  $l-\lfloor\frac{n}{2}\rfloor>\frac{n}{2}$,  the Morrey inequality implies that for any $t\in [0,T]
	$ and $k\leq \lfloor\frac{n}{2}\rfloor$, \begin{equation}\label{GGG}
		\sum_{k=0}^{\lfloor\frac{n}{2}\rfloor}\big\| \partial_t^{k}G(\gamma)\big\|_{C(\Omega)} \leq C\sum_{k=0}^{\lfloor\frac{n}{2}\rfloor}\big\| \partial_t^{k}G(\gamma)\big\|_{H^{l-\lfloor\frac{n}{2}\rfloor}}\leq \tilde{C}_G,
	\end{equation}
	for some constant $\tilde{C}_G=\tilde{C}_G(n, M, C_\Omega, T, \nu_1, l)$.
	
	Next, we focus on verifying the second inequality in \eqref{GUV123}, as the remaining ones can be proven similarly. Let
	$\partial_x=\{\partial_{x_1},\cdots,\partial_{x_n}\}$.	Given two multi-index
	$L, K$ with $ |L|+|K|= l$,  following the method in \cite[Lemma 2.1]{LiT2}, we directly compute
	\begin{equation}\label{GGGGG}
		\partial_t^L\partial_x ^K\big[G(\gamma)u\big]=\sum\limits_{|K_1|+|K_2|=|K|} \big[\sum\limits_{|L_1|+|L_2|=|L|} C_{K_1,K_2,L_1,L_2}\partial_t^{L_1}\partial_x^{K_1} G(\gamma) \partial_t^{L_2}\partial_x^{K_2}u\big],
	\end{equation}
	where
	\begin{equation}\label{KGG}
		\partial_x^{K_1} G(\gamma)=\sum\limits_{\sum\limits_{j=1}^{M}l_j=l, 1\leq m\leq |K_1|} \frac{\partial^{m}_xG}{\partial_x^{l_{1}}\gamma_{1}\cdot\cdot\cdot\partial^{l_{M}}\gamma_{M}} (\partial_x \gamma)^{a_1}\cdot (\partial_x^{|K_1|} \gamma)^{a_{|K_1|}},
	\end{equation}
	with $\sum\limits_{j=1}^{|K_1|}|a_j|=l$ and $\sum\limits_{j=1}^{|K_1|}|j||a_j|=|K_1|$, and $C_{K_1,K_2,L_1,L_2}$ are constants independent of $G$ and $u$.
	
	Next, we divide the terms on the right-hand side of \eqref{GGGGG} into two parts based on $|L_1|+|L_2|\leq l$. The first part is when $|L_2|\geq \lfloor\frac{n}{2}\rfloor$ which  implies $|L_1|\leq \lfloor\frac{n}{2}\rfloor$. Combining this with \eqref{GGG} and taking the $L^2$ norm of  these terms, we obtain
	\begin{align*}
		&\Big\|\sum\limits_{|K_1|+|K_2|=|K|} \big[\sum\limits_{|L_1|+|L_2|=|L|, |L_1|\leq \frac{n}{2}} C_{K_1,K_2,L_1,L_2}\partial_t^{L_1}\partial_x^{K_1} G(\gamma) \partial_t^{L_2}\partial_x^{K_2}u\big]\Big\|_{L^2} \\
        \leq~& C \|u\|_{\cap_{k=0}^lC^k(0,T; \mathcal{H}^{l-k})}
	\end{align*}
	for some constant $C$.
	The second part is when $|L_2|\leq \lfloor\frac{n}{2}\rfloor$, which implies that $\partial_t^{|L_2|} u\in C(0,T;H^{l-|L_2|})$  and $l-|L_2|>\lfloor\frac{n}{2}\rfloor$.
	Then we observe that for any $t\in [0,T]$, the Sobolev space $H^{l-|L_2|}$ is a Banach algebra. Thus, we have
	\begin{equation}
		\|(\partial \gamma)^{a_1} \cdots (\partial^{|K_1|} \gamma)^{a_{|K_1|}} \partial^{K_2} u \|_{L^2} \leq C \|\gamma\|_{\cap_{p=0}^l C^p(0,T; \mathcal{H}^{l - p})}^{|K_1|} \|u\|_{\cap_{p=0}^l C^p(0,T; \mathcal{H}^{l - p})}.
	\end{equation}
Here we also use the fact that $\gamma, u\in \cap_{p=0}^l C^p(0,T; \mathcal{H}^{l - p})\subset \cap_{p=0}^l C^p(0,T; H^{l - p})$ and for any $t\in [0,T]$, $\|u\|_{\cap_{p=0}^l C^p(0,T; \mathcal{H}^{l - p})}\sim \|u\|_{\cap_{p=0}^l C^p(0,T; H^{l - p})}.$\footnote{Here we say that $\|u\|_{\cap_{p=0}^l C^p(0,T; \mathcal{H}^{l - p})}\sim \|u\|_{\cap_{p=0}^l C^p(0,T; H^{l - p})},$ for any $u\in \cap_{p=0}^l C^p(0,T; \mathcal{H}^{l - p})$, means that there exist two positive constants $C_1,C_2$, independent of $u$ such that $C_1 \|u\|_{\cap_{p=0}^l C^p(0,T; H^{l - p})}\leq \|u\|_{\cap_{p=0}^l C^p(0,T; \mathcal{H}^{l - p})} \leq C_2 \|u\|_{\cap_{p=0}^l C^p(0,T; H^{l - p})}$.}
	Combining this with \eqref{GG}, \eqref{gammal}, \eqref{GGGGG}, and \eqref{KGG}, we conclude that for each $k\leq l$,
	\begin{equation}
		\|\partial_t^k \partial^K \big[G(\gamma)u\big]\|_{C(0,T; L^2)} \leq C(n, M, C_\Omega, T, \nu_1, l) \|u\|_{\cap_{p=1}^l C^p(0,T; \mathcal{H}^{l - p})}.
	\end{equation}
	Thus, the proof is complete.
\end{proof}
}

\begin{proof}[Proof of Lemma \ref{lem:FH}.]
Combining the fact that the coefficients have the expanding form seen in Remark \ref{rem:alphabound}, by Lemma \ref{lem:GG}, we first obtain that
\begin{equation}\label{estimate:alphaab}
\begin{split}
	\sum_{p=0}^{s-1}\|\partial_t^p(a_{ij}^{(\alpha)}-\delta_{ij})\|_{\mathcal{H}^{s-1-p}}&\leq C_{ab}\sum_{p=0}^{s}\|v^{(\alpha-1)}\|_{ \mathcal{H}^{s-p}}, ~ i,j=1,\cdots,n, \\
	\sum_{p=0}^{s-1}\|\partial_t^p	b_k^{(\alpha)}\|_{ \mathcal{H}^{s-1-p}}&\leq C_{ab}\sum_{p=0}^{s}\|v^{(\alpha-1)}\|_{ \mathcal{H}^{s-p}}, ~ k=1,\cdots, n, \\
	\sum_{p=0}^{s-1}\|\partial_t^p	(b_0^{(\alpha)}-1)\|_{ \mathcal{H}^{s-1-p}}&\leq C_{ab}\sum_{p=0}^{s}\|v^{(\alpha-1)}\|_{\mathcal{H}^{s-p}}, \\
	\sum_{p=0}^{s-1}\|\partial_t^p	(	\tilde{b}^{(\alpha)})\|_{ \mathcal{H}^{s-1-p}}&\leq C_{ab}\sum_{p=0}^{s}\|v^{(\alpha-1)}\|_{\mathcal{H}^{s-p}},
\end{split}
\end{equation}
where $C_{ab}$ is a constant depending on $\nu_1, s,T, \Omega$ and the bounds of $a_{ij,k}, i,j=1,\cdots,n,  k=0,1,\cdots,n+1$, $b_{i,k}, i,k=0, 1,\cdots,n+1$ and $\tilde{b}_{i}, i=0,1,\cdots, n+1$,  independent of $\alpha$ and $v^{(\alpha-1)}$.

We can then consider the estimations on $F^{(\beta+1)}$ and $H^{(\beta+1)}$. Based on the expressions \eqref{Gdefn} and \eqref{Hdefn}, we need to estimate the following terms:
\begin{equation}\label{coe:bb1}
\begin{aligned}
	\big\|\partial_t^p (b_0^{(\alpha)}-b_0^{(\alpha-1)})\big\|_{\mathcal{H}^{s-1-p}}
	 ,\qquad \big\|\partial_t^p(\tilde{b}^{(\alpha)}-\tilde{b}^{(\alpha-1)})\big\|_{ \mathcal{H}^{s-1-p}},
\end{aligned}
\end{equation}
and
$$\big\|\partial_t^p(b_i^{(\alpha)}-b_i^{(\alpha-1)})\big\|_{ \mathcal{H}^{s-1-p}},\qquad \big\|\partial_t^p(a_{ij}^{(\alpha)}-a_{ij}^{(\alpha-1)})\big\|_{ \mathcal{H}^{s-1-p}},$$
for any non-negative integer $p\leq s-2$.

We only deal with $\big\|\partial_t^p (b_0^{(\alpha)}-b_0^{(\alpha-1)})\big\|_{\mathcal{H}^{s-1-p}} $, other terms are the same.
Note the fact that
\begin{align*}
b_0^{(\alpha)}-b_0^{(\alpha-1)}&=b_0(t,x,v^{(\alpha-1)},v_t^{(\alpha-1)}, \nabla v^{(\alpha-1)})-b_0(t,x,v^{(\alpha-2)},v_t^{(\alpha-2)}, \nabla v^{(\alpha-2)}) \\
                                        &=b_{0,v}(v^{(\alpha-1)}-v^{(\alpha-2)})+b_{0,v_t}(v_t^{(\alpha-1)}-v_t^{(\alpha-2)})+\sum_{i=1}^n b_{0,vi}(v_{x_i}^{(\alpha-1)}-v_{x_i}^{(\alpha-2)}) \\
                                        &=b_{0,v}V^{(\alpha-1)}+b_{0,v_t}V_t^{(\alpha-1)}+\sum_{i=1}^n b_{0,vi}V_{x_i}^{(\alpha-1)}
\end{align*}
where
$$
\begin{cases}
b_{0,v}=\int_0^1\frac{\partial}{\partial v} b_0(t,x, \theta v^{(\alpha-1)},v_t^{(\alpha-1)}, \nabla v^{(\alpha-1)}) \dd\theta,\\ b_{0,v_t}=\int_0^1\frac{\partial}{\partial v_t} b_0(t,x, v^{(\alpha-1)},\theta v_t^{(\alpha-1)}, \nabla v^{(\alpha-1)}) \dd\theta,\\ b_{0,vi}=\int_0^1\frac{\partial}{\partial v_{x_i}} b_0(t,x,  v^{(\alpha-1)},v_t^{(\alpha-1)}, \theta  v_{x_i}^{(\alpha-1)}) \dd\theta.
\end{cases}
$$
Since we have assume that
\begin{equation}
\sum_{p=0}^{s-1}\|\partial_t^pV^{(\alpha-1)}\|_{ \mathcal{H}^{s-1-p}}\leq \nu_1, \quad \sum_{i=1,2}\sum_{p=0}^{s}\|\partial_t^pv^{(\alpha-i)}\|_{\mathcal{H}^{s-p}}\leq \nu_1,
\end{equation}
for some small $\nu_1$.
Then, noting that $s-1\geq n+1$, we can apply Lemma \ref{lem:GG} with $G=b_{0,v}$ (resp. $b_{0,v_t}, b_{0,vi}$) and $l=s-1$, we can obtain
\begin{equation}
\begin{aligned}
	\sum_{p=0}^{s-2}\big\|\partial_t^p(b_0^{(\alpha)}-b_0^{(\alpha-1)})\big\|_{\mathcal{H}^{s-2-p}}& \leq C'_{ab}\sum_{p=0}^{s-1}\|\partial_t^p(v^{(\alpha-1)}-v^{(\alpha-2)})\|_{\mathcal{H}^{s-1-p}} \\
&\leq C'_{ab}\sum_{p=0}^{s-1}\|\partial_t^pV^{(\alpha-1)}\|_{\mathcal{H}^{s-1-p}},
\end{aligned}
\end{equation}
where $C'_{ab}$ is a constant independent with $\alpha$ and  $V^{(\alpha-1)}$.

Thus applying Lemma \ref{lem:GG} again with $G=a_{ij}^{(\alpha)}-a_{ij}^{(\alpha-1)}$ and $l=s-1$, we have
\begin{equation}\label{coe:bb2}
\begin{aligned}
	\sum_{p=0}^{s-2}\big\|\partial_t^p((b_0^{(\alpha)}-b_0^{(\alpha-1)})v_t^{(\alpha-1)})\big\|_{\mathcal{H}^{s-2-p}}& \leq C''_{ab}(\sum_{p=0}^{s-1}\|\partial_t^pV^{(\alpha-1)}\|_{\mathcal{H}^{s-1-p}})(\sum_{p=0}^{s-1}\|\partial_t^pv_t^{(\alpha-1)}\|_{\mathcal{H}^{s-1-p}}) \\
&\leq C''_{ab}(\sum_{p=0}^{s-1}\|\partial_t^pV^{(\alpha-1)}\|_{\mathcal{H}^{s-1-p}})(\sum_{p=0}^{s}\|\partial_t^pv^{(\alpha-1)}\|_{\mathcal{H}^{s-p}}),
\end{aligned}
\end{equation}
where $C''_{ab}$ is a positive constant independent of $\alpha$.
Similarly, we can obtain the estimations of other terms in $F^{(\beta+1)}$ and $H^{(\beta+1)}$. Thus we complete the proof.
\end{proof}

We are now in a position to commence the proof of Lemma \ref{lem:VZ}.
		
		\begin{proof}[Proof of Lemma \ref{lem:VZ}.]
%
           To obtain the spatial norm estimates of the system solutions \eqref{induction} and \eqref{induction3}, we need to establish the following norm estimates:
           There exist constants $\varepsilon_{lem}, M,\delta$, ${C}_{V,mid,l},{C}_{Z,mid,l}, {C}_{v,mid,l},{C}_{z,mid,l}$ for any $l=0,\cdots, s-2$, independent of $\varepsilon$ and $\alpha$, such that for any $\varepsilon<\varepsilon_{lem}$ and any time $t\in [0,T]$, we have

 \begin{equation}\label{induction4}
				\begin{aligned}
					& \|\partial_t^{m+1} V^{(\alpha)}\|_{\mathcal{H}^{l-m}}^2+\|\partial_t^{m}V^{(\alpha)}\|_{\mathcal{H}^{l+1-m}}^2\leq(1-\delta)^{2\alpha-2}{C}_{V,mid,l}^{2}M^{2l}\varepsilon^2,\\
					& \|\partial_t^{m+1} Z^{(\alpha)}\|_{\mathcal{H}^{l-m}}^2+\|\partial_t^{m} Z^{(\alpha)}\|_{\mathcal{H}^{l+l-m}}^2\leq (1-\delta)^{2\alpha-2}{C}_{Z,mid,l}^{2}M^{2l}\varepsilon^2,
				\end{aligned}
			\end{equation}
			and
			\begin{equation}\label{induction5}
				\begin{aligned}
					& \big\|\partial_t^{m+1} v^{(\alpha)}\big\|_{\mathcal{H}^{l+1-m}}^2+\big\|\partial_t^{m}v^{(\alpha)}\big\|_{\mathcal{H}^{l+2-m}}^2 \leq {C}_{v,mid,l}^2M^{2l+2}\varepsilon^2, \\
					& \big\|\partial_t^{m+1} z^{(\alpha)}\big\|_{\mathcal{H}^{l+1-m}}^2+\big\|\partial_t^{m} z^{(\alpha)}\big\|_{\mathcal{H}^{l+2-m}}^2 \leq {C}_{z,mid,l}^{2}M^{2l+2}\varepsilon^2,
				\end{aligned}
			\end{equation}
		for any $m=0,\cdots,l$.

Taking $l=s-2$ and $m=0,1$ directly, we can immediately derive \eqref{induction} and \eqref{induction3} from \eqref{induction4} and \eqref{induction5}. Therefore, we will focus on proving \eqref{induction4} and \eqref{induction5} in the following.

We prove \eqref{induction4} and \eqref{induction5} by deduction.
			{\color{black}The proof of the assertions \eqref{induction4} and \eqref{induction5} will be systematically approached through a sequence of methodical steps, delineated as follows:			
				\begin{enumerate}
					\item Establish the base case by demonstrating that \eqref{induction4}, and \eqref{induction5} hold true when $\alpha=1$;
                    \item Establish the base case by demonstrating that \eqref{induction4} holds true when $l=0$;

					\item  Proceed to the inductive step for \eqref{induction4} and \eqref{induction5}, where it is to be shown that for $\alpha=\beta+1\geq 2, s-2\geq l\geq k+1\geq 2$, the assertion \eqref{induction4}  is valid. Similarly,  for $\alpha=\beta+1\geq 2, s-3\geq l\geq k+1\geq 1$, the assertion \eqref{induction5}  is valid, given that the conditions \eqref{induction4} and \eqref{induction5} are presupposed to be valid for $\alpha\leq \beta, l\leq k\geq 1$;
                    \item Proceed to the inductive step for \eqref{induction5}, where it is to be shown that for $\alpha=\beta+1\geq 2$, the assertion \eqref{induction5} is valid, given that the conditions \eqref{induction4} and \eqref{induction5} are presupposed to be valid for $\alpha=\beta$.
				\end{enumerate}
			}

			\subsubsection{Basic step 1: The case of $ \alpha=1$. }
	
We first note that $V^{(1)}=v^{(1)},Z^{(1)}=z^{(1)}$, which satisfy the following equations:
			\begin{equation}\label{VZ1}
				(\partial_t^2-\Delta+\partial_t) V^{(1)}=-2\chi Z_t^{(1)}, (\partial_t^2-\Delta-\partial_t) Z^{(1)}=0.
			\end{equation}
			Additionally, we have the initial and terminal conditions: \begin{equation}\label{casel00}(V^{(1)}(0),V_t^{(1)}(0))=(v^{(1)}(0),v_t^{(1)}(0)),
			(Z^{(1)}(T),Z_t^{(1)}(T))=(0,0).\end{equation}
			By the well-posedness theory of linear wave equations, there exists a constant $c_0>0$, such that for any $ t\in [0,T]$, for any $k=0,\cdots,s-1,$
			\begin{equation*}
\begin{split}
\|\partial_t^kZ^{(1)}\|_{\mathcal{H}^{s-k}}^2+\|\partial_t^{k+1}Z^{(1)}\|_{\mathcal{H}^{s-k-1}}^2\leq e^{c_0(t-T)}\big(\|\partial_t^kZ^{(1)}(T)\|_{\mathcal{H}^{s-k}}^2+\|\partial_t^{k+1}Z^{(1)}(T)\|_{\mathcal{H}^{s-k-1}}^2\big)=0.
\end{split}			
\end{equation*}
This implies that $-2\chi Z_t^{(1)}\equiv0$. Consequently, using the equation for $V$ in \eqref{VZ1}, we obtain that for any $l\leq s$ and $k=0,\cdots,l$,
		\begin{equation*}
\begin{split}
				\|\partial_t^kV^{(1)}\|_{\mathcal{H}^{l-k}}^2+\|\partial_t^{k+1}V^{(1)}\|_{\mathcal{H}^{l-k-1}}^2&\leq e^{-c_0t}\big(\|\partial_t^kV^{(1)}(0)\|_{\mathcal{H}^{l-k}}^2+\|\partial_t^{k+1}V^{(1)}(0)\|_{\mathcal{H}^{l-k-1}}^2\big).
\end{split}			
\end{equation*}
            Using the relation \eqref{casel00}, we have  
            $$\Delta^m V^{(1)}(0)=\Delta^mv^{(1)}(0), \quad \Delta^m\partial_tV^{(1)}(0)=\Delta^m\partial_tv^{(1)}(0),$$
            for any integer $m\geq 0$. Applying the operator $\partial_t^{k-2}$ the equation for $V^{(1)}$ in \eqref{VZ1} with $Z\equiv0$, we obtain that for any $k\geq 2$,
				\begin{align}\label{V:k:odd:l:0}
					\partial_t^kV^{(1)}+\partial_t^{k-1}V^{(1)}=\Delta(\partial_t^{k-2}V^{(1)}).
			\end{align}
            Using this recursive relation, we can express
            $$\partial^k_tV^{(1)}(0)=\partial_t^{m} \Delta^{\frac{k-m}{2}}V^{(1)}(0)+\sum_{p=0}^{\frac{k-m}{2}} \Delta^{p}\big(C_{k,p,1}V^{(1)}(0)+ C_{k,p,2}\partial_tV^{(1)}(0)\big).$$
            where $m=\frac{1-(-1)^k}{2}$, $C_{k,p,1}$ and $C_{k,p,2}$ are constants depending only on $k,p$.
            By elliptic regularity theory, for any $u\in \mathcal{H}^s$ and $s_1\leq s_2\leq s$, there exists a constant $C_{s_1,s_2}$ depending only on $s_1,s_2$ and $\Omega$, such that  $\|u\|_{\mathcal{H}^{s_1}}\leq C_{s_1,s_2}\|u\|_{\mathcal{H}^{s_2}}$.

            Thus, for any $l=1,\cdots,s-1$ and $k=0,\cdots,l$, we have
            \begin{equation*}
            \begin{split}
            \|\partial_t^{k}V^{(1)}(0)\|_{\mathcal{H}^{l-k}}^2+\|\partial_t^{k+1}V^{(1)}(0)\|_{\mathcal{H}^{l-k-1}}^2& \leq C_l (\|v^{(1)}(0)\|_{\mathcal{H}^{l}}^2+\|v^{(1)}_t(0)\|_{\mathcal{H}^{l-1}}^2)\\ &\leq C_{Vini,l}( \|y^0\|_{\mathcal{H}^s}^2+\|y^1\|_{\mathcal{H}^{s-1}}^2),
            \end{split}
            \end{equation*}
            for some constant $C_l, C_{Vini,l}>0$ depending only on $k$, $s$ and $\Omega$.
              Together with the smallness assumption on the initial data $(y^0,y^1)$, we conclude that if the constants $C_{V,mid,l}, C_{v,mid,l}$ are setting by
              \begin{equation}\label{cond:Vconstant}
              C_{V,mid,l}= 2C_{Vini,l}, \quad C_{v,mid,l}=\frac{2(1-\delta)^2}{1-(1-\delta)^2}C_{Vini,l},
              \end{equation}
              and $C_{Z,mid,l}, C_{z,mid,l}$ are setting by
                \begin{equation}\label{cond:Zconstant}
              C_{Z,mid,l}= \frac{1}{4T}C_{V,mid,l}, \quad C_{z,mid,l}=\frac{1}{4T}C_{v,mid,l},
              \end{equation}
			  then the estimates \eqref{induction4} and \eqref{induction5} hold for the case $\alpha=1$.

            \subsubsection{Basic step 2: To prove \eqref{induction4} and \eqref{induction5} when $l=0$ for any $\alpha\geq 2$}

            When $l=0$, we need to establish estimates for the coefficients. Since \eqref{induction4} and \eqref{induction5} are assumed to hold for $\alpha=\beta\geq 1$,  we can choose
			$\varepsilon_{lem}\leq \varepsilon_{nu}$		
			sufficiently small such that
			\begin{equation}\label{epsilon01}
				\max_{0\leq k\leq s}C_{v,mid,k}M^{k+1} \varepsilon_{nu} \leq 1=:\nu_1.
			\end{equation}
           Here $C_{v,mid,k}, k=0,1,\cdots,s$ are defined by \eqref{cond:Vconstant} and $M$ would be choosing later.

			 By Lemma \ref{lem:GG} and \eqref{estimate:alphaab} in Remark \ref{rem:alphabound} with $\nu_1=1$, for any $t\in [0,T]$,  we obtain for each $\alpha\leq \beta$,
			\begin{align}\label{vertify:1}\begin{split}
				&\big\|b^{(\alpha)}_0-1\big\|_{\cap_{k=0}^{s-1}C^k(0,T;\mathcal{H}^{s-1-k})}+\big\|a^{(\alpha)}_{ij}-\delta_{ij}\big\|_{\cap_{k=0}^{s-1}C^k(0,T;\mathcal{H}^{s-1-k})}\\
              &+\big\|\tilde{b}^{(\alpha)}\big\|_{\cap_{k=0}^{s-1}C^k(0,T;\mathcal{H}^{s-1-k})}+\big\|b^{(\alpha)}_i\big\|_{\cap_{k=0}^{s-1}C^k(0,T;\mathcal{H}^{s-1-k})}\leq 4C_\Omega \max_{k}\{C_{v,mid,k}M^{k+1}\}\varepsilon,
			\end{split}
            \end{align}
			where $C_\Omega$ is a constant depending on the expression of coefficients in Remark \ref{rem:alphabound}, independent of $\alpha$ and $\varepsilon$.

			Next, we choose $\varepsilon_{lem}\leq \min\{\varepsilon_{Hs}, \varepsilon_{nu}\}$ with
			\begin{equation}\label{epsilon02}
				4C_\Omega \max_{k}\{C_{v,mid,k}M^{k+1}\}\varepsilon_{Hs}= \varepsilon_{\mathcal{H}^s},
			\end{equation}
			where $\varepsilon_{\mathcal{H}^s}$ is defined by \eqref{coecond:norm1} with $l=s\geq \lfloor\frac{n}{2}\rfloor+2$.
Thus, the coefficients meet the conditions \eqref{coecond:norm1} with $l=s\geq \lfloor\frac{n}{2}\rfloor+2$. Applying Corollary \eqref{cor:well-reg}, we conclude that System \eqref{eqnofvalpha}-\eqref{eqnofzalpha} admits a unique solution $(v^{(\alpha)},z^{(\alpha)})\in \cap_{k=0}^1C^{k}(0,T;\mathcal{H}^{s-k})\times \cap_{k=0}^1C^{k}(0,T;\mathcal{H}^{s-k})$. Consequently, we have $V^{(\alpha)},Z^{(\alpha)}\in \cap_{k=0}^1C^{k}(0,T;\mathcal{H}^{s-k})$.

                We can use well-posedness of the system of $Z^{(\alpha)}$ to transform the estimate \eqref{induction4}, which holds for any time $t\in [0,T]$, into the following estimates at the terminal time $T$.

             \begin{claim}\label{claim:l0} For any $\alpha\geq 1$,  $Z^{(\alpha)}$ satisfies \begin{equation}\label{induction-Z}
				\begin{aligned}
					 \|\partial_t Z^{(\alpha)}(T)\|_{L^{2}}^2+\| Z^{(\alpha)}(T)\|_{\mathcal{H}^{1}}^2\leq C_{Z_T,mid,0}(1-\delta)^{2\alpha-2}\varepsilon^2,
				\end{aligned}
			\end{equation}
for some constant $C_{Z_T,mid,0}$ independent of $\alpha$ and $\varepsilon$.
             \end{claim}
             \begin{proof}
             Since $Z^{(1)}\equiv 0$ for any $t\in [0,T]$, the estimate \eqref{induction-Z} holds trivially for $\alpha=1$. We now proceed by induction. Assume that \eqref{induction-Z} holds for all $\alpha\leq \beta$.

             Referring to the proof of the linear system, for any $\beta$, we define
		    $$w^{(\beta)}=v^{(\beta)}+z^{(\beta)},\qquad W^{(\beta)}=w^{(\beta)}-w^{(\beta-1)}.$$
		  From the initial data of the $z$-system \eqref{eqnofzalpha}, we derive
		    \begin{equation}
		    	W^{(\beta)}_t(T)=Z^{(\beta+1)}(T), \quad W^{(\beta)}_t(T)=Z^{(\beta+1)}_t(T).
		    \end{equation}
		    For convenience, we define the energy functional: for any  $U\in C(0,T;\mathcal{H}^1)\cap C^1(0,T;L^2)$,
            \begin{equation}
                E_\alpha(U)(t)=\frac{1}{2}\bigg(\int_\Omega |U_t(t)|^2 \dd x+ \sum\limits_{i,j=1}^n\int_\Omega a_{ij}^{(\alpha)}U_{x_i}(t)U_{x_j}(t)\dd x\bigg).
            \end{equation}
           Based on the coefficient estimates \eqref{vertify:1}, and using embedding theory, we derive the $C^1$ estimates for the coefficients:
		    \begin{equation}\label{coe:C1}
		    	\begin{cases}
		    		\|a_{ij}^{(\alpha)}-\delta_{ij}\|_{ C^1((0,T)\times \Omega)}\leq C_{coe,C^1}C_\Omega \max_{k}\{C_{v,mid,k}M^{k+1}\}\varepsilon, \quad i,j=1,\cdots, n, \\
		    		\|b_0^{(\alpha)}-1\|_{C^1((0,T)\times \Omega)}\leq C_{coe,C^1}C_\Omega \max_{k}\{C_{v,mid,k}M^{k+1}\}\varepsilon,\\ \|\tilde{b}^{(\alpha)}\|_{C^1((0,T)\times \Omega)}\leq C_{coe,C^1}C_\Omega \max_{k}\{C_{v,mid,k}M^{k+1}\}\varepsilon,\\	
           \|b_k^{(\alpha)}\|_{C^1((0,T)\times \Omega)}\leq C_{coe,C^1}C_\Omega \max_{k}\{C_{v,mid,k}M^{k+1}\}\varepsilon, \quad k=1,\cdots, n,
		    	\end{cases}
		    \end{equation}
		where 	$C_{coe,C^1}$  depends only on $\Omega$ and $n$.
           Then we can show that for any $\alpha\leq \beta+1$, for any  $U\in C(0,T;\mathcal{H}^1)\cap C^1(0,T;L^2)$,
           \begin{equation}\label{est:EnergyET}
           (2-C_{coe,1}\varepsilon)E_\alpha(U) \leq  \|\partial_t U\|_{L^{2}}^2+\| U\|_{\mathcal{H}^{1}}^2\leq (2+C_{coe,1}\varepsilon)E_\alpha(U),
           \end{equation}
           where $C_{coe,1}=nC_{coe,C^1}C_\Omega \max_{k}\{C_{v,mid,k}M^{k+1}\}$ is independent of $\beta$ and $\varepsilon$.

           We now consider the following expression:
           \begin{equation}\label{EW1}
           \begin{split}
           E_\beta(Z^{(\beta+1)})-E_\beta(Z^{(\beta)})&=
		    E_\beta(W^{(\beta)})-E_\beta(Z^{(\beta)}) \\&=
E_\beta(V^{(\beta)})+\int_\Omega\big(V_t^{(\beta)}Z_t^{(\beta)} +\sum\limits_{i,j=1}^na_{ij}^{(\beta)}V^{(\beta)}_{x_i}\cdot Z^{(\beta)}_{x_j}\big)\dd x.\end{split}\end{equation}

           We first estimate $E_\beta(V^{(\beta)})$. Multiplying the equation for $ V^{(\beta)}$ by $V_t^{(\beta)}$,
we derive the following inequality:
\begin{equation}\label{eqn of partial t k V alpha}
				\begin{aligned}
					 V^{(\beta)}_{tt}V_t^{(\beta)}+b_0^{(\beta)}V^{(\beta)}_tV_t^{(\beta)} &
-\sum\limits_{i,j=1}^n\big(a_{ij}^{(\beta)}V^{(\beta)}_{x_i}\big)_{x_j}V_t^{(\beta)}+ \tilde{b}^{(\beta)}V^{(\beta)}V_t^{(\beta)}
					+  \sum\limits_{i=1}^nb_i^{(\beta)}V^{(\beta)}_{x_i}V_t^{(\beta)} \\
=
&F^{(\beta)}V_t^{(\beta)}-2(\chi Z_t^{(\beta)})V_t^{(\beta)}.
				\end{aligned}
			\end{equation}
            
          By Stokes' formula, we have
          \begin{equation}\label{uni:1}
                \int_\Omega V^{(\beta)}_{tt}V_t^{(\beta)}\dd x=\frac{1}{2}\frac{\dd}{\dd t}\int_\Omega\big| V_t^{(\beta)}\big|^2\dd x.
          \end{equation}

Utilizing the symmetry property  $a^{(\beta+1)}_{ij}=a_{ji}^{(\beta+1)}$, we obtain
    \begin{align}\label{uni:3}
    &-\sum\limits_{i,j=1}^n\int_\Omega\big(a_{ij}^{(\beta+1)}V^{(\beta)}_{x_i}\big)_{x_j}V^{(\beta+1)}_t\dd x \nonumber\\
    =~&\frac{1}{2}\frac{\dd}{\dd t}\int_\Omega \sum\limits_{i,j=1}^na_{ij}^{(\beta+1)}V^{(\beta)}_{x_i} V^{(\beta)}_{x_j}\dd x
            -\frac{1}{2}\sum\limits_{i,j=1}^n \int_\Omega a^{(\beta)}_{ijt}V^{(\beta)}_{x_i}\cdot V^{(\beta)}_{x_j}\dd x.
    \end{align}
      In view of \eqref{coe:C1}, by the inequality $ab\leq \frac{1}{2}(a^2+b^2)$ and Poincar\'{e}'s inequality, we obtain that there exists a constant $C_{coe}$ independent of $\varepsilon$ and $\beta$ such that
      \begin{align}\label{uni:4}
      &\left|\int_\Omega(b^{(\beta)}_0-1)|V_t^{(\beta)}|^2\dd x\right|+\left|\int_\Omega \tilde{b}^{(\beta)}V^{(\beta)}V_t^{(\beta)}\dd x\right| \nonumber \\
      &+  \left|\int_\Omega \sum\limits_{i=1}^nb_i^{(\beta)}V^{(\beta)}_{x_i}V_t^{(\beta)}\dd x\right|  + \left|\frac{1}{2}\sum\limits_{i,j=1}^n \int_\Omega a^{(\beta)}_{ijt}V^{(\beta)}_{x_i}\cdot V^{(\beta)}_{x_j}\dd x\right| \nonumber \\
      \leq~& C_{coe}\varepsilon E_\beta(V^{(\beta)}).
      \end{align}

            To estimate $F^{(\beta)}V^{(\beta)}_t$, we recall
             \eqref{coe:FH} in Lemma \ref{lem:FH}. Combining this with the induction hypothesis that \eqref{induction3} holds for $\alpha=\beta\geq 2$, we obtain that for any $t\in [0,T]$,
		    \begin{equation*}
		    	\|F^{(\beta)}\|_{L^2}\leq \tilde{C}_F (1-\delta)^{\beta-2} \varepsilon^2
		    \end{equation*}
           where  $\tilde{C}_F$ is a constant independent with $\beta$ and $\varepsilon$.
            Applying H\"{o}lder's inequality, we then have for any $t\in [0,T]$,
            \begin{equation}\label{uni:6}
                \int_\Omega|F^{(\beta)}V^{(\beta)}_t|\dd x\leq \tilde{\tilde{C}}_F (1-\delta)^{2\beta-2} \varepsilon^3,
            \end{equation}
            where  $\tilde{\tilde{C}}_F$ is a constant independent with $\beta$ and $\varepsilon$.

             Next, we observe that
\begin{equation}\label{uni:7}
\begin{split}		
\int_\Omega\big|V_t^{(\beta)}\big|^2\dd x
                    +2\int_\Omega V_t^{(\beta)}(\chi Z_t^{(\beta)})\dd x
                     \geq-\int_\Omega\big|\chi\cdot Z_t^{(\beta)}\big|^2\dd x,
                    \end{split}
                    \end{equation}

            Combining \eqref{uni:1}--\eqref{uni:6} and \eqref{uni:7}, we arrive at
            \begin{equation*}
            \frac{\dd}{\dd t}E_\beta(V^{(\beta)}) \leq   \big\|\chi\cdot Z_t^{(\beta)}\big\|_{L^2}^2 + C_{coe} \varepsilon E_\beta(V^{(\beta)})+ \tilde{\tilde{C}}_F (1-\delta)^{2\beta-2} \varepsilon^3.
            \end{equation*}
           Applying Gronwall's inequality, we obtain that  for all
$t\in [0,T]$,
		    \begin{equation}\label{VtoZ}
		    	E_\beta(V^{(\beta)})
		    	\leq C_{VtoZ}(\varepsilon) \left( \int_0^T\int_\Omega\big|\chi\cdot Z_t^{(\beta)}\big|^2\dd x \dd t + T\tilde{\tilde{C}}_F(1-\delta)^{2\beta-2} \varepsilon^3 \right),
		    \end{equation}
		    where $C_{VtoZ}(\varepsilon)=e^{TC_{coe} \varepsilon}$  is a constant that is independent of  $\delta$ and $\beta$.

			We next estimate $$\int_\Omega V_t^{(\beta)}Z_t^{(\beta)} \dd x+\sum\limits_{i,j=1}^n\int_\Omega a_{ij}^{(\beta)}V^{(\beta)}_{x_i}\cdot Z^{(\beta)}_{x_j}\dd x.$$
            Multiplying \eqref{eqnofValpha} by $Z_t^{(\beta)}$, \eqref{eqnofZalpha} by $V_t^{(\beta)}$, and integrating over $\Omega$, we obtain
			\begin{equation}\label{est:E1ZT-1}
			\begin{aligned}
				& \int_\Omega\Big(Z_t^{(\beta)}V^{(\beta)}_{tt}-\sum\limits_{i,j=1}^n Z_t^{(\beta)}\big(a_{ij}^{(\beta)}V^{(\beta)}_{x_i}\big)_{x_j}+\tilde{b}^{(\beta)}Z_t^{(\beta)}V^{(\beta)}+ \sum\limits_{i=1}^nb_i^{(\beta)}Z_t^{(\beta)}V^{(\beta)}_{x_i} \\
				& +V_t^{(\beta)}Z^{(\beta)}_{tt}-\sum\limits_{i,j=1}^nV_t^{(\beta)}\big(a_{ij}^{(\beta)}Z^{(\beta)}_{x_i}\big)_{x_j}+\tilde{b}^{(\beta)}V_t^{(\beta)}Z^{(\beta)}+\sum\limits_{i=1}^n b_i^{(\beta)}V_t^{(\beta)}Z^{(\beta)}_{x_i}\Big)\dd x \\
				& +2\int_\Omega\chi\big|Z_t^{(\beta)}\big|^2\dd x
				=\int_\Omega H^{(\beta)}Z_t^{(\beta)} \dd x.
			\end{aligned}
			\end{equation}
			
			By integration by parts, we derive
			\begin{equation}\label{est:E1ZT-2}
			\begin{aligned}
				& -\int_\Omega\sum\limits_{i,j=1}^n\Big(Z_t^{(\beta)}\big(a_{ij}^{(\beta)}V^{(\beta)}_{x_i}\big)_{x_j}+V_t^{(\beta)}\big(a_{ij}^{(\beta)}Z^{(\beta)}_{x_i}\big)_{x_j}\Big)\dd x \\
				= & ~ \frac{\dd}{\dd t}\int_\Omega\Big(\sum\limits_{i,j=1}^na_{ij}^{(\beta)}Z_{x_i}^{(\beta)}V^{(\beta)}_{x_j}\Big)\dd x-\int_\Omega\sum\limits_{i,j=1}^n\big(\partial_ta_{ij}^{(\beta)} \big)Z_{x_i}^{(\beta)}V^{(\beta)}_{x_j}\dd x,
			\end{aligned}
			\end{equation}
            and
            \begin{equation}\label{est:E1ZT-3}
            \int_\Omega\Big(Z_t^{(\beta)}V^{(\beta)}_{tt}+V_t^{(\beta)}Z^{(\beta)}_{tt}\big)\dd x=\frac{\dd}{\dd t}\int_\Omega V_t^{(\beta)}Z_t^{(\beta)}\dd x.
            \end{equation}

			Using the coefficient estimates \eqref{coe:C1}, the inequality $ab\leq \frac{1}{2}(a^2+b^2)$ and Poincar\'{e}'s inequality, we obtain:
            \begin{equation}\label{est:E1ZT-4}
           \begin{split}
           \left|\int_\Omega \tilde{b}^{(\beta)}Z_t^{(\beta)}V^{(\beta)}\dd x\right|\leq C_1\varepsilon (E_{\beta}(Z^{(\beta)}) +E_{\beta}(V^{(\beta)})), \\
           \left|\int_\Omega\sum\limits_{i=1}^n b_i^{(\beta)}V_t^{(\beta)}Z^{(\beta)}_{x_i}\dd x \right|\leq C_1\varepsilon (E_{\beta}(Z^{(\beta)}) +E_{\beta}(V^{(\beta)})), \\
           \left|\int_\Omega\sum\limits_{i,j=1}^n\big(\partial_ta_{ij}^{(\beta)} Z_{x_i}^{(\beta)}V^{(\beta)}_{x_j}\dd x\right|\leq C_1\varepsilon (E_{\beta}(Z^{(\beta)}) +E_{\beta}(V^{(\beta)})).
            \end{split}
            \end{equation}

			Similar to \eqref{uni:6}, we obtain that for any $t\in [0,T]$
            \begin{equation}\label{uni:8}
            \|H^{(\beta)}\|_{L^2}\leq \tilde{C}_H (1-\delta)^{\beta-2}\varepsilon^2,     \int_\Omega|H^{(\beta)}Z^{(\beta)}_t|\leq \tilde{\tilde{C}}_H (1-\delta)^{2\beta-4} \varepsilon^3,
            \end{equation}
            where $\tilde{C}_H, \tilde{\tilde{C}}_H$ are constants independent with $\beta,\delta$ and $\varepsilon$.

           Combining \eqref{est:E1ZT-1}--\eqref{est:E1ZT-4} with \eqref{uni:8}, we have

			$$
			\begin{aligned}
				& \frac{\dd}{\dd t}\int_\Omega\Big(Z_t^{(\beta)}V^{(\beta)}_t+\sum\limits_{i,j=1}^na_{ij}^{(\beta)}Z_{x_i}^{(\beta)}V^{(\beta)}_{x_j}\Big)\dd x+2\int_\Omega\chi \big|Z_t^{(\beta)}\big|^2\dd x \\
				\leq & ~ 3C_1\varepsilon(E_{\beta}(Z^{(\beta)}) +E_{\beta}(V^{(\beta)}))  + \tilde{\tilde{C}}_H  (1-\delta)^{2\beta-4} \varepsilon^3.
			\end{aligned}
			$$
			
			Integrating with respect to
			$t$ over the interval $(0,T)$ and using the initial condition $(V^{(\beta)}(0), V_t^{(\beta)}(0))=(0,0)$,  we arrive at
			$$
			\begin{aligned}
				& \int_\Omega\Big(Z_t^{(\beta)}V^{(\beta)}_t+\sum\limits_{i,j=1}^na_{ij}^{(\beta)}Z_{x_i}^{(\beta)}V^{(\beta)}_{x_j}\Big)\dd x\Big|_{t=T}+2\int_0^T\int_\Omega \chi\big| Z_t^{(\beta)}\big|^2\dd x\dd t \\
				\leq~ & \int_0^T \left(3C_1\varepsilon(E_{\beta}(Z^{(\beta)}) +E_{\beta}(V^{(\beta)}))  + \tilde{\tilde{C}}_H  (1-\delta)^{2\beta-4} \varepsilon^3\right) \dd t \\
				 =~ & 3C_1\varepsilon \int_0^T (E_{\beta}(Z^{(\beta)}) +E_{\beta}(V^{(\beta)})) \dd t + T\tilde{\tilde{C}}_H  (1-\delta)^{2\beta-4} \varepsilon^3 .
			\end{aligned}
			$$

			Combining this with the energy estimate  \eqref{VtoZ}  of $V^{(\beta)}$, we obtain
			\begin{equation}\label{EZV1}
			\begin{aligned}
				& E_\beta\big(V^{(\beta)}(T)\big)+\int_\Omega V_t^{(\beta)}Z_t^{(\beta)} \dd x+\sum\limits_{i,j=1}^n\int_\Omega a_{ij}^{(\beta)}V^{(\beta)}_{x_i}\cdot Z^{(\beta)}_{x_j}\dd x \\
				\leq & ~ 3C_1\varepsilon \int_0^T (E_{\beta}(Z^{(\beta)}) +E_{\beta}(V^{(\beta)})) \dd t + T(C_{VtoZ}(\varepsilon)\tilde{\tilde{C}}_F+ \tilde{\tilde{C}}_H)  (1-\delta)^{2\beta-2} \varepsilon^3\\
&+C_{VtoZ}(\varepsilon)  \int_0^T\int_\Omega\big|\chi\cdot Z_t^{(\beta)}\big|^2\dd x \dd t-2\int_0^T\int_\Omega \chi\big| Z_t^{(\beta)}\big|^2\dd x\dd t.
			\end{aligned}
			\end{equation}
			
            Since \eqref{induction4} and \eqref{induction5} are assumed to hold for $\alpha=\beta$, we have
            \begin{equation}\label{EZV2}
            (E_{\beta}(Z^{(\beta)}) +E_{\beta}(V^{(\beta)}))\leq C_2(1-\delta)^{2\beta-2} \varepsilon^2
            \end{equation}
            where $C_2:=C^2_{Z,mid,0}+C^2_{V,mid,0}$.

			{\color{black}Taking $\varepsilon_{lem}\leq \varepsilon_{2,0}$, where $\varepsilon_{2,0}$ is sufficiently small, such that $C_{VtoZ}(\varepsilon_{2,0})=e^{TC_{coe}\varepsilon_{2,0} }\leq \frac{3}{2}$}, and noting that $0\leq\chi_\omega(x)\leq 1$, we obtain
  \begin{equation}\label{EZV3}
  C_{VtoZ}(\varepsilon)  \int_0^T\int_\Omega\big|\chi\cdot Z_t^{(\beta)}\big|^2\dd x \dd t-2\int_0^T\int_\Omega \chi\big| Z_t^{(\beta)}\big|^2\dd x\dd t\leq -\frac{1}{2}\int_0^T\int_\Omega \chi\big| Z_t^{(\beta)}\big|^2\dd x\dd t.
  \end{equation}

          Combining \eqref{EZV1}, \eqref{EZV3} with \eqref{EW1}, we deduce that
			\begin{equation}\label{EZV4}
            E_\beta (Z^{(\beta+1)}(T))\leq E_\beta\big(Z^{(\beta)}(T)\big)- \frac{1}{2}\int_0^T\int_\Omega \chi\big| Z_t^{(\beta)}\big|^2\dd x\dd t+C_3 (1-\delta)^{2\beta-4} \varepsilon^3,
            \end{equation}
			where $C_3=3TC_1C_2+\tilde{\tilde{C}}_F+ \tilde{\tilde{C}}_H$ is a constant independent of $\beta$ and $\varepsilon$.

            Choosing $\varepsilon_{lem} $ small enough, such that
            \begin{equation}\label{cond:epsilon:obs}
            C(C_\Omega,\nu_0,n,s)\varepsilon_{lem}\leq \varepsilon_{obs},
            \end{equation}
			where $\varepsilon_{obs}$ is given in  Theorem \ref{fully nonlinear observability}, and recalling \eqref{uni:8}, we can then apply Theorem \ref{fully nonlinear observability} to System \eqref{eqnofZalpha} for $Z^{(\beta)}$, and obtain the following observability inequality
\begin{equation}\label{ineq:obsl0}			
E_\beta (Z^{(\beta)}(T))\leq D\int_0^T\int_\omega\big|Z_t^{(\beta)}\big|^2\dd x\dd t+C_{Z,0}(1-\delta)^{2\beta-4}\varepsilon^4,
\end{equation}
            for some constant $D$ and $C_{Z,0}$. Thus we obtain
			\begin{equation}\label{W1}
				\begin{aligned}
					 E_\beta (Z^{(\beta+1)}(T)))
					\leq & ~ \Big(1-\frac{1}{2D}\Big)E_\beta (Z^{(\beta)}(T))+C_{Z,0}(1-\delta)^{2\beta-4}\varepsilon^4 +  C_3 (1-\delta)^{2\beta-4} \varepsilon^3.
				\end{aligned}
			\end{equation}
					
			Taking $\delta>0$ small enough such that
			\begin{equation}\label{cond:Delta}
				1-\frac{1}{2D}\leq (1-\delta)^3,
			\end{equation}
            and recalling \eqref{est:EnergyET}, we obtain that
			\begin{align*}
				& ~ \big\|Z^{(\beta+1)}(T)\big\|_{\mathcal{H}^1}^2+\big\|Z_t^{(\beta+1)}(T)\big\|_{L^2}^2 \\
				\leq & ~ (1-\delta)^3\frac{2+C_{coe,1}\varepsilon}{2-C_{coe,1}\varepsilon}C_{Z_T,mid,0}(1-\delta)^{2\beta-2}\varepsilon^2\\ &+(2-C_{coe,1}\varepsilon )^{-1} C_{Z,0}(1-\delta)^{2\beta-4}\varepsilon^4 + (2-C_{coe,1}\varepsilon)^{-1} C_3 (1-\delta)^{2\beta-4} \varepsilon^3.
			\end{align*}
             Taking $ \varepsilon_{3}$ small enough such that
             \begin{equation}\label{cond:coe2}
             (1-\delta)\cdot \frac{2+C_{coe,1}\varepsilon_{3}}{2-C_{coe,1}\varepsilon_{3}}\leq  1-\frac{\delta}{2},
             \end{equation}
			 and
\begin{equation}\label{cond:coe3}
					\left(1-\frac{\delta}{2}\right)\left(C_{Z_T,mid,0}+(2-C_{coe,1}\varepsilon_{3} )^{-1}(1-\delta)^{-4}(C_{Z,0}\varepsilon_{3}+C_3)\varepsilon_{3}\right)<C_{Z_T,mid,0}.
			\end{equation}
Therefore,  choosing $\varepsilon_{lem}\leq \varepsilon_{3}$, we obtain
			\begin{align}
				\big\|Z^{(\beta+1)}(T)\big\|_{\mathcal{H}^1}^2+\big\|Z_t^{(\beta+1)}(T)\big\|_{L^2}^2\leq C_{Z_T,mid,0}(1-\delta)^{2\beta}\varepsilon^2.
			\end{align}
            Thus, we complete the proof of Claim \ref{claim:l0}.
            \end{proof}

            Similarly, using the standard energy estimate for $Z^{(\beta+1)}$,
            we obtain
            \begin{align}
                E_\beta(Z^{(\beta+1)})(t)
		    	\leq C_{ZtoZ_T}(\varepsilon) \left( E_\beta(Z^{(\beta+1)})(T) + T\tilde{\tilde{C}}_H(1-\delta)^{2\beta} \varepsilon^3 \right),
            \end{align}
            where $C_{ZtoZ_T}(\varepsilon) =e^{C_{Z_T}T \varepsilon}$ and $C_{Z_T}, \tilde{\tilde{C}}_H$ are constants independent of $\beta$ and $\varepsilon$.

            Choosing $\varepsilon_{lem}<\varepsilon_{ZtoZT}$ small enough and $C_{Z,mid,0}=2C_{Z_T,mid,0}$ such that
            \begin{equation}\label{cond:varepsilon3}
            C_{ZtoZ_T}(\varepsilon_{ZtoZT})\leq  \frac{3}{2},
            \end{equation}
            and
            \begin{equation}
            \frac{3}{2}\cdot\frac{2+C_{coe,1}\varepsilon_{ZtoZT}}{2-C_{coe,1}\varepsilon_{ZtoZT} } C_{Z_T,mid,0}+T\tilde{\tilde{C}}_H(1-\delta)^{-2} \varepsilon_{ZtoZT}\leq C_{Z,mid,0},
            \end{equation}
            we ensure that $Z^{(\beta+1)}$ in \eqref{induction4} holds for $l=0$.

            Finally,  given that $v^{(1)}=V^{(1)}$ and relationships between $v^{(\alpha)}$ and $V^{(\alpha)}$, we can derive the following inequality by \eqref{induction4}:
 $$\big\|\partial_t v^{(\alpha)}\big\|_{L^{2}}^2+\big\| v^{(\alpha)}\big\|_{\mathcal{H}^{1}}^2=\big\|\sum_{\beta=1}^\alpha \partial_t V^{(\beta)}\big\|_{L^{2}}^2+\big\|\sum_{\beta=1}^\alpha  V^{(\beta)}\big\|_{\mathcal{H}^{1}}^2\leq  \frac{(1-\delta)^2-(1-\delta)^{2\alpha}}{1-(1-\delta)^2}C^2_{V,mid,1}\varepsilon^2.$$
				Observing that $\frac{(1-\delta)^2-(1-\delta)^{2\alpha}}{1-(1-\delta)^2}\leq \frac{(1-\delta)^2}{1-(1-\delta)^2}$ and invoking the setting in \eqref{cond:Vconstant}, we can thereby conclude that the inequality \eqref{induction5} holds for the case $l=0$.

            \subsubsection{Inductive step 1: To prove \eqref{induction4} for $ l\geq 1$ and all $\alpha\geq 2$}

           We first consider the equations for $\partial_t^l V^{(\beta+1)}$ and $\partial_t^lZ^{(\beta+1)}$ for $l\geq 1$.
           To achieve this, we apply the differential operator $\partial_t^{k-2}$
  to Equations \eqref{eqnofValpha} and \eqref{eqnofzalpha}, resulting in the following equations:
			\begin{equation}\label{eqn of partial t s V alpha}
				\begin{aligned}
					& \partial_t^{k}V^{(\beta+1)}_{tt}+b_0^{(\beta+1)}\partial_t^{k}V^{(\beta+1)}_t-\sum\limits_{i,j=1}^n\big(a_{ij}^{(\beta+1)}\partial_t^{k}V^{(\beta+1)}_{x_i}\big)_{x_j}+ \tilde{b}^{(\beta+1)}\partial_t^{k}V^{(\beta+1)} \\
					+ & \sum\limits_{i=1}^nb_i^{(\beta+1)}\partial_t^{k}V^{(\beta+1)}_{x_i}=-2\chi\cdot\partial_t^{k}Z_t^{(\beta+1)}+\partial_t^{s-1}F^{(\beta+1)}+F^{(\beta+1,k)},
				\end{aligned}
			\end{equation}
			and
			\begin{equation}\label{eqn of partial t s Z alpha}
				\begin{aligned}
					& \partial_t^{k}Z^{(\beta+1)}_{tt}-b_0^{(\beta+1)}\partial_t^{k}Z^{(\beta+1)}_t-\sum\limits_{i,j=1}^n\big(a_{ij}^{(\beta+1)}\partial_t^{k}Z^{(\beta+1)}_{x_i}\big)_{x_j}+ \tilde{b}_{(\beta+1)}\partial_t^{k}Z^{(\beta+1)} \\
					+ & \sum\limits_{i=1}^nb_i^{(\beta+1)}\partial_t^{k}Z^{(\beta+1)}_{x_i}=\partial_t^{k}H^{(\beta+1)}+H^{(\beta+1,k)},
				\end{aligned}
			\end{equation}
			where
			\begin{align*}
				F^{(\beta+1,k)}= & ~ b_0^{(\beta+1)}\partial_t^kV^{(\beta+1)}_t-\sum\limits_{i,j=1}^n\big(a_{ij}^{(\beta+1)}\partial_t^kV^{(\beta+1)}_{x_i}\big)_{x_j}+\tilde{b}^{(\beta+1)} \partial_t^kV^{(\beta+1)}+\sum\limits_{i=1}^nb_i^{(\beta+1)}\partial_t^kV^{(\beta+1)}_{x_i} \\
				& -\partial_t^k\Big(b_0^{(\beta+1)}V^{(\beta+1)}_t-\sum\limits_{i,j=1}^n\big(a_{ij}^{(\beta+1)}V^{(\beta+1)}_{x_i}\big)_{x_j}+\tilde{b}^{(\beta+1)}V^{(\beta+1)}+\sum\limits_{i=1}^n b_i^{(\beta+1)}V^{(\beta+1)}_{x_i}\Big),
			\end{align*}
			and
			$$
			\begin{aligned}
				H^{(\beta+1,k)}= & ~ b_0^{(\beta+1)}\partial_t^kZ^{(\beta+1)}_t-\sum\limits_{i,j=1}^n\big(a_{ij}^{(\beta+1)}\partial_t^kZ^{(\beta+1)}_{x_i}\big)_{x_j}+\tilde{b}^{(\beta+1)} \partial_t^kZ^{(\beta+1)}+\sum\limits_{i=1}^nb_i^{(\beta+1)}\partial_t^kZ^{(\beta+1)}_{x_i} \\
				& -\partial_t^k\Big(b_0^{(\beta+1)}Z^{(\beta+1)}_t-\sum\limits_{i,j=1}^n\big(a_{ij}^{(\beta+1)}Z^{(\beta+1)}_{x_i}\big)_{x_j}+\tilde{b}^{(\beta+1)}Z^{(\beta+1)}+\sum\limits_{i=1}^n b_i^{(\beta+1)}Z^{(\beta+1)}_i\Big).
			\end{aligned}
			$$

Combining \eqref{coe:FH} in Lemma \ref{lem:FH} with the assumption that \eqref{induction4} and \eqref{induction5} hold for $\alpha\leq \beta$, we obtain that for any $t\in [0,T]$, $k=1,\cdots, s-2$
		    \begin{equation*}\begin{split}
		    	\|\partial_t^k F^{(\beta)}\|_{L^2}\leq \tilde{C}_{F,k}  (1-\delta)^{\beta-1} \varepsilon^2 ,
                \|\partial_t^k H^{(\beta)}\|_{L^2}\leq \tilde{C}_{H,k}  (1-\delta)^{\beta-1} \varepsilon^2 ,
		      \end{split}
            \end{equation*}
            where $\tilde{C}_{F,k} $ and $ \tilde{C}_{H,k}$ are constants independent of $M$, $\beta$ and $\varepsilon$.

            Next, we state the following claim:

		   \begin{claim}\label{lem:FH:est} For any $\beta\geq 1$ and $1\leq k\leq s-2$,  $F^{(\beta+1,k)}, H^{(\beta+1,k)}\in C(0,T;\mathcal{H}^1)$ and
		   there exist constants $C_{F,k},C_{H,k}$, independent of $\beta$ and $\varepsilon$, such that
		    \begin{equation}\label{FH:est}\begin{split}
		    \|F^{(\beta+1,k)}\|_{L^2}&\leq C_{F,k}\varepsilon \big(\sum_{p=0}^{k}\|\partial_t^pV^{\beta+1}\|_{\mathcal{H}^1}+\|\partial_t^{p+1}V^{\beta+1}\|_{L^2}+\|\partial_t^pZ^{(\beta+1)}\|^2_{L^{2}}\big),\\ \|H^{(\beta+1,k)}\|_{L^2}&\leq C_{H,k}\varepsilon \big(\sum_{p=0}^{k}\|\partial_t^pZ^{\beta+1}\|_{\mathcal{H}^1}+\|\partial_t^{p+1}Z^{\beta+1}\|_{L^2}\big).
		    \end{split}
            \end{equation}
		    \end{claim}

		   \begin{proof}
		   	 We first estimate $F^{(\beta+1,k)}$; the estimate for $H^{(\beta+1,k)}$  follows similarly. By expanding
		   	$F^{(\beta+1,k)}$ into commutators, we focus on the first term:
		   	\begin{equation}
		   		b_0^{(\beta+1)}\partial_t^kV^{(\beta+1)}_t-\partial_t^k(b_0^{(\beta+1)}V^{(\beta+1)}_t)=-\sum_{l=0}^{k-1}C_k^l \partial_t^lV^{(\beta+1)}_t\partial_t^{k-l}(b_0^{(\beta+1)}).
		   	\end{equation}
		  Each term in the above expression contains at least the first-order time derivative of   $(b_0^{(\beta+1)}$, and at most the $k-1$-th time derivative of  $V^{(\beta+1)}_t$  and $k$-th time derivative of $(b_0^{(\beta+1)}$, Consequently, by H\"{o}lder's inequality, we have:
		   	\begin{equation}
		   		\|\big [b_0^{(\beta+1)}\partial_t^kV^{(\beta+1)}_t-\partial_t^k(b_0^{(\beta+1)}V^{(\beta+1)}_t))\big ]\|_{L^2}\leq C(s,l)(\sum_{p=0}^k\|\partial_t^p b_0^{(\beta+1)}\|_{L^2)})(\sum_{p=0}^k\|\partial_t^pV^{\beta+1}\|_{L^2})
		   	\end{equation}
		   By \eqref{vertify:1}, we have  $\sum_{p=0}^k\|\partial_t^p b_0^{(\beta+1)}\|_{L^2)} \leq C\varepsilon$ for some constant $C$.

Next, we consider the term
$$\partial_t^k\sum\limits_{i,j=1}^n\big(a_{ij}^{(\beta+1)}V^{(\beta+1)}_{x_i}\big)_{x_j}-\sum\limits_{i,j=1}^n\big(a_{ij}^{(\beta+1)}\partial_t^kV^{(\beta+1)}_{x_i}\big)_{x_j}.$$

When $k\leq s-2$, similar estimates can be derived for the remaining commutators in $F^{(\beta+1,k)}$, except for the term:
\begin{equation*}
    \|\big(\partial_t^k\sum\limits_{i,j=1}^n\big(a_{ij}^{(\beta+1)}V^{(\beta+1)}_{x_i}\big)_{x_j}-\sum\limits_{i,j=1}^n\big(a_{ij}^{(\beta+1)}\partial_t^kV^{(\beta+1)}_{x_i}\big)_{x_j}\big)\|_{L^2}\leq C \varepsilon (\sum_{p=0}^{k-1}\|\partial_t^pV^{(\beta+1)}\|_{\mathcal{H}^{2}}).
\end{equation*}

We observe that
\begin{equation}\label{est:x-to-t}
\sum_{p=0}^{k-1}\|\partial_t^pV^{(\beta+1)}\|_{\mathcal{H}^{2}}\leq C  (\sum_{p=0}^{k}\|\partial_t^pV^{(\beta+1)}\|_{\mathcal{H}^{1}}^2+\sum_{p=0}^{k+1}\|\partial_t^pV^{(\beta+1)}\|^2_{L^{2}}+\sum_{p=0}^{k}\|\partial_t^pZ^{(\beta+1)}\|^2_{L^{2}}
\end{equation}
Indeed, using Equation \eqref{eqnofZalpha} and the conditions \eqref{coe:C1} on the coefficients, we deduce that for any  $k\geq 2$,
             		\begin{equation*}
             \begin{aligned}
                        \big\|\sum\limits_{i,j=1}^n\big(a_{ij}^{(\beta+1)}\partial_t^{k-2}V^{(\beta+1)}_{x_i}\big)_{x_j}\big\|_{L^2}^2\leq &\big\| \partial_t^{k-2}V^{(\beta+1)}_{tt}\big\|_{L^2}^2+\big\|b_0^{(\beta+1)}\partial_t^{k-2}V^{(\beta+1)}_t\big\|_{L^2}^2 \\ &+\|\tilde{b}^{(\beta+1)}\partial_t^{k-2}V^{(\beta+1)}\|_{L^2}^2 +
                        \big\|\sum\limits_{i=1}^nb_i^{(\beta+1)}\partial_t^{k-2}V^{(\beta+1)}_{x_i}\big\|_{L^2}^2 \\ &+\big\|\partial_t^{k-2}F^{(\beta+1)}\|_{L^2}^2+\|F^{(\beta+1,k-2)}\|^2_{L^2}+\big\|2\chi \partial_t^{k-2}Z_t^{(\beta+1)}\|_{L^2}^2.
                    \end{aligned}
                    \end{equation*}
                    By the expression of $F^{(\beta+1,k)}$ and the fact that $F^{(\beta+1,0)}=0$, combined with the conditions \eqref{coe:C1}, we can deduce that
                    $$
                    \big\|\sum\limits_{i,j=1}^n\big(a_{ij}^{(\beta+1)}\partial_t^{k-2}Z^{(\beta+1)}_{x_i}\big)_{x_j}\big\|_{L^2}^2\leq C\big(\sum_{p=0}^{k}\|\partial_t^pV^{(\beta+1)}\|_{\mathcal{H}^{1}}^2+\sum_{p=0}^{k+1}\|\partial_t^pV^{(\beta+1)}\|^2_{L^{2}}\big).
                    $$
                    Since  $\partial_t^{k-2}Z^{(\beta+1)}$ is well-defined and belongs to  $ \mathcal{H}^2$ for each $k=2,\cdots,s$ and due to the conditions \eqref{coe:C1} of $a_{ij}^{(\beta+1)}$,   elliptic theory implies the desired estimate.

Thus, the proof of the claim is complete.
		   \end{proof}

            Thanks to the estimations of $(F^{(\beta+1,k)},H^{(\beta+1,k)})$ in Claim \ref{lem:FH:est},  together with the coefficient estimates \eqref{coe:C1}, we can apply Corollary \ref{cor:well-reg} to obtain the well-posedness of  $\partial_t^kV^{(\beta+1)}$ and $\partial_t^k Z^{(\beta+1)}$ in $C(0,T; \mathcal{H}^{s-k})$.

            To complete the proof of \eqref{induction4}, by induction,  it suffices to prove \eqref{induction4} for the case $l=k,\alpha=\beta+1$ under the assumption that \eqref{induction4} holds for $l\leq k-1, k\geq 1$ and for any $\alpha$, as well as for $l\leq s-1, \alpha\leq \beta$.

            Similar to the case $l=0$, we now prove the following inequality:
           \begin{claim} There exists a small positive constant $\varepsilon_{ZT}$, such that if $\varepsilon\leq \varepsilon_{ZT}$, then for any integer $\alpha\geq 1$, any positive integer $l\leq s-2$,  $Z^{(\alpha)}$ satisfies
           \begin{equation}\label{induction-ZH}
				\begin{aligned}
					 \|\partial_t^{l+1} Z^{(\alpha)}(T)\|_{L^{2}}^2+\|\partial_t^{l} Z^{(\alpha)}(T)\|_{\mathcal{H}^{1}}^2\leq C_{Z_T,mid,l}(1-\delta)^{2\alpha-2}{M^{2l}}\varepsilon^2,
				\end{aligned}
			\end{equation}
where $C_{Z_T,mid,l}$ and $ M$ are positive constants independent of $\alpha$ and $\varepsilon$.
  \end{claim}
\begin{proof}
By employing mathematical induction, we assume that \eqref{induction4} and \eqref{induction5} hold for all $\alpha\leq \beta$ when $l\leq s-1$, as well as for all $\alpha$ when $l\leq k-1$ and $s-1\geq k\geq 1$. Under these assumptions, it remains to prove that \eqref{induction-ZH} holds for $\alpha=\beta+1$ and $l=k$.
             For notational simplicity, we define $\tilde{V}^{(\beta+1,k)}=\partial_t^k V^{(\beta+1)},\tilde{Z}^{(\beta+1,k)}=\partial_t^k Z^{(\beta+1)}$ and for any $t\in [0,T]$, we define
             $$\tilde{W}^{(\beta+1,k)}=\tilde{V}^{(\beta+1,k)}+\tilde{Z}^{(\beta+1,k)}, \tilde{w}^{(\beta+1,k)}=\tilde{v}^{(\beta+1,k)}+\tilde{z}^{(\beta+1,k)}. $$

             The estimates \eqref{induction-Z} for $l=k$ directly come from the estimation of $E_{\beta}(\tilde{W}^{(\beta,k)}(T))-E_{\beta}(\tilde{Z}^{(\beta,k)}(T))$ and the relation between $E_{\beta}(\tilde{W}^{(\beta,k)}(T))$ and $E_{\beta}(\tilde{Z}^{(\beta+1,k)}(T))$.

             The estimate of $E_{\beta}(\tilde{W}^{(\beta,k)}(T))-E_{\beta}(\tilde{Z}^{(\beta,k)}(T))$ follows a similar approach to the case $l=0$. We first observe that:
              \begin{equation}\label{EZVl}
			\begin{aligned}
              & E_{\beta}(\tilde{W}^{(\beta,k)}(T))-E_{\beta}(\tilde{Z}^{(\beta,k)}(T)) \\
              =~&\bigg(\int_\Omega \tilde{V}_t^{(\beta)}\tilde{Z}_t^{(\beta)} \dd x +\sum\limits_{i,j=1}^n\int_\Omega a_{ij}^{(\beta)}\tilde{V}^{(\beta)}_{x_i}\cdot \tilde{Z}^{(\beta)}_{x_j}\dd x\bigg)\Big|_{t=T} +E_\beta\big(\tilde{V}^{(\beta)}(T)\big).
            \end{aligned}
            \end{equation}
            Similarly, by multiplying the equation for
 $\tilde{V}^{(\beta)}$ by $\partial_t\tilde{V}^{(\beta)}$ and integrating over $\Omega$, we obtain:
             \begin{equation*}
		    	\begin{aligned}
		    		 \frac{\dd}{\dd t}E_\beta(\tilde{V}^{(\beta)}) \leq ~&  \big\|\chi\cdot \tilde{Z}_t^{(\beta)}\big\|_{L^2}^2+ C_{V,2,k} \varepsilon E_\beta(\tilde{V}^{(\beta)}) \\
                     &+ \tilde{\tilde{C}}_{F,k} (1-\delta)^{2\beta-2} \varepsilon^3+\bigg|\int_\Omega F^{(\beta,k)}\partial_t\tilde{V}^{(\beta)}\dd x\bigg|,
		    	\end{aligned}
		    \end{equation*}
             for some constants $C_{V,2,k}, \tilde{\tilde{C}}_{F,k}$, independent of $\beta$ and $\varepsilon$.

            Since \eqref{induction4} is assumed to hold for $\alpha=\beta$, by H\"{o}lder's inequality and Lemma \ref{lem:FH:est}, we have:
            $$\bigg|\int_\Omega F^{(\beta,k)}\partial_t\tilde{V}^{(\beta)}\dd x\bigg|\leq C_{V,4,k}(1-\delta)^{2\beta}\varepsilon^3.$$
            for some constant $C_{V,4,k}$, independent of $\beta$ and $\varepsilon$. Thus, by applying Gronwall's inequality, we immediately obtain:
             \begin{equation}\label{Vtov}
		    	E_\beta(\tilde{V}^{(\beta)}(t))
		    	\leq C_{\tilde{V}to\tilde{V}}(\varepsilon) \left(E_\beta(\tilde{V}^{(\beta)}(0))+ \int_0^T\int_\Omega\big|\chi\cdot \tilde{Z}_t^{(\beta)}\big|^2\dd x \dd t + C_{V,5,k}(1-\delta)^{2\beta-2} \varepsilon^3 \right),
		    \end{equation}
 		    where $C_{\tilde{V}to\tilde{V}}(\varepsilon)\leq C_{\tilde{V}to\tilde{V}}(\varepsilon_{2,k})=\frac{3}{2}<2$  is a constant when choosing $\varepsilon_{lem}\leq \varepsilon_{2,k}$.

            The next step is to estimate $E_\beta(\tilde{V}^{(\beta)}(0))$. The equation \eqref{eqnofValpha} of $V^{(\beta)}$ can be written as
			\begin{align}\label{equation:V:beta:rewrite}
				\partial_t^2V^{(\beta)}+\partial_tV^{(\beta)}=\Delta V^{(\beta)}-2\chi\cdot\partial_tZ^{(\beta)}+V_{error}^{(\beta)},
			\end{align}
			where
			{\color{black}\begin{align}\label{V:error}
					V_{error}^{(\beta)}:= & ~ \big(1-b_0^{(\beta)}\big)V_t^{(\beta)}-\big(\tilde{b}^{(\beta)}\big)V^{(\beta)}-\sum\limits_{i=1}^nb_i^{(\beta)}V_{x_i}^{(\beta)} \\
                    &+F^{(\beta)}+\sum\limits_{i,j=1}^n \Big(\big(a_{ij}^{(\beta)}-\delta_{ij} \big)V_{x_i}^{(\beta)}\Big)_{x_j}. \nonumber
				\end{align}
				Since \eqref{induction4} are assumed to hold for $\alpha=\beta$ and recalling the estimates on coefficients, applying Lemma \ref{lem:GG} with $G=V_{error}^{(\beta)}$, we have for any $m\leq s-2-k$
				\begin{align}
					\|\partial_t^kV_{error}^{(\beta)}\|_{\mathcal{H}^{m}}\leq C_{V,error} \big((1-\delta)^{\beta-1}M^{k+m+2}\varepsilon^2\big),
			\end{align}}
			for some constant $C_{V,error}$ independent of $\beta$ and $\varepsilon$.
			Differentiating \eqref{equation:V:beta:rewrite} with respect to $t$ for $k-2$ times, we obtain:

 {\color{black}When $k\geq 1$ is odd,
				\begin{align}\label{V:k:odd}
					\partial_t^kV^{(\beta)}=\partial_t\Delta^{\frac{k-1}{2}}V^{(\beta)}-\sum\limits_{l=0}^{\frac{k-1}{2}}\partial_t^{k-2l-2}\Delta^l\Big(2\chi Z^{(\beta)}_t+\partial_tV^{(\beta)}-V_{error}^{(\beta)}\Big),
			\end{align}
when $k\geq 2$ is even,
				\begin{align}\label{V:k:even}
					\partial_t^kV^{(\beta)}=\Delta^{\frac{k}{2}}V^{(\beta)}-\sum\limits_{l=0}^{\frac{k}{2}-1}\partial_t^{k-2l-2}\Delta^l\Big(2\chi Z^{(\beta)}_t+\partial_tV^{(\beta)} -V_{error}^{(\beta)}\Big).
				\end{align} }

				We note that $\chi$ is a smooth bounded function so that there exist a sequence constants $C_{\chi,p}, p=0,1,\cdots,k$ such that for any $u\in \mathcal{H}^k$,
$$
\sum_{p=0}^{l}\|\Delta^p (2\chi \partial_t^m u)\|_{L^2}\leq C_{\chi,l} \sum_{p=0}^{l}\|\Delta^p \partial_t^m u\|_{L^2}=C_{\chi,l} \sum_{p=0}^{l}\| \partial_t^m u\|_{\mathcal{H}^{2p}}
$$
			
			Noting that from initial data for $V^{(\beta)}$ in \eqref{eqnofValpha}, we have $(\Delta^mV^{(\beta)}(0),\partial_t\Delta^mV^{(\beta)}(0))=(0,0)$ for any integer $m\geq 0$. Moreover, since \eqref{induction4} and \eqref{induction5} hold for $\alpha=\beta$, we obtain
			\begin{align}\label{comb:3}
				& ~ \big\|\partial_t^kV^{(\beta)}(0)\big\|_{L^2}^2+\big\|\partial_t^{k-1}V^{(\beta)}(0)\big\|_{\mathcal{H}^1}^2 \\
				\leq & ~ \bigg(\sum_{i=1}^{k-1} C^2_{V,mid,i}M^{2i}+C_{\chi,k}\sum_{i=1}^{k-1}C^2_{Z,mid,i}M^{2i}\bigg)(1-\delta)^{2\beta-2}\varepsilon^2+ kC^2_{V,error}(1-\delta)^{2\beta-2}M^{2k}\varepsilon^4, \nonumber
			\end{align}
		

           Similarly, by multiplying the equation for $\tilde{V}^{(\beta)}$ by $\partial_t\tilde{Z}^{(\beta)}$ and adding it to the equation for $\tilde{Z}^{(\beta)}$ multiplied by $\partial_t\tilde{V}^{(\beta)}$, and integrating over $\Omega$, we obtain:
           \begin{align*}
           &\int_\Omega \tilde{V}_t^{(\beta)}\tilde{Z}_t^{(\beta)}\dd x +\sum\limits_{i,j=1}^n\int_\Omega a_{ij}^{(\beta)}\tilde{V}^{(\beta)}_{x_i}\cdot \tilde{Z}^{(\beta)}_{x_j}\dd x\Big|_{t=T} \\
           \leq~&  - 2\int_0^T\int_\Omega \chi\big| \tilde{Z}_t^{(\beta)}\big|^2\dd x\dd t
	    +C_{5,k} (1-\delta)^{2\beta-2} \varepsilon^3+\int_\Omega 
           \tilde{V}_t^{(\beta)}\tilde{Z}_t^{(\beta)} \dd x \\ &+\sum\limits_{i,j=1}^n\int_\Omega a_{ij}^{(\beta)}\tilde{V}^{(\beta)}_{x_i}\cdot \tilde{Z}^{(\beta)}_{x_j}\dd x\Big|_{t=0}.
           \end{align*}
           If we regard $\partial_t^kF^{\beta}+F^{(\beta,k)}$ as an external force term $f$, then the equations satisfied by $(\tilde{V}^{(\beta)},\tilde{Z}^{(\beta)})$ have the same coefficients as those for $(V^{(\beta)},Z^{(\beta)})$. Therefore, we can then apply Theorem \ref{fully nonlinear observability} to equation for $\tilde{Z}^{(\beta)}$, to obtain the following observability inequality:
\begin{equation}\label{ineq:obslk}			
E_\beta (\tilde{Z}^{(\beta)}(T))\leq D\int_0^T\int_\omega\big|\tilde{Z}^{(\beta)}_t\big|^2\dd x\dd t+C_6\int_0^T\|\partial_t^kF^{\beta}+F^{(\beta,k)}\|_{L^2}^2\dd t,
\end{equation}
            for the same constant $D$ and some constant $C_6>0$ independent of $\beta$ and $\varepsilon$.

            Recalling \eqref{coe:FH} in Lemma \ref{lem:FH} and \eqref{FH:est} in Lemma \eqref{lem:FH:est}, we have
			$$
\int_0^T\|\partial_t^kF^{\beta}+F^{(\beta,k)}\|_{L^2}^2\dd t\leq C_{F,k}(1-\delta)^{2\beta-2}\varepsilon^4.
            $$

           Next, we estimate the term $\int_\Omega \tilde{V}_t^{(\beta)}\tilde{Z}_t^{(\beta)} \dd x  +\sum\limits_{i,j=1}^n\int_\Omega a_{ij}^{(\beta)}\tilde{V}^{(\beta)}_{x_i}\cdot \tilde{Z}^{(\beta)}_{x_j}\dd x\big|_{t=0}$. By H\"{o}lder's inequality, this term is bounded by
           $$
           4E_\beta(\tilde{Z}^{(\beta)})^\frac{1}{2}E_\beta(\tilde{V}^{(\beta)})^\frac{1}{2}\big|_{t=0}.
           $$
           Using \eqref{induction4} with $\alpha=\beta$ and \eqref{comb:3}, we have
           \begin{align*}
           &\int_\Omega \tilde{V}_t^{(\beta)}\tilde{Z}_t^{(\beta)} \dd x+\sum\limits_{i,j=1}^n\int_\Omega a_{ij}^{(\beta)}\tilde{V}^{(\beta)}_{x_i}\cdot \tilde{Z}^{(\beta)}_{x_j}\dd x\big|_{t=T} \\
           \leq &  - 2\int_0^T\int_\Omega \chi\big| \tilde{Z}_t^{(\beta)}\big|^2\dd x\dd t
	    +\tilde{C}_{5,k} (1-\delta)^{2\beta-2} \varepsilon^3 \\
           &+ C_{Z,mid,k}M^k\bigg(\sum_{i=0}^{k-1} C^2_{V,mid,i}M^{2i}+\sum_{i=0}^{k-1}C_{\chi,i}C^2_{Z,mid,i}M^{2i}\bigg)^{\frac{1}{2}}(1-\delta)^{2\beta-2}\varepsilon^2,
           \end{align*}
           where $\tilde{C}_{5,k}=C_{5,k}+C_{Z,mid,k}C_{V,error}^{\frac{1}{2}}(1-\delta)^2$. Thus combining this, \eqref{comb:3} with \eqref{cond:Delta}, we obtain
           \begin{align}\label{est:EWT-EZT}
           E_\beta(\tilde{W}^{(\beta)})(T)\leq~ &(1-\delta)^3E_\beta(\tilde{Z}^{(\beta)})(T) \nonumber \\
           &+ C_{Z,mid,k}M^k\bigg(\sum_{i=0}^{k-1} C^2_{V,mid,i}M^{2i}+C_{\chi,k}\sum_{i=1}^{k-1}C^2_{Z,mid,i}M^{2i}\bigg)^{\frac{1}{2}}(1-\delta)^{2\beta-2}\varepsilon^2 \nonumber \\
           &+ \bigg(\sum_{i=0}^{k-1} C^2_{V,mid,i}M^{2i}+C_{\chi,k}\sum_{i=0}^{k-1}C^2_{Z,mid,i}M^{2i}\bigg)(1-\delta)^{2\beta-2}\varepsilon^2 \\
           &+\tilde{C}_{5,k}(1-\delta)^{2\beta-2} \varepsilon^3 +C_{V,error}(1-\delta)^{2\beta-2} \varepsilon^4. \nonumber
           \end{align}

%
           To derive the relationship between  $E_\beta(\partial_t^kW^{(\beta)})(T)$ and $E_\beta(\partial_t^kZ^{(\beta+1)})(T)$. we start with the equation for  $W^{(\beta)}$:
			\begin{align}\label{eqn:W:beta}
				\partial_t^2W^{(\beta)}=\Delta W^{(\beta)}+2(1-\chi)\partial_tZ^{(\beta)}-\partial_tV^{(\beta)}+W^{(\beta)}_{error},
			\end{align}
			{\color{black}where
				\begin{align}\label{W:error}
					W^{(\beta)}_{error}:= & ~ \big(1-b_0^{(\beta)}\big)V_t^{(\beta)}-\tilde{b}^{(\beta)}V^{(\beta)}-\sum\limits_{i=1}^nb_i^{(\beta)}V_{x_i}^{(\beta)}+F^{(\beta)} \nonumber \\
					& ~ +\big(b_0^{(\beta)}-1\big)Z_t^{(\beta)}-\tilde{b}^{(\beta)}Z^{(\beta)}-\sum\limits_{i=1}^nb_i^{(\beta)}Z_{x_i}^{(\beta)}+H^{(\beta)} \\
					& ~ +\sum\limits_{i,j=1}^n \Big(\big(a_{ij}^{(\beta)}-\delta_{ij}\big)V_{x_i}^{(\beta)}\Big)_{x_j} +\sum\limits_{i,j=1}^n\Big(\big(a_{ij}^{(\beta)}-\delta_{ij}\big) Z_{x_i}^{(\beta)}\Big)_{x_j}. \nonumber
				\end{align}
				Given that \eqref{induction4} holds for $\alpha=\beta$ and recalling the estimates on the coefficients, we apply Lemma \ref{lem:GG} with $G=W_{error}^{(\beta)}$. This yields:  for any  $m\leq s-2-k$
				\begin{align}
					\|\partial_t^kW^{(\beta)}_{error}\|_{\mathcal{H}^m}=C_{Werror}M^{m+k+2}(1-\delta)^\beta\varepsilon^2,
			\end{align}}
           for some constant $C_{Werror}$ independent of $M$, $\alpha$ and $\varepsilon$.

			Differentiating \eqref{eqn:W:beta} with respect to $t$ for $k-2$ times, {\color{black} we obtain: when $k\geq 1$ is odd,
				\begin{equation}\label{W:k:odd}				\partial_t^kW^{(\beta)}=\Delta^{\frac{k-1}{2}}W_t^{(\beta)}+\sum\limits_{l=0}^{\frac{k-1}{2}}\partial_t^{k-2l-2}\Delta^l\Big(2(1-\chi)\partial_tZ^{(\beta)}-\partial_t V^{(\beta)}+W^{(\beta)}_{error}\Big);
			\end{equation}}				
when $k\geq 2$ is even,
\begin{equation}\label{W:k:even}				\partial_t^kW^{(\beta)}=\Delta^{\frac{k}{2}}W^{(\beta)}+\sum\limits_{l=0}^{\frac{k}{2}-1}\partial_t^{k-2l-2}\Delta^l\Big(2(1-\chi)\partial_tZ^{(\beta)}-\partial_tV^{(\beta)}+W^{(\beta)}_{error}\Big).
				\end{equation}

Noting that for any integer $m\geq 0$,
			$$\Delta^m W^{(\beta)}(T)=\Delta^mZ^{(\beta+1)}(T),\qquad\Delta^m\partial_tW^{(\beta)}(T)=\Delta^m\partial_tZ^{(\beta+1)}(T),$$
			and combining \eqref{eqn:Z:alpha} with $\alpha=\beta+1$ and $t=T$, we obtain: when $k\geq 2$ is odd,
            \begin{equation}\begin{split}
            \partial_t^kW^{(\beta)}(T)=~&\partial_t^k Z^{(\beta+1)}(T)+\sum\limits_{p=0}^{\frac{k}{2}-1}\partial_t^{k-2p-2}\Delta^l\Big(2(1-\chi)\partial_tZ^{(\beta)}-\partial_tV^{(\beta)}+W^{(\beta)}_{error}\Big) \\
            &- \sum\limits_{p=0}^{\frac{k}{2}-1}\partial_t^{k-2p-2}\Delta^p\Big(\partial_tZ^{(\beta+1)}+Z^{(\beta+1)}_{error}\Big);
            \end{split}\end{equation}			

when $k\geq 2$ is even,
\begin{equation}\label{W:k:evenT}			\begin{split}	\partial_t^kW^{(\beta)}=~&\partial_t^kZ^{(\beta+1)}+\sum\limits_{p=0}^{\frac{k}{2}-1}\partial_t^{k-2p-2}\Delta^p\Big(2(1-\chi)\partial_tZ^{(\beta)}-\partial_tV^{(\beta)}+W^{(\beta)}_{error}\Big)\\
&-\sum\limits_{p=0}^{\frac{k}{2}-1}\partial_t^{k-2p-2}\Delta^p\Big(\partial_tZ^{(\beta+1)}+Z^{(\beta+1)}_{error}\Big).
		\end{split}		\end{equation}
				
We note that $\chi$ is a smooth bounded function so that there exist a sequence constants $C_{1-\chi,p}, p=0,1,\cdots,k$ such that for any $u\in \mathcal{H}^k$,
$$
\sum_{p=0}^{l}\|\Delta^p [2(1-\chi) u]\|_{L^2}\leq C_{1-\chi,l} \sum_{p=0}^{l}\|\Delta^p u\|_{L^2}=C_{1-\chi,l} \sum_{p=0}^{l}\| u\|_{\mathcal{H}^{2p}}.
$$

	      Combining this with the induction assumption that \eqref{induction4} and \eqref{induction5} hold for  $\alpha\leq \beta, l\leq s-1$ as well as for any $\alpha$ and $l\leq k-1$, we obtain
\begin{align}\label{est:EZT-EZT}
	&\big\|\partial_t^kZ^{(\beta+1)}(T)\big\|_{L^2}^2  +\big\|\partial_t^{k-1}Z^{(\beta+1)}(T)\big\|_{\mathcal{H}^1}^2 \nonumber\\
    \leq~ &	\big\|\partial_t^kW^{(\beta)}(T)\big\|_{L^2}^2+ \big\|\partial_t^{k-1}W^{(\beta)}(T)\big\|_{\mathcal{H}^1}^2 +C_{1-\chi,k-1}\sum\limits_{p=0}^{k-1} C^2_{Z,mid,p}M^{2p}(1-\delta)^{2\beta-2}\varepsilon^2 \nonumber \\
    &+\sum\limits_{p=0}^{k-1} C^2_{V,mid,p}M^{2p}(1-\delta)^{2\beta-2}\varepsilon^2+kC^2_{Werror}M^{2k}(1-\delta)^{2\beta-2}\varepsilon^4 \\
    &+\sum\limits_{p=0}^{k-1} C^2_{Z,mid,p} M^{2p}(1-\delta)^{2\beta}\varepsilon^2+kC^2_{Zerror}M^{2k}(1-\delta)^{2\beta}\varepsilon^4. \nonumber
\end{align}
			
			Combining with \eqref{comb:3}, and noting the relationship \eqref{est:EnergyET}, we obtain
            \begin{align*}
				&~ \big\|\partial_t^kZ^{(\beta+1)}(T)\big\|_{L^2}^2  +\big\|\partial_t^{k-1}Z^{(\beta+1)}(T)\big\|_{\mathcal{H}^1}^2  \\
				\leq & ~ (1-\delta)^3\frac{2+C_{coe,1}\varepsilon}{2-C_{coe,1}\varepsilon}\big\|\partial_t^kZ^{(\beta)}(T)\big\|_{L^2}^2  +\big\|\partial_t^{k-1}Z^{(\beta)}(T)\big\|_{\mathcal{H}^1}^2 \\
                &+ \frac{2+C_{coe,1}\varepsilon}{2-C_{coe,1}\varepsilon}C_{Z,mid,k}M^k\bigg(\sum_{i=0}^{k-1} \big(C^2_{V,mid,i}+C_{\chi,k-1}C^2_{Z,mid,i}\big)M^{2i}\bigg)^{\frac{1}{2}}(1-\delta)^{2\beta-2}\varepsilon^2\\
           &+\frac{2+C_{coe,1}\varepsilon}{2-C_{coe,1}\varepsilon} \bigg(\sum_{i=0}^{k-1} \big(C^2_{V,mid,i}+C_{\chi,k-1}C^2_{Z,mid,i}\big)M^{2i}\bigg)(1-\delta)^{2\beta-2}\varepsilon^2\\
           &+\frac{2+C_{coe,1}\varepsilon}{2-C_{coe,1}\varepsilon}\Big(\tilde{C}_{V,5,k}(1-\delta)^{2\beta-2}  +C^2_{V,error}\varepsilon\Big)(1-\delta)^{2\beta-2} \varepsilon^3\\
                &+\sum\limits_{p=0}^{k-1} \Big(C_{1-\chi,k-1}C^2_{Z,mid,p}+C^2_{V,mid,p}\Big)M^{2p}(1-\delta)^{2\beta-2}\varepsilon^2\\ &+kC^2_{Werror}M^{2k}(1-\delta)^{2\beta-2}\varepsilon^4 \\
&+\sum\limits_{p=0}^{k-1} C^2_{Z,mid,p} M^{2p}(1-\delta)^{2\beta}\varepsilon^2+kC^2_{Zerror}M^{2k}(1-\delta)^{2\beta}\varepsilon^4.
			\end{align*}
			
			 Noting that when all constants $C_{V,mid,i}, C_{Z,mid,i}, i=0,\cdots, s-1$ and $\delta, C_{Z_T,mid,k}$  are fixed, we can take $M$ large enough such that
             \begin{align}\label{cond:M}
             M\geq~ &\max_{k=0,1,\cdots,s-1}\Bigg\{\frac{10(1-\frac{\delta}{2})}{\delta(1-\delta)^3}\cdot  \frac{C_{Z,mid,k}\sum_{i=0}^{k-1} (C^2_{V,mid,i}+C_{\chi,k}C^2_{Z,mid,i})}{C_{Z_T,mid,k}^2}, \nonumber \\
             &\bigg(\frac{10(1-\frac{\delta}{2})}{\delta(1-\delta)^3}\cdot \frac{\sum_{i=0}^{k-1} (C^2_{V,mid,i}+C_{\chi,k}C^2_{Z,mid,i})}{C_{Z_T,mid,k}^2}\bigg)^{\frac{1}{2}} ,\\
             &\bigg(\frac{10}{\delta(1-\delta)^2} \sum\limits_{p=0}^{k-1} (C_{1-\chi,k}C^2_{Z,mid,p}+C^2_{V,mid,p})\bigg)^{\frac{1}{2}},
                           \bigg(\frac{10}{\delta}\sum\limits_{p=0}^{k-1} C^2_{Z,mid,p} \bigg)^{\frac{1}{2}}       \Bigg\}, \nonumber
             \end{align}
             which together with \eqref{cond:coe2} implies that
            \begin{align}\label{cond:Mk}
          & \big\|\partial_t^kZ^{(\beta+1)}(T)\big\|_{L^2}^2  +\big\|\partial_t^{k-1}Z^{(\beta+1)}(T)\big\|_{\mathcal{H}^1}^2 \nonumber\\
          \leq ~ &     (1-\frac{\delta}{2})(1-\delta)^{2\beta}M^{2k}C^2_{Z_T,mid,k}\varepsilon^2+ \frac{4\delta}{10}(1-\delta)^{2\beta}M^{2k}C^2_{Z_T,mid,k}\varepsilon^2 \\
               & +\frac{2+C_{coe,1}\varepsilon}{2-C_{coe,1}\varepsilon}(\tilde{C}_{V,5,k}(1-\delta)^{2\beta-2}  +C^2_{V,error}\varepsilon)(1-\delta)^{2\beta-2} \varepsilon^3 \nonumber \\
               &+kC^2_{Werror}M^{2k}(1-\delta)^{2\beta-2}\varepsilon^4+kC^2_{Zerror}M^{2k}(1-\delta)^{2\beta}\varepsilon^4. \nonumber
            \end{align}
               We now choose $\varepsilon_{4,k}\leq \varepsilon_3$ such that
               \begin{align}\label{cond:vare4}
                \frac{1-\frac{\delta}{2}}{1-\delta}\tilde{C}_{V,5,k}(1-\delta)^{-2}\varepsilon_{4,k}&+k\Big(\frac{1-\frac{\delta}{2}}{1-\delta}C^2_{V,error}+C^2_{Werror}\Big)M^{2k}(1-\delta)^{-2}\varepsilon^2_{4,k} \\
                &+kC^2_{Zerror}M^{2k}\varepsilon_{4,k}^2
                \leq \frac{\delta}{10}M^{2k}C^2_{Z_T,mid,k}.
               \end{align}
				Thus, by setting $\varepsilon_{lem}\leq \varepsilon_{4,k}$, we obtain
				\begin{align}
					\big\|\partial_tZ^{(\beta+1)}(T)\big\|_{H^{k-1}}^2+\big\|Z^{(\beta+1)}(T)\big\|_{H^k}^2\leq C_{Z_T,mid,k}^2(1-\delta)^{2\beta}M^{2k}\varepsilon^2.
			\end{align}
    Since $l\leq s-1$ is finite, taking $\varepsilon_{ZT}=\min\limits_{k=1,\cdots,s-1}\{\varepsilon_{4,k}\}$, we complete the proof of the claim.
\end{proof}
                 			
           Now, similarly to the energy estimate for $\tilde{V}^{(\beta)}(t)$ in \eqref{Vtov}, we can derive the energy estimate for $\tilde{Z}^{(\beta+1)}(t)=\partial_t^k z^{(\beta+1)}$ as follows:
           \begin{equation}\label{ZtoZ}
		    	E_{\beta+1}(\tilde{Z}^{(\beta+1)}(t))
		    	\leq C_{\tilde{Z}to\tilde{Z}}(\varepsilon) \left(E_\beta(\tilde{Z}^{(\beta+1)}(T)) + C_{Z,5,k}(1-\delta)^{2\beta} \varepsilon^3 \right)
		    \end{equation}
 		    where $C_{\tilde{Z}to\tilde{Z}}(\varepsilon)\leq C_{\tilde{Z}to\tilde{Z}}(\varepsilon_{5,k})=\frac{3}{2}<2$  is a constant when choosing $\varepsilon_{lem}\leq \varepsilon_{5,k}$ and
            $C_{Z,5,k}$ is a constant independent of $\beta$ and $\varepsilon$.


            Putting the estimate \eqref{induction-ZH} into \eqref{ZtoZ}, we have
            for any $t\in [0,T]$,
                \begin{align*}
				&\big\|\partial_t^{k+1}Z^{(\beta+1)}(t)\big\|_{L^{2}}^2+\big\|\partial_t^{k}Z^{(\beta+1)}(t)\big\|_{\mathcal{H}^1}^2 \\
                \leq~& \frac{3}{2}\Big(\frac{1-\frac{\delta}{2}}{1-\delta}C^2_{Z_T,mid,k}M^{2k}+(2+C_{coe,1}\varepsilon)C_{Z,5,k}\varepsilon\Big)(1-\delta)^{2\beta}\varepsilon^2.
			\end{align*}
            Taking this back to \eqref{Vtov} in replace of $\beta$ with $\beta+1$, and noting the same estimate of $\tilde{V}^{(\beta+1)}(0)$ with \eqref{comb:3}, we obtain
            \begin{align*}
            &\big\|\partial_t^{k+1}V^{(\beta+1)}(t)\big\|_{L^{2}}^2+\big\|\partial_t^{k}V^{(\beta+1)}(t)\big\|_{\mathcal{H}^1}^2 \\
            \leq& \, \frac{3}{2}\frac{1-\frac{\delta}{2}}{1-\delta}\Big(
            \Big(\sum_{i=1}^{k-1}C^2_{V,mid,i}M^{2i}+ C_{\chi,k}\sum_{i=1}^{k-1}C^2_{Z,mid,i}M^{2i}\Big)(1-\delta)^{2\beta}\varepsilon^2 + kC^2_{V,error}(1-\delta)^{2\beta}M^{2k}\varepsilon^4\Big) \\
            &+\frac{(18T+9TC_{coe,1}\varepsilon)}{4}\bigg(\frac{1-\frac{\delta}{2}}{1-\delta}C^2_{Z_T,mid,k}M^{2k}+(2+C_{coe,1}\varepsilon)C_{Z,5,k}\varepsilon\bigg)(1-\delta)^{2\beta}\varepsilon^2 \\
            &+\frac{(6+3C_{coe,1}\varepsilon)}{2}  C_{V,5,k}(1-\delta)^{2\beta}\varepsilon^3
            \end{align*}
            Recalling \eqref{cond:M} for $M$ and \eqref{cond:vare4} for $\varepsilon_{lem}$, we obtain that
            \begin{align*}
		&\big\|\partial_t^{k+1}V^{(\beta+1)} 
            (t)\big\|_{L^{2}}^2+\big\|\partial_t^{k}V^{(\beta+1)}(t)\big\|_{\mathcal{H}^1}^2 \\
            \leq ~ & \bigg(\frac{\delta}{10}+\frac{(18T+9TC_{coe,1}\varepsilon)}{4}\frac{1-\frac{\delta}{2}}{(1-\delta)^3}\bigg)C_{Z_T,mid,k}^2M^{2k}(1-\delta)^{2\beta+2}\varepsilon^2 \\
            &+ C_{V,6,k}(1-\delta)^{2\beta+2}\varepsilon^3,
            \end{align*}
            where $C_{V,6,k}=\frac{3kC^2_{V,error}}{2}\frac{1-\frac{\delta}{2}}{(1-\delta)^3}\varepsilon+\frac{(18T+9TC_{coe,1}\varepsilon)}{4(1-\delta)^2}(2+C_{coe,1}\varepsilon)C_{Z,5,k}+	\frac{(6+3C_{coe,1}\varepsilon)}{2(1-\delta)^2}C_{V,5,k}$.
            So according to the relation between $C_{Z_T,mid,k}$ and $C_{V,mid,k}$, we know that
            $$
            \frac{3}{2}\frac{1-\frac{\delta}{2}}{(1-\delta)^2}C^2_{Z_T,mid,k}\leq \frac{3}{5}C^2_{Z,mid,k},
            $$
            and
            $$\bigg(\frac{\delta}{10}+\frac{(18T+9TC_{coe,1}\varepsilon)}{4}\frac{1-\frac{\delta}{2}}{(1-\delta)^3}\bigg)C_{Z_T,mid,k}^2\leq \frac{3}{5}C_{V,mid,k}^2.$$
            And choosing $\varepsilon_{5,k}$ such that
            $$(2+C_{coe,1}\varepsilon_{5,k})C_{Z,5,k}\varepsilon_{5,k}\leq \frac{1}{5}M^{2k}(1-\delta)^2,$$
            and
            $$C_{V,6,k}\varepsilon_{5,k}\leq \frac{1}{5}C^2_{V,mid,k}M^{2k},$$
            then we have
            \begin{align}\label{norm:Zk1}
            \big\|\partial_t^{k+1}Z^{(\beta+1)}(t)\big\|_{L^{2}}^2+\big\|\partial_t^{k}Z^{(\beta+1)}(t)\big\|_{\mathcal{H}^1}^2\leq \frac{4}{5}C^2_{Z,mid,k}(1-\delta)^{2\beta+2}\varepsilon^2,
            \end{align}
            and
            \begin{align}\label{norm:Vk1}
				\big\|&\partial_t^{k+1}V^{(\beta+1)}(t)\big\|_{L^{2}}^2+\big\|\partial_t^{k}V^{(\beta+1)}(t)\big\|_{\mathcal{H}^1}^2\leq \frac{4}{5} C^2_{V,mid,k}M^{2k}(1-\delta)^{2\beta+2}\varepsilon^2.
            \end{align}

         Finally, we establish the relationship between time-space norms.
\begin{claim}
            For any $\alpha\geq 1$, $(V^{(\alpha)}, Z^{(\alpha)})$ satisfy for any integer $0\leq k_1\leq k$:
               \begin{equation}
            \label{Vnormequiv}
               \begin{split}
               & \Big|\|\partial_t^{k_1+1} V^{(\alpha)}\|_{\mathcal{H}^{k-k_1}}+\|\partial_t^{k_1} V^{(\alpha)}\|_{\mathcal{H}^{k+1-k_1}}-\|\partial_t^{k+1}V^{(\alpha)}\|_{L^2}-\|\partial_t^{k}V^{(\alpha)}\|_{\mathcal{H}^1}\Big| \\
               \leq ~& \sum\limits_{p=0}^{k-1} (C_{V,mid,p}+C_{\chi,k}C_{Z,mid,p})  M^{p}(1-\delta)^{\alpha}\varepsilon+ C_{V,1,k}M^k(1-\delta)^{\alpha}\varepsilon^2,
               \end{split}\end{equation}
               and
            \begin{equation}
            \label{Znormequiv}
               \begin{split}
               &\Big| \|\partial_t^{k_1+1} Z^{(\alpha)}\|_{\mathcal{H}^{k-k_1}}+\|\partial_t^{k_1} Z^{(\alpha)}\|_{\mathcal{H}^{k+1-k_1}} -\|\partial_t^{k+1}Z^{(\alpha)}\|_{L^2}-\|\partial_t^{k}Z^{(\alpha)}\|_{\mathcal{H}^1} \Big|\\
               \leq~ &  \sum\limits_{p=0}^{k-1} C_{Z,mid,p} M^{p}(1-\delta)^{\alpha-1}\varepsilon+ C_{Z,1,k}M^k(1-\delta)^{\alpha-1}\varepsilon^2,
               \end{split}\end{equation}
               where $C_{V,1,k}, C_{Z,1,k}$ are constants independent of $M$, $\beta$ and $\varepsilon$.
               \end{claim}

             Thanks to the relations in this claim, we indeed prove that there exist $M_k$ and $\varepsilon_{6,k}$ such that when $M\geq M_k$ and $\varepsilon \leq \varepsilon_{6,k}$,
$$\Big|\|\partial_t^{k_1+1} V^{(\alpha)}\|_{\mathcal{H}^{k-k_1}}+\|\partial_t^{k_1} V^{(\alpha)}\|_{\mathcal{H}^{k+1-k_1}}-\|\partial_t^{k+1}V^{(\alpha)}\|_{L^2}-\|\partial_t^{k}V^{(\alpha)}\|_{\mathcal{H}^1}\Big|\leq \frac{1}{5}C_{V,mid,k}M^k(1-\delta)^{\alpha}\varepsilon $$
and
$$
\Big|\|\partial_t^{k_1+1} Z^{(\alpha)}\|_{\mathcal{H}^{k-k_1}}+\|\partial_t^{k_1} Z^{(\alpha)}\|_{\mathcal{H}^{k+1-k_1}}-\|\partial_t^{k+1}Z^{(\alpha)}\|_{L^2}-\|\partial_t^{k}Z^{(\alpha)}\|_{\mathcal{H}^1}\Big|\leq \frac{1}{5}C_{Z,mid,k}M^k(1-\delta)^{\alpha}\varepsilon
$$
Combining these inequalities with \eqref{norm:Vk1} and \eqref{norm:Zk1}, we can obtain the \eqref{induction4}.

            \begin{proof} We only prove \eqref{Znormequiv}, the \eqref{Vnormequiv} actually is the same.
Rewriting the equation for $Z^{(\alpha)}$, we have
\begin{align}\label{eqn:Z:alpha}
				\partial_t^2Z^{(\alpha)}=\Delta Z^{(\alpha)}-\partial_tZ^{(\alpha)}+Z^{(\alpha)}_{error},
			\end{align}
			{\color{black}where
				\begin{align*}
			Z^{(\alpha)}_{error}:=\big(1-b_0^{(\alpha)}\big) 
                Z_t^{(\alpha)}-\tilde{b}^{(\alpha)}Z^{(\alpha)}-\sum\limits_{i=1}^nb_i^{(\alpha)}V_{x_i}^{(\alpha)}+H^{(\alpha)}+\sum\limits_{i,j=1}^n\Big(\big( a_{ij}^{(\alpha)}-\delta_{ij}\big) Z_{x_i}^{(\alpha)}\Big)_{x_j}.
				\end{align*}
				Since \eqref{induction-ZH} are assumed to hold for any $\alpha\geq 1$ and $l\leq k-1$, as well as for any $\alpha\leq \beta,$ and $l\leq s-1$,  and recalling the estimates on coefficients, applying Lemma \ref{lem:GG} with $G=V_{error}^{(\alpha)}$, we obtain that for any $p\leq k-1$ and $m\leq k-1-p$,
				\begin{align}
					\|\partial_t^pZ^{(\alpha)}_{error}(T)\|_{\mathcal{H}^m}\leq C_{Zerror}M^{p+m+2 }(1-\delta)^\alpha\varepsilon^2,
			\end{align}}
           for some constant $C_{Zerror}$ independent of $M$, $\alpha$ and $\varepsilon$.		

     For any integer $0\leq k_1\leq k$, differentiating \eqref{eqn:Z:alpha} with respect to $t$ for $k-2$ times, we obtain the following results:

              {\color{black}When $k-k_1\geq 2$ is odd, we have
				\begin{align}\label{z:k:odd}					\partial_t^kZ^{(\alpha)}=\partial_t^{k_1+1}\Delta^{\frac{k-k_1-1}{2}}Z^{(\alpha)}+\sum\limits_{p=0}^{\frac{k-k_1-1}{2}}\partial_t^{k-2p-2}\Delta^p\Big(\partial_tZ^{(\alpha)}+Z_{error}^{(\alpha)}\Big),
			\end{align}
            when $k-k_1\geq 2$ is even, we have
				\begin{align}\label{z:k:even}
                \partial_t^kZ^{(\alpha)}=\partial_t^{k_1}
                \Delta^{\frac{k-k_1}{2}}Z^{(\alpha)}+
                \sum\limits_{p=0}^{\frac{k-k_1}{2}-1}\partial_t^{k-2p-2}\Delta^{p}\Big(\partial_tZ^{(\alpha)} +Z_{error}^{(\alpha)}\Big).
				\end{align} }

              Thus, we derive the following estimates:
              \begin{equation}
              \|\partial_t^kZ^{(\alpha)}\|_{L^2}\leq \|Z^{(\alpha)}\|_{\mathcal{H}^k}+ \sum\limits_{p=0}^{k-1} \|\partial_t^{k-1-p}Z^{(\alpha)}\|_{\mathcal{H}^{p}}+k\sum_{p=0}^{k-2}\|\partial_t^pZ_{error}^{(\alpha)}\|_{\mathcal{H}^{k-p-2}},
              \end{equation}
              and
              \begin{equation}
             \|Z^{(\alpha)}\|_{\mathcal{H}^k}\leq  \|\partial_t^kZ^{(\alpha)}\|_{L^2} + \sum\limits_{p=0}^{k-1} \|\partial_t^{k-1-p}Z^{(\alpha)}\|_{\mathcal{H}^{p}}+k\sum_{p=0}^{k-2}\|\partial_t^pZ_{error}^{(\alpha)}\|_{\mathcal{H}^{k-p-2}}.
              \end{equation}

               Since we assume that \eqref{induction4} holds for any $\alpha$ and $l\leq k-1$, we obtain
               \begin{equation}
               \|\partial_t^{k+1}Z^{(\alpha)}\|_{L^2}\leq \|Z^{(\alpha)}\|_{\mathcal{H}^{k+1}}+ \sum\limits_{p=0}^{k-1} C_{Z,mid,p} M^{p}(1-\delta)^{\alpha}\varepsilon+kC_{Zerror}M^k(1-\delta)^{\alpha}\varepsilon^2,
               \end{equation}
               and
               \begin{equation}
               \|Z^{(\alpha)}\|_{\mathcal{H}^{k+1}}\leq \|\partial_t^{k+1}Z^{(\alpha)}\|_{L^2} + \sum\limits_{p=0}^{k-1} C_{Z,mid,p} M^{p}(1-\delta)^{\alpha}\varepsilon+kC_{Zerror}M^k(1-\delta)^{\alpha}\varepsilon^2.
               \end{equation}
               This completes the proof.
               \end{proof}
%

			\subsubsection{ Inductive step 2: To prove \eqref{induction5} for $\alpha=\beta+1\geq 2$.}

			When $k\leq s-1$, \eqref{induction5} follows from \eqref{induction4} and $(v^{(0)},z^{(0)})=(0,0)$ directly. In fact, we observe that for any $k\leq s-1$,
            \begin{align*}
            \big\|\partial_t^m v^{(\alpha)}\big\|_{H^{k-m}}^2=~&\big\|\sum_{\beta=1}^\alpha \partial_t^mV^{(\beta)}\big\|_{H^{k-m}}^2\leq \sum_{\beta=1}^\alpha \|\partial_t^mV^{(\beta)}\|_{H^{k-m}}^2 \\
            \leq~& \frac{(1-\delta)^2-(1-\delta)^{2\alpha}}{1-(1-\delta)^2}C^2_{V,mid,k}M^{2k}\varepsilon^2.
            \end{align*}
		Noting that  $\frac{(1-\delta)^2-(1-\delta)^{2\alpha}}{1- 
            (1-\delta)^2}\leq C_\delta$, thus we can obtain \eqref{induction3} for the case that $k\leq s-1$.

			It remains to estimate $E_\alpha(\partial_t^{s-1}v^{(\alpha)})$ and $E_\alpha(\partial_t^{s-1}z^{(\alpha)})$, i.e., the case of $k=s$. Differentiating the equations \eqref{eqnofvalpha} and \eqref{eqnofzalpha} by $t$ for $s-1$ times, we obtain that
			\begin{equation}\label{eq:vs}
			\begin{aligned}
				& \partial_t^{s+1}v^{(\alpha)}+b^{(\alpha)}_0\partial_t^sv^{(\alpha)}-\sum\limits_{i,j=1}^n\big(a^{(\alpha)}_{ij}\partial_t^{s-1}v^{(\alpha)}_{x_i}\big)_{x_j}+\tilde{b}^{(\alpha)} \partial_t^{s-1}v^{(\alpha)}+\sum\limits_{i=1}^nb^{(\alpha)}_i\partial_t^{s-1}v^{(\alpha)}_{x_i} \\
				+ & ~ 2\chi\cdot\partial_t^{s-1}z^{(\alpha)}_t=g^{(\alpha)},
			\end{aligned}
			\end{equation}
			
			and
\begin{equation}\label{eq:zs}			\partial_t^{s+1}z^{(\alpha)}-b^{(\alpha)}_0\partial_t^sz^{(\alpha)}-\sum\limits_{i,j=1}^n\big(a^{(\alpha)}_{ij}\partial_t^{s-1}z^{(\alpha)}_{x_i}\big)_{x_j}+\tilde{b}^{(\alpha)} \partial_t^{s-1}z^{(\alpha)}+\sum\limits_{i=1}^nb^{(\alpha)}_i\partial_t^{s-1}z^{(\alpha)}_{x_i}=h^{(\alpha)},\end{equation}
			
			where
			$$
			\begin{aligned}
				g^{(\alpha)}= & ~ b^{(\alpha)}_0\partial_t^sv^{(\alpha)}-\sum\limits_{i,j=1}^n\big(a^{(\alpha)}_{ij}\partial_t^{s-1}v^{(\alpha)}_{x_i}\big)_{x_j}+\tilde{b}^{(\alpha)} \partial_t^{s-1}v^{(\alpha)}+\sum\limits_{i=1}^nb_i^{(\alpha)}\partial_t^{s-1}v^{(\alpha)}_{x_i} \\
				- & ~ \partial_t^{s-1}\Big(b_0^{(\alpha)}v^{(\alpha)}_t-\sum\limits_{i,j=1}^n\big(a_{ij}^{(\alpha)}v^{(\alpha)}_{x_i}\big)_{x_j}+\tilde{b}^{(\alpha)}v^{(\alpha)}+\sum\limits_{i=1}^n b_i^{(\alpha)}v^{(\alpha)}_{x_i}\Big),
			\end{aligned}
			$$
			
			and
			$$
			\begin{aligned}
				h^{(\alpha)}= & ~ -b_0^{(\alpha)}\partial_t^sz^{(\alpha)}-\sum\limits_{i,j=1}^n\big(a_{ij}^{(\alpha)}\partial_t^{s-1}z^{(\alpha)}_{x_i}\big)_{x_j}+\tilde{b}^{(\alpha)} \partial_t^{s-1}z^{(\alpha)}+\sum\limits_{i=1}^nb_i^{(\alpha)}\partial_t^{s-1}z^{(\alpha)}_{x_i} \\
				- & ~ \partial_t^{s-1}\Big(-b_0^{(\alpha)}z^{(\alpha)}_t-\sum\limits_{i,j=1}^n\big(a_{ij}^{(\alpha)}z^{(\alpha)}_{x_i}\big)_{x_j}+\tilde{b}^{(\alpha)}z^{(\alpha)}+ \sum\limits_{i=1}^nb_i^{(\alpha)}z^{(\alpha)}_{x_i}\Big).
			\end{aligned}
			$$
            The proof of this case is quite similar to the proof of \eqref{induction3}. We only list the key steps here.
\begin{enumerate}
  \item \textbf{The equations for $\partial_t^{s-1}v^{(\alpha)}$ and $\partial_t^{s-1}z^{(\alpha)}$.} 
  
  Owing to our small assumptions on the coefficients, we rewrite the equations for $\partial_t^{s-1}v^{(\alpha)}$ and $\partial_t^{s-1}z^{(\alpha)}$ as follows:

      {\color{black}When $s$ is odd,
				\begin{align}\label{v:s:odd}
                \partial_t^{s+1}v^{(\alpha)}=\partial_t\Delta^{\frac{s}{2}}v^{(\alpha)}-\sum\limits_{l=0}^{\frac{s}{2}}\partial_t^{s-2l-2}\Delta^l\Big(2\chi Z^{(\alpha)}_t+\partial_tv^{(\alpha)}-v_{error}^{(\alpha)}\Big),
			\end{align}
and
	\begin{align}\label{z:s:odd}
        \partial_t^{s+1}z^{(\alpha)}=\partial_t\Delta^{\frac{s}{2}}z^{(\alpha)}-\sum\limits_{l=0}^{\frac{s}{2}}\partial_t^{s-2l-2}\Delta^l\Big(2\chi z^{(\alpha)}_t+\partial_tv^{(\alpha)}-v_{error}^{(\alpha)}\Big),
	\end{align}
when $s$ is even,
				\begin{align}\label{v:s:even}
					\partial_t^{s+1}v^{(\alpha)}=\Delta^{\frac{s+1}{2}}v^{(\alpha)}-\sum\limits_{l=0}^{\frac{s}{2}}\partial_t^{s-2l-2}\Delta^l\Big(2\chi z^{(\alpha)}_t+\partial_tv^{(\alpha)} -v_{error}^{(\alpha)}\Big),
				\end{align}
      and
				\begin{align}\label{z:s:even}
					\partial_t^{s+1}v^{(\alpha)}=\Delta^{\frac{s+1}{2}}v^{(\alpha)}-\sum\limits_{l=0}^{\frac{s}{2}}\partial_t^{s-2l-2}\Delta^l\Big(2\chi z^{(\alpha)}_t+\partial_tv^{(\alpha)} -v_{error}^{(\alpha)}\Big).
				\end{align} }

      Consequently, we can establish the following relationships between $\partial_t^kv^{(\alpha)}$ and $\Delta^{\frac{k}{2}} v^{(\alpha)} $ and  between $\partial_t^kz^{(\alpha)}$ and $\Delta^{\frac{k}{2}} z^{(\alpha)} $. Specifically, for any  $t\in [0,T]$,
      \begin{equation}
      \Big|\|\partial_t^kv^{(\alpha)}\|_{L^2}-\|v^{(\alpha)} \|_{\mathcal{H}^k}\Big|+\Big|\|\partial_t^kz^{(\alpha)}\|_{L^2}-\|z^{(\alpha)}\|_{\mathcal{H}^k}\Big|=O(M^{k}\varepsilon^2)+O(M^{k-1}\varepsilon).
      \end{equation}
  Recalling the definition $w^{(\alpha)}=v^{(\alpha)}+z^{(\alpha)}$, and combining it with the above relationships, we obtain for any $t\in [0,T]$,
   $\big|\|\partial_t^kw^{(\alpha)}\|_{L^2}-\|w^{(\alpha)}\|_{\mathcal{H}^k}\big|=O(M^{k-1}\varepsilon)$.
  \item \textbf{The non-increasing of $E_{\alpha}(z^{(\alpha)})(T)$}. 
  
  We directly compute the difference $E_{\alpha}(z^{(\alpha)})(T)-E_{\alpha-1}(z^{(\alpha-1)})(T)$ and find that
    \begin{equation}
        E_{\alpha}(\partial_t^{s-1}z^{(\alpha)})(T)-E_{\alpha-1}(\partial_t^{s-1}z^{(\alpha-1)})(T)=E_{\alpha}(w^{(\alpha-1)})(T)-E_{\alpha-1}(z^{(\alpha-1)})(T).
    \end{equation}
    Combining this with the aforementioned relationships, we deduce that the above expression is equal to
    \begin{equation}
    \begin{aligned}
         E_{\alpha}(\partial_t^{s-1}z^{(\alpha-1)})(T)-E_{\alpha-1}(\partial_t^{s-1}z^{(\alpha-1)})(T)
        +E_{\alpha}(\partial_t^{s-1}v^{(\alpha-1)})(T)\\
        +\int_\Omega \partial_t^sz^{(\alpha-1)}\partial_t^sv^{(\alpha-1)}\dd x+\sum_{i,j=1}^n\int_\Omega a^{(\alpha)}_{ij}\partial_{x_i}\partial_t^{s-1}z^{(\alpha-1)}\partial_{x_i}\partial_t^{s-1}v^{(\alpha-1)}\dd x.
    \end{aligned}
    \end{equation}
   Standard energy estimates yield
   \begin{equation}
   E_{\alpha}(\partial_t^{s-1}z^{(\alpha-1)})(T)-E_{\alpha-1}(\partial_t^{s-1}z^{(\alpha-1)})(T)=O(M^{2s}\varepsilon^3),
   \end{equation}
   \begin{equation}
   E_{\alpha}(\partial_t^{s-1}v^{(\alpha-1)})(T)\leq \frac{3}{2}\int_0^T\big\|\chi \partial_t^sz^{(\alpha)}\big\|_{L^2}^2\dd t+O(M^{2s-1}\varepsilon^2),
   \end{equation}
   and
  \begin{equation}\begin{aligned}
  \int_\Omega \partial_t^sz^{(\alpha-1)}\partial_t^sv^{(\alpha-1)}+\sum_{i,j=1}^n\int_\Omega a^{(\alpha)}_{ij}\partial_{x_i}\partial_t^{s-1}z^{(\alpha-1)}\partial_{x_i}\partial_t^{s-1}v^{(\alpha-1)} \\=-2\int_0^T\big\|\chi \partial_t^sz^{(\alpha)}\big\|_{L^2}^2\dd t+O(M^{2s-1}\varepsilon^2).
  \end{aligned}
  \end{equation}
Compared with the coefficients of the equation \eqref{eq:zs} for $\partial_t^{s-1}z^{(\alpha)}$  with those of the equation \eqref{eqnofZalpha} for $Z^{(\alpha)}$, we see that they are identical, with the only difference being in the force term. Consequently, we can apply Theorem \ref{fully nonlinear observability} to the system of   $\partial_t^{s-1}z^{(\alpha)}$ yielding
\begin{equation}			
E\big(\partial_t^{s-1}z^{(\alpha)}(T)\big)\leq D\int_0^T\int_\omega\big|\partial_t^sz^{(\alpha)}\big|^2\dd x\dd t+O(M^{2s}\varepsilon^4).
\end{equation}
Combining these results, we arrive at
\begin{align}
&E_{\alpha}(\partial_t^{s-1}z^{(\alpha)})(T)-E_{\alpha-1}(\partial_t^{s-1}z^{(\alpha-1)})(T) \nonumber\\
\leq~& -\frac{1}{2D}E_{\alpha-1}(z^{(\alpha-1)})(T)+O(M^{2s}\varepsilon^3)+O(M^{2s-1}\varepsilon^2).
\end{align}
Given the assumption $E_{\alpha-1}(z^{(\alpha-1)})(T)=O(M^{2s}\varepsilon^2)$, by choosing $M$ sufficiently large and  $\varepsilon$ sufficiently small, we can ensure that $E_{\alpha}(\partial_t^{s-1}z^{(\alpha)})(T)$ is non-increasing with respect to $\alpha$.

\item \textbf{Uniform Boundedness for $E\big(\partial_t^{s-1}z^{(\alpha)}(t)\big)$ and $E\big(\partial_t^{s-1}v^{(\alpha)}(t)\big)$.} Returning to the equation for  $\partial_t^{s-1}z^{(\alpha)}$, and invoking the well-posedness, for some fixed  $T$,  we have that $E_{\alpha}(\partial_t^{s-1}z^{(\alpha)})(t)$ is uniformly bounded. Consequently, by \eqref{eq:vs},  $E_{\alpha}(\partial_t^{s-1}v^{(\alpha)})(t)$ is also uniformly bounded.
\item \textbf{Completion of the Proof.} Utilizing the result from the first step, we establish the uniform boundedness of the time-space norms of  $\partial_t^{s-1}z^{(\alpha)}(t)$and $\partial_t^{s-1}v^{(\alpha)}(t)$. This completes the proof of \eqref{induction5}.
\end{enumerate}
\end{proof}

        \subsection{Uniqueness}
        In this section, we aim to demonstrate that $y$ is the unique solution to System \eqref{system:quasilinear} that satisfies \eqref{cond:y} with some constant $C_{uni}>0$. To proceed, we assume the existence of another solution $\tilde{y}\in \cap_{i=0}^2 C^i(0,T;\mathcal{H}^s, s\geq \{n+2,4\}$ that also satisfies \eqref{system:quasilinear} and the bound \eqref{cond:y}.
        For notational simplicity, we define:
        \begin{equation*}
        \tilde{b}^v=\tilde{b}(t,x,v,v_t,\nabla v), ~ b_k^v=v_k(t,x,v,v_t,\nabla v), ~ a_{ij}^v=a_{ij}(t,x,v,v_t,\nabla v),
        \end{equation*}
        for any function $v$ and for any $k=0,\cdots,n$ and  $i,j=1,\cdots,n$.
        
        Let
        $w=y-\tilde{y}$. Then, we have
       
        \begin{equation}\label{equ:ww}
			\left\{
			\begin{aligned}
				& \partial_t^2w-\Delta w+w_t=w_{error}, & (t,x)\in & ~ (0,T)\times\Omega, \\
				& w(t,x)=0, & (t,x)\in & ~ (0,T)\times\partial\Omega, \\
				& w(0,x)=0, ~ w_t(0,x)=0, & x\in & ~ \Omega.
			\end{aligned}
			\right.
		\end{equation}
        where
        \begin{align*}
        w_{error}=~& \left(\tilde{b}^yy-\tilde{b}^{\tilde{y}}\tilde{y}\right)+\sum_{i,j=1}^n \left(((a_{ij}^y-\delta_{ij})y_{x_i})_{x_j}-((a_{ij}^{\tilde{y}}-\delta_{ij})\tilde{y}_{x_i})_{x_j}\right)\\
        &+\left((1-b_{0}^y)y_t-(1-b_{0}^{\tilde{y}})\tilde{y}_t\right) +\sum_{i=1}^n\left(b_{i}^yy_{x_i}-b_{i}^{\tilde{y}}\tilde{y}_{x_i}\right) \\
        =:~&I_1+I_2+I_3+I_4.
        \end{align*}
        Next, we expand each  $I_i$ for $i=1,2,3,4$. Stating with $I_1$:
        \begin{equation}
        I_1=\tilde{b}^yy-\tilde{b}^{\tilde{y}}\tilde{y}=(\tilde{b}^y-\tilde{b}^{\tilde{y}})y+\tilde{b}^{\tilde{y}}(y-\tilde{y}).
        \end{equation}
        The first term on the right-hand side can be written as:
        \begin{align*}
        &\tilde{b}(t,x,y,y_t,y_{x_1},\cdots,y_{x_n})-\tilde{b}(t,x,\tilde{y},\tilde{y}_t,\tilde{y}_{x_1},\cdots,\tilde{y}_{x_n}) \\
        =~&\frac{\tilde{b}(y,y_t,y_{x_1},\cdots,y_{x_n})-\tilde{b}(\tilde{y},y_t,y_{x_1},\cdots,y_{x_n})}{y-\tilde{y}}(y-\tilde{y})  \\
        &+\frac{\tilde{b}(\tilde{y},y_t,y_{x_1},\cdots,y_{x_n})-\tilde{b}(t,x,\tilde{y},\tilde{y}_t,y_{x_1},\cdots,y_{x_n})}{y_t-\tilde{y}_t}(y_t-\tilde{y}_t) \\
        &+\frac{\tilde{b}(t,x,\tilde{y},\tilde{y}_t,y_{x_1},\cdots,y_{x_n})-\tilde{b}(t,x,\tilde{y},\tilde{y}_t,\tilde{y}_{x_1},\cdots,y_{x_n})}{y_{x_1}-\tilde{y}_{x_1}}(y_{x_1}-\tilde{y}_{x_1})\\ 
        & +\cdots \\
        &+\frac{\tilde{b}(t,x,\tilde{y},\tilde{y}_t,\tilde{y}_{x_1},\cdots,\tilde{y}_{x_{n-1}},y_{x_n})-\tilde{b}(t,x,\tilde{y},\tilde{y}_t,\tilde{y}_{x_1},\cdots,\tilde{y}_{x_{n-1}},\tilde{y}_{x_n})}{y_{x_n}-\tilde{y}_{x_n}}(y_{x_n}-\tilde{y}_{x_n})\\
        =~& \tilde{b}_yw+\tilde{b}_{y_t}w_t+\sum_{i=1}^n \tilde{b}_{y_i}w_{x_i}.
        \end{align*}
        Therefore, $I_1$ is equal to
        \begin{equation}\label{I11}
        I_1=(y\tilde{b}_y+\tilde{b}^{\tilde{y}})w+y\tilde{b}_{y_t}w_t +y\sum_{k=1}^n\tilde{b}_{y_{x_k}}w_{x_k},
        \end{equation}
        where $\tilde{b}_y,\tilde{b}_{y_t}, \tilde{b}_{y_{x_k}}$ are bounded functions for any $k=1,\cdots,n$.

         Similarly, we can expand $I_2,I_3$ and $I_4$:
         \begin{equation}\label{I12}
         \begin{split}
         I_2&=\sum_{i,j=1}^n \left(\left(y_{x_i}a_{ij,y}w+y_{x_i}a_{ij,y_t}w_t+\sum_{k=1}^ny_{x_i}a_{ij,y_{x_k}}w_{x_k}\right)_{x_j}+((a_{ij}^{\tilde{y}}-\delta_{ij})w_{x_i})_{x_j}\right),\\
         I_3&=y_tb_{0,y}w+\left(y_tb_{0,y_t}+(1-b_{0}^{\tilde{y}})\right)w_t+\sum_{k=1}^ny_tb_{0,y_{x_k}}w_{x_k},\\
         I_4&=  \sum_{i=1}^n\left(y_{x_i}\left(b_{i,y}w+b_{i,y_t}w_t+\sum_{k=1}^nb_{i,y_{x_k}}w_{x_k}\right)+b_{i}^{\tilde{y}}w_{x_i}\right).
         \end{split}
         \end{equation}
         Here $a_{ij,y},a_{ij,y_t},a_{ij,y_{x_k}}, b_{0,y},b_{0,y_t},b_{0,y_{x_k}}, b_{i,y},b_{i,y_t},b_{i,y_{x_k}},$ are bounded functions for any $i,j,k=1,\cdots,n $.
         
         Now, we proceed to prove that $w$ must be zero. We define the energy of the system \eqref{equ:ww} as follows:
         $$E(t)=\frac{1}{2}\int_\Omega \left(|w_t|^2+|\nabla w|^2\right) \dd x.$$
         
         Next, we multiply \eqref{equ:ww} by $w_t$ and integrate over $\Omega$, yielding:
         $$
         E(t)+\int_\Omega |w_t|^2\dd x=\int_\Omega w_{error}w_t\dd x.
         $$
         We need to estimate  $\int_\Omega w_{error}w_t\dd x$,  specifically $\int_\Omega I_iw_t\dd x$ for $ i=1,2,3,4$. 
         Using the expansions \eqref{I11} and \eqref{I12}, along with the Cauchy-Schwarz inequality, we obtain:
         $$
         \Big|\int_\Omega (I_1+I_3+I_4)w_t\dd x\Big|\leq C_1E(t),
         $$
         for some constant $C_1>0$. 
         
         Finally, we need to handle $\int_\Omega I_2 w_t \dd x$, The first two terms of $I_2w_t$  are similar to the previous cases and can be controlled by $E(t)$, that is,
         $$
         \bigg|\int_\Omega\Big(\sum_{i,j=1}^n\left(y_{x_i}a_{ij,y} w+y_{x_i}a_{ij,y_t}w_t\right)_{x_j}\Big)w_t\dd x\bigg|\leq C_2E(t).
         $$
         For the remaining two terms, we can use integration by parts to obtain:
         \begin{align*}
         &\int_\Omega \sum_{i,j=1}^n \bigg(w_t\Big(\sum_{k=1}^ny_{x_i}a_{ij,y_{x_k}}w_{x_k}\Big)_{x_j}+w_t\Big(\big(a_{ij}^{\tilde{y}}-\delta_{ij}\big)w_{x_i}\Big)_{x_j}\bigg)\dd x         \\
         =~& -\bigg(\sum_{i=1}^n\sum_{k,j=1}^n \frac{y_{x_i}a_{ij,y_{x_k}}+y_{x_i}a_{ik,y_{x_j}}}{2}w_{x_i}w_{x_j}\bigg)_t-\bigg(\sum_{i,j=1}^n \frac{(a_{ij}^{\tilde{y}}-\delta_{ij})}{2}w_{x_i}w_{x_j}\bigg)_t \\
         &+ \Big(\sum_{i=1}^n\sum_{k,j=1}^n \frac{y_{x_i}a_{ij,y_{x_k}}+y_{x_i}a_{ik,y_{x_j}}}{2}\Big)_tw_{x_i}w_{x_j}+\frac{1}{2}\sum_{i,j=1}^n (a_{ij}^{\tilde{y}})_tw_{x_i}w_{x_j}.
         \end{align*}
         Given our assumption that the solution satisfies the estimate \eqref{cond:y}, and considering the boundedness of the coefficient functions $a_{ij,y_{x_k}}$, we can conclude that the above terms are bounded by
         $$C_3\varepsilon \frac{\dd E(t)}{\dd t}+C_4E(t),$$
         for some constants $C_3,C_4>0$. Therefore, we have the following inequality:
         $$
         (1-C_3\varepsilon)\frac{\dd E}{\dd t}\leq(C_1+C_2+C_4)E(t)
         $$
         When $\varepsilon$ satisfies $0<1-C_3\varepsilon<1$,  applying Gronwall's inequality and noting that $E(0)=0$,  we conclude that $E(t)\equiv0$ for all $t\geq 0$.
         This completes the proof of the uniqueness of the solution. 
%
%
%

		\section{Proof of Theorem \ref{fully nonlinear controllability}}\label{sec:5}
		
		We consider the local null controllability problem for the fully nonlinear damped wave equations:
		\begin{equation}\label{dampedfullynonlinearwaveeqn}
			\left\{
			\begin{aligned}
				& y_{tt}+2y_t-\Delta y+y=F(y,y_t,\nabla y,\nabla^2y)+\chi\cdot u, & (t,x)\in & ~ (0,T)\times\Omega, \\
				& y(t,x)=0, & (t,x)\in & ~ (0,T)\times\partial\Omega, \\
				& y(0,x)=y^0, ~ y_t(0,x)=y^1, & x\in & ~ \Omega,
			\end{aligned}
			\right.
		\end{equation}
		where $\chi\in C^\infty(\Omega)$ satisfies $0\leq\chi(x)\leq 1$, $\chi|_{\omega}\equiv 1$, and $\chi$ supports in a neighbourhood of $\omega$, with $\omega\subset\Omega\cap O_{\varepsilon_0}(\partial\Omega)$, and
		\begin{equation}
			F(\lambda)=O\big(|\lambda|^2\big), \quad (\lambda\to 0),
		\end{equation}
		with
		$$\lambda=\Big(\lambda',\lambda_0,\lambda_i(i=1,\cdots,n),\lambda_{ij}(i,j=1,\cdots,n)\Big).$$
		
%
Before proceeding with the proof, we offer several remarks here.
		\begin{remark}
			Our nonlinear term $F$ is independent of $\nabla y_t$, this is mainly for simplicity, otherwise we need to deal with terms $v_{tx_i}$, and terms $z_{tx_i}$ in the dual system. But under the assumption of $(T,\omega)$ and $\varepsilon\ll 1$, the observability inequality might also be right.
		\end{remark}
		
		\begin{remark}
			Recall that in the second section, we let $y=e^t\tilde{y}$ to reduce a classical linear wave equation to a damped one. Similarly, for a classical nonlinear wave equation, we can use the same method to reduce it to \eqref{dampedfullynonlinearwaveeqn}.
		\end{remark}
				
			We draw attention to the fact that the outcome articulated in Theorem \ref{fully nonlinear controllability} is characterized by the conditions: $y_t(T)=0, ~ y_{tt}(T)=0,$ as opposed to the conditions: $y(T)=0, ~ y_t(T)=0.$ This discrepancy arises from the inherent complexity associated with fully nonlinear equations, which precludes a direct solution approach. To circumvent this, it is necessary to apply differentiation with respect to $t$ to the equation, thereby converting it into a quasi-linear form. Consequently, the objective of our control strategy is shifted to target the state variables $ (y_t, y_{tt}) $ at the terminal time $ T $, rather than $ (y, y_t) $. Pursuing control over $ (y, y_t) $ may introduce additional layers of complexity. This represents a novel insight that has emerged from our examination of fully nonlinear equations, underscoring the distinctive challenges they present in comparison to their linear counterparts.

		\begin{proof} {\color{black}We first consider \eqref{dampedfullynonlinearwaveeqn} intuitively. Let $v=y_t$, we have
				\begin{equation}\label{eqnofyandv}
					-\Delta y+y=F(y,v,\nabla y,\nabla^2y)-v_t-2v+\chi\cdot u
				\end{equation}
				differentiate \eqref{eqnofyandv} by $t$ formally, we get
				\begin{equation}\label{eqn of v}
					v_{tt}+b_0v_t-\sum\limits_{i,j=1}^na_{ij}v_{x_ix_j}=\tilde{b}v+\sum\limits_{i=1}^nb_iv_{x_i}+\chi\cdot u_t,
				\end{equation}
				where
				\begin{align}
			a_{ij}= & ~ \delta_{ij}+\frac{\partial F}{\partial 
                y_{x_ix_j}}(y,v,\nabla y,\nabla^2y), & b_0= & ~ 2-\frac{\partial F}{\partial v}(y,v,\nabla y,\nabla^2y), \nonumber \\
                b_i= & ~ \frac{\partial F}{\partial y_{x_i}}(y,v,\nabla y,\nabla^2y), & \tilde{b}= & ~ \frac{\partial F}{\partial y}(y,v,\nabla y,\nabla^2y)-1.
				\end{align}
				
				Inspired by \eqref{eqnofyandv} and \eqref{eqn of v}, we set up the following iteration schemes: taking $$(y^{(0)},z^{(0)},v^{(0)})\equiv 0,$$ knowing $(y^{(\alpha-1)},z^{(\alpha-1)}, v^{(\alpha-1)})$, we define $(y^{(\alpha)},z^{(\alpha)},v^{(\alpha)})$ as follows
				\begin{align}\label{eqnofyalpha:1.3}
					-\Delta y^{(\alpha)}+y^{(\alpha)}= & ~ F\big(y^{(\alpha-1)},v^{(\alpha-1)},\nabla y^{(\alpha-1)},\nabla^2y^{(\alpha-1)}\big) \\
					& ~ -v^{(\alpha-1)}_t-2v^{(\alpha-1)}-2\chi\left(z^{(\alpha-1)}(t)-z^{(\alpha-1)}(0)\right), \nonumber
				\end{align}
				\begin{align}\label{eqnofvalpha:1.3}
					v^{(\alpha)}_{tt}+b^{(\alpha)}_0v^{(\alpha)}_t-\sum\limits_{i,j=1}^na^{(\alpha)}_{ij}v^{(\alpha)}_{x_ix_j}=\sum\limits_{i=1}^nb^{(\alpha)}_iv^{(\alpha)}_{x_i}+\tilde{b}^{(\alpha)} v^{(\alpha)}-2\chi\cdot z^{(\alpha)}_t,
				\end{align}
				and
				\begin{align}\label{eqnofzalpha:1.3}
					z^{(\alpha)}_{tt}-b^{(\alpha)}_0z^{(\alpha)}_t-\sum\limits_{i,j=1}^na^{(\alpha)}_{ij}z^{(\alpha)}_{x_ix_j}=\sum\limits_{i=1}^nb_i^{(\alpha)}z^{(\alpha)}_{x_i}+\tilde{b}^{(\alpha)} z^{(\alpha)},
				\end{align}
				with boundary value
				\begin{align}\label{boundary value:1.3}
					y^{(\alpha)}(t,x)=0, ~ v^{(\alpha)}(t,x)=0, ~ z^{(\alpha)}(t,x)=0, \quad (t,x)\in(0,T)\times\partial\Omega,
				\end{align}
				and initial (or final) data
				\begin{align}\label{data:1.3}
					& ~ v^{(\alpha)}(0,x)=y^1, ~ v^{(\alpha)}_t(0,x)=\Delta y^0-y^0-2y^1+F(y^0,y^1,\nabla y^0,\nabla^2y^0), \nonumber \\
					& ~ z^{(\alpha)}(T,x)=v^{(\alpha-1)}(T,x)+z^{(\alpha-1)}(T,x), \\
					& ~ z^{(\alpha)}_t(T,x)=v_t^{(\alpha-1)}(T,x)+z_t^{(\alpha-1)}(T,x), \nonumber
				\end{align}
				where
				\begin{equation}
					\begin{aligned}
						& a_{ij}^{(\alpha)}=a_{ij}(y^{(\alpha-1)},v^{(\alpha-1)},\nabla y^{(\alpha-1)},\nabla^2y^{(\alpha-1)}), \quad i,j=1,\cdots,n\\
						& b^{(\alpha)}_i=b_i(y^{(\alpha-1)},v^{(\alpha-1)},\nabla y^{(\alpha-1)},\nabla^2y^{(\alpha-1)}), \quad i=0,\cdots,n \\
						& \tilde{b}^{(\alpha)}=\tilde{b}^{(\alpha)}(y^{(\alpha-1)},v^{(\alpha-1)},\nabla y^{(\alpha-1)},\nabla^2y^{(\alpha-1)}).
					\end{aligned}
				\end{equation}
				
				By Picard iteration method, we can prove that
				\begin{align}\label{converge:1.3}
					\big(v^{(\alpha)}, ~ v^{(\alpha)}_t\big) \to (v, ~ v_t) \;\; & \text{in} \;\; L^\infty\big(0,T;\mathcal{H}^{s-2}\big)\times L^\infty\big(0,T;\mathcal{H}^{s-3}\big), \nonumber \\
					\big(z^{(\alpha)}, ~ z^{(\alpha)}_t\big) \to (z, ~ z_t) \;\; & \text{in} \;\; L^\infty\big(0,T;\mathcal{H}^{s-2}\big)\times L^\infty\big(0,T;\mathcal{H}^{s-3}\big), \\
					y^{(\alpha)} \to y \;\; & \text{in} \;\; L^\infty\big(0,T;\mathcal{H}^{s-1}\big), \nonumber \\
					y^{(\alpha)}_t \to y_t \;\; & \text{in} \;\; L^\infty\big(0,T;\mathcal{H}^{s-2}\big), \nonumber
				\end{align}
				as $\alpha\to\infty$. The proof is similar to (but more complicated than) that of Theorem \ref{thm:main1}.}
				
				The next step is to prove $v=y_t$. By \eqref{eqnofvalpha:1.3} and \eqref{converge:1.3}, we can check that $v$ satisfies
				\begin{equation}\label{eqn of v finally}
					\left\{
					\begin{aligned}
						& v_{tt}+b_0v_t-\sum\limits_{i,j=1}^na_{ij}v_{x_ix_j}=\tilde{b}v+\sum\limits_{i=1}^nb_iv_{x_i}-2\chi\cdot z_t, & (t,x)\in & ~ (0,T)\times\Omega, \\
						& v(t,x)=0, & (t,x)\in & ~ (0,T)\times\partial\Omega, \\
						& v(0,x)=y^1, ~ v(T,x)=0 , ~ v_t(T,x)=0, & x\in & ~ \Omega, \\
						& v_t(0,x)=-2y^1+\Delta y^0-y^0+F(y^0,y^1,\nabla y^0,\nabla^2y^0), & x\in & ~ \Omega,
					\end{aligned}
					\right.
				\end{equation}
				and $y$ satisfies
				\begin{equation}\label{eqnofyfinally:1.3}
				v_t+2v-\Delta y+y=F\big(y,v,\nabla y,\nabla^2y\big)-2\chi\big(z(t)-z(0)\big).
				\end{equation}
				
				Denote $\bar{v}=y_t$, differentiating \eqref{eqnofyfinally:1.3} by $t$, we get
                \begin{equation*}
                v_{tt}+2v_t-\Delta\bar{v}+\bar{v}=F_y\cdot\bar{v}+F_v\cdot v_t+\sum\limits_{i=1}^nF_{y_{x_i}}\bar{v}_{x_i}+\sum\limits_{i,j=1}^nF_{y_{x_ix_j}}\bar{v}_{x_ix_j}-2\chi\cdot z_t,
                \end{equation*}
                which can be written as
				\begin{equation}
				v_{tt}+b_0v_t-\sum\limits_{i,j=1}^na_{ij}\bar{v}_{x_ix_j}=\tilde{b}\bar{v}+\sum\limits_{i=1}^nb_i\bar{v}_{x_i}-2\chi\cdot z_t.
				\end{equation}
				
				Subtracting from \eqref{eqn of v finally}, we get
				$$
				\left\{
				\begin{aligned}
					& \sum\limits_{i,j=1}^na_{ij}(v-\bar{v})_{x_ix_j}+\sum\limits_{i=1}^nb_i(v-\bar{v})_{x_i}+\tilde{b}(v-\bar{v})=0, & x\in & ~ \Omega, \\
					& v-\bar{v}=0, & x\in & ~ \partial\Omega.
				\end{aligned}
				\right.
				$$
				
				Noting that $a_{ij}, ~ b_i, ~ \tilde{b}$ are functions of $y$ and $v$, this is a linear equation of $v-\bar{v}$. To prove $v=\overline{v}$, we multiply the equation by $-v+\bar{v}$ and make an integration by parts, then we get
				$$\int_\Omega\sum\limits_{i,j=1}^na_{ij}(v-\bar{v})_{x_i}(v-\bar{v})_{x_j}\dd x=\int_\Omega\sum\limits_{i=1}^n\big(b_i-\partial_{x_j}a_{ij}\big)(v-\bar{v})(v-\bar{v})_{x_i}+\tilde{b} (v-\bar{v})^2\dd x.$$
				
				Noting that $|b_i-\partial_ja_{ij}|+|\tilde{b}+1|+|a_{ij}-\delta_{ij}|=O(\varepsilon)$, we have
				
				$$\int_\Omega\big|\nabla(v-\overline{v})\big|^2\dd x\leq\frac{3C\varepsilon-1}{1-2C\varepsilon}\int_\Omega|v-\overline{v}|^2\dd x.$$
				
				Taking $\varepsilon$ small enough such that $C\varepsilon<\frac{1}{3}$, hence we get $v=\bar{v}=y_t$, satisfying
				$$y_t(T)=0, ~ y_{tt}(T)=0.$$
				
				Then
				$$u(t)=-2\chi\left(z(t)-z(0)\right),$$
				is the desired control function.
		\end{proof}
		
		\appendix
		
		\section{Proof of Theorem \ref{fully nonlinear observability}}\label{appdix}
		The appendix is devoted to showing the proof of Theorem \ref{fully nonlinear observability}.
		Denote
		$$Q^T:=(0,T)\times\Omega, \quad \Gamma^T:=(0,T)\times\partial\Omega.$$
		
		Consider the following linear  system
		\begin{equation}\label{linear hyperbolic observability system-A}
			\left\{
			\begin{aligned}
				& z_{tt}+b_0z_t-\sum\limits_{i,j=1}^n\big(a^{ij}z_{x_i}\big)_{x_j}+\sum\limits_{k=1}^nb_kz_{x_k}+\tilde{b}z=0, & (t,x)\in & ~ Q^T, \\
				& z(t,x)=0, & (t,x)\in & ~ \Gamma^T, \\
				& z(0,x)=z_0, ~ z_t(0,x)=z_1 & x\in & ~ \Omega,
			\end{aligned}
			\right.
		\end{equation}
		with
		\begin{equation}\label{coecond:normA}
			\begin{cases}
				\|a^{ij}-\delta_{ij}\|_{  C^{1}(\overline{Q}^T)}<\tilde{\varepsilon}, i,j=1,\cdots, n \\
				\|b_0-1\|_{  C^{1}(\overline{Q}^T)}<\tilde{\varepsilon}, \|\tilde{b}\|_{  C^{0}(\overline{Q}^T)}<\tilde{\varepsilon}\\	  \|b_k\|_{ C^{0}(\overline{Q}^T)}<\tilde{\varepsilon}, k=1,\cdots, n.
			\end{cases}
		\end{equation}
				
		In this section, we will prove Theorem \ref{fully nonlinear observability}, i.e. there exists $\varepsilon_5>0$, such that if $\varepsilon_1\leq \varepsilon_5$,  the observability inequality holds for any solution of \eqref{linear hyperbolic observability system-A}
		\begin{equation}\label{linear hyperbolic observability inequality-A}
			\|z_1\|_{L^2(\Omega)}^2+\|z_0\|_{\mathcal{H}^1(\Omega)}^2\leq D\int_0^T\int_\omega|z_t|^2\dd x\dd t,
		\end{equation}
		for some constant $D>0$ depends on $T, \varepsilon_5$, $n$, $\Omega$ and $\omega$.
		
	We attempt to apply the methodology presented in \cite{Tebou, Fu3} to construct a proof for inequality \eqref{linear hyperbolic observability inequality-A}. The proof is based on Carleman estimate and  primarily divided into two main steps. First, we establish the $H^1$-norm Carleman estimate as detailed in Proposition \ref{internal Carleman theorem}. Following this, to derive the observability inequality, it is essential to eliminate the $z^2$ term appearing on the right-hand side of the inequality. To accomplish this, we proceed to establish an $L^2$-norm Carleman estimate.
		
		In order to secure the Carleman estimate for the system described by \eqref{linear hyperbolic observability system-A}, we commence by confirming that the assumption \eqref{defi:weak-Gamma} concerning $(T, \omega)$ yields the subsequent property. This property is instrumental in laying the groundwork for the derivation of the Carleman estimate in the $H^1$ norm.
		\begin{lem}\label{lem:A1}
		Assume that $(T,\omega)$ satisfies assumption \ref{defi:weak-Gamma}. Assume \eqref{coecond:normA} is valid. Define $\psi(x)=\frac{2\max\limits_{x\in\overline{\Omega}}|x-x_0|^2}{\min\limits_{x\in\overline{\Omega}}|x-x_0|^2}|x-x_0|^2$, there exists a small $\varepsilon_6$ depends on $\varepsilon_0$, $\Omega$ and $n$, such that if in \ref{defi:weak-Gamma}, we have $\tilde{\varepsilon}\leq \varepsilon_6$,  then the following statements are valid.
	\begin{description}
		\item[(a)]	There exists a positive constant $\mu_0>4$, 
            such that for any $(t,x,\xi)\in\overline{Q^T} \times\mathbb{R}^n$,
		\begin{align}\label{assumption:1.1}
            \sum\limits_{j,k=1}^n\sum\limits_{j',k'=1}^n\left[2a^{jk'}(a^{j'k}\psi_{x_{j'}})_{x_{k'}}\right]\xi^j\xi^k\geq\mu_0\sum\limits_{j,k=1}^na^{jk}\xi^j\xi^k,
		\end{align}
		and for any $(t,x)\in\overline{Q^T}$,
            \begin{equation}\label{condition of psi}
		\frac{1}{4}\sum\limits_{j,k=1}^na^{jk}(t,x)\psi_{x_j}\psi_{x_k}\geq 
            \max\limits_{x\in\overline{\Omega}}\psi(x)\geq\min\limits_{x\in\overline{\Omega}}\psi(x)\geq 0.
		\end{equation}
		\item [(b)]Let
		\begin{equation}\label{Gamma t}
			\Gamma_t=\Big\{x\in\partial\Omega:\sum\limits_{j,k=1}^na^{jk}(t,x)\psi_{x_j}n^k>0\Big\},
		\end{equation}
		
		and for $\varepsilon_0>0$,
		$$O_{\varepsilon_0}(\Gamma_t)=\{x\in\mathbb{R}^n:d(x,\Gamma_t)<\varepsilon_0\},$$
		
		we have
		\begin{equation}\label{condition omega-A}
			\Big(\bigcup_{t\in[0,T]}O_{\varepsilon_0}(\Gamma_t)\Big)\cap ~ \Omega\subseteq\omega.
		\end{equation}
		\end{description}
		
		\end{lem}

			\begin{proof}[Proof of (a)]
				First, we prove \eqref{assumption:1.1}. Since  $a^{ij}$ satisfies \eqref{coecond:normA}, we have that
					\begin{equation}\label{aaaa}
					\|a^{ij}-\delta_{ij}\|_{  C^{1}(\overline{Q}^T)}<\varepsilon_1, i,j=1,\cdots, n.
				\end{equation}
				This implies that for any $\mu_0$
				\begin{align}\label{A7}
					\mu_0\sum\limits_{j,k=1}^na^{jk}\xi^j\xi^k\leq\mu_0\big(1+n^2\varepsilon_1\big)|\xi|^2.
				\end{align}
			
			Let $d=\frac{\max\limits_{x\in\overline{\Omega}}|x-x_0|^2}{\min\limits_{x\in\overline{\Omega}}|x-x_0|^2}$. Direct computation shows
				\begin{align}\label{aa9}
					& ~ \sum\limits_{j,k=1}^n\sum\limits_{j',k'=1}^n\left[2a^{jk'}(a^{j'k}\psi_{x_{j'}})_{x_{k'}}\right]\xi^j\xi^k \nonumber \\
					= & ~ \big[8d|\xi|^2+2\sum\limits_{j,k=1}^n\sum\limits_{j',k'=1}^n\left[a^{jk'}(a^{j'k}\psi_{x_{j'}})_{x_{k'}}-4d\delta_{jk}\right]\xi^j\xi^k\big] \\
					\geq & ~ 8|\xi|^2-2\sum\limits_{j,k=1}^n\left|\sum\limits_{j',k'=1}^na^{jk'}(a^{j'k}\psi_{x_{j'}})_{x_{k'}}-4d\delta_{jk}\right||\xi^j\xi^k|. \nonumber
				\end{align}
				
			We observe that
				\begin{align}
					& ~ \sum\limits_{j',k'=1}^na^{jk'}\left(a^{j'k}\psi_{x_{j'}}\right)_{x_{k'}}-4d\delta_{jk} \nonumber \\
					= & ~ 4d\sum\limits_{j',k'=1}^na^{jk'}\left(a^{j'k}\big(x^{j'}-x_0^{j'}\big)\right)_{x_{k'}}-4d\delta_{jk} \\
					= & ~ 4d\sum\limits_{j',k'=1}^na^{jk'}\left(\big(a^{j'k}-\delta_{j'k}\big)\big(x^{j'}-x_0^{j'}\big)\right)_{x_{k'}}+4d\big(a^{jk}-\delta_{jk}\big). \nonumber
				\end{align}
				By \eqref{aaaa}, we obtain
				\begin{align}
				&\bigg|4d\sum\limits_{j',k'=1}^na^{jk'}\left(\big(a^{j'k}-\delta_{j'k}\big)\big(x^{j'}-x_0^{j'}\big)\right)_{x_{k'}}+4d\big(a^{jk}-\delta_{jk}\big)\bigg| \nonumber\\
                \leq~& d\Big(n\sqrt{n} (\tilde{\varepsilon}+1)\tilde{\varepsilon} \sqrt{\max_{x\in \bar{\Omega}}\{\psi(x)\}}+4n(1+\tilde{\varepsilon}) \tilde{\varepsilon}+4\tilde{\varepsilon}\Big).
				\end{align}
				
			Substituting this into \eqref{aa9}, we have
				\begin{align}\label{A10}
					\sum\limits_{j,k=1}^n\sum\limits_{j',k'=1}^n\left[2a^{jk'}(a^{j'k}\psi_{x_{j'}})_{x_{k'}}\right]\xi^j\xi^k\geq (8-C(n,\tilde{\varepsilon},\psi)\tilde{\varepsilon})|\xi|^2,
				\end{align}
				where
				$C(n,\tilde{\varepsilon},\psi)=d\big(n^2\sqrt{n} (\tilde{\varepsilon}+1) \max_{x\in \bar{\Omega}}\{\psi(x)\} +4n^2(1+\tilde{\varepsilon})+4n\big)$.
				
				Combining \eqref{A7} and \eqref{A10}, we know that by choosing $\varepsilon_6$ small enough such that
				\begin{equation}\label{ep61}
					C(n,\varepsilon_6,\psi)\varepsilon_6<4
				\end{equation}
				 then we have \eqref{assumption:1.1} for some $\mu_0>4$.
				
				For the proof of \eqref{condition of psi}, it suffices to check the first part of the inequality. We compute
				\begin{equation}						\frac{1}{4}\sum\limits_{j,k=1}^na^{jk}(t,x)\psi_{x_j}\psi_{x_k}=4d^2 |x-x_0|^2+\frac{1}{4}\sum\limits_{j,k=1}^n\big(a^{jk}(t,x)-\delta^{jk}\big)\psi_{x_j}\psi_{x_k},
				\end{equation}
				which implies that
				\begin{equation}
					\frac{1}{4}\sum\limits_{j,k=1}^na^{jk}(t,x)\psi_{x_j}\psi_{x_k}\geq 4d^2|x-x_0|^2-nd^2\tilde{\varepsilon} |x-x_0|^2.
				\end{equation}
			Choosing  $\varepsilon_6< \frac{2}{n}$ and satisfies \eqref{ep61}, then
				this implies that
				\begin{equation}
						\frac{1}{4}\sum\limits_{j,k=1}^na^{jk}(t,x)\psi_{x_j}\psi_{x_k}\geq 2d^2|x-x_0|^2\geq 2 \max\lim\limits_{x\in \overline{\Omega}}|x-x_0|^2.
				\end{equation}
			Thus, \eqref{condition of psi} is valid for some small $\varepsilon_6>0$
			\end{proof}

			\begin{proof}[Proof of (b)]
				It suffices to demonstrate that for $\varepsilon_0>0$,
				\begin{align}\label{prop:2:need}
					\bigcup_{t\in[0,T]}O_{\varepsilon_0}(\Gamma_t)\subseteq O_{\varepsilon_0}(\Gamma).
				\end{align}
				
				Utilizing the definition of  $\Gamma_t$ and $\psi$, we have
				\begin{align}
					\Gamma_t= & ~ \Big\{x\in\partial\Omega:\sum\limits_{j,k=1}^na^{jk}(t,x)\psi_{x_j}n^k>0\Big\} \nonumber \\
					= & ~ \Big\{x\in\partial\Omega:(x-x_0)\cdot\bm{\nu}>\sum\limits_{j,k=1}^n\big(\delta_{jk}-a^{jk}(t,x)\big)\psi_{x_j}n^k\Big\} 			
				\end{align}
				Recalling \eqref{aaaa}, it follows that
			\begin{equation}			|	\sum\limits_{j,k=1}^n\big(\delta_{jk}-a^{jk}(t,x)\big)\psi_{x_j}n^k|\leq \tilde{\varepsilon}\sqrt{\frac{n}{2}\max_{x\in \bar{\Omega}}\{\psi(x)\} }
				\end{equation}
				Thus, we obtain
				\begin{equation}
						\Gamma_t \subseteq  ~ \Big\{x\in\partial\Omega:(x-x_0)\cdot\bm{\nu}>-C(n,\psi)\varepsilon_1\Big\},
				\end{equation}
			which holds for  all $ ~ t\in[0,T]$. Consequently, by selecting $\varepsilon_6$ sufficiently small such that  $C(n,\psi)\varepsilon_6< \varepsilon_0$, we deduce
				\begin{align}
					\bigcup_{t\in[0,T]}\Gamma_t\subseteq\Gamma,				\end{align}
				and \eqref{condition omega-A} is satisfied. This completes the proof.
		\end{proof}
		
		We now state a Carleman estimate in the $H^1$-norm. Denote
		\begin{equation}\label{definitions in Carleman estimate}
			\left\{
			\begin{aligned}
				& v(t,x)=\theta z, ~ \theta(t,x)=e^{l(t,x)}, ~ l(t,x)=\lambda\phi(t,x), \\
				& \phi(t,x)=\psi(x)-c_1(t-T/2)^2,  c_1\in(0,1).
			\end{aligned}
			\right.
		\end{equation}

	\begin{prop}\label{internal Carleman theorem}
		Assuming that $(T,\omega)$  satisfies the condition given in Assumption \ref{defi:weak-Gamma} and that \eqref{coecond:normA} holds with $\tilde{\varepsilon}\leq \varepsilon_6$ in Lemma \ref{lem:A1}. 			
			Then there exists a constant $\lambda_0>0$ such that for all $\lambda>\lambda_0$, any $z\in H^1_0(Q^T)$ fulfills the following internal Carleman estimate
			\begin{equation}\label{internal Carleman estimate}
				\begin{aligned}
					& \int_{Q^T}\theta^2\Big(\lambda(z_t^2+|\nabla z|^2)+\lambda^3z^2\Big)\dd x\dd t \\
					\leq & ~ C\bigg(\int_{Q^T}\theta^2|z_{tt}-\sum\limits_{i,j=1}^n\big(a^{ij}z_{x_i}\big)_{x_j}|^2\dd x\dd t+\lambda^2\int_0^T\int_{\omega}\theta^2(z_t^2+\lambda^2z^2)\dd x\dd t\bigg).
				\end{aligned}
			\end{equation}
            \end{prop}
		
		\begin{rem}
				Let
			\begin{equation}\label{the choice of T1}
				T_1=\max\left\{2\sqrt{\kappa_1} ~, ~ 1+25s_0(n+2)\sqrt{n}\right\},
			\end{equation}
			where
			$$\kappa_1=\max\limits_{t\in[0,T],x\in\overline{\Omega}}\sum\limits_{j,k=1}^na^{jk}\psi_{x_j}\psi_{x_k}, \quad s_0=\max\limits_{t\in[0,T],x\in\partial\Omega}\sum\limits_{j,k=1}^n a^{jk}\psi_{x_j}n^k.$$
		Direct computation shows that if  $(T,\omega)$ satisfies Assumption \ref{defi:weak-Gamma}, then $T>T_1$.
		\end{rem}
		
		The proof of this Lemma can follow the procedures outlined in \cite[Chapter 4]{Fu2}, and thus we do not provide a detailed proof here.

	As mentioned at the beginning of this section, in order to obtain \eqref{linear hyperbolic observability inequality-A}, we need to eliminate the $z^2$ terms on the right-hand side of \eqref{internal Carleman estimate}. Following the approach in \cite{Fu3}, we need consider the $L^2$-norm Carleman estimate for the following system:
	\begin{equation}\label{hyperbolic observability F-system-B}
		\left\{
		\begin{aligned}
			& z_{tt}+b_0z_t-\sum\limits_{i,j=1}^n\big(a^{ij}z_{x_i}\big)_{x_j}+\sum\limits_{k=1}^nb_kz_{x_k}+\tilde{b}z=F, &  (t,x)\in & ~ Q^T, \\
			& z(t,x)=0, &  (t,x)\in & ~ \Gamma^T,
		\end{aligned}
		\right.
	\end{equation}
	where $F\in L^1(0,T;H^{-1}(\Omega))$ and $a^{ij},b_0,b_i,\tilde{b}, i,j=1,2,\cdots,n$ satisfies \eqref{coecond:normA}.
	
	The $L^2$ estimate requires consideration of the weak solution to system \eqref{hyperbolic observability F-system-B}:
	\begin{defn}
		A function $z\in L^2((0,T)\times\Omega)$ is called a weak solution to \eqref{hyperbolic observability F-system-B} if
		\begin{equation}\label{define weak solution-B}
            \bigg(z ~ , ~ \eta_{tt}-\sum\limits_{j,k=1}^n\big(a^{jk} \eta_{x_j}\big)_{x_k}\bigg)_{L^2(Q^T)}=\int_0^T\langle f(t,\cdot), \eta(t,\cdot)\rangle_{H^{-1}(\Omega), H_0^1(\Omega)} \dd t,
            \end{equation}
        holds for any $\eta\in H_0^2\big(0,T;H^2(\Omega)\cap H_0^1(\Omega)\big)$.
	\end{defn}
	
	Note that there are no initial data in \eqref{hyperbolic observability F-system-B}, so we need the following lemma for weak solutions.
	\begin{lem}\label{lemma4.1 in the book}
		Given $0<t_1<t_2<T$ and $g\in L^2((t_1,t_2)\times\Omega)$. Assume that $z\in L^2(Q^T)$ is a weak solution to \eqref{hyperbolic observability F-system-B} with $z=g$ in $(t_1,t_2)\times\Omega$. Assume that  \eqref{coecond:normA} is valid. Then there exists a small constant $\varepsilon_7$, such that if $\tilde{\varepsilon}\leq \varepsilon_7$ in  \eqref{coecond:normA},  then we have
		$$z\in C([0,T];L^2(\Omega))\cap C^1([0,T];H^{-1}(\Omega)),$$
		
		and there exists a constant $C=C(T,t_1,t_2,\Omega,\tilde{\varepsilon})>0$, such that
		\begin{equation}\label{zfg}
			\|z\|_{C([0,T];L^2(\Omega))\cap C^1([0,T];H^{-1}(\Omega))}\leq C\big(\|f\|_{L^1(0,T;H^{-1}(\Omega))}+\|g\|_{L^2((t_1,t_2)\times\Omega)}\big).
		\end{equation}

	\end{lem}
	
	The proof of this lemma differs from that of \cite[Lemma 5.1]{zhangzua03} solely in that the coefficients are time-dependent.
	This results in additional terms appearing during the regularization process of the solution with respect to time. Nonetheless, by leveraging the smallness assumption \eqref{coecond:normA}, we can achieve the desired conclusion.
	\begin{proof}[Proof of Lemma \ref{lemma4.1 in the book}]
	{\color{black}	
		Fix arbitrary
	 $t_i, i=3,4$ satisfying
		\begin{equation}
			t_1<t_3<t_4<t_2.
		\end{equation}
		For any $\delta\in (0,\min(t_3-t_1,t_2-t_4))$, we have for any $t,x\in (t_3,t_6)\times \Omega$,
		\begin{equation}
			z^{\delta}:=(z*\rho_\delta)(t,x)=\int_{-\infty}^{+\infty}z(s,x)\rho_\delta(t-s)\dd s,
		\end{equation}
		where $\rho_\delta \in C_0^\infty(\R)$ is a Friedrichs mollifier.
		
	  According to equation \eqref{hyperbolic observability F-system-B}, we can verify that $z^\delta\in C^\infty([t_3,t_4];L^2)$ satisfies
	 	\begin{equation}\label{hyperbolic observability F-system-BB}
	 	\left\{
	 	\begin{aligned}
	 		& z^\delta_{tt}+z^\delta_t-\Delta z^\delta=F_1^\delta+F_2^\delta, &  (t,x)\in & ~ Q^T, \\
	 		& z^\delta(t,x)=0, &  (t,x)\in & ~ \Gamma^T,
	 	\end{aligned}
	 	\right.
	 \end{equation}
		where $F^\delta_1=F*\rho_\delta$ and
		\begin{align*}
            F_2^\delta=&\int_{-\infty}^{+\infty}(b_0(t)-b_0(s))z_t(s,x)\rho_\delta(t-s)\dd s \\
		&-\int_{-\infty}^{+\infty}\sum\limits_{i,j=1}^n\big((a^{ij}(s,x)- 
            a^{ij}(t,x))z_{x_i}(s,x)\big)_{x_j}\rho_\delta(t-s)\dd s \\
            &+ \int_{-\infty}^{+\infty}(b_k(t)-b_k(s))z_{x_k}(s,x)\rho_\delta(t-s)\dd s+\int_{-\infty}^{+\infty}(\tilde{b}(t)-\tilde{b}(s))z(s,x)\rho_\delta(t-s)\dd s.
            \end{align*}

		Since for any $t \in (t_3, t_4)$, $\rho_\delta(t-s)$ has compact support. Then we can use integration by parts in the sense of distributions, to deduce that
		\begin{equation}
				\int_{-\infty}^{+\infty}(b_0(t)-b_0(s))z_t(s,x)\rho_\delta(t-s)\dd s= \int_{-\infty}^{+\infty}z(s,x)\partial_s\big((b_0(t)-b_0(s))\rho_\delta(t-s)\big)\dd s.
		\end{equation}

		Denote $(-\Delta)^{-1}$ as the inverse of the Laplacian operator  $-\Delta$ with Dirichlet boundary conditions. Thus, for any
		 $u,v\in L^2(\overline\Omega), a \in C^1(\overline\Omega), b, c\in C^0(\overline\Omega)$, if $(-\Delta)^{-1}v=v=u=0$ on $\partial\Omega$, then
		\begin{equation}
			\begin{split}
				((au_{x_i})_{x_j},(\Delta)^{-1}v)_{H^{-2},H^2}\leq~& C_1\|a\|_{C^1}\|u\|_{L^2}\|v\|_{L^2}, \\
				((bu_{x_i}),(\Delta)^{-1}v)_{H^{-2},H^2}\leq~& C_2\|b\|_{C^0}\|u\|_{L^2}\|v\|_{H^{-1}}, \\	((cu),(\Delta)^{-1}v)_{H^{-2},H^2}\leq~& C_3\|c\|_{C^0}\|u\|_{L^2}\|v\|_{H^{-2}}\leq C_4\|c\|_{C^0}\|u\|_{L^2}\|v\|_{L^{2}},
			\end{split}
		\end{equation}
		 where $C_i,i=1,2,3,4$ depend only on $\Omega$ and $n$.
		 Multiplying the equation for
		$z^\delta$
		 in system \eqref{hyperbolic observability F-system-BB} by $\Delta^{-1}z^\delta$, integrating over $\Omega$, and using \eqref{coecond:normA}, we obtain:
		 \begin{align}\label{zdelta:1}
		 	&\|z^\delta\|_{C^0(t_3,t_4; L^2)\cap C^1(t_3,t_4; H^{-1}) }^2\nonumber\\
    \leq ~&\tilde{C}\big(\|F_1^\delta\|_{L^2(t_3,t_4; L^2)}	\|z^\delta\|_{C^0(t_3,t_4; L^2) }+\tilde{\varepsilon} \|z\|_{L^2(t_1,t_2; L^2)}	\|z^\delta\|_{C^0(t_3,t_4; L^2)\cap C^1(t_3,t_4; H^{-1})}\big).
		 \end{align}
		 Here $\tilde{C} >0$ is a constant independent with $\delta, z,z^\delta, F^\delta_1$ and $\tilde{\varepsilon}$.
		
		Thus, together with the assumption that  $z=g$ in $(t_1,t_2)\times\Omega$, it immediately implies that
		  \begin{equation}\label{zdelta:2}
		 	\|z^\delta\|_{C^0(t_3,t_4; L^2)\cap C^1(t_3,t_4; H^{-1}) }^2\leq \tilde{C}_1\big(\|F_1^\delta\|_{L^2(t_3,t_4; L^2)}^2	+ \|g\|_{L^2(t_1,t_2; L^2)}^2	\big).
		 \end{equation}
		
		  Letting $\delta$ tends to zero and using the properties of the Friedrichs mollifier $\rho_\delta$, we can conclude that  $z\in C([t_3,t_6];L^2(\Omega))\cap C^1([t_3,t_6];H^{-1}(\Omega))$ and
		    \begin{equation}\label{zdelta2}
		  	\|z\|_{C^0(t_3,t_6; L^2)\cap C^1(t_3,t_6; H^{-1}) }^2\leq \tilde{C}_1\big(\|F\|_{L^2(t_1,t_2; L^2)}^2	+ \|g\|_{L^2(t_1,t_2; L^2)}^2	\big).
		  \end{equation}
		
		 Since \eqref{hyperbolic observability F-system-B} is a linear system, using the well-posedness theory of linear wave equations, we can get \eqref{zfg}. Therefore, we complete the proof.}
	\end{proof}

	Our Carleman estimate for the above hyperbolic operators in $L^2$-norm is as follows.
                \begin{prop}\label{prop:L2 Carleman theorem}
				Assuming that $(T,\omega)$  satisfies the condition given in Assumption \ref{defi:weak-Gamma} and that \eqref{coecond:normA} holds with $\tilde{\varepsilon}\leq \varepsilon_6$ in Lemma \ref{lem:A1}.  Let $T_1$ be given in \eqref{the choice of T1}. Then there exists a constant $\lambda_0^*>0$ such that for $\forall ~ T>T_1$ and $\lambda>\lambda_0^*$, and every solution $z\in C^0([0,T]; L^2(\Omega))$ satisfying $z(0,x)=z(T,x)=0, ~ x\in\Omega$ and $$z_{tt}-\sum\limits_{j,k=1}^n\big(a^{jk}z_{x_j}\big)_{x_k}\in H^{-1}\big(Q^T\big),$$
                it holds
			\begin{equation}\label{L2 Carleman estimate}
				\begin{aligned}
					& \lambda\int_{Q^T}\theta^2z^2\dd x\dd t \\
					\leq & ~ C\bigg(\Big\|\theta\Big(z_{tt}-\sum\limits_{j,k=1}^n\big(a^{jk}z_{x_j}\big)_{x_k}+2z_t+z\Big)\Big\|_{H^{-1}(Q^T)}^2+\lambda^2\int_0^T\int_\omega\theta^2 z^2\dd x\dd t\bigg).
				\end{aligned}
			\end{equation}
            \end{prop}
		
		We will first assume Proposition \ref{prop:L2 Carleman theorem} and then provide the proof for Theorem \ref{fully nonlinear observability} . The proof for Proposition \ref{prop:L2 Carleman theorem} will be presented later.
		\begin{proof}[Proof of Theorem \ref{fully nonlinear observability}]
			
		For any $(z_0,z_1)\in H_0^1(\Omega)\times L^2(\Omega)$, the system \eqref{linear hyperbolic observability system-A} admits a unique solution
		$$z\in C([0,T];H_0^1(\Omega))\cap C^1([0,T];L^2(\Omega)).$$
		
		Define the energy of the system by
		$$\mathcal{E}(t)=\frac{1}{2}\int_\Omega\Big[|z_t(t)|^2+\sum\limits_{j,k=1}^na^{jk}z_{x_j}(t)z_{x_k}(t)+|z(t)|^2\Big]\dd x.$$
        
        Multiplying the system \eqref{linear hyperbolic observability system-A} by $z_t$, integrating it on $\Omega$, and using integration by parts, we get
		\begin{equation}\label{energy estimate half}
			\mathcal{E}'(t)+2\int_\Omega b_0z_t^2\dd x=-\int_\Omega\Big(\sum\limits_{k=1}^nb_kz_tz_{x_k}+\tilde{b}zz_t\Big)\dd x\geq-C\varepsilon\mathcal{E}(t).
		\end{equation}
		
		Since that
		$$\int_\Omega b_0z_t^2\dd x\leq(2+C\varepsilon)\mathcal{E}(t),$$
		we have
		$$\mathcal{E}'(t)+(2+C\varepsilon)\mathcal{E}(t)=e^{-(2+C\varepsilon)t}\frac{\dd}{\dd t}\big(e^{(2+C\varepsilon)t}\mathcal{E}(t)\big)\geq 0.$$
		
		Integrating the above inequality on $(0,T)$, we get
		\begin{equation}\label{E0 and ET}
			e^{(2+C\varepsilon)T}\mathcal{E}(T)\geq\mathcal{E}(0).
		\end{equation}
		
		\textbf{Step 1.} We put
		$$\tilde{T}_j=\Big(\frac{1}{2}-\varepsilon_j\Big)T, \quad \tilde{T}'_j=\Big(\frac{1}{2}+\varepsilon_j\Big)T, \quad j=0,1$$
		for constants $0<\varepsilon_0<\varepsilon_1<\frac{1}{2}$.
		
		Then we choose a nonnegative cut-off function $\tilde{\zeta}\in C_0^2([0,T])$ such that
		\begin{equation}\label{zeta is 1}
			\tilde{\zeta}(t)\equiv 1, \qquad \forall ~ t\in[\tilde{T}_1,\tilde{T}'_1].
		\end{equation}
		
		Set $\tilde{z}(t,x)=\tilde{\zeta}(t)z_t(t,x)$ for $(t,x)\in Q^T$. Then $\tilde{z}$ solves
		\begin{equation}\label{system:z}
			\left\{
			\begin{aligned}
				& \tilde{z}_{tt}-\sum\limits_{j,k=1}^n(a^{jk}\tilde{z}_{x_j})_{x_k}+2\tilde{z}_t+\tilde{z}=\tilde{\zeta}_{tt}z_t+\tilde{\zeta}_tz_t+2\tilde{\zeta}_tz_{tt}+\tilde{\zeta}(2-b_0)z_{tt} & & \\
				& +\tilde{\zeta}\bigg[\sum\limits_{j,k=1}^n(a^{jk}_tz_{x_j})_{x_k}-\sum\limits_{k=1}^n(b_kz_{x_k})_t-(\tilde{b}-1+\partial_tb_0)z_t-\tilde{b}_tz\bigg], & (t,x)\in & ~ Q^T, \\
				& \tilde{z}(t,x)=0, & (t,x)\in & ~ \Gamma_T, \\
				& \tilde{z}(0,x)=\tilde{z}(T,x)=0, & x\in & ~ \Omega.
			\end{aligned}
			\right.
		\end{equation}
		
		Let $T_1$ and $\phi$ be given by \eqref{the choice of T1} and \eqref{definitions in Carleman estimate}. Then by Proposition \ref{prop:L2 Carleman theorem}, there exists $\lambda_0^*>0$ such that for all $T>T_1$ and $\lambda\geq \lambda_0^*$, it holds that
		\begin{equation}\label{4.133 in the book}
			\begin{aligned}
				& \lambda\int_{Q^T}\theta^2\tilde{z}^2\dd x\dd t \\
				\leq ~ & C\bigg(\Big\|\theta\big(\tilde{\zeta}_{tt}z_t+\tilde{\zeta}_tz_t+2\tilde{\zeta}_tz_{tt}+\tilde{\zeta}(2-b_0)z_{tt}\big)\Big\|_{H^{-1}(Q^T)}^2+\lambda^2\int_0^T \int_\omega\theta^2\tilde{z}^2\dd x\dd t \\
				+ & \bigg\|\theta\tilde{\zeta}\Big[\sum\limits_{j,k=1}^n(a^{jk}_tz_{x_j})_{x_k}-\sum\limits_{k=1}^n(b_kz_{x_k})_t-(\tilde{b}-1+\partial_tb_0)z_t-\tilde{b}_tz\Big]\bigg\|_{H^{-1} (Q^T)}^2\bigg).
			\end{aligned}
		\end{equation}
		
		Using H$\mathrm{\ddot{o}}$lder inequality and Sobolev embedding theorem, we find that
		\begin{equation}\label{4.134 in the book}
			\left\{
			\begin{aligned}
				\big\|\theta(\tilde{\zeta}_{tt}+\tilde{\zeta}_t)z_t\big\|_{H^{-1}(Q^T)} & \leq C\|\theta z_t\|_{L^2(\tilde{Q})} \\
				\big\|2\theta\tilde{\zeta}_tz_{tt}\big\|_{H^{-1}(Q^T)} & \leq C(1+\lambda)\|\theta z_t\|_{L^2(\tilde{Q})} \\
				\big\|\theta\tilde{\zeta}(2-b_0)z_{tt}\big\|_{H^{-1}(Q^T)} & \leq C(1+\lambda)\varepsilon\|\theta z_t\|_{L^2(Q^T)},
			\end{aligned}
			\right.
		\end{equation}
		where $\tilde{Q}=\big((0,\tilde{T}_1)\cup(\tilde{T}'_1,T)\big) 
            \times\Omega$, and
		\begin{equation}\label{Addition:5}
			\begin{aligned}
				& \bigg\|\theta\tilde{\zeta}\Big[\sum\limits_{j,k=1}^n(a^{jk}_tz_{x_j})_{x_k}-\sum\limits_{k=1}^n(b_kz_{x_k})_t-(\tilde{b}-1+\partial_tb_0)z_t-\tilde{b}_tz\Big]\bigg\|_{H^{-1} (Q^T)} \\
				\leq ~ & C(1+\lambda)\varepsilon\big(\|\theta\nabla z\|_{L^2(Q^T)}+\|\theta z_t\|_{L^2(Q^T)}\big).
			\end{aligned}
		\end{equation}
		
		Combining \eqref{system:z}--\eqref{Addition:5}, we have
		\begin{equation}\label{4.135 in the book}
			\begin{aligned}
				& \lambda\|\theta\tilde{z}\|_{L^2(Q^T)}^2 \\
				\leq ~ & C\lambda^2\|\theta z_t\|_{L^2(\tilde{Q})}^2+C\lambda^2\|\theta\tilde{z}\|_{L^2((0,T)\times\omega)}^2+C\lambda^2\varepsilon^2\Big(\|\theta\nabla z\|_{L^2(Q^T)}^2 +\|\theta z_t\|_{L^2(Q^T)}^2\Big) \\
				\leq ~ & C\lambda^2\|\theta z_t\|_{L^2(\tilde{Q})}^2+C\lambda^2\int_0^T\int_\omega\theta^2z_t^2\dd x\dd t+C\lambda^2\varepsilon^2\Big(\|\theta\nabla z\|_{L^2(Q^T)}^2+\|\theta z_t\|_{L^2(Q^T)}^2\Big).
			\end{aligned}
		\end{equation}
		
		On the other hand, by \eqref{zeta is 1}, we find that
		$$\|\theta\tilde{z}\|_{L^2(Q^T)}^2\geq\int_{\tilde{T}_1}^{\tilde{T}'_1}\int_\Omega\theta^2z_t^2\dd x\dd t.$$ 
        Thus we have
		\begin{equation}\label{4.136 in the book}
			\|\theta z_t\|_{L^2(Q^T)}^2\leq\|\theta\tilde{z}\|_{L^2(Q^T)}^2+\|\theta z_t\|_{L^2(\tilde{Q})}^2.
		\end{equation}
		
		It follows from \eqref{4.135 in the book} and \eqref{4.136 in the book} that
		\begin{equation}\label{4.137 in the book}
			\|\theta z_t\|_{L^2(Q^T)}^2\leq C\lambda\Big(\|\theta z_t\|_{L^2(\tilde{Q})}^2+\varepsilon^2\|\theta\nabla z\|_{L^2(Q^T)}^2+\int_0^T\int_\omega\theta^2z_t^2\dd x\dd t\Big).
		\end{equation}
		
		\textbf{Step 2.} We set
		$$R_0=\min\limits_{x\in\overline{\Omega}}\sqrt{\psi(x)},\quad R_1=\max\limits_{x\in\overline{\Omega}}\sqrt{\psi(x)}.$$
        By the definition \eqref{definitions in Carleman estimate} of the function $\phi$, we can see there exists an $\varepsilon_1\in(0,1/2)$, such that
		\begin{equation}\label{4.84 in the book}
			\phi(t,x)\leq\frac{R_1^2}{2}-\frac{c_1T^2}{8}<0, \quad \forall ~ (t,x)\in\tilde{Q}.
		\end{equation}
		
		Further, since that
		$$\phi\Big(\frac{T}{2},x\Big)=\psi(x)\geq R_0^2, \quad\forall ~ x\in\Omega,$$
        one can find an $\varepsilon_0\in(0,1/2)$, such that
		\begin{equation}\label{4.85 in the book}
			\phi(t,x)\geq\frac{R_0^2}{2}, \quad \forall ~ (t,x)\in\big(\tilde{T}_0,\tilde{T}'_0\big)\times\Omega:=Q_0.
		\end{equation}
		
		Combining \eqref{4.137 in the book}--\eqref{4.85 in the book}, we obtain that
		$$
		\begin{aligned}
			& e^{\lambda R_0^2}\|z_t\|_{L^2(Q_0)}^2 \\
			\leq ~ & C\lambda\Big(e^{\lambda(R_1^2-cT^2/4)}\|z_t\|_{L^2(\tilde{Q})}^2+\varepsilon^2e^{2\lambda R_1^2}\|\nabla z\|_{L^2(Q^T)}^2+e^{2\lambda R_1^2}\int_0^T \int_\omega z_t^2\dd x\dd t\Big). \\
		\end{aligned}
		$$
        Noting that
		$$\|z_t\|_{L^2(\tilde{Q})}^2+\|\nabla z\|_{L^2(Q^T)}^2\leq 2T\sup\limits_{t\in[0,T]}\mathcal{E}(t)\leq 2Te^{(1+\varepsilon)T}\mathcal{E}(T),$$
        hence we have
		\begin{equation}\label{4.139 in the book}
			\|z_t\|_{L^2(Q_0)}^2\leq C\lambda\Big(e^{\lambda(R_1^2-R_0^2-cT^2/4)}\mathcal{E}(T)+e^{2\lambda R_1^2}\int_0^T\int_\omega z_t^2\dd x\dd t\Big).
		\end{equation}
		
		\textbf{Step 3.} We choose a nonnegative function $\zeta\in C^1([\tilde{T}_0,\tilde{T}'_0])$ with $\zeta(\tilde{T}_0)=\zeta(\tilde{T}'_0)=0$. Multiplying the equation in \eqref{linear hyperbolic observability system-A} by $\zeta z$, integrating it in $Q_0$ and using integration by parts, we get
		$$
		\begin{aligned}
			& \int_{Q_0}\zeta\Big(z_t^2+\sum\limits_{j,k=1}^na^{jk}z_{x_j}z_{x_k}+z^2\Big)\dd x\dd t=2\int_{\tilde{T}_0}^{\tilde{T}'_0}\zeta(t)\mathcal{E}(t)\dd t \\
			= & ~ 2\int_{Q_0}\zeta z_t^2\dd x\dd t+\int_{Q_0}\zeta_tzz_t\dd x\dd t-\int_{Q_0}\zeta z\Big(b_0z_t+\sum\limits_{k=1}^nb_kz_{x_k}+(\tilde{b}-1)z\Big)\dd x\dd t \\
			\leq & ~ C\int_{Q_0}z_t^2\dd x\dd t+C\varepsilon\int_{Q_0}\zeta|\nabla z|^2\dd x\dd t \\
			\leq & ~ C\int_{Q_0}z_t^2\dd x\dd t+C\varepsilon\int_{\tilde{T}_0}^{\tilde{T}'_0}\zeta(t)\mathcal{E}(t)\dd t.
		\end{aligned}
		$$
        Thus we obtain
		\begin{equation}\label{minE}
			\min\limits_{t\in[0,T]}\mathcal{E}(t)\leq C\int_{Q_0}z_t^2\dd x\dd t.
		\end{equation}
        Note that by \eqref{energy estimate half}, we also have
		$$\mathcal{E}'(t)+\int_\Omega b_0z_t^2\dd x=-\int_\Omega\Big(\sum\limits_{k=1}^nb_kz_tz_{x_k}+\tilde{b}zz_t\Big)\dd x\leq C\varepsilon\mathcal{E}(t),$$
        hence we get
		$$\frac{\dd}{\dd t}\big(e^{-C\varepsilon t}\mathcal{E}(t)\big)\leq -e^{-C\varepsilon t}\int_\Omega b_0z_t^2\dd x\leq 0.$$
        Then we have
		\begin{equation}\label{almost decrease of E}
			\mathcal{E}(t)\geq e^{-C\varepsilon t}\mathcal{E}(t)\geq e^{-C\varepsilon T}\mathcal{E}(T), \qquad \forall ~ t\in[0,T].
		\end{equation}
		
		Combining \eqref{minE} and \eqref{almost decrease of E}, we have
		\begin{equation}\label{4.140 in the book}
			\mathcal{E}(T)\leq C\int_{Q_0}z_t^2\dd x\dd t.
		\end{equation}
		
		It follows from \eqref{4.139 in the book} and \eqref{4.140 in the book} that
		\begin{equation}\label{4.141 in the book}
			\mathcal{E}(T)\leq C\lambda\Big(e^{\lambda(R_1^2-R_0^2-cT^2/4)}\mathcal{E}(T)+e^{2\lambda R_1^2}\int_0^T\int_\omega z_t^2\dd x\dd t\Big).
		\end{equation}
		
		Noting that $R_1^2-R_0^2-cT^2/4<0$, let $\lambda$ be large enough such that
		$$C\lambda e^{\lambda(R_1^2-R_0^2-cT^2/4)}\leq\frac{1}{2},$$
        then can deduce from \eqref{4.141 in the book} that
		\begin{equation}\label{observability but ET}
			\mathcal{E}(T)\leq C_1e^{C_1}\int_0^T\int_\omega z_t^2\dd x\dd t,
		\end{equation}
		where $C_1$ is a positive constant independent of initial data.
		
		Combining \eqref{observability but ET} and \eqref{E0 and ET}, we obtain
		$$\|z_1\|_{L^2(\Omega)}^2+\|z_0\|_{H^1(\Omega)}^2\leq C\mathcal{E}(0)\leq C_2e^{C_2}\int_0^T\int_\omega z_t^2\dd x\dd t,$$
		with a constant $C_2>0$ independent of initial data. Thus we obtain the desired inequality.
		\end{proof}
		
			\subsection{Carleman estimate in $L^2$-norm}

		This subsection is devoted to prove Proposition \ref{prop:L2 Carleman theorem}.
		
		Throughout this subsection, we fix the function $\phi$ in \eqref{definitions in Carleman estimate}, a parameter $\lambda>0$, and a function $z\in C([0,T];L^2(\Omega))$ holding $z(0,x)=z(T,x)=0$ for $x\in\Omega$. For any $K>1$, we choose a function $\rho(x)\in C^2(\overline{\Omega})$ with $\min\limits_{x\in\overline{\Omega}}\rho(x)=1$ so that
		\begin{equation}
			\rho(x)=
			\left\{
			\begin{aligned}
				1, & ~ x\in \omega, \\
				K, & ~ d(x,\omega)\geq \frac{1}{\ln K},
			\end{aligned}
			\right.
		\end{equation}
		
		For any integer $m\geq 3$, let $h=\frac{T}{m}$. Define
		\begin{equation}
			z_m^i=z_m^i(x)=z(ih,x), ~ \phi_m^i=\phi_m^i(x)=\phi(ih,x), \quad i=0,1,\cdots,m.
		\end{equation}
		and
		\begin{equation}
			a^{jk}_i=a^{jk}_i(x)=a^{jk}(ih,x),\quad i=0,1,\cdots,m; ~ j,k=1,\cdots,n.
		\end{equation}
		
		Let $\{(w_m^i,r_{1m}^i,r_{2m}^i),r_m^i\}_{i=0}^m\in (H_0^1(\Omega)\times (L^2(\Omega))^3)^{m+1}$ satisfy the following system:
		\begin{equation}\label{system:one}
			\left\{
			\begin{aligned}
				& \frac{w_m^{i+1}-2w_m^i+w_m^{i-1}}{h^2}-\sum_{j_1,j_2=1}^n\partial_{j_2}(a^{j_1,j_2}_i\partial_{j_1}w_m^i)\\
				= & \frac{r_{1m}^{i+1}-r^i_{1m}}{h}+r_{2m}^i+\lambda z_m^ie^{2\lambda\phi_m^i}+r_m^i, \quad 1\leq i\leq m-1, ~ x\in\Omega, \\
				& w_m^i=0, \qquad 0\leq i\leq m, ~ x\in\partial\Omega, \\
				& w_m^0=w_m^m=r_{2m}^0=r_{2m}^m=r_m^0=r_m^m=0, ~ r_{1m}^0=r_{1m}^1, \quad x\in\Omega.
			\end{aligned}
			\right.
		\end{equation}
		The set of admissible sequences for \eqref{system:one} is defined as
		\begin{equation}
			\begin{aligned}
				\mathcal{A}_{ad}:=\Big\{ & \{(w_m^i,r_{1m}^i,r_{2m}^i),r_m^i\}_{i=0}^m\in (H_0^1(\Omega)\times (L^2(\Omega))^3)^{m+1}\Big| \\
				& \{(w_m^i,r_{1m}^i,r_{2m}^i),r_m^i\}_{i=0}^m ~ \text{satisfy} ~ \eqref{system:one}\Big\}.
			\end{aligned}
		\end{equation}
		Note that we can easily see the set $\mathcal{A}_{ad}\not=\emptyset$ because $\big\{(0,0,0,-\lambda z_m^ie^{2\lambda\phi_m^i})\big\}_{i=0}^m\in\mathcal{A}_{ad}$.
		
		Now, let us introduce the cost functional
		\begin{equation}\begin{aligned}
				& J\Big(\{(w_m^i,r_{1m}^i,r_{2m}^i),r_m^i\}_{i=0}^m\Big)=\frac{h}{2}\int_\Omega\rho\frac{|r_{1m}^m|}{\lambda^2}e^{-2\lambda\phi_m^m}\dd x \\
				+ & \frac{h}{2}\sum_{i=1}^{m-1}\left[\int_\Omega|w_m^i|^2e^{-2\lambda\phi_m^i}\dd x+\int_\Omega\rho\left(\frac{|r_{1m}^i|^2}{\lambda^2}+\frac{|r_{2m}^i|^2}{\lambda^4}\right)e^{-2 \lambda\phi_m^i}+K\int_\Omega|r_m^i|^2\dd x\right].
			\end{aligned}
		\end{equation}
		
		Let us consider the following optimal problem:
		\begin{equation}\label{problem}
			\inf_{\{(w_m^i,r_{1m}^i,r_{2m}^i),r_m^i\}_{i=0}^m\in \mathcal{A}_{ad}}J(\{(w_m^i,r_{1m}^i,r_{2m}^i),r_m^i\}_{i=0}^m)=d.
		\end{equation}
		
		We have the following key proposition.
		\begin{prop}\label{auxiliary OP}
			For any $K>1$ and $m\geq 3$, problem \eqref{problem} admits a unique solution $\{(\hat{w}_m^i,\hat{r}_{1m}^i,\hat{r}_{2m}^i),\hat{r}_m^i\}_{i=0}^m\in \mathcal{A}_{ad}$, such that
			$$J\Big(\{(\hat{w}_m^i,\hat{r}_{1m}^i,\hat{r}_{2m}^i),\hat{r}_m^i\}_{i=0}^m\Big)=\min_{\{(w_m^i,r_{1m}^i,r_{2m}^i),r_m^i\}_{i=0}^m\in\mathcal{A}_{ad}}J\Big(\{(w_m^i,r_{1m}^i, r_{2m}^i),r_m^i\}_{i=0}^m\Big).$$
			
			Furthermore, for
			\begin{equation}
				p_m^i=p_m^i(x):=K\hat{r}_m^i(x), \qquad 0\leq i\leq m,
			\end{equation}
			one has
			\begin{equation}
				\begin{aligned}
					& \hat{w}_m^0=\hat{w}_m^m=p_m^0=p_m^m=0, \qquad x\in\Omega, \\
					& \hat{w}_m^i,p_m^i\in H^2(\Omega)\cap H_0^1(\Omega), \quad 1\leq i\leq m-1
				\end{aligned}
			\end{equation}
			and the following optimality conditions:
			\begin{equation}\label{p:one}
				\left\{
				\begin{aligned}
					& \frac{p_m^i-p_m^{i-1}}{h}+\rho\frac{\hat{r}_{1m}^i}{\lambda^2}e^{-2\lambda\phi_m^i}=0, \\
					& p_m^i-\rho\frac{\hat{r}_{2m}^i}{\lambda^4}e^{-2\lambda\phi_m^i}=0,
				\end{aligned}
				\right.
				\qquad 1\leq i\leq m, ~ x\in\Omega
			\end{equation}
				and
			\begin{equation}\label{p:two}
				\left\{
				\begin{aligned}
					& \frac{p_m^i-2p_m^{i-1}+p_m^{i-1}}{h^2}-\sum_{j_1,j_2=1}^n\partial_{j_2}(a^{j_1,j_2}_i\partial_{j_1}p_m^i) \\
					& +\hat{z}_m^i e^{-2\lambda\phi_m^i}=0, ~ x\in\Omega \\
					& p_m^i=0, \qquad x\in\partial\Omega.
				\end{aligned}
				\right.
				\quad 1\leq i\leq m-1.
			\end{equation}
			
			Moreover, there is a constant $C=C(K,\lambda)>0$, independent of $m$, such that
			\begin{equation}\label{estimation:one}
				h\sum_{i=1}^{m-1}\int_\Omega\Big[|\hat{w}_m^i|^2+|\hat{r}_{1m}^i|^2+|\hat{r}_{2m}^i|^2+K|\hat{r}_m^i|^2\Big]\dd x+h\int_\Omega|\hat{r}_{1m}^m|^2\leq C
			\end{equation}
            and
			\begin{equation}\label{estimation:two}
				\sum_{i=1}^{m-1}\int_\Omega\Big[\frac{(\hat{w}_m^{i+1}-\hat{w}_m^{i})^2}{h^2}+\frac{(\hat{r}_{1m}^{i+1}-\hat{r}_{1m}^{i})^2}{h^2}+\frac{(\hat{r}_{2m}^{i+1}-\hat{r}_{2m}^{i})^2} {h^2}+K\frac{(\hat{r}_{m}^{i+1}-\hat{r}_{m}^{i})^2}{h^2}\Big]\dd x\leq\frac{C}{h}.
			\end{equation}
		\end{prop}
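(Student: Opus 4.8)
The plan is to read \eqref{problem} as the minimisation of a strictly convex, coercive quadratic functional over the nonempty closed affine subspace $\mathcal{A}_{ad}$ of the Hilbert space $\mathcal{X}_m:=\big(H_0^1(\Omega)\times(L^2(\Omega))^3\big)^{m+1}$, and then to extract the optimality system by the Lagrange multiplier rule. First I would record that $J$ is continuous and that, since $\phi\in C^3(\overline{\Omega_T})$ and $1\le\rho\le K$, the weights $\rho e^{\pm2\lambda\phi_m^i}$ lie between fixed positive constants, so $J$ bounds $h\big[\sum_{i=1}^{m-1}\int_\Omega(|w_m^i|^2+|r_{1m}^i|^2+|r_{2m}^i|^2+K|r_m^i|^2)dx+\int_\Omega|r_{1m}^m|^2dx\big]$ from below by a constant depending only on $K,\lambda$. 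Reading the $i$th equation of \eqref{system:one} as the elliptic problem $-\sum_{j_1,j_2}\partial_{j_2}(a_i^{j_1j_2}\partial_{j_1}w_m^i)=(\text{terms controlled in }L^2)$ and using that $(a_i^{jk})$ is uniformly elliptic (a consequence of the smallness in \eqref{coefficient condition-A}), one obtains, by testing with $w_m^i$, an $H_0^1(\Omega)$ bound on each $w_m^i$; hence the sublevel sets of $J|_{\mathcal{A}_{ad}}$ are bounded in $\mathcal{X}_m$, and since $\mathcal{A}_{ad}$ is weakly closed and $J$ weakly lower semicontinuous, a minimiser exists. As $J$ is a positive-definite quadratic form in $(w_m^i)_{1\le i\le m-1}$, $(r_{1m}^i)_{1\le i\le m}$, $(r_{2m}^i)_{1\le i\le m-1}$, $(r_m^i)_{1\le i\le m-1}$, its restriction to the affine set $\mathcal{A}_{ad}$ is still strictly convex, which gives uniqueness. (Equivalently, apply the projection theorem on the tangent space $\ker L_m$, where $L_m:\mathcal{X}_m\to(H^{-1}(\Omega))^{m-1}$ is the constraint operator of \eqref{system:one}.)

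Next I would derive the optimality system. The operator $L_m$ has closed range — indeed it is onto $(H^{-1}(\Omega))^{m-1}$, since for $i$ fixed $-\sum_{j_1,j_2}\partial_{j_2}(a_i^{j_1j_2}\partial_{j_1}\cdot):H_0^1\to H^{-1}$ is an isomorphism and the free variable $r_m^i$ absorbs the leftover $L^2$ discrepancy — so the stationarity condition $DJ(\hat X)[\delta X]=0$ for all $\delta X\in\ker L_m$ produces a multiplier $\{p_m^i\}_{i=1}^{m-1}\subset H_0^1(\Omega)$ with $DJ(\hat X)=L_m^*\{p_m^i\}$. Setting $p_m^0=p_m^m=0$ and normalising so that $p_m^i=K\hat r_m^i$, the four families of stationarity equations — variations in $r_m^i$, $r_{2m}^i$, $r_{1m}^i$, and $w_m^i$ — give precisely $p_m^i=K\hat r_m^i$, the two identities of \eqref{p:one}, and the discrete adjoint equation \eqref{p:two}; the variation in $w_m^i$ uses $a_i^{j_1j_2}=a_i^{j_2j_1}$ and two integrations by parts, and the discrete second difference of $p_m$ appears there because $w_m^i$ enters the $(i-1)$st, $i$th, and $(i+1)$st constraints. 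Then elliptic regularity for \eqref{p:two}, whose right-hand side is $-\hat w_m^i e^{-2\lambda\phi_m^i}+h^{-2}(p_m^{i+1}-2p_m^i+p_m^{i-1})\in L^2(\Omega)$, gives $p_m^i\in H^2(\Omega)\cap H_0^1(\Omega)$; \eqref{p:one} together with $p_m^i=K\hat r_m^i$ transfers this to $\hat r_{1m}^i,\hat r_{2m}^i,\hat r_m^i$, and reading \eqref{system:one} as an elliptic equation for $\hat w_m^i$ with $L^2$ right-hand side yields $\hat w_m^i\in H^2(\Omega)\cap H_0^1(\Omega)$. The endpoint identities $\hat w_m^0=\hat w_m^m=p_m^0=p_m^m=0$ are built into $\mathcal{A}_{ad}$ and the normalisation.

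For the $m$-uniform bounds I would first compare with the trivial admissible element $X_0=\{(0,0,0,-\lambda z_m^ie^{2\lambda\phi_m^i})\}_{i=0}^m$ noted right after the definition of $\mathcal{A}_{ad}$: $J(\hat X)\le J(X_0)=\tfrac{hK}{2}\sum_{i=1}^{m-1}\lambda^2\!\int_\Omega|z_m^i|^2e^{4\lambda\phi_m^i}dx\le\tfrac{K\lambda^2}{2}e^{4\lambda\|\phi\|_{L^\infty}}\,h\!\sum_{i=1}^{m-1}\|z(ih)\|_{L^2(\Omega)}^2\le\tfrac{K\lambda^2 T}{2}e^{4\lambda\|\phi\|_{L^\infty}}\|z\|_{C([0,T];L^2)}^2$, using $h(m-1)<T$; since the weights in $J$ are bounded below, this is exactly \eqref{estimation:one} with $C=C(K,\lambda)$. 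For \eqref{estimation:two} I would use the optimality relations: the first identity of \eqref{p:one} with $p_m^i=K\hat r_m^i$ gives $Kh\sum_i\big\|\tfrac{\hat r_m^i-\hat r_m^{i-1}}{h}\big\|_{L^2}^2=K^{-1}h\sum_i\big\|\rho\lambda^{-2}e^{-2\lambda\phi_m^i}\hat r_{1m}^i\big\|_{L^2}^2\le C(K,\lambda)h\sum_i\|\hat r_{1m}^i\|_{L^2}^2\le C(K,\lambda)$, i.e. $\sum_iK\big\|\tfrac{\hat r_m^{i+1}-\hat r_m^i}{h}\big\|^2\le C/h$; solving the second identity of \eqref{p:one} for $\hat r_{2m}^i$ and differencing in $i$ (the difference quotient of $\rho^{-1}e^{2\lambda\phi_m^i}$ being bounded by $C\lambda$ since $\phi$ is $C^1$ in $t$) treats the $\hat r_{2m}$-term similarly; and the $\hat w_m$-term is obtained from a discrete energy estimate for \eqref{system:one} with the optimal data, whose source $\tfrac{\hat r_{1m}^{i+1}-\hat r_{1m}^i}{h}+\hat r_{2m}^i+\lambda z_m^ie^{2\lambda\phi_m^i}+\hat r_m^i$ is already bounded in the discrete $\ell^2(h;L^2(\Omega))$ norm by the previous estimates and \eqref{estimation:one}.

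I expect the main obstacle to be exactly the uniform-in-$m$ control of the discrete time-differences in \eqref{estimation:two}, above all of $\tfrac{\hat w_m^{i+1}-\hat w_m^i}{h}$: this requires a clean discrete energy identity for the second-order (in the index $i$) state equation, together with careful bookkeeping that every constant depends only on $K$, $\lambda$, the fixed data $\psi,\phi,\rho$ and $\|z\|_{C([0,T];L^2)}$, and never on $h^{-1}$ except for the single explicit factor on the right of \eqref{estimation:two}. The remaining ingredients — the convex minimisation, the Lagrange rule for a closed-range linear constraint, and elliptic regularity — are routine and can be adapted from the analogous penalised-control arguments of \cite{Fu2} and \cite{Fu3}.
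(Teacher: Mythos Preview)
Your proposal is correct and follows essentially the same approach as the paper's proof: direct-method existence via coercivity of $J$ and weak compactness, uniqueness by strict convexity, derivation of the optimality system \eqref{p:one}--\eqref{p:two} by varying the constraint, and the uniform bounds \eqref{estimation:one}--\eqref{estimation:two} by comparison with the trivial admissible element together with the optimality relations. In fact your sketch is more explicit than the paper's own argument, which leaves Step~2 blank and for Step~3 simply refers to \cite{Fu3}; your identification of the discrete time-difference bounds (especially for $\hat w_m$ and $\hat r_{1m}$, the latter requiring the second differences of $p_m$ via \eqref{p:two}) as the main technical point is accurate and is precisely what the references \cite{Fu2,Fu3,O.Yu} work out in detail.
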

		\begin{rem}
			For any $\big\{(w_m^i,r_{1m}^i,r_{2m}^i),r_m^i\big\}_{i=0}^m\in\mathcal{A}_{ad}$, since $(a^{j_1,j_2}_i)$ is positive definite, by standard regularity results of elliptic equations, we obtain $w_m^i\in H^2(\Omega)\cap H_0^1(\Omega)$.
		\end{rem}
		\begin{proof}
			The proof is divided into several steps.
			
			\textbf{Step 1. Existence and uniqueness of $\{(\hat{w}_m^i,\hat{r}_{1m}^i,\hat{r}_{2m}^i),\hat{r}_m^i\}_{i=0}^m\in \mathcal{A}_{ad}$.}
			
			Let $\{\{(w_m^{i,j},r_{1m}^{i,j},r_{2m}^{i,j}),r_m^{i,j}\}_{i=0}^m\}_{j=1}^\infty\subset\mathcal{A}_{ad}$ be a minimizing sequence of $J$. Due to the coercivity of $J$ and noting that $w_m^{i,j}$ solves an elliptic equation, it can be shown that
			$$\{\{(w_m^{i,j},r_{1m}^{i,j},r_{2m}^{i,j}),r_m^{i,j}\}_{i=0}^m\}_{j=1}^\infty$$
			is bounded in $\mathcal{A}_{ad}$. Therefore, there exists a subsequence of $\{\{(w_m^{i,j},r_{1m}^{i,j},r_{2m}^{i,j}),r_m^{i,j}\}_{i=0}^m\}_{j=1}^\infty$ converging weakly to some
			$$\{(\hat{w}_m^i,\hat{r}_{1m}^i,\hat{r}_{2m}^i),\hat{r}_m^i\}_{i=0}^m\in(H_0^1(\Omega)\times (L^2(\Omega))^3)^{m+1}.$$
			
			Note that the constraint condition \eqref{system:one} is a linear system, we obtain
			$$\{(\hat{w}_m^i,\hat{r}_{1m}^i,\hat{r}_{2m}^i),\hat{r}_m^i\}_{i=0}^m\in \mathcal{A}_{ad}.$$
			and $\hat{w}_m^0=\hat{w}_m^m=p_m^0=p_m^m=0, ~ x\in\Omega$.
			
			Since $J$ is strictly convex, this optimal target is the unique solution of \eqref{problem}.
			
			\textbf{Step 2. The proof of \eqref{p:one} and \eqref{p:two}.}
			
			Fix any
			$$\delta_{0m}^i\in H^2\cap H_0^1, ~ \delta_{1m}^i\in L^2, ~ \delta_{2m}^i\in L^2, \quad i=0,1,\cdots,m$$
			with $\delta_{0m}^0=\delta_{0m}^m=\delta_{2m}^0=\delta_{2m}^m=0$ and $\delta_{1m}^0=\delta_{1m}^1$ in $\Omega$. For $(\lambda_0,\lambda_1,\lambda_2)\in \mathbb{R}^3$, we denote
			\begin{equation}\label{system:two}
			\left\{
			\begin{aligned}
			& r_m^i:=\frac{\hat{w}_m^{i+1}-2\hat{w}_m^i+\hat{w}_m^{i-1}}{h^2}+\frac{\delta_{0m}^{i+1}-2\delta_{0m}^i+\delta_{0m}^{i-1}}{h^2}\lambda_0 \\
			& -\sum_{j_1,j_2=1}^n\partial_{j_2}\big(a^{j_1,j_2}_i\partial_{j_1}(\hat{w}_m^i+\lambda_0\delta_{0m}^i)\big)-\frac{\hat{r}_{1m}^{i+1}-\hat{r}^i_{1m}}{h} \\
			&-\frac{\delta_{1m}^{i+1}-\delta^i_{1m}}{h}\lambda_1-\hat{r}_{2m}^i-\lambda_2\delta_{2m}^i-\lambda z_m^ie^{2\lambda\phi_m^i}, ~ 1\leq i\leq m-1, \\
			& r_m^0=r_m^m=0
			\end{aligned}
			\right.
			\end{equation}
			
			Then we have
			$$\big\{(\hat{w}_m^i+\lambda_0\delta_{0m}^i,\hat{r}_{1m}^i+\lambda_1\delta_{1m}^i,\hat{r}_{2m}^i)+\lambda_2\delta_{2m}^i,r_m^i\big\}_{i=0}^m\in\mathcal{A}_{ad}.$$
			
			Define a function $g$ in $\mathbb{R}^3$ by
			\begin{equation}
			g(\lambda_0,\lambda_1,\lambda_2)=J\Big(\{(\hat{w}_m^i+\lambda_0\delta_{0m}^i,\hat{r}_{1m}^i+\lambda_1\delta_{1m}^i,\hat{r}_{2m}^i)+\lambda_2\delta_{2m}^i,r_m^i\}_{i=0}^m\Big).
			\end{equation}
			Since $\{(\hat{w}_m^i,\hat{r}_{1m}^i,\hat{r}_{2m}^i),\hat{r}_m^i\}_{i=0}^m\in \mathcal{A}_{ad}$ is minimum point of $J$, $g$ has a minimum at $(0,0,0)$. Hence we have $\nabla g(0,0,0)=0$.
			
			By $\frac{\partial g(0,0,0)}{\partial\lambda_1}=\frac{\partial g(0,0,0)}{\partial\lambda_2}=0$, and the fact that $\{(\hat{w}_m^i,\hat{r}_{1m}^i,\hat{r}_{2m}^i),\hat{r}_m^i \}_{i=0}^m\in\mathcal{A}_{ad}$ satisfy \eqref{system:one}, one gets
			\begin{equation}
			-K\sum_{i=1}^{m-1}\int_\Omega\hat{r}_m^i\frac{\delta_{1m}^{i+1}-\delta_{1m}^i}{h}\dd x+\sum_{i=1}^m\int_\Omega\rho\frac{\hat{r}_{1m}^i\delta_{1m}^i}{\lambda^2}e^{-2\lambda\phi_m^i}\dd x=0,
			\end{equation}
			\begin{equation}
			-K\sum_{i=1}^{m-1}\int_\Omega \hat{r}_m^i\delta_{2m}^i\dd x+\sum_{i=1}^{m-1}\int_\Omega\rho\frac{\hat{r}_{2m}^i\delta_{2m}^i}{\lambda^4}e^{-2\lambda\phi_m^i}\dd x=0,
			\end{equation}
            combined with \eqref{system:one} and $p_m^0=p_m^m=\hat{r}_{2m}^m=0$ in $\Omega$ gives \eqref{p:one}. From $\frac{g(0,0,0)}{\partial \lambda_0}=0$, one obtains
			\begin{equation}
			\begin{aligned}
			\sum_{i=1}^{m-1}\int_\Omega\bigg\{K\hat{r}_m^i\Big[\frac{\delta_{0m}^{i+1}-2\delta_{0m}^i+\delta_{0m}^{i-1}}{h^2}- & \sum_{j_1,j_2=1}^n\partial_{j_2}\big(a^{j_1,j_2}_i 
			+ & \hat{w}_m^i\delta_{0m}^ie^{-2\lambda\phi_m^i}\bigg\}\dd x=0
			\end{aligned}
			\end{equation}
			together with $p_m^0=p_m^m=\delta_{0m}^0=\delta_{0m}^m=0$ in $\Omega$, implies that $p_m^i=K\hat{r}_m^i$ is a weak solution of \eqref{p:two}. By the regularity theory for elliptic equations, one sees that $\hat{w}^i_m,p_m^i\in H^2\cap H_0^1$ for $1\leq i \leq m-1$.
			
			\textbf{Step 3. The proof of \eqref{estimation:one} and \eqref{estimation:two}.}
			
			The proof of the above estimates are similar to those of \cite{Fu3}, so we omit the details, then we complete the proof.
		\end{proof}
	
		Now we are in a position to prove  Propostion \ref{prop:L2 Carleman theorem}.
		\begin{proof}[Proof of Propostion \ref{prop:L2 Carleman theorem}]
			The main idea is to choose a special $\eta$, so that
			$$\eta_{tt}-\sum\limits_{j,k=1}^n\big(a^{jk}\eta_{x_j}\big)_{x_k}=\lambda ze^{2\lambda\phi}+\cdots,$$
            where we get the desired term $\lambda\|\theta z\|_{L^2(Q^T)}^2$ and reduce the estimate to that for $\|\eta\|_{H_0^1(Q^T)}$. The proof is divided into several steps.
			
			\textbf{Step 1.} Firstly, recall the functions $\{(\hat{w}_m^i,\hat{r}_{1m}^i,\hat{r}_{2m}^i),\hat{r}_m^i\}_{i=0}^m$ in Proposition \ref{auxiliary OP}, put
			$$
			\left\{
			\begin{aligned}
				\tilde{w}^m(t,x) & =\frac{1}{h}\sum\limits_{i=0}^{m-1}\Big((t-ih)\hat{w}^{i+1}_m(x)-(t-(i+1)h)\hat{w}^i_m(x)\Big)\chi_{(ih,(i+1)h]}(t), \\
				\tilde{r}_1^m(t,x) & =\hat{r}^0_{1m}(x)\chi_{\{0\}}(t) \\
				&  +\frac{1}{h}\sum\limits_{i=0}^{m-1}\Big((t-ih)\hat{r}^{i+1}_{1m}(x)-(t-(i+1)h)\hat{r}^i_{1m}(x)\Big)\chi_{(ih,(i+1)h]}(t), \\
				\tilde{r}_2^m(t,x) & =\frac{1}{h}\sum\limits_{i=0}^{m-1}\Big((t-ih)\hat{r}^{i+1}_{2m}(x)-(t-(i+1)h)\hat{r}^i_{2m}(x)\Big)\chi_{(ih,(i+1)h]}(t), \\
				\tilde{r}^m(t,x) & =\frac{1}{h}\sum\limits_{i=0}^{m-1}\Big((t-ih)\hat{r}^{i+1}_m(x)-(t-(i+1)h)\hat{r}^i_m(x)\Big)\chi_{(ih,(i+1)h]}(t),
			\end{aligned}
			\right.
			$$
			
			By \eqref{estimation:one} and \eqref{estimation:two}, there exist a subsequence of $\{(\tilde{w}^m,\tilde{r}_1^m,\tilde{r}_2^m),\tilde{r}^m\}_{m=1}^\infty$ which converges weakly to some $(\tilde{w},\tilde{r}_1,\tilde{r}_2),\tilde{r}\in H^1(0,T; L^2(\Omega))$ as $m\to\infty$.
			
			Let $\tilde{p}=K\tilde{r}$ for some sufficiently large constant $K>1$. By \eqref{system:one}, \eqref{p:one}--\eqref{estimation:two} and Lemma \ref{lemma4.1 in the book}, we obtain that
			\begin{align}\label{regularity-of-w:p}
				\tilde{w}, ~ \tilde{p}\in C([0,T];H^1_0(\Omega))\cap C^1([0,T];L^2(\Omega)),
			\end{align}
			and
			\begin{equation}\label{system:w and p}
				\left\{
				\begin{aligned}
					& \tilde{w}_{tt}-\sum\limits_{j,k=1}^n\big(a^{jk}\tilde{w}_{x_j}\big)_{x_k}=\partial_t\tilde{r}_1+\tilde{r}_2+\lambda\theta^2z+\tilde{r}, & (t,x) ~ \in ~ & Q^T, \\
					& \tilde{p}_{tt}-\sum\limits_{j,k=1}^n\big(a^{jk}\tilde{p}_{x_j}\big)_{x_k}+\theta^{-2}\tilde{w}=0, &  (t,x) ~ \in ~ & Q^T, \\
					& \tilde{p}_t+\rho\theta^{-2}\frac{\tilde{r}_1}{\lambda^2}=0, &  (t,x) ~ \in ~ & Q^T, \\
					& \tilde{p}-\rho\theta^{-2}\frac{\tilde{r}_2}{\lambda^4}=0, &  (t,x) ~ \in ~ & Q^T, \\
					& \tilde{p}(t,x)=\tilde{w}(t,x)=0, &  (t,x) ~ \in ~ & \Gamma^T, \\
					& \tilde{p}(0,x)=\tilde{p}(T,x)=\tilde{w}(0,x)=\tilde{w}(T,x)=0, & x ~ \in ~ & \Omega.
				\end{aligned}
				\right.
			\end{equation}
			
			\textbf{Step 2.} Applying Theorem \ref{internal Carleman theorem} to $\tilde{p}$ in \eqref{system:w and p}, we have
			\begin{equation}\label{estimate:p1}
				\begin{aligned}
					& \lambda\int_{Q^T}\theta^2(\lambda^2\tilde{p}^2+\tilde{p}_t^2+|\nabla\tilde{p}|^2)\dd x\dd t \\
					\leq ~ & C\Big[\int_{Q^T}\theta^{-2}\tilde{w}^2\dd x\dd t+\lambda^2\int_0^T\int_\omega\theta^2(\lambda^2\tilde{p}^2+\tilde{p}_t^2)\dd x\dd t\Big] \\
					\leq ~ & C\Big[\int_{Q^T}\theta^{-2}\tilde{w}^2\dd x\dd t+\int_0^T\int_\omega\theta^{-2}\Big(\frac{\tilde{r}_1^2}{\lambda^2}+\frac{\tilde{r}_2^2}{\lambda^4}\Big)\dd x\dd t\Big].
				\end{aligned}
			\end{equation}
				Here and hence forth, $C$ is a constant independent of $K$ and $\lambda$.
			
			By \eqref{system:w and p}, we have
			\begin{equation}\label{system:pt}
				\left\{
				\begin{aligned}
					& \tilde{p}_{ttt}-\sum\limits_{j,k=1}^n\big(a^{jk}\tilde{p}_{tx_j}\big)_{x_k}+(\theta^{-2}\tilde{w})_t-\sum\limits_{j,k=1}^n\big(a^{jk}_t\tilde{p}_{x_j}\big)_{x_k}=0, &  (t,x) ~ \in ~ & Q^T, \\
					& \tilde{p}_{tt}+\frac{\rho}{\lambda}\theta^{-2}\Big(\frac{\partial_t\tilde{r}_1}{\lambda}-2\phi_t\tilde{r}_1\Big)=0, &  (t,x) ~ \in ~ & Q^T, \\
					& \tilde{p}_t-\frac{\rho}{\lambda^2}\theta^{-2}\Big(\frac{\partial_t\tilde{r}_2}{\lambda^2}-\frac{2}{\lambda}\phi_t\tilde{r}_1\Big)=0, &  (t,x) ~ \in ~ & Q^T, \\
					& \tilde{p}_t(t,x)=0, &  (t,x) ~ \in ~ & \Gamma^T,
				\end{aligned}
				\right.
			\end{equation}
			and
			\begin{equation}\label{system:tilde:p}
				\left\{
				\begin{aligned}
					& \tilde{p}_{tt}-\Delta\tilde{p}=\sum\limits_{j,k=1}^n\big((a^{jk}-\delta_{jk})\tilde{p}_{x_j}\big)_{x_k}-\theta^{-2}\tilde{w}, & (t,x)\in & ~ Q^T, \\
					& \tilde{p}(t,x)=0, & (t,x) \in & ~ \Gamma^T, \\
					& \tilde{p}(0,x)=\tilde{p}(T,x)=0, & x\in & ~ \Omega.
				\end{aligned}
				\right.
			\end{equation}
			
			Applying Theorem \ref{internal Carleman theorem} to $\tilde{p}_t$ in \eqref{system:pt}, we obtain
			\begin{align}\label{estimate:p2}
				& \lambda\int_{Q^T}\theta^2(\lambda^2 \tilde{p}_t^2+\tilde{p}_{tt}^2+|\nabla\tilde{p}_t|^2)\dd x\dd t \nonumber\\
                \leq ~ & C\bigg[\big\|\theta(\theta^{-2}\tilde{w})_t\big\|_{L^2(Q^T)}^2+\Big\|\theta\sum\limits_{j,k=1}^n\big(a^{jk}_t\tilde{p}_{x_j}\big)_{x_k}\Big\|_{L^2(Q^T)}^2+ \lambda^2\int_0^T\int_\omega\theta^2(\lambda^2\tilde{p}_t^2+\tilde{p}_{tt}^2)\dd x\dd t\bigg] \nonumber \\
				\leq ~ & C\bigg[\int_{Q^T}\theta^{-2}(\tilde{w}_t^2+\lambda^2\tilde{w}^2)\dd x\dd t+\Big\|\theta\sum\limits_{j,k=1}^n\big(a^{jk}_t\tilde{p}_{x_j}\big)_{x_k}\Big\|_{L^2 (Q^T)}^2 \\
				&+\int_0^T\int_\omega\theta^{-2}\Big( \frac{|\partial_t\tilde{r}_1|^2}{\lambda^2} +\frac{|\partial_t\tilde{r}_2|^2}{\lambda^4}+\tilde{r}_1^2+\frac{\tilde{r}_2^2}{\lambda^2}\Big) \dd x\dd t\bigg]. \nonumber
				\end{align}
			
			{\color{black} Here we note that, in view of \eqref{regularity-of-w:p} and \eqref{system:tilde:p}, we have $\tilde{p}_t\in H^1(Q^T)$, hence we can apply Theorem \ref{internal Carleman theorem} to $\tilde{p}_t$.}
			
			Recalling the smallness assumption \eqref{coecond:normA} on $a^{ij}$, we have
			\begin{equation}\label{Addition:1}
				\int_{Q^T}\theta^2\Big|\sum\limits_{j,k=1}^n\big(a^{jk}_t\tilde{p}_{x_j}\big)_{x_k}\Big|^2\dd x\dd t\leq C\varepsilon\int_{Q^T}\theta^2\big(|\nabla\tilde{p}|^2+|\nabla^2 \tilde{p}|^2\big)\dd x\dd t.
			\end{equation}

			Taking $L^2$ inner product of \eqref{system:tilde:p} with $-\Delta\tilde{p}$, we get
			$$
			\begin{aligned}
				& \int_{Q^T}|\nabla^2\tilde{p}|^2\dd x\dd t-\int_{Q^T}|\nabla\tilde{p}_t|^2\dd x\dd t \\
				\leq C\varepsilon & \int_{Q^T}\big(|\nabla\tilde{p}|^2+|\nabla^2\tilde{p}|^2\big)\dd x\dd t+\int_{Q^T}\theta^{-2}|\tilde{w}||\Delta\tilde{p}|\dd x\dd t \\
				\leq C\varepsilon & \int_{Q^T}\big(|\nabla\tilde{p}|^2+|\nabla^2\tilde{p}|^2\big)\dd x\dd t+\frac{1}{2}\int_{Q^T}\big(\theta^{-4}\tilde{w}^2+|\Delta\tilde{p}|^2\big)\dd x\dd t.
			\end{aligned}
			$$
			
			Noting that $e^{C_1\lambda}\leq\theta\leq e^{C_2\lambda}$ for some $C_1<C_2$, we obtain
			\begin{equation}\label{Addition:2}
				\begin{aligned}
					\int_{Q^T}\theta^2|\nabla^2\tilde{p}|^2\dd x\dd t\leq ~ & e^{C\lambda}\int_{Q^T}|\nabla^2\tilde{p}|^2\dd x\dd t \\
					\leq ~ & Ce^{C\lambda}\int_{Q^T}\big(\varepsilon|\nabla\tilde{p}|^2+|\nabla\tilde{p}_t|^2+\theta^{-4}\tilde{w}^2\big)\dd x\dd t \\
					\leq ~ & Ce^{C\lambda}\int_{Q^T}\Big(\varepsilon\theta^2|\nabla\tilde{p}|^2+\theta^2|\nabla\tilde{p}_t|^2+\theta^{-2}\tilde{w}^2\Big)\dd x\dd t.
				\end{aligned}
			\end{equation}
			
			By \eqref{Addition:1} and \eqref{Addition:2}, we obtain
			\begin{equation}\label{Addition:3}
				\int_{Q^T}\theta^2\Big|\sum\limits_{j,k=1}^n\big(a^{jk}_t\tilde{p}_{x_j}\big)_{x_k}\Big|^2\dd x\dd t\leq C\varepsilon e^{C\lambda}\int_{Q^T}\Big[\theta^2\big(|\nabla \tilde{p}|^2+|\nabla\tilde{p}_t|^2\big)+\theta^{-2}\tilde{w}^2\Big]\dd x\dd t.
			\end{equation}
			
			\textbf{Step 3.} Noting that by \eqref{system:w and p},
			$$-\int_{Q^T}(\partial_t\tilde{r}_1+\tilde{r}_2)\tilde{p}\dd x\dd t=\int_{Q^T}(\tilde{r}_1\tilde{p}_t-\tilde{r}_2\tilde{p})\dd x\dd t=-\int_{Q^T}\rho\theta^{-2}\Big( \frac{\tilde{r}_1^2}{\lambda^2}+\frac{\tilde{r}_2^2}{\lambda^4}\Big)\dd x\dd t.$$
			
			Thus we have
			$$
			\begin{aligned}
				0 ~ & =\Big(\tilde{w}_{tt}-\sum\limits_{j,k=1}^n\big(a^{jk}\tilde{w}_{x_j}\big)_{x_k}-\partial_t\tilde{r}_1-\tilde{r}_2-\lambda\theta^2z-\tilde{r}, ~ \tilde{p}\Big)_{L^2(Q^T)} \\
				& = ~ -\int_{Q^T}\theta^{-2}\tilde{w}^2\dd x\dd t-\int_{Q^T}\rho\theta^{-2}\Big(\frac{\tilde{r}_1^2}{\lambda^2}+\frac{\tilde{r}_2^2}{\lambda^4}\Big)\dd x\dd t-\lambda\int_{Q^T} \theta^2z\tilde{p}\dd x\dd t-K\int_{Q^T}\tilde{r}^2\dd x\dd t.
			\end{aligned}
			$$
			
			Hence we get
			$$\int_{Q^T}\theta^{-2}\tilde{w}^2\dd x\dd t+\int_{Q^T}\rho\theta^{-2}\Big(\frac{\tilde{r}_1^2}{\lambda^2}+\frac{\tilde{r}_2^2}{\lambda^4}\Big)\dd x\dd t+K\int_{Q^T}\tilde{r}^2 \dd x\dd t=-\lambda\int_{Q^T}\theta^2z\tilde{p}\dd x\dd t.$$
			
			By Cauchy-Schwartz inequality and \eqref{estimate:p1}, we obtain
			\begin{equation}\label{4.62 of the book}
				\int_{Q^T}\theta^{-2}\tilde{w}^2\dd x\dd t+\int_{Q^T}\rho\theta^{-2}\Big(\frac{\tilde{r}_1^2}{\lambda^2}+\frac{\tilde{r}_2^2}{\lambda^4}\Big)\dd x\dd t+K\int_{Q^T}\tilde{r}^2 \dd x\dd t\leq\frac{C}{\lambda}\int_{Q^T}\theta^2z^2\dd x\dd t.
			\end{equation}
			
			\textbf{Step 4.} Using \eqref{system:w and p} and \eqref{system:pt}, by the fact that $\tilde{p}_{tt}(0)=\tilde{p}_{tt}(T)=0$ in $\Omega$, we get
			\begin{align}\label{4.63 of the book}
				0=~ & \Big(\tilde{w}_{tt}-\sum\limits_{j,k=1}^n\big(a^{jk}\tilde{w}_{x_j}\big)_{x_k}-\partial_t\tilde{r}_1-\tilde{r}_2-\lambda\theta^2z-\tilde{r}, ~ \tilde{p}_{tt}\Big)_{L^2 (Q^T)} \nonumber \\
				=~& \Big(\tilde{w} ~ ,\tilde{p}_{tttt}-\sum\limits_{j,k=1}^n\big(a^{jk}\tilde{p}_{ttx_j}\big)_{x_k}\Big)_{L^2(Q^T)} \nonumber \\
				& -\int_{Q^T}(\partial_t\tilde{r}_1+\tilde{r}_2)\tilde{p}_{tt}\dd x\dd t-\lambda\int_{Q^T}\theta^2z\tilde{p}_{tt}\dd x\dd t-\int_{Q^T}\tilde{r}\tilde{p}_{tt}\dd x\dd t \\
				=~& -\int_{Q^T}\tilde{w}(\theta^{-2}\tilde{w})_{tt}\dd x\dd t+\sum\limits_{j,k=1}^n\int_{Q^T}\tilde{w}\big(2a^{jk}_t\tilde{p}_{tx_j}+a^{jk}_{tt}\tilde{p}_{x_j} \big)_{x_k}\dd x\dd t \nonumber \\
				& -\int_{Q^T}(\partial_t\tilde{r}_1+\tilde{r}_2)\tilde{p}_{tt}\dd x\dd t-\lambda\int_{Q^T}\theta^2z\tilde{p}_{tt}\dd x\dd t-\int_{Q^T}\tilde{r}\tilde{p}_{tt}\dd x\dd t. \nonumber
			\end{align}
			
			Now we should deal with the terms on the right hand side.
			
			Firstly, it's easy to see that
			\begin{equation}\label{4.64 of the book}
				\begin{aligned}
					-\int_{Q^T}\tilde{w}(\theta^{-2}\tilde{w})_{tt}\dd x\dd t & =\int_{Q^T}\Big[\theta^{-2}\tilde{w}_t^2-(\theta^{-2})_{tt}\frac{\tilde{w}^2}{2}\Big]\dd x\dd t \\
					& =\int_{Q^T}\theta^{-2}\big(\tilde{w}_t^2+\lambda\phi_{tt}\tilde{w}^2-2\lambda^2\phi_t^2\tilde{w}^2\big)\dd x\dd t.
				\end{aligned}
			\end{equation}
			
			Secondly, by \eqref{system:pt} we have
			\begin{equation}\label{4.65 of the book}
				\begin{aligned}
					& -\int_{Q^T}(\partial_t\tilde{r}_1+\tilde{r}_2)\tilde{p}_{tt}\dd x\dd t=\int_{Q^T}(\tilde{p}_t\partial_t\tilde{r}_2-\tilde{p}_{tt}\partial_t\tilde{r}_1)\dd x\dd t \\
					=~ & \int_{Q^T}\rho\theta^{-2}\bigg[\frac{\partial_t\tilde{r}_1}{\lambda}\Big(\frac{\partial_t\tilde{r}_1}{\lambda}-2\phi_t\tilde{r}_1\Big)+\frac{\partial_t\tilde{r}_2} {\lambda^2}\Big(\frac{\partial_t\tilde{r}_2}{\lambda^2}-\frac{2}{\lambda}\phi_t\tilde{r}_2\Big)\bigg]\dd x\dd t \\
					=~ & \int_{Q^T}\rho\theta^{-2}\Big(\frac{|\partial_t\tilde{r}_1|^2}{\lambda^2}+\frac{|\partial_t\tilde{r}_2|^2}{\lambda^4}-\frac{2}{\lambda}\phi_t\tilde{r}_1\partial_t \tilde{r}_1-\frac{2}{\lambda^3}\phi_t\tilde{r}_2\partial_t\tilde{r}_2\Big)\dd x\dd t.
				\end{aligned}
			\end{equation}
			
			Moreover, by $\tilde{p}=K\tilde{r}$ and integration by parts, one gets that
			\begin{equation}\label{4.66 of the book}
				-\int_{Q^T}\tilde{r}\tilde{p}_{tt}\dd x\dd t=K\int_{Q^T}\tilde{r}_t^2\dd x\dd t
			\end{equation}
				and
			\begin{align}\label{Addition:4}
				& \sum\limits_{j,k=1}^n\int_{Q^T} \tilde{w}\big(2a^{jk}_t\tilde{p}_{tx_j}+a^{jk}_{tt}\tilde{p}_{x_j}\big)_{x_k}\dd x\dd t \nonumber \\
					= & -\sum\limits_{j,k=1}^n\int_{Q^T}\tilde{w}_{x_k} \big(2a^{jk}_t\tilde{p}_{tx_j}+a^{jk}_{tt}\tilde{p}_{x_j}\big)\dd x\dd t \\
					\leq & ~ C\varepsilon\int_{Q^T}\Big[\theta^2(|\nabla\tilde{p}_t|^2+|\nabla\tilde{p}|^2)+\theta^{-2}|\nabla\tilde{w}|^2\Big]\dd x\dd t. \nonumber
				\end{align}
			
			Combining \eqref{4.63 of the book}--\eqref{Addition:4}, we end up with
			\begin{equation}\label{4.67 of the book}
				\begin{aligned}
					& \int_{Q^T}\rho\theta^{-2}\Big(\frac{|\partial_t\tilde{r}_1|^2}{\lambda^2}+\frac{|\partial_t\tilde{r}_2|^2}{\lambda^4}-\frac{2}{\lambda}\phi_t\tilde{r}_1\partial_t \tilde{r}_1-\frac{2}{\lambda^3}\phi_t\tilde{r}_2\partial_t\tilde{r}_2\Big)\dd x\dd t \\
					&+ K\int_{Q^T}\tilde{r}_t^2\dd x\dd t+\int_{Q^T}\theta^{-2}\big(\tilde{w}_t^2+\lambda\phi_{tt}\tilde{w}^2-2\lambda^2\phi_t^2\tilde{w}^2\big)\dd x\dd t \\
					\leq & ~ \lambda\int_{Q^T}\theta^2z\tilde{p}_{tt}\dd x\dd t+C\varepsilon\int_{Q^T}\Big[\theta^2(|\nabla\tilde{p}_t|^2+|\nabla\tilde{p}|^2)+\theta^{-2}|\nabla\tilde{w}|^2 \Big]\dd x\dd t.
				\end{aligned}
			\end{equation}
			
			By \eqref{4.67 of the book}+ $C\lambda^2\cdot$\eqref{4.62 of the book} with a sufficiently large $C>0$, using Cauchy-Schwartz inequality, noting \eqref{estimate:p1}, \eqref{estimate:p2} and \eqref{Addition:3}, we obtain that
			\begin{equation}\label{4.68 of the book}
				\begin{aligned}
					& \int_{Q^T}\theta^{-2}(\tilde{w}_t^2+\lambda^2\tilde{w}^2)\dd x\dd t+\int_{Q^T}\rho\theta^{-2}\Big(\frac{|\partial_t\tilde{r}_1|^2}{\lambda^2}+\frac{|\partial_t\tilde{r}_2|^2 }{\lambda^4}+\tilde{r}_1^2+\frac{\tilde{r}_2^2}{\lambda^2}\Big)\dd x\dd t \\
					\leq ~ & C\lambda\int_{Q^T}\theta^2z^2\dd x\dd t+C\varepsilon e^{C\lambda}\int_{Q^T}\Big[\theta^2\big(|\nabla\tilde{p}|^2+|\nabla\tilde{p}_t|^2\big)+\theta^{-2}\tilde{w}^2 \Big]\dd x\dd t \\
					&+ C\varepsilon\int_{Q^T}\theta^{-2}|\nabla\tilde{w}|^2\dd x\dd t.
				\end{aligned}
			\end{equation}
			
			\textbf{Step 5.} It follows from \eqref{system:w and p} that
			\begin{equation}\label{4.69 of the book}
				\begin{aligned}
					& ~ \Big(\partial_t\tilde{r}_1+\tilde{r}_2+\lambda\theta^2z+\tilde{r},\theta^{-2}\tilde{w}\Big)_{L^2(Q^T)} \\
					= & ~ \Big(\tilde{w}_{tt}-\sum\limits_{j,k=1}^n\big(a^{jk}\tilde{w}_{x_j}\big)_{x_k},\theta^{-2}\tilde{w}\Big)_{L^2(Q^T)} \\
					= & -\int_{Q^T}\tilde{w}_t(\theta^{-2}\tilde{w})_t\dd x\dd t+\sum\limits_{j,k=1}^n\int_{Q^T}a^{jk}\tilde{w}_{x_j}(\theta^{-2}\tilde{w})_{x_k}\dd x\dd t \\
					= & -\int_{Q^T}\theta^{-2}\big(\tilde{w}_t^2+\lambda\phi_{tt}\tilde{w}^2-2\lambda^2\phi_t^2\tilde{w}^2\big)\dd x\dd t+\sum\limits_{j,k=1}^n\int_{Q^T}\theta^{-2}a^{jk} \tilde{w}_{x_j}\tilde{w}_{x_k}\dd x\dd t \\
					& -2\lambda\sum\limits_{j,k=1}^n\int_{Q^T}\theta^{-2}a^{jk}\tilde{w}_{x_j}\tilde{w}\phi_{x_k}\dd x\dd t,
				\end{aligned}
			\end{equation}
				thus we get
			\begin{equation}\label{4.70 of the book}
				\begin{aligned}
					& \int_{Q^T}\theta^{-2}|\nabla\tilde{w}|^2\dd x\dd t \\
					\leq & ~ C\int_{Q^T}\Big[\theta^{-2}|\partial_t\tilde{r}_1+\tilde{r}_2+\tilde{r}||\tilde{w}|+\lambda|z\tilde{w}|+\theta^{-2}(\tilde{w}_t^2+\lambda^2\tilde{w}^2)\Big]\dd x\dd t \\
					\leq & ~ C\int_{Q^T}\Big[\theta^2z^2+\theta^{-2}\Big(\frac{|\partial_t\tilde{r}_1|^2}{\lambda^2}+\frac{\tilde{r}_2^2}{\lambda^2}+\tilde{r}^2+\tilde{w}_t^2+\lambda^2 \tilde{w}^2\Big)\Big]\dd x\dd t.
				\end{aligned}
			\end{equation}
			
			Now we combine \eqref{4.62 of the book}, \eqref{4.68 of the book} and \eqref{4.70 of the book}, and choose the constant $K$ in \eqref{4.62 of the book} so that
			$$K\geq Ce^{2\lambda\|\phi\|_{L^\infty(Q^T)}}$$
			to absorb the term $C\int_{Q^T}\theta^{-2}\tilde{r}^2\dd x\dd t$ in \eqref{4.70 of the book}. Noting that $\rho(x)\geq 1$ and $\varepsilon$ can be so small that $C\varepsilon e^{C\lambda}\ll 1 $ for given $\lambda$, we finally deduce that
			\begin{equation}\label{4.72 of the book}
				\begin{aligned}
					& \int_{Q^T}\theta^{-2}(|\nabla\tilde{w}|^2+\tilde{w}_t^2+\lambda^2\tilde{w}^2)\dd x\dd t \\
					&+ \int_{Q^T}\rho\theta^{-2}\Big(\frac{|\partial_t\tilde{r}_1|^2}{\lambda^2}+\frac{|\partial_t\tilde{r}_2|^2}{\lambda^4}+\tilde{r}_1^2+\frac{\tilde{r}_2^2}{\lambda^2}\Big) \dd x\dd t \\
					\leq~ & C\lambda\int_{Q^T}\theta^2z^2\dd x\dd t.
				\end{aligned}
			\end{equation}
			
			\textbf{Step 6.} Recall that $(\tilde{w},\tilde{r}_1,\tilde{r}_2,\tilde{r})$ depends on $K$, so we can denote it by
			$$(\tilde{w}^K,\tilde{r}_1^K,\tilde{r}_2^K,\tilde{r}^K).$$
			
			Fix $\lambda$ and let $K\to\infty$, since $\rho=\rho^K(x)\to\infty$ for $x\notin\omega$, we can see from \eqref{4.62 of the book} and \eqref{4.72 of the book} that there exists a subsequence of $(\tilde{w}^K,\tilde{r}_1^K,\tilde{r}_2^K,\tilde{r}^K)$ which converges weakly to some $(\check{w},\check{r}_1,\check{r}_2,0)$ in
			$$H_0^1(Q^T)\times(H^1(0,T;L^2(\Omega))^2\times L^2(Q^T),$$
			with $\text{supp} \,\check{r}_j\subseteq[0,T]\times\overline{\omega}, ~ j=1,2.$ By \eqref{system:w and p} we see that
			$$
			\left\{
			\begin{aligned}
				& \check{w}_{tt}-\sum\limits_{j,k=1}^n\big(a^{jk}\check{w}_{x_j}\big)_{x_k}=\partial_t\check{r}_1+\check{r}_2+\lambda\theta^2z, & (t,x)\in ~ & Q^T, \\
				& \check{w}(0,x)=\check{w}(T,x)=0, & x\in ~ & \Omega, \\
				& \check{w}(t,x)=0, & (t,x)\in ~ & \Gamma_T.
			\end{aligned}
			\right.
			$$
			
			Using \eqref{4.72 of the book} again, we find that
			\begin{equation}\label{4.74 of the book}
				\big\|\theta^{-1}\check{w}\big\|_{H_0^1(Q^T)}^2+\frac{1}{\lambda^2}\int_0^T\int_\omega\theta^{-2}\big(|\partial_t\check{r}_1|^2+\check{r}_2^2\big)\dd x\dd t\leq C\lambda \int_{Q^T}\theta^2z^2\dd x\dd t.
			\end{equation}
				Then we take the $\eta$ in \eqref{define weak solution-B} to be the above $\check{w}$, and find that
                $$\Big(\check{w},\partial_t\check{r}_1+ \check{r}_2+\lambda\theta^2z\Big)_{L^2(Q^T)}=\Big\langle z_{tt}-\sum\limits_{j,k=1}^n\big(a^{jk}z_{x_j}\big)_{x_k},\check{w} \Big\rangle_{H^{-1}(Q^T),H_0^1(Q^T)}.$$
				Hence we have
                \begin{align}\label{4.75 of the book}
			\lambda\int_{Q^T}\theta^2z^2\dd x\dd t= & ~ 
                \Big\langle z_{tt}-\sum\limits_{j,k=1}^n \big(a^{jk}z_{x_j}\big)_{x_k}+2z_t+z,\check{w}\Big\rangle_{H^{-1}(Q^T),H_0^1(Q^T)} \nonumber \\
				&+ 2(z,\check{w}_t)_{L^2(Q^T)}-(z,\check{w})_{L^2(Q^T)}-(z,\partial_t\check{r}_1+\check{r}_2)_{L^2((0,T)\times\omega)} \nonumber \\
				\leq & ~ \Big\|\theta\Big(z_{tt}-\sum\limits_{j,k=1}^n\big(a^{jk}z_{x_j}\big)_{x_k}+2z_t+z\Big)\Big\|_{H^{-1}(Q^T)}\big\| \theta^{-1}\check{w}\big\|_{H_0^1(Q^T)} \nonumber \\
				&+\|\theta z\|_{L^2(Q^T)}\big(\big\|\theta^{-1} \check{w}_t\big\|_{L^2(Q^T)}+\big\|\theta^{-1}\check{w}\big\|_{L^2(Q^T)}\big) \\
				&+ \|\theta z\|_{L^2((0,T)\times\omega)} \big\|\theta^{-1}(\partial_t\check{r}_1+\check{r}_2) \big\|_{L^2((0,T)\times\omega)} \nonumber \\
				\leq & ~ C\sqrt{J}\Big[\big\|\theta^{-1}\check{w}\big\|_{H_0^1(Q^T)}+\lambda\big\|\theta^{-1}\check{w}\big\|_{L^2(Q^T)}+\big\|\theta^{-1}\check{w}_t\big\|_{L^2 (Q^T)} \nonumber \\
				&+ \lambda^{-1}\big\|\theta^{-1}(\partial_t\check{r}_1+\check{r}_2)\big\|_{L^2((0,T)\times\omega)}\Big], \nonumber
				\end{align}
				where
			$$J:=\Big\|\theta\Big(z_{tt}-\sum\limits_{j,k=1}^n\big(a^{jk}z_{x_j}\big)_{x_k}+2z_t+z\Big)\Big\|_{H^{-1}(Q^T)}^2+\lambda^2\int_0^T\int_\omega\theta^2z^2\dd x\dd t.$$
			is exactly the right hand side of \eqref{L2 Carleman estimate}. Since that
			$$\theta^{-1}\check{w}_t=\big(\theta^{-1}\check{w}\big)_t-\big(\theta^{-1}\big)_t\check{w}=\big(\theta^{-1}\check{w}\big)_t+\lambda\phi_t\check{w},$$
            we have
			\begin{equation}\label{extra 4.76 of the book}
				\begin{aligned}
					\big\|\theta^{-1}\check{w}_t\big\|_{L^2(Q^T)}\leq & ~ C\big(\big\|\theta^{-1}\check{w}\big\|_{H^1(0,T;L^2(\Omega))}+\lambda\big\|\theta^{-1}\check{w}\big\|_{L^2(Q^T) }\big) \\
					\leq & ~ C\big(\big\|\theta^{-1}\check{w}\big\|_{H_0^1(Q^T)}+\lambda\big\|\theta^{-1}\check{w}\big\|_{L^2(Q^T)}\big).
				\end{aligned}
			\end{equation}
			
			Finally, by \eqref{4.74 of the book}--\eqref{extra 4.76 of the book}, we obtain the desired estimate \eqref{L2 Carleman estimate}. This completes the proof of Proposition \ref{prop:L2 Carleman theorem}.
		\end{proof}

		\par{\bf Acknowledgements.}
        This work was supported by the National Natural Science Foundation of China (No. 12171097, 12471421, 12001555), Science Foundation of Zhejiang Sci-Tech University (No. 25062122-Y), Key Laboratory of Mathematics for Nonlinear Sciences (Fudan University), Ministry of Education of China, Shanghai Key Laboratory for Contemporary Applied Mathematics, School of Mathematical Sciences, Fudan University and Shanghai Science and Technology Program (No. 21JC1400600, SKLCAM202403002), National Key $R\&D$ Program of China under the grant 2023YFA1010300, Funding by Science and Technology Projects in Guangzhou (No. 2023A04J1335).
		
		\bibliographystyle{plain}
		\def\cprime{$'$}

	\end{document}